\definecolor{backcolour}{rgb}{0.95,0.95,0.92}
\lstdefinestyle{mystyle}{
    backgroundcolor=\color{backcolour},   
}
\def\R{\mathbb R}
\def\N{\mathbb N}
\def\Z{\mathbb Z}
\def\cala{\mathcal{A}}
\def\be{\begin{equation}}
\def\ee{\end{equation}}
\def\bea{\begin{eqnarray}}
\def\eea{\end{eqnarray}}
\def\beas{\begin{eqnarray*}}
\def\eeas{\end{eqnarray*}}
\def\d{\partial}
\def\l{\lambda}
\def\pa{\partial }
\def\l{\lambda}
\def\lv{\left\vert}
\def\rv{\right\vert}
\def\bcr{\begin{color}{red}}
\def\bcb{\begin{color}{blue}}
\def\ec{\end{color}}
\def\lv{\left\vert}
\def\rv{\right\vert}
\newcommand{\ga}{\gamma}
\newcommand{\de}{\delta}
\newcommand{\al}{\alpha}
\newcommand{\om}{\omega}
\newcommand{\la}{\lambda}
\newcommand{\ka}{\kappa}
\newcommand{\eps}{\varepsilon}
\newcommand{\dif}{\operatorname{d}\!}
\newcommand{\beq}{\begin{equation}}
\newcommand{\eeq}{\end{equation}}
\newcommand{\beqs}{\begin{equation*}}
\newcommand{\eeqs}{\end{equation*}}
\newcommand{\beqa}{\begin{equation}\begin{aligned}}
\newcommand{\eeqa}{\end{aligned}\end{equation}}
\newcommand{\beqas}{\begin{equation*}\begin{aligned}}
\newcommand{\eeqas}{\end{aligned}\end{equation*}}
\newtheorem{theorem}{Theorem}[section]
\newtheorem{definition}[theorem]{Definition}
\newtheorem{proposition}[theorem]{Proposition}
\newtheorem{prop}[theorem]{Proposition}
\newtheorem{corollary}[theorem]{Corollary}
\newtheorem{lemma}[theorem]{Lemma}
\newtheorem{remark}[theorem]{Remark}
\renewcommand{\theequation}{\arabic{section}.\arabic{equation}}
\title{Gravitational Collapse for Polytropic Gaseous Stars: Self-similar Solutions}
\author{Yan Guo\thanks{Division of Applied Mathematics, Brown University, Providence, RI 02912, USA, Email: Yan\_Guo@brown.edu.}, \ Mahir Had\v zi\'c\thanks{Department of Mathematics, University College London, London WC1E 6XA, UK. Email: m.hadzic@ucl.ac.uk.}, \  Juhi Jang\thanks{Department of Mathematics, University of Southern California, Los Angeles, CA 90089, USA, and Korea Institute for Advanced Study, Seoul, Korea.  Email: juhijang@usc.edu.}, \ and Matthew Schrecker\thanks{Department of Mathematics, University College London, London WC1E 6XA, UK. Email: m.schrecker@ucl.ac.uk.},}
\date{}
\begin{document}

\maketitle

\abstract{In the supercritical range of the polytropic indices $\gamma\in(1,\frac43)$ we show the existence of smooth radially symmetric self-similar solutions to the gravitational Euler-Poisson system. These solutions exhibit gravitational collapse in the sense that the density blows-up in finite time. Some of these solutions were numerically found by Yahil in 1983 and they can be thought of as polytropic analogues of the Larson-Penston collapsing solutions in the isothermal case $\ga=1$. They each contain a sonic point, which leads to numerous mathematical difficulties in the existence proof.}

\tableofcontents
\section{Introduction and the main result}

The rigorous description of stellar collapse in the context of Newtonian gravity is a fundamental mathematical problem. It is believed, at least for some classes of initial data, that on approach to singularity a self-gravitating gaseous star will enter an approximately self-similar regime~\cite{Larson69, Penston69, Shu77, Yahil83, Harada03}, which will intertwine the spatial and the time scales in a universal manner dictated by the scaling symmetries of the problem. The purpose of this paper is to construct radially symmetric examples of {\em exactly} self-similar imploding solutions for the full range of the supercritical polytropic pressure laws.

A self-gravitating Newtonian star is described using the gravitational Euler-Poisson equations, coupling the isentropic compressible Euler equations to a gravitational potential. In three spatial dimensions, under the assumption of radial symmetry, these equations take the form 
\begin{align}
\d_t\rho +\d_r(\rho u)+\frac{2}{r}\rho u &=0, \label{eq:EPCONT}\\
\rho\big(\d_t u+u\d_ru\big)+\d_rp+\frac{1}{r^2}\rho m& =0, \label{eq:EPMOM}
\end{align}
where the principal unknowns $\rho(t,r)$ and $u(t,r)$ are the density and radial velocity of the star, respectively, and depend only on time $t$ and the radial coordinate $r=|x|$. 
Equation~\eqref{eq:EPCONT} gives the conservation of mass and~\eqref{eq:EPMOM} is the conservation of momentum with the given pressure law and gravitational force. We will assume throughout that the pressure  $p=P(\rho)$ satisfies the polytropic equation of state
\begin{equation}\label{E:EOS}
P(\rho)=\ka\rho^\ga,\quad \ga\in(1,\frac43),\:\ka>0,
\end{equation}
and the mass function $m(t,r)$ is defined by
\beq
m(t,r)=4\pi\int_0^r\sigma^2\rho(t,\sigma)\,\dif\sigma.
\eeq
Notice that the term $\frac{m}{r^2}$ appearing in the momentum equation~\eqref{eq:EPMOM} corresponds to the radial component of the gravitational force field $\nabla\phi$ generated by the 
gravitational potential $\phi$, which by definition solves the Poisson equation
$$\Delta\phi=4\pi\rho,\quad \lim_{|x|\to\infty}\phi(t,x)=0.$$
This is easily checked under the assumptions of radial symmetry.

A natural criticality scale is introduced in the problem by varying the polytropic index $\ga$ in the pressure law~\eqref{E:EOS}. It is easily checked that the nonlinear flow associated with~\eqref{eq:EPCONT}--\eqref{E:EOS} is invariant under the scaling transformation
\beqa\label{E:SCALING}
\rho(t,r)\mapsto \la^{-\frac{2}{2-\ga}}\rho\Big(\frac{t}{\la^{\frac{1}{2-\ga}}},\frac{r}{\la}\Big),\\
u(t,r)\mapsto \la^{-\frac{\ga-1}{2-\ga}}u\Big(\frac{t}{\la^{\frac{1}{2-\ga}}},\frac{r}{\la}\Big).
\eeqa
This scaling is in fact the only invariant scaling for the compressible Euler-Poisson system, by contrast to the compressible Euler equations which allow for a 2-parameter family of invariant scalings, see for example~\cite{Merle19}.

When $\gamma>\frac43$ the problem is {\em mass-subcritical} with respect to the scaling~\eqref{E:SCALING}, see~\cite{Hadzic18}. In this case, under the assumption of finite total mass and energy, it is known that no collapsing solutions can exist, see~\cite{Deng02}. In the {\em mass-critical} case, there is a well-known finite-dimensional family of collapsing stars discovered by Goldreich and Weber~\cite{Goldreich80}, see also~\cite{Makino92,FuLin98,Deng03}. The goal of this paper is to prove the existence of 
self-similar solutions describing gravitational collapse in the {\em mass-supercritical} regime $\ga\in(1,\frac43)$.

Motivated by~\eqref{E:SCALING}, we define the self-similar variable
\beq
y=\frac{r}{\sqrt{\ka}(-t)^{2-\ga}}
\eeq
and formally look for solutions to~\eqref{eq:EPCONT}--\eqref{E:EOS} of the form
\beqa
\rho(t,r)=(-t)^{-2}\tilde\rho(y),\\
u(t,r)=\sqrt{\ka}(-t)^{1-\ga}\tilde u(y).
\eeqa
Substituting this ansatz into the continuity equation~\eqref{eq:EPCONT} and dropping the tilde-s, we derive
\beqa
\rho'\big(u+(2-\ga)y)+\rho u'+2\rho+\frac{2}{y}\rho u=0.
\eeqa
Multiplying through by $y^2$, we simplify to find
$$(4-3\ga)y^2\rho=\big(y^2\rho(u+(2-\ga)y)\big)'$$
which we integrate to get a representation for the self-similar local mass as
\beq
\int_0^yz^2\rho(z)\,\dif z=\frac{y^2\rho(u+(2-\ga)y)}{4-3\ga}.
\eeq
Thus we derive from the momentum equation~\eqref{eq:EPMOM} the second self-similar equation
\beq
\rho(u+(2-\ga)y)u'+\ga\rho^{\ga-1}\rho'+(\ga-1)\rho u+\frac{4\pi}{4-3\ga}\rho^2(u+(2-\ga)y)=0.
\eeq
It will be convenient in what follows to work with the re-scaled relative velocity, rather than working directly with the velocity $u$. The new relative velocity is defined as
\beq\label{E:OMEGADEF}
\om=\frac{u+(2-\ga)y}{y}.
\eeq
We therefore arrive at the self-similar ODE system
\beqa
y\om\rho'+y\rho\om'=&\,(4-3\ga)\rho-3\rho\om,\\
y\rho\om\big(y\om'+\om-(2-\ga))+\ga\rho^{\ga-1}\rho'=&\,-(\ga-1)y\rho\om-(\ga-1)(\ga-2)y\rho-\frac{4\pi}{4-3\ga}\rho^2y\om.
\eeqa
Equivalently we may rewrite the system in the form
\beqa\label{eq:EPSS}
\rho'=& \frac{y\rho\left(2\om^2+(\ga-1)\om-\frac{4\pi\rho\om}{4-3\ga}+(\ga-1)(2-\ga)\right)}{\ga\rho^{\ga-1}-y^2\om^2},\\
\om'=&\,\frac{4-3\ga-3\om}{y} -  \frac{y\om\left(2\om^2+(\ga-1)\om-\frac{4\pi\rho\om}{4-3\ga}+(\ga-1)(2-\ga)\right)}{\ga\rho^{\ga-1}-y^2\om^2}.
\eeqa

We refer to~\eqref{eq:EPSS} as the self-similar Euler-Poisson system.
Clearly, this system has a singularity at $y=0$. However, there is a further singularity which occurs whenever $\ga\rho^{\ga-1}-y^2\om^2=0$. This is of fundamental importance and the presence of such singularities, as we shall explain below, is unavoidable in the study of {\em smooth} self-similar solutions to~\eqref{eq:EPSS} satisfying physically reasonable boundary conditions.  
This motivates the following definition.


\begin{definition}[Sonic point]
Let $(\rho(\cdot),\om(\cdot))$ be a $C^1$-solution to the self-similar Euler-Poisson system~\eqref{eq:EPSS} on the interval $(0,\infty)$. A point $y_*\in(0,\infty)$ such that 
\[
\ga\rho^{\ga-1}(y_*)-y_*^2\om^2(y_*)=0
\] 
is called a \textit{sonic point}.
\end{definition}


If $y_\ast$ is a sonic point, then the hypersurface defined by the relation $r(t)=\sqrt\kappa y_\ast (-t)^{2-\ga}$ corresponds to the backward acoustic cone emanating from the
origin $(0,0)$ (\cite{Brenner98,Merle19}). It satisfies the relation $\dot r(t) = u(t,r(t))-c_s(t,r(t))$, where $c_s$ is the sound speed $c_s=\frac{dP}{d\rho}= \sqrt{\kappa\ga \rho^{\ga-1}}$.


We are looking for smooth solutions which are both regular at the (scaling) origin $y=0$ and satisfy suitable decay conditions as $y\to\infty$. Taking Taylor expansions at the origin and in the far-field (as $y\to\infty$), we see that we require the initial and asymptotic boundary conditions
\begin{align}
\rho(0)>0, \quad&\om(0)=\frac{4-3\ga}{3}, \label{eq:IC}\\
\rho(y)\sim y^{-\frac{2}{2-\ga}}\text{ as }y\to\infty, \quad& \lim_{y\to\infty}\om(y)=2-\ga. \label{eq:BC}
\end{align}
From these conditions, it is clear that any continuous solution of \eqref{eq:EPSS} and \eqref{eq:IC}--\eqref{eq:BC} must have at least one sonic point $y_*>0$.

In the {\em isothermal} case $\gamma=1$, the existence of global solutions satisfying \eqref{eq:EPSS} and \eqref{eq:IC}--\eqref{eq:BC} has a long history in the astrophysics literature, primarily relying on numerical methods. In their seminal works, Larson \cite{Larson69} and Penston \cite{Penston69} independently showed (numerically) the existence of a globally defined solution satisfying~\eqref{eq:IC}--\eqref{eq:BC} and with only a single sonic point present. Subsequently, Hunter \cite{Hunter77} numerically constructed a full sequence of further solutions, commonly referred to as Hunter-type solutions, each of which also has a single sonic point (see also the work of Shu \cite{Shu77} and the later work of Whitworth and Summers~\cite{Whitworth85}). Despite the physically simplifying assumption $\gamma=1$,  these families of solutions attracted a lot of attention in the physics literature as possible prototype models for the behaviour of the core in late stages of gravitational collapse. In fact, the Larson-Penston (henceforth, LP) solutions were judged to be the more stable solutions under subsequent numerical analysis \cite{Brenner98,Maeda01,Ori88}. They also play an important role in the Newtonian criticality theory and the resolution of the so-called {\em self-similarity hypothesis}, see~\cite{Harada03}.

However, the assumption that the flow is isothermal received criticism on physical grounds, for example by Yahil \cite{Yahil83}, who pointed out that the physical condition of finite energy is violated unless $\ga>\frac65$. The value $\ga=\frac65$ plays the role of the {\em energy-critical} exponent with respect to the scaling~\eqref{E:SCALING}, see~\cite{Hadzic18}.
More importantly, different values of $\gamma>1$ allow us to encode stars with different thermodynamic properties and it is therefore important to understand the space of self-similar flows in the range $\ga\in(1,\frac43)$. In the range $\ga\in(\frac65,\frac43)$, Yahil~\cite{Yahil83} constructed a family of numerical self-similar solutions to~\eqref{eq:EPSS} with finite energy. These solutions share certain characteristics with the isothermal LP solution. For example, the physical radial velocity remains strictly negative (except at the origin, where it vanishes) up to the collapse time in both the Yahil solutions and the LP solutions. This property does not hold for Hunter solutions and has been tied to the possible dynamic instabilities of such solutions by Maeda--Harada \cite{Maeda01}. This leads us to the following definition.
\begin{definition}[Yahil-type solution]\label{D:YAHIL}
Let $\ga\in(1,\frac43)$. A pair of $C^1$ functions $(\rho,\om)$ defined on a connected interval $I\subset[0,\infty)$ satisfying the self-similar Euler-Poisson system \eqref{eq:EPSS} is said to be of Yahil-type if
\begin{itemize}
\item[(i)] There exists a unique sonic point $y_*\in I$;
\item[(ii)] For all $y\in I$, $\rho(y)>0$ and for all $y\in I\setminus\{0\}$, $u(y)<0$.
\end{itemize}
\end{definition}

Recently, the first three authors were able to construct LP solutions in the case $\ga=1$ in \cite{Guo20}. 
The main result of this paper is to show that Yahil solutions exist for the full physical range $\ga\in(1,\frac43)$, including the finite energy range ($\ga>\frac65$).


\begin{theorem}\label{T:MAIN}
For each $\ga\in(1,\frac43)$, there exists a global, real-analytic, Yahil-type solution $(\rho,\om)$ of \eqref{eq:EPSS}, \eqref{eq:IC}--\eqref{eq:BC} with a single sonic point $y_*$ and satisfying the natural, physical  conditions
\beqa\label{E:RHOOMBOUNDS}
&\rho(y)>0 \text{ for all }y\in[0,\infty), \quad -\frac23 y< u(y)<0 \text{ for all }y\in(0,\infty).
\eeqa
In addition, both $\rho$ and $\om$ are strictly monotone on their domain of definition:
\beqa \label{E:MONOTONICITY}
&\rho'(y)<0 \text{ for all }y\in(0,\infty),\quad \om'(y)>0\text{ for all }y\in(0,\infty).
\eeqa
\end{theorem}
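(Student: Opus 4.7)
The plan is to construct the Yahil-type solution by a shooting argument organized around the sonic point. First I would perform a local, analytic existence theory at a candidate sonic point $y_*>0$. The condition $\ga\rho^{\ga-1}(y_*)=y_*^2\om^2(y_*)$ is one algebraic constraint on $(\rho(y_*),\om(y_*))$; after prescribing, say, $\rho(y_*)$ and solving this constraint for $\om(y_*)$ (with the physically correct sign), the requirement that the numerator in \eqref{eq:EPSS} also vanishes at $y_*$ fixes $y_*$ as an explicit function of $\rho(y_*)$. Computing $\rho'(y_*)$ and $\om'(y_*)$ then reduces to an L'H\^opital-type calculation that yields a quadratic in $(\rho'(y_*),\om'(y_*))$; I would select the branch corresponding to strictly decreasing $\rho$ and strictly increasing $\om$, consistent with \eqref{E:MONOTONICITY}. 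Higher-order Taylor coefficients are determined by a linear recursion whose $n$-th stage has a coefficient that grows only linearly in $n$, which allows one to close an inductive upper bound of the form $|\rho_n|,|\om_n|\le M R^n$ for some $M,R>0$ and thereby obtain a real-analytic solution in a neighbourhood of $y_*$.

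Second, I would extend the local solution to the left, toward the origin. Away from $y_*$ the system \eqref{eq:EPSS} is regular provided the denominator $\ga\rho^{\ga-1}-y^2\om^2$ stays positive (the subsonic region). The central step is the shooting argument: let $\rho_0=\rho(y_*)$ play the role of the shooting parameter, giving a one-parameter family of solutions $(\rho(\cdot;\rho_0),\om(\cdot;\rho_0))$, and study the map $\rho_0\mapsto\om(0^+;\rho_0)-\frac{4-3\ga}{3}$. I would identify two admissible values of $\rho_0$ for which this quantity has opposite signs, by (i) comparing with the unique regular solution of the $\rho_0\to 0$ (dust) limit and (ii) comparing with a barrier built from the relation $\om=\frac{4-3\ga}{3}$, and then invoke continuous dependence together with the intermediate value theorem to produce a $\rho_0^*$ for which the initial condition \eqref{eq:IC} is exactly realised. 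Throughout this continuation, the subsonic condition, the bound $\rho>0$, and the velocity constraint $-\frac23 y<u<0$ have to be propagated, which I would achieve by building an invariant region in the $(\rho,\om)$-plane whose boundary is transversal to the vector field of \eqref{eq:EPSS}.

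Third, I would extend the solution to the right of $y_*$ to infinity, now in the supersonic region where the denominator is negative. The boundary conditions \eqref{eq:BC} suggest introducing the normalised variables $\tilde\om=\om-(2-\ga)$ and $\tilde\rho= y^{2/(2-\ga)}\rho$ and rewriting the system in a logarithmic variable $s=\log y$. The resulting autonomous-like system has $((2-\ga),\rho_\infty)$ as a stable fixed point for the supercritical $\ga\in(1,\tfrac43)$ range; I would identify its stable manifold and show that the trajectory leaving the sonic point lies on this manifold by a second invariant-region argument (now in the supersonic wedge), which simultaneously yields the asymptotic decay $\rho\sim y^{-2/(2-\ga)}$, $\om\to 2-\ga$ and the sign conditions $\rho'<0$, $\om'>0$ throughout $(y_*,\infty)$. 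Combined with the analogous (already established) signs on $(0,y_*)$, this gives the global monotonicity \eqref{E:MONOTONICITY}.

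The hard part, as in the isothermal construction of \cite{Guo20}, will be step two: sonic passage plus shooting to the origin. The analyticity-through-the-sonic-point is technically delicate because the linearisation of \eqref{eq:EPSS} at $y_*$ is not invertible and one must show the recursion closes; but the genuinely nontrivial difficulty is quantitative, namely to rule out the appearance of secondary sonic points on $(0,y_*)$ uniformly in the shooting parameter, and to establish the strict inequality needed for the sign change of $\om(0^+;\rho_0)-\frac{4-3\ga}{3}$. This must moreover be carried out uniformly in $\ga\in(1,\tfrac43)$, where the location of $y_*$ and the profile of $\rho_0^*$ depend on $\ga$ in a singular way as $\ga\downarrow 1$ (degenerating to the LP case) and as $\ga\uparrow\tfrac43$ (degenerating to the mass-critical scaling of \cite{Goldreich80}), so the barriers and the invariant regions have to be constructed with parameters depending explicitly on $\ga$.
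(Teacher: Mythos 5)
Your high-level architecture (local analytic continuation through a chosen sonic point, continuation to the left by shooting, continuation to the right with asymptotics) does match the paper's, but there are several concrete gaps. First, the branch selection for the first-order Taylor coefficients at $y_*$ is not resolved by the monotonicity signs you propose: both roots $R_1,R_2$ of the quadratic~\eqref{eq:Rquadratic} are negative throughout the parameter range (see Figure~\ref{fig:2}), so ``choose the branch with $\rho'(y_*)<0$'' is not well-posed. The paper instead identifies the LPH branch by tracking the $\gamma\to1^+$ limit and matching it against the known Larson--Penston solution (Lemmas~\ref{L:BRANCHES} and~\ref{lemma:rho1om1}, Section~\ref{subsec:LPH}), and then has to verify by interval arithmetic (Proposition~\ref{P:AN}) that the resulting recursion matrix $\mathcal{A}_N$ is invertible. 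Second, your claim that the Taylor recursion has a coefficient ``that grows only linearly in $n$'' and therefore a geometric bound $|\rho_n|,|\om_n|\le MR^n$ closes is too optimistic: the determinant $\det\mathcal{A}_N$ is a quadratic polynomial in $N$, the effective gain after inversion is only $O(1/N)$, and the convolution sums in the source terms $\mathcal{F}_N,\mathcal{G}_N$ then produce a factor of $N$ that a bare geometric ansatz cannot absorb. The paper closes the induction only with the refined ansatz $|\rho_N|\le C^{N-\alpha}/N^3$ (Lemma~\ref{L:INDUCTION}), and even then must cope with the non-integer power $\rho^{\ga-1}$ via Fa\`a di Bruno combinatorics (Appendix~\ref{A:COMB}), which you do not address at all and which is in fact one of the main technical obstacles of the $\gamma>1$ case.

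The most substantive gap is in your Step 2. You correctly identify that the hard part is ruling out secondary sonic points on $(0,y_*)$ uniformly in the shooting parameter, but you offer no mechanism beyond ``an invariant region whose boundary is transversal to the vector field,'' and no such simple invariant region exists here: because $h$ and $G$ are nonlinear in $(\rho,\om)$, the sets $\{h<0\}$, $\{G>0\}$ have a complicated geometry that changes qualitatively at $\gamma=\tfrac{10}{9}$ (Lemma~\ref{lemma:f1structure}), and the sets $\{\om'=0\}$ and $\{\om''=0\}$ can intersect in the subsonic region. The paper's key new ingredient --- which goes beyond the isothermal construction in \cite{Guo20} and is the conceptual heart of Section~\ref{S:LEFT} --- is the monotonicity statement $\mathcal S = Y$ (Proposition~\ref{prop:S=Y}): for all shooting parameters in the fundamental set $Y$, $\om$ is strictly monotone until it reaches the Friedman level. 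This is proved by a delicate third-derivative computation (Lemma~\ref{lemma:omega3}) at points where $\om'=\om''=0$, and it is precisely this monotonicity that propagates a uniform lower bound on $G$ to the left and thereby forbids a second sonic point. Without an analogue of this lemma your intermediate-value argument on $\om(0^+;\rho_0)-\tfrac{4-3\ga}{3}$ is not well-defined, since $\om(0^+;\rho_0)$ may simply fail to exist. Finally, the shooting parameter itself: using $\rho_0$ is equivalent to using $y_*$ by Lemma~\ref{lemma:rho0om0}, so that choice is fine, but the dust-limit comparison you propose for one endpoint is not how the paper pins down the range; the paper uses the explicit far-field and Friedman solutions at $y_f$ and $y_F$ as the two anchors (Proposition~\ref{prop:far-field}), and you would need to verify your dust comparison separately.
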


The proof of this theorem is a consequence of a delicate analysis of the nonautonomous dynamical system~\eqref{eq:EPSS} in the regions separated by the sonic point $y_\ast$, presented in Sections~\ref{S:RIGHT} and~\ref{S:LEFT}. The combination of results derived in these two sections gives Theorem~\ref{T:MAIN} and the short argument is given in Section~\ref{S:PROOF}.


The most famous class of special solutions to the radially symmetric Euler-Poisson system are the Lane-Emden steady stars~\cite{Chandrasekhar38}, known to be of finite mass and energy 
if $\gamma\in[\frac65,2)$. Their dynamic stability is a classical subject, and in the case $\gamma>\frac 43$ they are known to be linearly stable and conditionally nonlinearly stable~\cite{Rein03}. By contrast, when $\gamma\in[\frac65,\frac43)$ the Lane-Emden stars are unstable~\cite{Jang08,Jang14}. In the critical case $\gamma=\frac43$, the Lane-Emden stars are spectrally stable, but nonlinearly unstable. The latter statement follows by observing that the above mentioned Goldreich-Weber (henceforth GW) collapsing stars can be chosen initially to be arbitrarily close to the corresponding steady Lane-Emden stars. In fact, due to the mass-critical nature of the problem, the GW collapse is a consequence of an effective separation of variables in the problem, where the solution corresponds to a time-modulated spatial profile, which satisfies a Lane-Emden-like equation. By time-reversal, there also exist global-in-time expanding GW-solutions, whose nonlinear stability was shown in~\cite{Hadzic18}.

The solutions constructed in Theorem~\ref{T:MAIN} ($1<\gamma<\frac43$) are very different from the GW solutions ($\ga=\frac43$), and owe their existence to a subtle balancing of the three dominant forces in the problem: inertia, pressure, and gravity.  A completely different portion of the phase-space is populated by the so-called dust-like collapsing stars, which have been shown to exist in~\cite{Guo20b}. The solutions constructed in~\cite{Guo20b} do not honour the scaling invariance implied by~\eqref{E:SCALING}, but are instead to a leading order approximated by the so-called dust solutions, which solve~\eqref{eq:EPCONT}--\eqref{eq:EPMOM} without the pressure term $p$.


As explained above, the most exciting physical feature of the self-similar solutions that we construct is their behaviour in the inner core region, as a possible model of typical stellar collapse scenario.   
Nevertheless, for completeness we also discuss some {\em global} properties of the solution, in particular the size of the total mass and total energy.
The solutions constructed in Theorem~\ref{T:MAIN} have infinite total mass
\[
M[\rho] = 4\pi \int_0^\infty \rho(t,r) r^2\dif r,
\]
as can easily be seen from the asymptotic behaviour~\eqref{eq:BC}. A short calculation shows that for any fixed $t<0$, asymptotically as $r\to\infty$
\begin{align}\label{E:RHOMPHIASYMP}
\rho(t,r)\sim_{r\to\infty} r^{-\frac2{2-\ga}}, \ \ m(t,r)\sim_{r\to\infty} r^{\frac{4-3\ga}{2-\ga}}, \ \ \phi(t,r)\sim_{r\to\infty} r^{\frac{2(1-\ga)}{2-\ga}},
\end{align}
where $m(t,r) := 4\pi \int_0^r \rho(t,s) s^2\dif s$ is the mass contained in a ball of radius $r$. On the other hand, the total energy 
\[
E[\rho,u]=4\pi \int_0^\infty \left(\frac12 \rho u^2 + \frac1{\ga-1}\rho^\ga - \frac1{8\pi}|\pa_r\phi|^2\right) r^2\dif r
\]
of the solutions constructed in Theorem~\ref{T:MAIN} is finite when $\ga\in(\frac65,\frac43)$ and infinite for $\ga\in(1,\frac65]$. This can be easily seen 
from~\eqref{E:RHOMPHIASYMP} and the asymptotic behaviour $u(t,r)_{r\to\infty}\sim r^{\frac{1-\ga}{2-\ga}}$ for any fixed $t<0$, which is established later in Lemma~\ref{lemma:asymptotics}.

A further surprising outcome of our work is the provision of a new context within which to consider the above mentioned distinction between the LP- and Hunter-type solution.
In the context of the isothermal problem ($\ga=1$), the demand that the solution be regular produces two possible algebraic ``branches" for the 
Taylor expansion coefficients at the sonic point. The LP-solution constructed in~\cite{Guo20} belongs to one of them, all the Hunter solutions to the other, and the branches intersect at exactly one point.
When $\ga>1$, we will show that there are two analogous branches. Remarkably, in the formal $\gamma\to1$ limit one of them converges to two portions of the two isothermal branches that together form a continuous curve
containing both the LP- and Hunter solutions in the isothermal case. We thus term the solutions coming off this $\gamma>1$-branch the {\em Larson-Penston-Hunter- (LPH-) type} solutions. We comment on this further in
Section~\ref{SS:METHODS}, while the detailed analysis can be found in
Section~\ref{S:SONICPOINT}.


\subsection{Methodology}\label{SS:METHODS}


Due to the importance that the sonic condition will play throughout all of the subsequent analysis, we define here a function 
\beq\label{def:G}
G(y;\rho,\om)=\ga\rho^{\ga-1}-y^2\om^2.
\eeq


\begin{definition}[Sonic, supersonic, and subsonic]\label{def:subsonicitysupersonicity}
We say that the flow is subsonic whenever $G(y;\rho,\om)>0$, supersonic when $G(y;\rho,\om)<0$, and sonic when $G(y;\rho,\om)=0$.
\end{definition}


For convenience, we denote by $h(\rho,\om)$ the function
\beq\label{def:h}
h(\rho,\om)=2\om^2+(\ga-1)\om-\frac{4\pi\rho\om}{4-3\ga}+(\ga-1)(2-\ga).
\eeq
The system \eqref{eq:EPSS} may then be written concisely as
\beqa\label{eq:rhoom}
\rho'=&\,\frac{y\rho h(\rho,\om)}{G(y;\rho,\om)},\\
\om'=&\,\frac{4-3\ga-3\om}{y}-\frac{y\om h(\rho,\om)}{G(y;\rho,\om)}.
\eeqa
There are two known, explicit solutions to the system \eqref{eq:rhoom}, the {\em Friedman} solution 
\beq
\om_F=\frac{4-3\ga}{3},\quad \rho_F=\frac{1}{6\pi}
\eeq
which satisfies the initial condition \eqref{eq:IC} at the origin, but fails the asymptotic boundary condition \eqref{eq:BC}, and the {\em far-field} solution,
\beq\label{def:far-field}
\om_f=2-\ga,\quad \rho_f=ky^{-\frac{2}{2-\ga}},\text{ where }k=\Big(\frac{\ga(4-3\ga)}{2\pi(2-\ga)^2}\Big)^{\frac{1}{2-\ga}},
\eeq
which satisfies the asymptotic boundary condition \eqref{eq:BC}  but fails the initial condition \eqref{eq:IC} .
Note that the constant $k>0$ is well-defined due to $\ga<4/3$. 

The Friedman and far-field solutions have sonic points at $y_F(\ga)$, $y_f(\ga)$, respectively, with
\beqa\label{def:yfyF}
y_F(\ga)=\frac{3}{4-3\ga}\sqrt{\frac{\ga}{(6\pi)^{(\ga-1)}}},\quad y_f(\ga)=\frac{\sqrt{\ga}}{2-\ga}\Big(\frac{4-3\ga}{2\pi}\Big)^{\frac{\ga-1}{2}}.
\eeqa
For all $\ga\in(1,\frac43)$, we see that $0<y_f(\ga)<y_F(\ga)<\infty$. Henceforth, we will drop the explicit dependence on $\ga$ for $y_f$ and $y_F$, emphasising that for each $\ga\in(1,\frac43)$, $[y_f,y_F]$ is a compact interval.

The system of ODE~\eqref{eq:EPSS} is challenging since, as explained above, the flow must pass through a sonic point. The requirement of smoothness at such a point then leads to a number of mathematical difficulties. Generically, if we stipulate that some $y_\ast\in(0,\infty)$ be a sonic point, then the flow around that point will not be global. It is only for special values of $y_\ast$ where the corresponding solution is in fact globally defined on $[0,\infty)$.
In a recent pioneering study of self-similar solutions for the compressible Euler system with the equation of state $P(\rho)=\rho^\ga$ ($\ga>1$), Merle, Rapha\"el, Rodnianski, and Szeftel~\cite{Merle19} systematically developed the existence theory for $C^\infty$-self-similar solutions of the Euler flow; the underlying $2\times2$ system of ODE is in this case autonomous (in contrast to~\eqref{eq:EPSS}). The smoothness of the self-similar solutions across the sonic point is in fact a crucial ingredient in the proof of their (finite codimension) nonlinear stability~\cite{Merle19b}. 

We will seek a solution with sonic point at some $y_*\in(y_f,y_F)$. 
Making the formal Taylor expansion around the sonic point $y_*$, we set
\beq
\rho(y)=\sum_{N=0}^\infty \rho_N(y-y_*)^N,\quad \om(y)=\sum_{N=0}^\infty \om_N (y-y_*)^N.
\eeq
In order to have a smooth solution through $y_*$, we require that the values $\rho_0=\rho(y_*)$ and $\om_0=\om(y_*)$ are constrained by the two identities
\beq
G(y_*;\rho_0,\om_0)=0,\quad h(\rho_0,\om_0)=0.
\eeq
For all $y_*\in[y_f,y_F]$, we will show below that there is a unique pair $(\rho_0,\om_0)$ satisfying these two conditions. When we come to solve for the first order coefficients $(\rho_1,\om_1)$, however, we see that the picture becomes more complicated. In fact, there are again two possible branches from which the coefficients may be chosen. 
However, as we next explain, it is natural to view the $\gamma=1$-case as a degenerate case. Namely, the possible pairs lie on graphs as shown in Figure \ref{fig:gamma=1}, parametrised by $\om_0$ (equivalently, by $y_*$). In this case, the LP solution constructed in \cite{Guo20} lies in the region of the LP branch for which $\om_0<\frac12$ (equivalently $y_*>2$) while the numerically constructed Hunter solutions all lie in the region $\om_0>\frac12$ (equivalently $y_*<2$), compare also \cite[Fig. 2]{Hunter77}.

\begin{figure}[h]
\centering
\begin{minipage}{0.75\textwidth}
\centering
  \includegraphics[width=.8\linewidth]{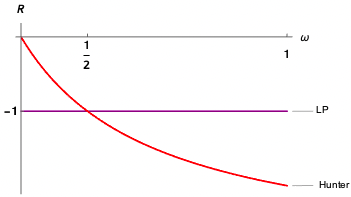}
\caption{{\footnotesize Plot of possible values $R=y_*\rho_1/\rho_0$ as a function of $\om_0\in[\frac13,1]$ in case $\ga=1$.}}
\label{fig:gamma=1}
\end{minipage}
\end{figure}

However, as soon as we increase $\ga>1$, a bifurcation occurs. The picture then looks like one of the cases in Figure \ref{fig:2}. The Hunter and LP solutions in the case $\ga=1$ actually live on the same branch of the solutions, a feature that is concealed in the isothermal case by the degeneracy that makes the branches coalesce at this value. For $\ga>1$, the analogue of the isothermal LP solution is the global solution with a unique sonic point $y_*$ such that the first order coefficient $\rho_1$ lies on the joint LP-Hunter (henceforth LPH) branch, and with maximal $y_*$ (equivalently minimal $\om_0$) - this is the lower (blue) branch in Figure~\ref{fig:2}. Such a solution which will be shown to correspond to the Yahil solution that we are looking to construct, see Definition~\ref{D:YAHIL}. 

\begin{figure}[h]
\centering
\begin{minipage}{0.9\textwidth}
\centering
  \includegraphics[width=.48\linewidth]{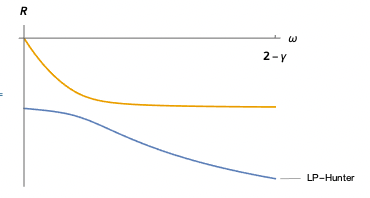}\hspace{.04\linewidth}\includegraphics[width=.48\linewidth]{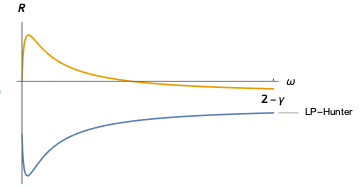}
\caption{{\footnotesize Plot of possible values $R=y_*\rho_1/\rho_0$ as functions of $\om_0\in[\frac{4-3\ga}{3},2-\ga]$ for $\ga=1.01$ and $1.3$}}
\label{fig:2}
\end{minipage}
\end{figure}

Once we have correctly identified the branch of solutions on which the LPH-type solution should lie, we seek the globally defined Yahil-type solution whose Taylor expansion at the sonic point is of LPH-type (see Definition \ref{def:LPH} for the precise meaning). We then proceed in four key steps, as in the earlier work of the first three authors, \cite{Guo20}.

\textbf{Step 1:} The first step is to complete the Taylor expansion at each potential sonic point $y_*\in[y_f,y_F]$ to obtain a local, analytic solution around $y_*$, denoted by
\[
(\rho(y;y_\ast),\om(y;y_\ast)).
\]
When clear from the context, we shall occasionally drop the dependence on $y_\ast$ in the notation above.
In comparison to \cite{Guo20}, the convergence of the Taylor series is significantly complicated by the presence of the term $\rho^{\ga-1}$ with its non-integer power. Various technical tricks are employed, using the Fa\`a di Bruno formula, to control the size of the coefficients arising in the expansion, while interval arithmetic is employed to control rigorously the sign of three key quantities (see \eqref{ineq:A2}--\eqref{ineq:dNdetA0} and Appendix \ref{A:IADETERMINANTS} below).

\textbf{Step 2:} Second, we show that the local solution arising from each $y_*\in[y_f,y_F]$ may be extended globally to the right, remains supersonic, and satisfies the correct asymptotic boundary condition \eqref{eq:BC}. This is based on the identification of several invariant regions to the right. Compared with the isothermal case, the key property to show is that the flow remains supersonic, a fact that is no longer trivially true. The asymptotics follow easily from the global existence and bounds obtained.

\textbf{Step 3:} The third, and key, step, is to show that there exists a critical value $\bar y_*$ for which the local analytic solution extends smoothly up to the singular point at the origin with limit $\om(y)\to\frac{4-3\ga}{3}$ as $y\to0$. Similarly to \cite{Guo20}, this $\bar y_*$ is found as the infimum of a fundamental set 
$$Y=\Big\{y_*\in(y_f,y_F)\,|\,\text{ there exists }y\text{ such that }\om(y;\tilde y_*)=\frac{4-3\ga}{3}\text{ for all }\tilde y_*\in[y_*,y_F)\Big\}.$$
It is here that many of the additional complications arising from the choice $P(\rho)=\rho^\ga$, $\ga>1$ make themselves felt. Many of the invariances that were easily available in the case $\ga=1$ are either significantly harder to prove or fail altogether. For example, we no longer have that the region $\{\om>\frac12\}$ is invariant as $y$ decreases. These losses are due to the non-linear structure of the quantities $h(\rho,\om)$ and $G(y;\rho,\om)$. Whereas, for $\ga=1$, the sets in phase space in which $h$ or $G$ have a constant sign are simply half-spaces (parametrised by $y$ due to the non-autonomous nature of the system), for $\ga>1$, they have a much more complicated structure, with a change in the geometry of the set $\{h(\rho,\om)=0\}$ especially at $\ga=\frac{10}{9}$, see Lemma \ref{lemma:f1structure}. This makes itself felt at a number of levels. For example, the sets $\{\om'(y)=0\}$ and $\{\om''(y)=0\}$ in the $(\rho,\om)$ have an intersection in the region $\{h<0,G>0\}$, something which cannot happen for $\ga=1$, while there are no obvious invariant regions for $G$ either.

To resolve the difficulties caused by these features, we prove a new and stronger property for the relative velocity $\om(y;y_*)$ for all $y_*\in Y$: {\em monotonicity} with respect to $y$. By a careful analysis of the phase plane and a continuity argument, we are able to show that for all $y_*\in Y$, the function $\om(y;y_*)$ is strictly monotone as long as it remains above the Friedman solution. This property, which is proved in the key Proposition \ref{prop:S=Y} below, allows us to propagate a lower bound for the quantity $G$ to the left, preventing the formation of additional sonic points and allowing us to extend the solution as far as the origin, $y=0$.

\textbf{Step 4:} The final step in the scheme is to show that the solution $(\rho(y;\bar y_*),\om(y;\bar y_*))$ connects smoothly to the origin. More precisely, we show that the solution is analytic on $(0,\infty)$ and $C^1$ at the scaling origin $y=0$. 
With a little extra work, one can show that the solution is in fact smooth at $y=0$, but the argument is not included in this paper, as it is not central to our proof of the existence of Yahil-type solutions.
This is achieved by exploiting again the monotonicity proved for $\omega$ to demonstrate that $\om(\cdot;\bar y_*)$ attains the boundary condition $\om(0;\bar y_*)=\frac{4-3\ga}{3}$ and that the density remains both bounded and monotone. This greatly simplifies the proof of the equivalent step in \cite{Guo20} and removes the need for a topological upper- and lower solution argument of the kind used in~\cite{Guo20}.

At three points throughout the proof (twice in the Taylor expansion at the sonic point in Propositions \ref{prop:R1W1} and \ref{P:AN} and then once more in extending the solution to the right in the technical Lemma \ref{lemma:Qmintervalarithmetic}), we require an understanding of the sign of key quantities depending polynomially on $\om$ and $\ga$. As the quantities are significantly too complicated to control by hand, we employ rigorous interval arithmetic, a means of computer-assisted proof that has been used several times recently to resolve open questions in the theory of PDE, see for example \cite{Castro17,Cohen21,GS14}. A useful overview of the method and its applications, along with a wealth of references to recent applications, can be found in \cite{GS18}. Unlike in these works, our use of interval arithmetic is elementary, as we perform most of the analysis directly, only employing interval arithmetic to find bounds for the maxima and minima of certain explicit polynomial quantities.

The paper is organised as follows. 
Details of the sonic point expansion, the definition of the LPH-type solutions, and the local existence of real-analytic solutions in the vicinity of the sonic point are presented in Section~\ref{S:SONICPOINT}. In Section~\ref{S:RIGHT} we show that for {\em any} $y_\ast\in[y_f,y_F]$ there exists an LPH-type solution on $[y_\ast,\infty)$
and provide a detailed asymptotic description of the solution as $y\to\infty$. Section~\ref{S:LEFT} is devoted to the existence problem to the left of the sonic point, and contains some of the key conceptual insights of the paper. In particular, Proposition~\ref{P:LEFTGLOBAL} shows that there exists a $\bar y_\ast\in[y_f,y_F]$ such that the associated local LPH-type solution extends to the whole
interval $(0,\bar y_\ast]$. The main theorem is then easily obtained by gluing together the constructed left- and right solutions, and the proof is presented in Section~\ref{S:PROOF}.

Several technical lemmas are stated and proved in Appendices~\ref{A:EU} and~\ref{A:COMB}. Appendix~\ref{A:EU} contains the standard existence and uniqueness argument {\em away} from the sonic points, while Appendix~\ref{A:COMB} contains the details of an involved combinatorial argument used to prove the existence of real-analytic solution in a neighbourhood of a sonic point.
Several of our arguments in Sections~\ref{S:RIGHT} and~\ref{S:LEFT} involve complicated multinomial expressions depending on $\ga$, $\om_0$, and $y_\ast$. Their signs play a crucial role in the proofs and we resort to rigorous, computer-assisted proofs by way of interval arithmetic to check the relevant signs. Appendix~\ref{A:IA} contains all the details of such arguments including the associated interval arithmetic code. Finally, Appendix~\ref{app:continuity} contains a detailed proof of some of the key continuity properties of the LPH-type solutions, used heavily in Section~\ref{S:LEFT}. Such a proof is not standard in the literature, but is quite similar to a related proof in~\cite{Guo20}, and the details are therefore moved to an appendix.

\bigskip 

{\bf Acknowledgments.}
Y. Guo's research is supported in part by NSF DMS-grant 2106650.
M. Had\v zi\'c's and M. Schrecker's  research is supported by the EPSRC Early Career Fellowship EP/S02218X/1.
J. Jang's research is supported by the NSF DMS-grant 2009458
and the Simons Fellowship (grant number 616364). 


\section{The sonic point} \label{S:SONICPOINT}


As discussed in the introduction, our strategy for constructing a solution to the system \eqref{eq:EPSS} is to begin from a sonic point $y_*$, obtain a solution locally around this point, and then to extend to both the left and to the right. The purpose of this section is to provide the solution locally around the sonic point. This is a difficult endeavour, as it requires us to first clarify how the condition of smoothness (in fact analyticity) at the sonic point affects our definition of the solution we seek after. This will lead us to the notion of the Larson-Penston-Hunter (LPH) branch. The next step involves a combinatorial argument that shows that locally around the sonic point there indeed exist analytic solutions of the LPH-type.


\subsection{The formal Taylor expansion}

Any smooth solution to the flow \eqref{eq:EPSS} must satisfy that, at any sonic point, $y_*$, the values $\rho_0=\rho(y_*)$, $\om_0=\om(y_*)$ satisfy the constraint
\beq\label{eq:sonicconstraint}
h(\rho_0,\om_0)=2\om_0^2+\big(\ga-1-\frac{4\pi\rho_0}{4-3\ga}\big)\om_0+(\ga-1)(2-\ga)=0.
\eeq
For notational reasons, we define
\beq\label{def:f1}
f_1(\om)=\frac{4-3\ga}{4\pi\om}\big(2\om^2+(\ga-1)\om+(\ga-1)(2-\ga)\big),
\eeq
so that $h(\rho,\om)=0$ corresponds to $\rho=f_1(\om)$. The structure of the level set $h(\rho,\om)=0$, equivalently $\rho=f_1(\om)$, will play an important role, both in solving for the Taylor coefficients at the sonic point (see Lemma \ref{lemma:rho0om0} below), but also in demonstrating certain crucial invariances along the flow in Section \ref{sec:invariants}.
\begin{lemma}\label{lemma:f1structure}
Let $\ga\in(1,\frac43)$ and consider the function $f_1(\om)$ on the domain $\om\in(0,2-\ga)$. On this domain,  $f_1$ is uniformly convex with a global minimum at 
\beq
\om_*=\sqrt{\frac{(\ga-1)(2-\ga)}{2}}.
\eeq
For $\ga\in(1,\frac{10}{9})$, the inequality $\om_*<\frac{4-3\ga}{3}$ holds while the inequality is reversed if $\ga\in(\frac{10}{9},\frac43)$ and equality holds at $\ga=\frac{10}{9}$.

In particular, $f_1'(\om_*)=0$ and, if $\ga\in(1,\frac{10}{9}]$, we have $f_1'(\om)\geq0$ for all $\om\in[\frac{4-3\ga}{3},2-\ga]$ (with strict inequality if at least one of $\ga<\frac{10}{9}$ or $\om>\frac{4-3\ga}{3}$ holds).
If $\ga\in(\frac{10}{9},\frac43)$, then for $\om\in[\frac{4-3\ga}{3},\om_*)$, $f_1'(\om)<0$ and for $\om\in(\om_*,2-\ga]$, $f_1'(\om)>0$.
\end{lemma}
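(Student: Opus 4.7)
The plan is to observe that $f_1$ has a very simple structure once the $1/\om$-term is factored out. Writing
\begin{equation*}
f_1(\om)=\frac{4-3\ga}{4\pi}\left(2\om+(\ga-1)+\frac{(\ga-1)(2-\ga)}{\om}\right),
\end{equation*}
I can read off $f_1'(\om)=\frac{4-3\ga}{4\pi}\bigl(2-(\ga-1)(2-\ga)/\om^2\bigr)$ and $f_1''(\om)=\frac{(4-3\ga)(\ga-1)(2-\ga)}{2\pi\om^3}$. Each of the factors $4-3\ga$, $\ga-1$, $2-\ga$ is strictly positive throughout $\ga\in(1,\tfrac43)$, so $f_1''$ is strictly positive on $(0,2-\ga)$ and, being monotone in $\om$, is bounded below by $f_1''(2-\ga)>0$; this gives the claimed uniform convexity. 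Setting $f_1'(\om)=0$ yields the unique positive root $\om_*=\sqrt{(\ga-1)(2-\ga)/2}$, which by convexity is the unique global minimum. The inequality $\om_*<2-\ga$, i.e.\ $(\ga-1)(2-\ga)/2<(2-\ga)^2$, is equivalent to $\ga<\tfrac 53$ and is automatic, so $\om_*$ lies in the stated domain.

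For the comparison of $\om_*$ with $\tfrac{4-3\ga}{3}$, the plan is to square both quantities and analyse the sign of
\begin{equation*}
P(\ga):=9(\ga-1)(2-\ga)-2(4-3\ga)^2,
\end{equation*}
which is $18$ times $\om_*^2-\bigl(\tfrac{4-3\ga}{3}\bigr)^2$. Expanding gives $P(\ga)=-27\ga^2+75\ga-50$, a quadratic whose two real roots are exactly $\tfrac{10}{9}$ and $\tfrac{5}{3}$; hence $P(\ga)=-27(\ga-\tfrac{10}{9})(\ga-\tfrac 53)$. Since $\ga-\tfrac 53<0$ throughout $(1,\tfrac 43)$, the sign of $P(\ga)$ coincides with the sign of $\ga-\tfrac{10}{9}$: negative on $(1,\tfrac{10}{9})$, zero at $\ga=\tfrac{10}{9}$, and positive on $(\tfrac{10}{9},\tfrac 43)$. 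Translating back, $\om_*<\tfrac{4-3\ga}{3}$, $\om_*=\tfrac{4-3\ga}{3}$, or $\om_*>\tfrac{4-3\ga}{3}$ in the three respective subcases, as claimed.

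The sign statements for $f_1'$ then follow at once from strict convexity: $f_1'<0$ on $(0,\om_*)$, $f_1'(\om_*)=0$, $f_1'>0$ on $(\om_*,\infty)$. When $\ga\in(1,\tfrac{10}{9}]$ the interval $[\tfrac{4-3\ga}{3},2-\ga]$ lies weakly to the right of $\om_*$, giving $f_1'\ge 0$ there with equality only at the single point $\om=\om_*=\tfrac{4-3\ga}{3}$, which occurs precisely when $\ga=\tfrac{10}{9}$ and $\om=\tfrac{4-3\ga}{3}$; in every other case the inequality is strict. When $\ga\in(\tfrac{10}{9},\tfrac 43)$ one has $\tfrac{4-3\ga}{3}<\om_*<2-\ga$, so $f_1'$ changes sign at $\om_*$ exactly as stated. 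I expect no genuine obstacle; the only mildly subtle bookkeeping is identifying $\ga=\tfrac{10}{9}$ as the transition, which is transparent once $P(\ga)$ is factorised.
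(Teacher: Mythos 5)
Your proof is correct, and since the paper omits the proof, describing it only as "a simple, direct calculation," your explicit direct calculation is exactly the intended argument: rewrite $f_1$ to expose the $\om + \text{const} + \text{const}/\om$ structure, differentiate twice, locate $\om_*$, and compare $\om_*^2$ with $\bigl(\tfrac{4-3\ga}{3}\bigr)^2$ via the factorisation $P(\ga)=-27(\ga-\tfrac{10}{9})(\ga-\tfrac{5}{3})$.
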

The proof is by a simple, direct calculation, and so we omit it.
\begin{figure}[h]
\centering
\begin{minipage}{0.9\textwidth}
\centering
  \includegraphics[width=.4\linewidth]{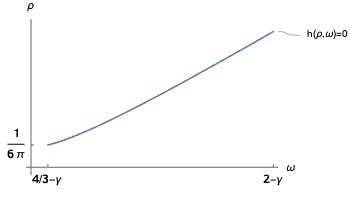}\hspace{.2\linewidth}\includegraphics[width=.4\linewidth]{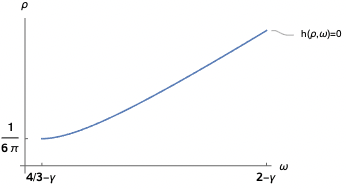}\\
  \includegraphics[width=.4\linewidth]{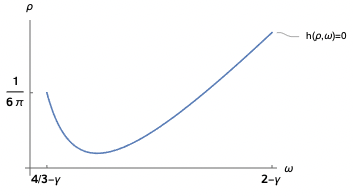}
\caption{{\footnotesize$\{h(\rho,\om)=0\}$ level sets for $\ga=1.08$, $\frac{10}{9}$, and $1.2$}}
\label{fig:hlevelset}
\end{minipage}
\end{figure}

Figure \ref{fig:hlevelset} plots the level set $h(\rho,\om)=0$ in the $(\rho,\om)$ plane for the cases $\ga=1.08,\frac{10}{9},1.2$ and $\om\in\big[\frac{4-3\ga}{3},2-\ga\big]$. The minimum for $\ga\geq\frac{10}{9}$ occurs at $\om=\sqrt{\frac{(\ga-1)(2-\ga)}{2}}\geq\frac{4-3\ga}{3}$ as stated in Lemma \ref{lemma:f1structure}.
\begin{lemma}\label{lemma:rho0om0}
For any $y_*\in[y_f,y_F]$, where $y_f$, $y_F$ are defined as in \eqref{def:yfyF}, there exists a unique pair $(\rho_0(y_*),\om_0(y_*))$ with $\rho_0(y_*)\geq\underline{\rho}>0$, where $\bar\rho$ depends only on $\ga$, satisfying 
\beq
G(y_*,\rho_0,\om_0)=0,\quad h(\rho_0,\om_0)=0.
\eeq
Moreover, the mapping $y_*\mapsto\om_0(y_*)$ is a strictly decreasing function for $y_*\in[y_f,y_F]$ with $$\om_0(y_f)=2-\ga,\quad \om_0(y_F)=\frac{4-3\ga}{3}.$$
\end{lemma}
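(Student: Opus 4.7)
The plan is to reduce the two-equation constraint $\{G(y_*;\rho_0,\om_0) = 0,\; h(\rho_0,\om_0) = 0\}$ to a single scalar equation in $\om_0$, and then invert by monotonicity. Assuming $\om_0 > 0$ (which is forced by the desired boundary values $(4-3\ga)/3 \leq \om_0 \leq 2-\ga$; more generally, whenever $\rho_0 > 0$ the quadratic $h(\rho_0,\cdot)$ has positive product and sum of roots, so any real roots in $\om$ are positive), the constraint $h(\rho_0, \om_0) = 0$ is equivalent to $\rho_0 = f_1(\om_0)$ by definition \eqref{def:f1}. Substituting into $G = 0$ and solving for $y_*$ yields
\[
y_* \;=\; \Psi(\om_0) \;:=\; \frac{\sqrt{\ga}\, f_1(\om_0)^{(\ga-1)/2}}{\om_0}.
\]
All the claims of the lemma then reduce to showing that $\Psi$ is a strictly decreasing continuous bijection from $[(4-3\ga)/3,\, 2-\ga]$ onto $[y_f, y_F]$.

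I would first verify the endpoints. Using \eqref{def:f1}, patient but elementary algebra shows $2(4-3\ga)^2/9 + (\ga-1)(4-3\ga)/3 + (\ga-1)(2-\ga) = 2/9$, so $f_1((4-3\ga)/3) = 1/(6\pi)$ (the Friedman density) and hence $\Psi((4-3\ga)/3) = y_F$. At the other endpoint the bracket factors as $2(2-\ga)[(2-\ga) + (\ga-1)] = 2(2-\ga)$, giving $f_1(2-\ga) = (4-3\ga)/(2\pi)$ (the far-field density at its sonic point) and $\Psi(2-\ga) = y_f$; both values are consistent with \eqref{def:yfyF}.

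The analytic heart of the proof is the strict monotonicity of $\Psi$, which I would establish by logarithmic differentiation:
\[
\frac{\Psi'(\om)}{\Psi(\om)} \;=\; \frac{\ga-1}{2}\,\frac{f_1'(\om)}{f_1(\om)} - \frac{1}{\om},
\]
so $\Psi'(\om) < 0$ iff $(\ga-1)\om f_1'(\om) < 2 f_1(\om)$. Inserting the explicit expressions
\[
\frac{4\pi}{4-3\ga}\,f_1(\om) = 2\om + (\ga-1) + \frac{(\ga-1)(2-\ga)}{\om}, \qquad \frac{4\pi}{4-3\ga}\,\om f_1'(\om) = 2\om - \frac{(\ga-1)(2-\ga)}{\om},
\]
the inequality reduces, after cancellation, to
\[
-2(3-\ga)\om - 2(\ga-1) \;<\; \frac{(\ga-1)(2-\ga)(\ga+1)}{\om},
\]
which is automatic for $\ga \in (1, 4/3)$ and $\om > 0$ since the left-hand side is strictly negative while the right-hand side is strictly positive. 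Note that this argument applies to every $\om > 0$, so $\Psi$ is in fact strictly decreasing on all of $(0,\infty)$, which incidentally yields global uniqueness of the positive pair $(\rho_0, \om_0)$ solving the system.

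Having the monotonicity, $\Psi: [(4-3\ga)/3,\, 2-\ga] \to [y_f, y_F]$ is a strictly decreasing continuous bijection, so I may set $\om_0(y_*) := \Psi^{-1}(y_*)$ and $\rho_0(y_*) := f_1(\om_0(y_*))$, whence the claimed monotonicity and endpoint identities for $y_* \mapsto \om_0(y_*)$ follow immediately. The uniform lower bound is obtained by setting $\underline\rho := \min_{\om \in [(4-3\ga)/3,\, 2-\ga]} f_1(\om) > 0$, which is strictly positive (depending only on $\ga$) since $f_1$ is continuous and strictly positive on this compact interval (the bracket $2\om^2 + (\ga-1)\om + (\ga-1)(2-\ga)$ has nonnegative coefficients for $\ga > 1$). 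I do not anticipate any serious obstacle; the only mildly technical parts are the two endpoint algebra computations, which simplify cleanly.
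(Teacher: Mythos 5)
Your proof is correct and takes a genuinely cleaner route than the paper's. The paper works with the two graphs $\rho=f_1(\om)$ and $\rho=f_2(\om;y_*)$: it locates an intersection by the intermediate value theorem, then rules out multiple intersections by showing $\partial_\om(f_2-f_1)>0$ wherever $f_2\geq f_1$, and finally extracts the monotonicity of $y_*\mapsto\om_0(y_*)$ via the implicit function theorem. You instead solve $G=0$ for $y_*$ and obtain the explicit map $y_*=\Psi(\om_0)=\sqrt{\ga}\,f_1(\om_0)^{(\ga-1)/2}/\om_0$, then show $\Psi'<0$ on $(0,\infty)$ by a single logarithmic-derivative computation. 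The underlying constraint is the same (the paper's $f_1(\om)=f_2(\om;y_*)$ is algebraically equivalent to $y_*=\Psi(\om)$), but your parameterization by $\om_0$ makes existence, uniqueness, and the monotonicity of $y_*\mapsto\om_0(y_*)$ all fall out of the one inequality $(\ga-1)\om f_1'(\om)<2f_1(\om)$, which you correctly reduce to a sign-obvious statement. I checked the endpoint algebra and the reduction; both are right, and the uniform lower bound via $\min f_1$ on the compact interval is fine.

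One small imprecision in your parenthetical aside: the assertion that $\rho_0>0$ by itself gives a positive sum of roots of $h(\rho_0,\cdot)$ is not literally true. The sum equals $\tfrac12\big(\tfrac{4\pi\rho_0}{4-3\ga}-(\ga-1)\big)$, which is negative whenever $\rho_0<\tfrac{(\ga-1)(4-3\ga)}{4\pi}$. The conclusion that any \emph{real} roots are positive is nevertheless correct: real roots require $\big|\ga-1-\tfrac{4\pi\rho_0}{4-3\ga}\big|\geq\sqrt{8(\ga-1)(2-\ga)}$, and since $(\ga-1)^2<8(\ga-1)(2-\ga)$ for all $\ga<\tfrac{17}{9}$ (hence for all $\ga<\tfrac43$), the middle coefficient is forced to be negative, giving a positive sum. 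Since your main argument confines $\om_0$ to $[\tfrac{4-3\ga}{3},2-\ga]$ where positivity is immediate, this does not affect the lemma, but the aside as stated skips the discriminant step.
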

\begin{proof}
We begin by recalling the definition of the function $f_1(\om)$ from \eqref{def:f1} and define also a function $f_2(\om;y_*)$ so that
\beqa\label{def:f1f2}
f_1(\om)=&\,\frac{4-3\ga}{4\pi\om}\big(2\om^2+(\ga-1)\om+(\ga-1)(2-\ga)\big),\\
f_2(\om;y_*)=&\,\Big(\frac{y_*^2\om^2}{\ga}\Big)^{\frac{1}{\ga-1}}.
\eeqa
As discussed above, the constraint $h(\rho_0,\om_0)=0$ is equivalent to $\rho_0=f_1(\om_0)$, while we see easily that $G(y_*,\rho_0,\om_0)=0$ if and only if $\rho_0=f_2(\om_0;y_*)$. So we seek $\om_0(y_*)$ such that $f_1(\om_0(y_*))=f_2(\om_0(y_*);y_*)$. This value is then defined to be $\rho_0(y_*)$. We easily check that 
$$f_1(\frac{4-3\ga}{3})=\frac{1}{6\pi},\quad f_1(2-\ga)=\frac{4-3\ga}{2\pi}.$$
Moreover, as $y_f$, $y_F$ are the sonic points corresponding to the far-field and Friedman solutions, respectively, we also know that
$$f_2(\frac{4-3\ga}{3};y_F)=f_1(\frac{4-3\ga}{3})=\frac{1}{6\pi},\quad f_2(2-\ga;y_f)=f_1(2-\ga)=\frac{4-3\ga}{2\pi}.$$
Noting then that 
\beq\label{ineq:f2derivs}
\d_\om f_2(\om;y_*),\d_{y_*}f_2(\om;y_*)>0,
\eeq we see that for any $y_*\in(y_f,y_F)$, we have
\beqas
f_2(\frac{4-3\ga}{3};y_*)<f_2(\frac{4-3\ga}{3};y_F)=\frac{1}{6\pi},\quad f_2(2-\ga;y_*)>f_2(2-\ga;y_f)=\frac{4-3\ga}{2\pi},
\eeqas
and so 
\beqas
(f_2(\cdot;y_*)-f_1)(\frac{4-3\ga}{3})<0<(f_2(\cdot;y_*)-f_1)(2-\ga).
\eeqas
Thus, by the intermediate value theorem, we see that $\om_0(y_*)$ exists as required, and hence so does $\rho_0(y_*)$. The uniqueness follows from the following observations:
\beqas
f_1'(\om)=&\,-\frac{f_1(\om)}{\om}+\frac{4-3\ga}{4\pi\om}\big(4\om+(\ga-1)\big),\\
\d_\om f_2(\om;y_*)=&\,\frac{2\om y_*^2}{(\ga-1)\ga}\Big(\frac{y_*^2\om^2}{\ga}\Big)^{\frac{1}{\ga-1}-1}=\frac{2f_2(\om;y_*)}{(\ga-1)\om}.
\eeqas
Thus at any point $\om>0$ such that $f_2(\om;y_*)\geq f_1(\om)$, we have
\beqas
\d_\om\big(f_2(\om;y_*)-f_1(\om)\big)=&\,\frac{2f_2(\om;y_*)}{(\ga-1)\om}+\frac{f_1(\om)}{\om}-\frac{4-3\ga}{4\pi\om}\big(4\om+(\ga-1)\big)\\
\geq &\,\frac{1}{\om}\Big(\frac{\ga+1}{\ga-1}f_1(\om)-\frac{4-3\ga}{4\pi}\big(4\om+(\ga-1)\big)\Big)\\
\geq &\,\frac{4-3\ga}{4\pi\om}\Big(\frac{\ga+1}{\ga-1}\big(2\om+(\ga-1)+\frac{(\ga-1)(2-\ga)}{\om}\big)-\big(4\om+(\ga-1)\big)\Big)\\
\geq&\,\frac{4-3\ga}{4\pi\om}\Big(2\om\frac{3-\ga}{\ga-1}+2+\frac{(\ga+1)(2-\ga)}{\om}\Big)>0
\eeqas
and so the uniqueness follows easily.
The monotonicity properties of $y_*\mapsto\om_0(y_*)$ then follow directly from \eqref{ineq:f2derivs} and $f_{2,\om}(\om_0(y_*),y_*)-f_{1,\om}(\om_0(y_*))>0$ as 
\beqas
0=&\,\frac{\dif}{\dif{y_*}}(f_2(\om_0(y_*);y_*)-f_1(\om_0(y_*))\\
=&\,\big(f_{2,\om}(\om_0(y_*),y_*)-f_{1,\om}(\om_0(y_*))\big)\om_0'(y_*)+f_{2,y_*}(\om_0(y_*),y_*).
\eeqas
To find the uniform lower bound $\rho_0\geq\underline{\rho}>0$, we note that $\rho_0=f_1(\om_0)$. As $\om_0\in[\frac{4-3\ga}{3},2-\ga]$, we easily obtain $f_1(\om_0)\geq \frac{(4-3\ga)(\ga-1)}{4\pi}>0$ as required.
\end{proof}

We seek a local solution around a sonic point $y_*\in[y_f,y_F]$ via a Taylor expansion. To that end, we now derive the necessary relations for the coefficients of the expansion. Suppose we have an analytic solution of system \eqref{eq:EPSS}. Then, after rearranging, we have
\begin{align}
\big(\ga\rho^{\ga-1}-y^2\om^2\big)\rho'=&\,{(\ga-1)y\rho(\om+2-\ga)}-{2y\rho\om}\big(\frac{2\pi\rho}{4-3\ga}-\om\big),\label{eq:EPSSSonic1}\\
\big(\ga\rho^{\ga-1}-y^2\om^2\big)\om'=&\,\frac{4-3\ga-3\om}{y}\big(\ga\rho^{\ga-1}-y^2\om^2\big)-{(\ga-1)y\om(\om+2-\ga)}+{2y\om^2}\big(\frac{2\pi\rho}{4-3\ga}-\om\big).\label{eq:EPSSSonic2}
\end{align}
We write the formal power series
\beq\label{E:FORMAL}
\rho(y)=\sum_{N=0}^\infty\rho_N(y-y_*)^N,\quad \om(y)=\sum_{N=0}^\infty\om_N(y-y_*)^N.
\eeq
By Lemma \ref{lemma:rho0om0}, we see that a choice of $y_*\in[y_f,y_F]$ defines a unique pair of values $(\rho_0,\om_0)$ for the Taylor series. We define the obvious notation
\beqa
(\om^2)_N=\sum_{k+j=N}\om_k\om_j,&\quad (\rho\om)_N=\sum_{k+j=N}\rho_k\om_j,\\
(\rho^2\om)_N=\sum_{k+j+l=N}\rho_k\rho_j\om_l,&\quad(\rho\om^2)_N=\sum_{k+j+l=N}\rho_k\om_j\om_l.
\eeqa
We recall the Fa\`a di Bruno formula for the $N$-th derivative of a composition,
\beq
\frac{\dif^N}{\dif y^N}\big(f(g(y))\big)=\sum_{(\la_1,\ldots,\la_N)\in M_N}\frac{N!}{\la_1!\cdots \la_N!}f^{(\la_1+\cdots+\la_N)}(g(y))\prod_{j=1}^N\Big(\frac{g^{(j)}(y)}{j!}\Big)^{\la_j},
\eeq
where $$M_N=\{(\la_1,\ldots,\la_N)\in(\Z_{\geq0})^N\,|\,\sum_{j=1}^Nj\la_j=N\}.$$
Taking $f(x)=x^{\ga-1}$, $g(y)=\rho(y)$ in this formula, we therefore obtain
\beqa
&\frac{\frac{\dif^N}{\dif y^N}\big(\rho^{\ga-1}(y)\big)\big|_{y=y_*}}{N!}\\
&=\sum_{(\la_1,\ldots,\la_N)\in M_N}\frac{(\ga-1)\cdots(\ga-(\la_1+\cdots+\la_N))\rho_0^{\ga-(\la_1+\cdots+\la_N)-1}}{\la_1!\cdots \la_N!}\prod_{j=1}^N\rho_j^{\la_j}=:P_N
\eeqa
and thus we have the power series
\beqa\label{eq:seriesPN}
\rho^{\ga-1}=&\,\sum_{N=0}^\infty\frac{\frac{\dif^N}{\dif y^N}\big(\rho^{\ga-1}(y)\big)\big|_{y=y_*}}{N!}(y-y_*)^N=\sum_{N=0}^\infty P_N(y-y_*)^N.
\eeqa
Throughout this section, for $N<0$, we set coefficients $\rho_{N},\om_N$ etc to be zero.
\begin{lemma}
For each $N\geq 1$, the power series coefficients satisfy the relations
\begin{align}
\sum_{k+j=N}&(k+1)\rho_{k+1}\big(\ga P_j-y_*^2(\om^2)_j-2y_*(\om^2)_{j-1}-(\om^2)_{j-2}\big)\nonumber\\
=&(\ga-1)(2-\ga)\big(y_*\rho_N+\rho_{N-1}\big)+(\ga-1)\big(y_*(\rho\om)_N+(\rho\om)_{N-1}\big)\label{eq:coefficients1}\\
&-\frac{4\pi}{4-3\ga}\big(y_*(\rho^2\om)_N+(\rho^2\om)_{N-1}\big)+2\big(y_*(\rho\om^2)_N+(\rho\om^2)_{N-1}\big),\nonumber\\
\sum_{k+j=N}&(k+1)\om_{k+1}\big(\ga P_j-y_*^2(\om^2)_j-2y_*(\om^2)_{j-1}-(\om^2)_{j-2}\big)\nonumber\\
=&\frac{4-3\ga}{y_*}\sum_{k+j=N}\frac{(-1)^k}{y_*^k}\big(\ga P_j-y_*^2(\om^2)_j-2y_*(\om^2)_{j-1}-(\om^2)_{j-2}\big)\label{eq:coefficients2}\\
&-\frac{3}{y_*}\sum_{k+j+l=N}\om_l\frac{(-1)^k}{y_*^k}\big(\ga P_j-y_*^2(\om^2)_j-2y_*(\om^2)_{j-1}-(\om^2)_{j-2}\big)\nonumber\\
&-(\ga-1)(2-\ga)\big(y_*\om_N+\om_{N-1}\big)-(\ga-1)\big(y_*(\om^2)_N+(\om^2)_{N-1}\big)\nonumber\\
&+\frac{4\pi}{4-3\ga}\big(y_*(\rho\om^2)_N+(\rho\om^2)_{N-1}\big)-2\big(y_*(\om^3)_N+(\om^3)_{N-1}\big).\nonumber
\end{align}
\end{lemma}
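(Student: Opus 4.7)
The plan is to substitute the formal power series \eqref{E:FORMAL} directly into the rearranged equations \eqref{eq:EPSSSonic1}--\eqref{eq:EPSSSonic2}, expand every factor as a power series centred at $y_\ast$, and read off the coefficient of $(y-y_\ast)^N$ via Cauchy products. The only nontrivial input is the Fa\`a di Bruno expansion \eqref{eq:seriesPN} for $\rho^{\ga-1}$; the rest is bookkeeping.

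First I would treat the common factor $\ga\rho^{\ga-1}-y^2\om^2$. By \eqref{eq:seriesPN}, $\ga\rho^{\ga-1}$ contributes the coefficient $\ga P_j$ at order $j$. Writing $y^2=y_\ast^2+2y_\ast(y-y_\ast)+(y-y_\ast)^2$ and convolving with the series $\om^2=\sum_j (\om^2)_j(y-y_\ast)^j$, the order-$j$ coefficient of $y^2\om^2$ is
\begin{equation*}
y_\ast^2(\om^2)_j+2y_\ast(\om^2)_{j-1}+(\om^2)_{j-2},
\end{equation*}
with the convention that negatively indexed quantities vanish. Multiplying by $\rho'=\sum_{k\geq 0}(k+1)\rho_{k+1}(y-y_\ast)^k$ (respectively $\om'$) and taking the order-$N$ term reproduces exactly the left-hand sides of \eqref{eq:coefficients1} and \eqref{eq:coefficients2}.

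Second, for the right-hand side of \eqref{eq:EPSSSonic1}, each product of the form $y\rho A$ with $A\in\{1,\om,\rho\om,\om^2\}$ has order-$N$ coefficient $y_\ast (\rho A)_N+(\rho A)_{N-1}$ because $y=y_\ast+(y-y_\ast)$. Grouping the four pieces $(\ga-1)(2-\ga)y\rho$, $(\ga-1)y\rho\om$, $-\tfrac{4\pi}{4-3\ga}y\rho^2\om$ and $2y\rho\om^2$ gives precisely the right-hand side of \eqref{eq:coefficients1}. For \eqref{eq:coefficients2} the last three families of terms are handled identically with $\rho$ replaced by $\om$. The genuinely new ingredient is the factor $\tfrac{4-3\ga-3\om}{y}(\ga\rho^{\ga-1}-y^2\om^2)$, which I would treat by expanding
\begin{equation*}
\frac{1}{y}=\frac{1}{y_\ast}\sum_{k=0}^\infty \frac{(-1)^k}{y_\ast^k}(y-y_\ast)^k,
\end{equation*}
valid in a neighbourhood of $y_\ast$; this is precisely where the alternating factor $(-1)^k/y_\ast^{k}$ in \eqref{eq:coefficients2} comes from. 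Splitting $4-3\ga-3\om$ as a constant minus $3\om$ and convolving with the already-computed series for $\ga\rho^{\ga-1}-y^2\om^2$ gives the first two lines of the right-hand side of \eqref{eq:coefficients2}.

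Finally, one collects like terms, uses the vanishing convention for negatively indexed coefficients, and reads off the identities for each $N\geq 1$. I do not expect a conceptual obstacle here: every step is a finite Cauchy product, and the Fa\`a di Bruno identity \eqref{eq:seriesPN} already isolates the only nontrivial combinatorics in the single compact object $P_N$. The main practical difficulty is purely organisational --- keeping track of the shift $y_\ast(\cdot)_N+(\cdot)_{N-1}$ arising from multiplication by $y$, the three-term shift from multiplication by $y^2$, and the geometric series from division by $y$, and then matching them to the six (respectively eight) pieces on the right-hand side of the two identities. Once these shifts are recorded systematically, the stated relations follow by identifying coefficients term by term.
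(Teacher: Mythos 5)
Your proposal is correct and follows essentially the same route as the paper: expand $y$, $y^2$, and $1/y$ as power series (polynomial, polynomial, geometric respectively) in $(y-y_\ast)$, form Cauchy products with the series for $\rho$, $\om$, $\rho^{\ga-1}$, and match coefficients of $(y-y_\ast)^N$ in \eqref{eq:EPSSSonic1}--\eqref{eq:EPSSSonic2}. The paper records the $y$- and $y^2$-shift identities up front and then carries out the same bookkeeping you describe.
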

\begin{proof}
We begin the proof by noting the identities, for a general power series,
\beqas
y\sum_{N=0}^\infty b_N(y-y_*)^N=&\,\sum_{N=0}^\infty\big(y_*b_N+b_{N-1}\big)(y-y_*)^N,\\
y^2\sum_{N=0}^\infty b_N(y-y_*)^N=&\,\sum_{N=0}^\infty\big(y_*^2b_N+2y_*b_{N-1}+b_{N-2}\big)(y-y_*)^N,
\eeqas
where we define $b_{N}=0$ for any $N<0$.

\textit{Step 1: Derive \eqref{eq:coefficients1}.}\\
We begin by substituting the power series into \eqref{eq:EPSSSonic1}. The left hand side of this equation then becomes
\beqa\label{eq:powerseries1}
&\Big(\sum_{N=0}^\infty\big(\ga P_N-y_*^2(\om^2)_N-2y_*(\om^2)_{N-1}-(\om^2)_{N-2}\big)(y-y_*)^N\Big)\Big(\sum_{N=0}^\infty(N+1)\rho_{N+1}(y-y_*)^N\Big)\\
&=\sum_{N=0}^\infty\Big(\sum_{k+j=N}(k+1)\rho_{k+1}\big(\ga P_j-y_*^2(\om^2)_j-2y_*(\om^2)_{j-1}-(\om^2)_{j-2}\big)\Big)(y-y_*)^N.
\eeqa
The right hand side of \eqref{eq:EPSSSonic1} becomes
\beqa\label{eq:powerseries2}
&(\ga-1)\sum_{N=0}^\infty\Big((2-\ga)\big(y_*\rho_N+\rho_{N-1}\big)+y_*(\rho\om)_N+(\rho\om)_{N-1}\Big)(y-y_*)^N\\
&-2\sum_{N=0}^\infty\Big(\frac{2\pi}{4-3\ga}\big(y_*(\rho^2\om)_N+(\rho^2\om)_{N-1}\big)-\big(y_*(\rho\om^2)_N+(\rho\om^2)_{N-1}\big)\Big)(y-y_*)^N.
\eeqa
Equating the $N$-th order terms of \eqref{eq:powerseries1} and \eqref{eq:powerseries2}, we have the claimed relation \eqref{eq:coefficients1}, that is, for all $N\in\N\cup\{0\}$,
\beqa
\sum_{k+j=N}&(k+1)\rho_{k+1}\big(\ga P_j-y_*^2(\om^2)_j-2y_*(\om^2)_{j-1}-(\om^2)_{j-2}\big)\\
=&(\ga-1)(2-\ga)\big(y_*\rho_N+\rho_{N-1}\big)+(\ga-1)\big(y_*(\rho\om)_N+(\rho\om)_{N-1}\big)\\
&-\frac{4\pi}{4-3\ga}\big(y_*(\rho^2\om)_N+(\rho^2\om)_{N-1}\big)+2\big(y_*(\rho\om^2)_N+(\rho\om^2)_{N-1}\big).
\eeqa
\textit{Step 2: Derive \eqref{eq:coefficients2}.}\\
 To prove  from \eqref{eq:EPSSSonic2}, we begin by expanding the term $\frac{4-3\ga-3\om}{y}\big(\ga\rho^{\ga-1}-y^2\om^2\big)$ by noting first that
$$\frac{1}{y}=\frac{1}{y_*}\sum_{N=0}^\infty\frac{(-1)^N}{y_*^N}(y-y_*)^N.$$
Then we find
\beqas
\frac{4-3\ga-3\om}{y}&\big(\ga\rho^{\ga-1}-y^2\om^2\big)\\
=&\Big(\frac{1}{y_*}\sum_{N=0}^\infty\frac{(-1)^N}{y_*^N}(y-y_*)^N\Big)\Big(4-3\ga-3\sum_{N=0}^\infty\om_N(y-y_*)^N\Big)\\
&\times\Big(\sum_{N=0}^\infty\big(\ga P_N-y_*^2(\om^2)_N-2y_*(\om^2)_{N-1}-(\om^2)_{N-2}\big)(y-y_*)^N\Big)\\
=&\frac{4-3\ga}{y_*}\sum_{N=0}^\infty\Big(\sum_{k+j=N}\frac{(-1)^k}{y_*^k}\big(\ga P_j-y_*^2(\om^2)_j-2y_*(\om^2)_{j-1}-(\om^2)_{j-2}\big)\Big)(y-y_*)^N\\
&-\frac{3}{y_*}\sum_{N=0}^\infty\Big(\sum_{k+j+l=N}\om_l\frac{(-1)^k}{y_*^k}\big(\ga P_j-y_*^2(\om^2)_j-2y_*(\om^2)_{j-1}-(\om^2)_{j-2}\big)\Big)(y-y_*)^N.
\eeqas
Thus, expanding \eqref{eq:EPSSSonic2} and equating terms of the same order, we find 
\beqa
\sum_{k+j=N}&(k+1)\om_{k+1}\big(\ga P_j-y_*^2(\om^2)_j-2y_*(\om^2)_{j-1}-(\om^2)_{j-2}\big)\\
=&\frac{4-3\ga}{y_*}\sum_{k+j=N}\frac{(-1)^k}{y_*^k}\big(\ga P_j-y_*^2(\om^2)_j-2y_*(\om^2)_{j-1}-(\om^2)_{j-2}\big)\\
&-\frac{3}{y_*}\sum_{k+j+l=N}\om_l\frac{(-1)^k}{y_*^k}\big(\ga P_j-y_*^2(\om^2)_j-2y_*(\om^2)_{j-1}-(\om^2)_{j-2}\big)\\
&-(\ga-1)(2-\ga)\big(y_*\om_N+\om_{N-1}\big)-(\ga-1)\big(y_*(\om^2)_N+(\om^2)_{N-1}\big)\\
&+\frac{4\pi}{4-3\ga}\big(y_*(\rho\om^2)_N+(\rho\om^2)_{N-1}\big)-2\big(y_*(\om^3)_N+(\om^3)_{N-1}\big).
\eeqa
This is \eqref{eq:coefficients2}.
\end{proof}

Before studying the solvability of this system for the higher order coefficients, we first collect a pair of identities satisfied by the first order coefficients, $(\rho_1,\om_1$).


\begin{lemma}[First order Taylor coefficients]\label{lemma:rho1om1}
Let $\ga\in(1,\frac43)$ and consider the formal Taylor expansion \eqref{E:FORMAL}. Let
\be\label{E:RWDEF}
R:=\frac{y_*\rho_1}{\rho_0}\text{ and } \ W:=y_*\om_1.
\ee 
Then the pair $(R,W)$ satisfies the following system of algebraic equations:
\begin{align}
&(\ga-1)\om_0^2 R^2-2\om_0 W R+(\ga-1)(\om_0+2-\ga)R-2\om_0 W+(\ga-1)(2-\ga)\frac{W}{\om_0}=0,\label{eq:rho1quad}\\
&2\om_0 W^2-(\ga-1)\om_0^2RW+W\big(-2(4-3\ga-3\om_0)\om_0+(\ga-1)(2-\ga)\big)\nonumber\\
&+\big((5-3\ga)\om_0^2+(5-3\ga)(\ga-1)\om_0+(\ga-1)(2-\ga)\big)\om_0R-2(4-3\ga-3\om_0)\om_0^2=0,\label{eq:om1quad}
\end{align}
with the additional constraint 
\beq\label{eq:RWlinear}
R\om_0+W=4-3\ga-3\om_0.
\eeq
\end{lemma}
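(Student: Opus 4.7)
The plan is to derive the three identities by evaluating the self-similar system (and, for the two quadratics, its first derivative) at $y=y_*$, exploiting the sonic conditions $G(y_*)=0$ and $h(\rho_0,\om_0)=0$ supplied by Lemma~\ref{lemma:rho0om0}.

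\emph{Linear constraint \eqref{eq:RWlinear}.} Forming the combination $\om\,\rho'/\rho + \om'$ in \eqref{eq:rhoom} causes the singular $1/G$ factors to cancel exactly, recovering the purely polynomial self-similar continuity equation $y\om\rho' + y\rho\om' = (4-3\ga-3\om)\rho$ recorded in the introduction. Dividing by $\rho_0$, evaluating at $y=y_*$, and substituting $R=y_*\rho_1/\rho_0$ and $W=y_*\om_1$ from \eqref{E:RWDEF} yields $\om_0 R + W = 4-3\ga-3\om_0$ immediately.

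\emph{Quadratic relation \eqref{eq:rho1quad}.} Rewrite \eqref{eq:EPSSSonic1} in the form $G\rho'=y\rho h$. Both sides vanish at $y_*$, so differentiating once and evaluating (equivalently, the $N=1$ case of the recursion \eqref{eq:coefficients1}) gives the compact identity $\rho_1\,G'(y_*) = y_*\rho_0\,h'(y_*)$. I then compute both derivatives using the sonic constraints. The identity $\ga\rho_0^{\ga-1}=y_*^2\om_0^2$ replaces $\ga(\ga-1)\rho_0^{\ga-2}$ in $G'$ by $(\ga-1)y_*^2\om_0^2/\rho_0$, producing
\[
G'(y_*) = y_*\om_0\bigl[(\ga-1)\om_0 R - 2\om_0 - 2W\bigr].
\]
The identity $h(\rho_0,\om_0)=0$ rearranges to $4\pi\rho_0/(4-3\ga) = 2\om_0 + (\ga-1) + (\ga-1)(2-\ga)/\om_0$, which on substitution into $h'=(4\om+(\ga-1)-4\pi\rho/(4-3\ga))\om' - (4\pi\om/(4-3\ga))\rho'$ yields
\[
h'(y_*) = \frac{1}{y_*}\Bigl[\bigl(2\om_0-(\ga-1)(2-\ga)/\om_0\bigr)W - \bigl(2\om_0^2+(\ga-1)\om_0+(\ga-1)(2-\ga)\bigr)R\Bigr].
\]
Substituting both into $\rho_1 G'(y_*)=y_*\rho_0 h'(y_*)$, dividing by $\rho_0$, and collecting the $R^2$, $RW$, $R$ and $W$ terms reproduces \eqref{eq:rho1quad}.

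\emph{Quadratic relation \eqref{eq:om1quad}.} Rewrite \eqref{eq:EPSSSonic2} after multiplication by $y$ as $yG\om' = (4-3\ga-3\om)G - y^2\om h$. Both sides vanish at $y_*$; differentiating and evaluating gives
\[
W\,G'(y_*) = (4-3\ga-3\om_0)\,G'(y_*) - y_*^2\om_0\,h'(y_*).
\]
Inserting the same formulas for $G'(y_*)$ and $h'(y_*)$ found above, dividing through by $y_*\om_0$, and finally multiplying by $-\om_0$ (to clear the $1/\om_0$ produced by $h'$) yields an identity in $R$, $W$, $\om_0$, $\ga$ which, after rewriting the coefficient of $\om_0 R$ as
\[
(\ga-1)(4-3\ga-3\om_0)\om_0 + 2\om_0^2 + (\ga-1)\om_0 + (\ga-1)(2-\ga) = (5-3\ga)\om_0^2+(5-3\ga)(\ga-1)\om_0+(\ga-1)(2-\ga),
\]
coincides with \eqref{eq:om1quad} term by term.

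\emph{Main obstacle.} The proof is conceptually clean but algebraically heavy; the only subtle point is recognising the factored form of the $R$-coefficient in \eqref{eq:om1quad}. No new idea is required beyond the two sonic identities, and the apparent singularity from the $1/G$ terms in \eqref{eq:rhoom} is circumvented by exploiting the simultaneous vanishing of $G$ and $h$ at $y_*$, so that $\rho_1$ and $\om_1$ are obtained through an L'H\^opital-type limit.
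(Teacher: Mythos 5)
Your proof is correct, and it reproduces exactly the three identities of the lemma. The derivation of \eqref{eq:RWlinear} is identical to the paper's (both reduce to the mass-flux relation $(\rho\om)'=\tfrac{4-3\ga-3\om}{y}\rho$ at $y_*$). For the two quadratics, the paper specializes the $N=1$ case of the general power-series recursion \eqref{eq:coefficients1}--\eqref{eq:coefficients2}, tracking the coefficient matching term by term; you instead observe that both sides of $G\rho'=y\rho h$ and $yG\om'=(4-3\ga-3\om)G-y^2\om h$ vanish at $y_*$ because $G(y_*)=h(y_*)=0$, so a single differentiation (an L'H\^opital-type argument) yields the compact intermediate identities $\rho_1\,G'(y_*)=y_*\rho_0\,h'(y_*)$ and $W\,G'(y_*)=(4-3\ga-3\om_0)G'(y_*)-y_*^2\om_0\,h'(y_*)$. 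Substituting the sonic-constraint simplifications of $G'(y_*)$ and $h'(y_*)$ then gives \eqref{eq:rho1quad} and \eqref{eq:om1quad}. This is the same mathematical content as the paper's $N=1$ step — equating first-order Taylor coefficients — but packaged through $G'$ and $h'$ rather than through the full combinatorial recursion, which makes the calculation a bit more transparent and avoids the need to have derived \eqref{eq:coefficients1}--\eqref{eq:coefficients2} first. All the algebra you report, including the factoring of the $R$-coefficient into $(5-3\ga)\om_0^2+(5-3\ga)(\ga-1)\om_0+(\ga-1)(2-\ga)$, checks out.
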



\begin{proof}
In the case $N=1$, we note that $P_1=(\ga-1)\rho_0^{\ga-2}\rho_1$, $(\om^2)_1=2\om_1\om_0$ etc to find from \eqref{eq:coefficients1}
\beqa\label{eq:rho1quad1}
&\ga(\ga-1)\rho_0^{\ga-2}\rho_1^2-2y_*^2\rho_1\om_1\om_0-2y_*\rho_1\om_0^2\\
&=(\ga-1)(2-\ga)(y_*\rho_1+\rho_0)+(\ga-1)(y_*\rho_0\om_1+y_*\rho_1\om_0+\rho_0\om_0)\\
&-\frac{4\pi}{4-3\ga}(y_*\rho_0^2\om_1+2y_*\rho_0\om_0\rho_1+\rho_0^2\om_0)+2(y_*\rho_1\om_0^2+2y_*\rho_0\om_0\om_1+\rho_0\om_0^2)\\
&=(\ga-1)y_*\rho_0\om_1-\frac{4\pi}{4-3\ga}(y_*\rho_0^2\om_1+y_*\rho_0\om_0\rho_1)+4y_*\rho_0\om_0\om_1,
\eeqa
where we have used \eqref{eq:sonicconstraint} twice.

From \eqref{eq:coefficients2} we get
\beqa\label{eq:om1quad1}
\om_1&\ga(\ga-1)\rho_0^{\ga-2}\rho_1-2y_*^2\om_1^2\om_0-2y_*\om_1\om_0^2\\
=&\,\frac{4-3\ga-3\om_0}{y_*}\big(\ga(\ga-1)\rho_0^{\ga-2}\rho_1-2y_*^2\om_1\om_0-2y_*\om_0^2\big)-(\ga-1)(2-\ga)(y_*\om_1+\om_0)\\
&-(\ga-1)(2y_*\om_0\om_1+\om_0^2)+\frac{4\pi}{4-3\ga}(y_*\om_0^2\rho_1+2y_*\rho_0\om_0\om_1+\rho_0\om_0^2)-2(3y_*\om_0^2\om_1+\om_0^3)\\
=&\,\frac{4-3\ga-3\om_0}{y_*}\big(\ga(\ga-1)\rho_0^{\ga-2}\rho_1-2y_*^2\om_1\om_0-2y_*\om_0^2\big)+\frac{4\pi}{4-3\ga}y_*\rho_1\om_0^2-2y_*\om_0^2\om_1\\
&+(\ga-1)(2-\ga)y_*\om_1,
\eeqa
where we have used \eqref{eq:sonicconstraint} again.
Rearranging \eqref{eq:rho1quad1}, we can use \eqref{eq:sonicconstraint} further to write
\beqas
0=&\,\ga(\ga-1)\rho_0^{\ga-2}\rho_1^2-2y_*^2\om_0\om_1\rho_1-2y_*\om_0^2\rho_1-(\ga-1)y_*\rho_0\om_1\\
&+\frac{4\pi}{4-3\ga}y_*\rho_0^2\om_1+\frac{4\pi}{4-3\ga}y_*\rho_0\om_0\rho_1-4y_*\rho_0\om_0\om_1\\
=&\,\ga(\ga-1)\rho_0^{\ga-2}\rho_1^2-2y_*^2\om_0\om_1\rho_1+(\ga-1)(\om_0+2-\ga)y_*\rho_1-2y_*\rho_0\om_0\om_1\\
&+(\ga-1)(2-\ga)y_*\frac{\rho_0}{\om_0}\om_1.
\eeqas
Thus, using also the sonic condition to replace $\ga\rho_0^{\ga-2}=\frac{y_*^2\om_0^2}{\rho_0}$ and dividing through by $\rho_0$, we recall the definitions of $R$, $W$ and arrive at
\beq
(\ga-1)\om_0^2 R^2-2\om_0RW+(\ga-1)(\om_0+2-\ga)R-2\om_0W+(\ga-1)(2-\ga)\frac{W}{\om_0}=0,
\eeq
that is, we have \eqref{eq:rho1quad}.

Working now from \eqref{eq:om1quad1}, we rearrange to find
\beqas
0=&\,2\om_0W^2-(\ga-1)\om_0^2RW+2\om_0^2W+(4-3\ga-3\om_0)\big((\ga-1)\om_0^2R-2\om_0W-2\om_0^2\big)\\
&+\frac{4\pi}{4-3\ga}\rho_0\om_0^2R-2\om_0^2W+(\ga-1)(2-\ga)W\\
=&\,2\om_0W^2+W\big(-(\ga-1)\om_0^2R-2(4-3\ga-3\om_0)\om_0+(\ga-1)(2-\ga)\big)\\
&+(4-3\ga-3\om_0)(\ga-1)\om_0^2R+\om_0\big(2\om_0^2+(\ga-1)\om_0+(\ga-1)(2-\ga)\big)R\\
&-2(4-3\ga-3\om_0)\om_0^2,
\eeqas
which is exactly \eqref{eq:om1quad}.

To show \eqref{eq:RWlinear}, we work from \eqref{eq:rhoom}. Multiplying the first equation by $\rho$, the second by $\om$ and summing, we obtain
\beqs
(\rho\om)'=\frac{4-3\ga-3\om}{y}\rho.
\eeqs
Substituting in the formal Taylor expansion and grouping the terms at order zero, we find
\beqs
\rho_1\om_0+\rho_0\om_1=\frac{4-3\ga-3\om_0}{y_*}\rho_0.
\eeqs
Multiplying through by $\frac{y_*}{\rho_0}$ and recalling~\eqref{E:RWDEF} we arrive at~\eqref{eq:RWlinear}.
\end{proof}


\begin{remark}
The coefficients of the quadratics in~\eqref{eq:rho1quad}--\eqref{eq:om1quad}
depend only on $\ga$ and on $\om_0$ (hence also on $y_*$). 
\end{remark}


Our next lemma establishes the key recursive relation that will allow us to compute the $N$-th order Taylor coefficients in terms of $(\rho_k,\om_k)$, $0\le k\le N-1$.


\begin{lemma}\label{Lem:N}
Let $N\geq2$ and define the matrix $\cala_N$ by
\beq\label{def:calA}
\cala_N= \begin{pmatrix} \cala^N_{11} & \cala^N_{12} \\ \cala^N_{21} & \cala^N_{22},
\end{pmatrix},
\eeq
where the matrix coefficients $\cala^N_{ij}$, $i,j\in\{1,2\}$ depend on $N$, $\gamma$, $\om_0$, $\rho_1$ and $\om_1$ and are given explicitly by~\eqref{E:A11}--\eqref{E:A22} below.
Then the coefficients $(\rho_N,\om_N)$ in the formal series expansion~\eqref{E:FORMAL} satisfy the algebraic equation
\be\label{E:RECURSIVE}
\cala_N\begin{pmatrix}
\rho_N\\
\om_N
\end{pmatrix}=\begin{pmatrix}
\mathcal{F}_N\\
\mathcal{G}_N
\end{pmatrix},
\ee
where the polynomials $\mathcal{F}_N$ and $\mathcal{G}_N$ are given by~\eqref{E:FNDEF} and~\eqref{E:GNDEF} below.
\end{lemma}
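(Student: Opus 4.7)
The plan is to substitute the formal power series \eqref{E:FORMAL} into the relations \eqref{eq:coefficients1}--\eqref{eq:coefficients2}, isolate all terms that depend on the top-order coefficients $\rho_N$ and $\omega_N$, and show that every remaining term is a polynomial in the lower-order coefficients $\{\rho_k,\om_k\}_{0\le k\le N-1}$, $\ga$, $\rho_1$, $\om_1$, and $y_*$.

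The first step is to exploit a cancellation forced by the sonic condition. The naive ``highest-order'' contribution to \eqref{eq:coefficients1} would come from the $(k,j)=(N,0)$ term on the left-hand side, producing $(N+1)\rho_{N+1}$ multiplied by
\begin{equation*}
\ga P_0-y_*^2(\om^2)_0-2y_*(\om^2)_{-1}-(\om^2)_{-2}=\ga\rho_0^{\ga-1}-y_*^2\om_0^2=G(y_*;\rho_0,\om_0)=0.
\end{equation*}
Hence $\rho_{N+1}$ drops out, and the analogous computation removes $\om_{N+1}$ from \eqref{eq:coefficients2}. The genuine top-order unknowns in the $N$-th order relations are therefore $\rho_N$ and $\om_N$.

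The second step is to read off the coefficients of $\rho_N$ and $\om_N$ by inspecting each convolution sum. By the Fa\`a di Bruno formula, the unique multi-index $(\la_1,\ldots,\la_j)\in M_j$ producing a $\rho_j$ factor has $\la_j=1$ and all other $\la_i=0$, so
\begin{equation*}
P_j=(\ga-1)\rho_0^{\ga-2}\rho_j+\hat P_j(\rho_0,\ldots,\rho_{j-1}).
\end{equation*}
Similarly, the convolutions $(\om^2)_j,(\rho\om)_j,(\rho^2\om)_j,(\rho\om^2)_j,(\om^3)_j$ decompose into an explicit linear combination of $\rho_j$ and $\om_j$ with coefficients in $\rho_0,\om_0$, plus a remainder depending only on the coefficients of index at most $j-1$. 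On the left-hand side of \eqref{eq:coefficients1}--\eqref{eq:coefficients2}, factors of $\rho_N$ or $\om_N$ can therefore arise only from the index choices $(k,j)=(N-1,1)$ (through the factor $(k+1)\rho_{k+1}=N\rho_N$, respectively $(k+1)\om_{k+1}=N\om_N$) and $(k,j)=(0,N)$ (through the $\rho_N,\om_N$-linear piece of $P_N$ and the convolutions at index $N$); the shifted convolutions at index $N-1$ or $N-2$ contribute only lower-order coefficients. On the right-hand side the top-order contributions come directly from the convolutions at index $N$. Collecting all such contributions, using the sonic relations $\ga\rho_0^{\ga-1}=y_*^2\om_0^2$ and $h(\rho_0,\om_0)=0$ to simplify where possible, and transferring everything else to the right-hand side produces the matrix equation \eqref{E:RECURSIVE} with the claimed entries $\mathcal A^N_{ij}$ and polynomial forcings $\mathcal F_N,\mathcal G_N$.

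The main obstacle is pure bookkeeping, rather than any analytic difficulty: several distinct index choices contribute to the coefficients of $\rho_N$ and $\om_N$, and in particular one must track the ``off-diagonal'' contribution of $\rho_1$ multiplying the $\rho_N$-part of $P_N$ via Fa\`a di Bruno, as well as the contributions of $\rho_1,\om_1$ to the $\om_N$-coefficient through $(\om^2)_N,(\rho\om)_N$ and their cubic analogues. Once the sonic cancellation is established and the single-term structure of the Fa\`a di Bruno contribution to $P_N$ is exploited, the derivation reduces to an entirely algebraic collection of coefficients, yielding the explicit formulas for $\cala^N_{ij}$, $\mathcal F_N$, $\mathcal G_N$ referenced by \eqref{E:A11}--\eqref{E:A22}, \eqref{E:FNDEF}, and \eqref{E:GNDEF}.
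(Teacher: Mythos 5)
Your proposal is correct and follows essentially the same approach as the paper. You correctly identify the two key structural observations that make the lemma work: (1) the sonic condition $G(y_*;\rho_0,\om_0)=0$ annihilates the would-be top coefficient $\rho_{N+1}$ (resp.\ $\om_{N+1}$) coming from the $(k,j)=(N,0)$ term, so that $\rho_N,\om_N$ are the genuine unknowns at order $N$; and (2) the Fa\`a di Bruno formula isolates the unique linear-in-$\rho_N$ piece of $P_N$, namely $(\ga-1)\rho_0^{\ga-2}\rho_N$. With these in hand, the remaining work is exactly what the paper does: extract the coefficients of $\rho_N,\om_N$ from the $(k,j)=(N-1,1)$ and $(k,j)=(0,N)$ terms on the left together with the order-$N$ convolutions on the right, simplify repeatedly via $\ga\rho_0^{\ga-1}=y_*^2\om_0^2$ and $h(\rho_0,\om_0)=0$, and dump the rest into $\mathcal F_N,\mathcal G_N$.
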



\begin{proof}
We begin from \eqref{eq:coefficients1} and group the terms on the left hand side as follows.
\beqa\label{eq:rhoNeq1}
&N\rho_N\big(\ga P_1-2y_*^2\om_1\om_0-2y_*\om_0^2\big)+\rho_1\big(\ga P_N-2y_*^2\om_N\om_0\big)-\rho_1 y_*^2\sum_{\substack{j+k=N,\\j,k\neq N}}\om_j\om_k\\
&-\rho_1\big(2y_*(\om^2)_{N-1}-(\om^2)_{N-2}\big)+\sum_{\substack{k+j=N\\j\neq0,1,N}}(k+1)\rho_{k+1}\big(\ga P_j-y_*^2(\om^2)_j-2y_*(\om^2)_{j-1}-(\om^2)_{j-2}\big)\\
&=\rho_N\Big(N\big(\ga(\ga-1)\rho_0^{\ga-2}\rho_1-2y_*^2\om_1\om_0-2y_*\om_0^2\big)+\ga(\ga-1)\rho_0^{\ga-2}\rho_1\Big)-2y_*^2\rho_1\om_0\om_N\\
&\quad+\mathcal{F}_N^I,
\eeqa
where
\beqas
\mathcal{F}_N^I=&-\rho_1 y_*^2\sum_{\substack{j+k=N,\\j,k\neq N}}\om_j\om_k-\rho_1\big(2y_*(\om^2)_{N-1}-(\om^2)_{N-2}\big)\\
&+\sum_{\substack{k+j=N\\j\neq0,1,N}}(k+1)\rho_{k+1}\big(\ga P_j-y_*^2(\om^2)_j-2y_*(\om^2)_{j-1}-(\om^2)_{j-2}\big)\\
&+\ga\rho_1\sum_{\substack{(\la_1,\ldots,\la_N)\in M_N\\\la_N=0}}\frac{(\ga-1)\cdots(\ga-(\la_1+\cdots+\la_N))\rho_0^{\ga-1-(\la_1+\cdots+\la_N)}}{\la_1!\cdots \la_N!}\prod_{j=1}^N\rho_j^{\la_j},
\eeqas
and we have applied the definition of $P_j$ to isolate the term with a $\rho_N$ contribution as
\beqas
P_N=&\,(\ga-1)\rho_0^{\ga-2}\rho_N+\hspace{-2mm}\sum_{\substack{(\la_1,\ldots,\la_N)\in M_N\\\la_N=0}}\hspace{-1mm}\frac{(\ga-1)\cdots(\ga-(\la_1+\cdots+\la_N))\rho_0^{\ga-1-(\la_1+\cdots+\la_N)}}{\la_1!\cdots \la_N!}\prod_{j=1}^N\rho_j^{\la_j}
\eeqas
and also recalled 
$$P_1=(\ga-1)\rho_0^{\ga-2}\rho_1.$$
Studying the right hand side of \eqref{eq:coefficients1}, we find expand to isolate terms at order $N$ and then apply \eqref{eq:sonicconstraint} to eliminate terms with factors of $\frac{4\pi}{4-3\ga}$ as follows:
\beqa\label{eq:rhoNeq2}
&(\ga-1)(2-\ga)y_*\rho_N+(\ga-1)y_*\rho_N\om_0+(\ga-1)y_*\rho_0\om_N-2\frac{4\pi}{4-3\ga}y_*\rho_N\om_0\rho_0-\frac{4\pi}{4-3\ga}y_*\rho_0^2\om_N\\
&+2y_*\rho_N\om_0^2+4y_*\rho_0\om_0\om_N+(\ga-1)(2-\ga)\rho_{N-1}+(\ga-1)\Big(y_*\sum_{\substack{k+j=N\\k\neq0,N}}\rho_k\om_j+(\rho\om)_{N-1}\Big)\\
&-\frac{4\pi}{4-3\ga}\Big(y_*\sum_{\substack{k+j+l=N\\k,j,l\neq N}}\rho_k\rho_j\om_l+(\rho^2\om)_{N-1}\Big)+2\Big(y_*\sum_{\substack{k+j+l=N\\ k,j,l\neq N}}\rho_k\om_j\om_l+(\rho\om^2)_{N-1}\Big)\\
&=-\big(2\om_0^2+(\ga-1)\om_0+(\ga-1)(2-\ga)\big)y_*\rho_N+2y_*\rho_0\om_0\om_N-(\ga-1)(2-\ga)y_*\frac{\rho_0}{\om_0}\om_N+\mathcal{F}_N^{II},
\eeqa
where
\beqas
\mathcal{F}_N^{II}=&\,(\ga-1)(2-\ga)\rho_{N-1}+(\ga-1)\Big(y_*\sum_{\substack{k+j=N\\k\neq0,N}}\rho_k\om_j+(\rho\om)_{N-1}\Big)\\
&-\frac{4\pi}{4-3\ga}\Big(y_*\sum_{\substack{k+j+l=N\\k,j,l\neq N}}\rho_k\rho_j\om_l+(\rho^2\om)_{N-1}\Big)+2\Big(y_*\sum_{\substack{k+j+l=N\\ k,j,l\neq N}}\rho_k\om_j\om_l+(\rho\om^2)_{N-1}\Big),
\eeqas
where we have applied \eqref{eq:sonicconstraint}. Thus, as  \eqref{eq:rhoNeq1}  is equal to \eqref{eq:rhoNeq2}, we rearrange to arrive at
\begin{align}
&\rho_N\Big(N\big(\ga (\ga-1)\rho_0^{\ga-2}\rho_1-2y_*^2\om_1\om_0-2y_*\om_0^2\big)\notag\\
&\qquad+\ga(\ga-1)\rho_0^{\ga-2}\rho_1+y_*\big(2\om_0^2+(\ga-1)\om_0+(\ga-1)(2-\ga)\big)\Big) \notag \\
&+\om_N\Big(-2y_*^2\rho_1\om_0-2y_*\rho_0\om_0+(\ga-1)(2-\ga)y_*\frac{\rho_0}{\om_0}\Big) \notag \\
&=\mathcal{F}_N^{II}-\mathcal{F}_N^I \\
&=:\mathcal{F}_N(\rho_0,\ldots,\rho_{N-1},\om_0,\ldots,\om_{N-1}) \label{E:FNDEF}
\end{align}
Thus we have found
\beqa
&\rho_N\Big((N+1)(\ga-1)\frac{y_*^2\om_0^2}{\rho_0}\rho_1-2Ny_*^2\om_1\om_0-2(N-1)y_*\om_0^2+y_*\big((\ga-1)\om_0+(\ga-1)(2-\ga)\big)\Big)\\
&+\om_N\Big(-2y_*^2\rho_1\om_0-2y_*\rho_0\om_0+(\ga-1)(2-\ga)y_*\frac{\rho_0}{\om_0}\Big)\\
&=\mathcal{F}_N.
\eeqa
Considering now \eqref{eq:coefficients2}, we expand the left hand side as above as
\beqa\label{eq:omNeq1}
&N\om_N\big(\ga P_1-2y_*^2\om_1\om_0-2y_*\om_0^2\big)+\om_1\big(\ga P_N-2y_*^2\om_N\om_0\big)-\om_1 y_*^2\sum_{\substack{j+k=N,\\j,k\neq N}}\om_j\om_k\\
&-\om_1\big(2y_*(\om^2)_{N-1}-(\om^2)_{N-2}\big)+\sum_{\substack{k+j=N\\j\neq0,1,N}}(k+1)\om_{k+1}\big(\ga P_j-y_*^2(\om^2)_j-2y_*(\om^2)_{j-1}-(\om^2)_{j-2}\big)\\
&=\om_N\Big(N\big(\ga(\ga-1)\rho_0^{\ga-2}\rho_1-2y_*^2\om_1\om_0-2y_*\om_0^2\big)-2y_*^2\om_1\om_0\Big)+\rho_N\ga(\ga-1)\rho_0^{\ga-2}\om_1\\
&\quad+\mathcal{G}_N^I,
\eeqa
where
\beqa
&\mathcal{G}_N^I=-\om_1\big(2y_*(\om^2)_{N-1}-(\om^2)_{N-2}\big)-\om_1 y_*^2\sum_{\substack{j+k=N,\\j,k\neq N}}\om_j\om_k\\
&+\sum_{\substack{k+j=N\\j\neq0,1,N}}(k+1)\om_{k+1}\big(\ga P_j-y_*^2(\om^2)_j-2y_*(\om^2)_{j-1}-(\om^2)_{j-2}\big)\\
&+\ga\om_1\sum_{\substack{(\la_1,\ldots,\la_N)\in M_N\\\la_N=0}}\frac{(\ga-1)\cdots(\ga-(\la_1+\cdots+\la_N))\rho_0^{\ga-1-(\la_1+\cdots+\la_N)}}{\la_1!\cdots \la_N!}\prod_{j=1}^N\rho_j^{\la_j},
\eeqa
where we have applied the definition of $P_j$ to isolate the term with a $\rho_N$ contribution.

Working with the right hand side of \eqref{eq:coefficients2}, we have
\beqa\label{eq:omNeq2}
&\frac{4-3\ga-3\om_0}{y_*}\big(\ga P_N-y_*^2(\om^2)_N\big)-(\ga-1)(2-\ga)y_*\om_N-2(\ga-1)y_*\om_N\om_0\\
&+\frac{4\pi}{4-3\ga}y_*\rho_N\om_0^2+2\frac{4\pi}{4-3\ga}y_*\rho_0\om_0\om_N-6y_*\om_0^2\om_N\\
&+\widetilde{\mathcal{G}}_N^{II},
\eeqa
where
\beqa
&\widetilde{\mathcal{G}}_N^{II}=\frac{4-3\ga-3\om_0}{y_*}\big(-2y_*(\om^2)_{N-1}-(\om^2)_{N-2}\big)\\
&+\frac{4-3\ga}{y_*}\sum_{\substack{k+j=N\\j\neq N}}\frac{(-1)^k}{y_*^k}\big(\ga P_j-y_*^2(\om^2)_j-2y_*(\om^2)_{j-1}-(\om^2)_{j-2}\big)\\
&-\frac{3}{y_*}\sum_{\substack{k+j+l=N\\j\neq N}}\om_l\frac{(-1)^k}{y_*^k}\big(\ga P_j-y_*^2(\om^2)_j-2y_*(\om^2)_{j-1}-(\om^2)_{j-2}\big)-(\ga-1)(2-\ga)\om_{N-1}\\
&-(\ga-1)\Big(y_*\sum_{\substack{k+j=N\\k\neq0,N}}\om_k\om_j+(\om^2)_{N-1}\Big)+\frac{4\pi}{4-3\ga}\Big(y_*\sum_{\substack{k+j+l=N\\k,j,l\neq N}}(\rho_k\om_j\om_l)+(\rho\om^2)_{N-1}\Big)\\
&-2\Big(y_*\sum_{\substack{k+j+l=N\\k,j,l\neq N}}(\om_k\om_j\om_l)+(\om^3)_{N-1}\Big).
\eeqa
Grouping the terms on the first two lines here, we again expand $P_N$ to find the contribution
\beqa\label{eq:omNeq3}
&\frac{4-3\ga-3\om_0}{y_*}\big(\rho_N\ga(\ga-1)\rho_0^{\ga-2}-2y_*^2\om_0\om_N\big)+(\ga-1)(2-\ga)y_*\om_N-2y_*\om_0^2\om_N\\
&+\frac{\om_0}{\rho_0}\big(2\om_0^2+(\ga-1)\om_0+(\ga-1)(2-\ga)\big)y_*\rho_N-\frac{4-3\ga-3\om_0}{y_*}y_*^2\sum_{\substack{j+k=N\\j,k\neq N}}\om_j\om_k\\
&+\frac{4-3\ga-3\om_0}{y_*}\ga\sum_{\substack{(\la_1,\ldots,\la_N)\in M_N\\\la_N=0}}\frac{(\ga-1)\cdots(\ga-(\la_1+\cdots+\la_N))\rho_0^{\ga-1-(\la_1+\cdots+\la_N)}}{\la_1!\cdots \la_N!}\prod_{j=1}^N\rho_j^{\la_j},
\eeqa
where we have again applied \eqref{eq:sonicconstraint}. Setting 
\beqas
\mathcal{G}_N^{II}=&\,\widetilde{\mathcal{G}}_N^{II}-\frac{4-3\ga-3\om_0}{y_*}y_*^2\sum_{\substack{j+k=N\\j,k\neq N}}\om_j\om_k\\
&+\frac{4-3\ga-3\om_0}{y_*}\ga\hspace{-2mm}\sum_{\substack{(\la_1,\ldots,\la_N)\in M_N\\\la_N=0}}\hspace{-1mm}\frac{(\ga-1)\cdots(\ga-(\la_1+\cdots+\la_N))\rho_0^{\ga-1-(\la_1+\cdots+\la_N)}}{\la_1!\cdots \la_N!}\prod_{j=1}^N\rho_j^{\la_j},
\eeqas
we substitute \eqref{eq:omNeq3} back into \eqref{eq:omNeq2} and equate with \eqref{eq:omNeq1} to arrive at
\begin{align}
&\om_N\Big(N\big(\ga(\ga-1)\rho_0^{\ga-2}\rho_1-2y_*^2\om_1\om_0-2y_*\om_0^2\big)-2y_*^2\om_1\om_0\Big)+\rho_N\ga(\ga-1)\rho_0^{\ga-2}\om_1\notag\\
&-\frac{4-3\ga-3\om_0}{y_*}\big(\rho_N\ga(\ga-1)\rho_0^{\ga-2}-2y_*^2\om_0\om_N\big)-(\ga-1)(2-\ga)y_*\om_N+2y_*\om_0^2\om_N \notag \\
&-\frac{\om_0}{\rho_0}\big(2\om_0^2+(\ga-1)\om_0+(\ga-1)(2-\ga)\big)y_*\rho_N \notag \\
&=\mathcal{G}_N^{II}-\mathcal{G}_N^I \notag \\
&=:\mathcal{G}_N.\label{E:GNDEF}
\end{align}
Thus we have 
\beqa
&\om_N\Big(N(\ga-1)\frac{y_*^2\om_0^2}{\rho_0}\rho_1-2(N+1)y_*^2\om_1\om_0-2(N+2)y_*\om_0^2+2(4-3\ga)y_*\om_0-(\ga-1)(2-\ga)y_*\Big)\\
&+\rho_N\Big(\frac{y_*^2\om_0^2}{\rho_0}(\ga-1)\big(\om_1-\frac{4-3\ga-3\om_0}{y_*}\big)-y_*\frac{\om_0}{\rho_0}\big(2\om_0^2+(\ga-1)\om_0+(\ga-1)(2-\ga)\big)\Big)\\
&=\mathcal{G}_N.
\eeqa
So we have found the claimed identity with 
\begin{align}
&\mathcal{A}_{11}=y_*\Big((N+1)(\ga-1)\om_0^2\frac{y_*\rho_1}{\rho_0}-2Ny_*\om_1\om_0-2(N-1)\om_0^2+\big((\ga-1)\om_0+(\ga-1)(2-\ga)\big)\Big),\label{E:A11}\\
&\mathcal{A}_{12}=y_*\rho_0\Big(-2\frac{y_*\rho_1}{\rho_0}\om_0-2\om_0+\frac{(\ga-1)(2-\ga)}{\om_0}\Big),\\
&\cala_{21}=\frac{y_*}{\rho_0}\Big(\om_0^2(\ga-1)\big(y_*\om_1-(4-3\ga-3\om_0)\big)-\om_0\big(2\om_0^2+(\ga-1)\om_0+(\ga-1)(2-\ga)\big)\Big),\\
&\cala_{22}=y_*\Big(N(\ga-1)\om_0^2\frac{y_*\rho_1}{\rho_0}-2(N+1)y_*\om_1\om_0-2(N+2)\om_0^2+2(4-3\ga)\om_0-(\ga-1)(2-\ga)\Big).\label{E:A22}
\end{align}
\end{proof}


\begin{lemma}\label{lemma:detAN}
Consider the formal series expansion~\eqref{E:FORMAL} and recall the definitions
\be
R=\frac{y_*\rho_1}{\rho_0}\text{ and } W=y_*\om_1.
\ee 
Then the map $N\mapsto \det(\mathcal{A}_N)$ is a quadratic polynomial of the form
\be\label{E:DETAQUADRATIC}
\det\mathcal{A}_N=\sum_{j=0}^2A_jN^j,
\ee
where $A_0$, $A_1$, and $A_2$
are $(\gamma,\om_0,R,W)$-dependent functions given by the formulas:
\begin{align}
A_2=&\,\big(-2(3-\ga)\om_0^2+\om_0(\ga-1)(5\ga-9)-(\ga-1)(2-\ga)(\ga+1)\big)\om_0^2 R\notag\\
&+8\om_0^3 W+4\om_0^4,\label{E:ATWODEF}\\
A_1=&\,-\big(2(3-\ga)\om_0^2+2\om_0(\ga-1)+(\ga-1)(2-\ga)(\ga+1)\big)\om_0^2 R \notag \\
&+\big(8\om_0^2-4(4-3\ga)\om_0-2(\ga-1)\om_0\big)\om_0 W+\big(4\om_0^4-14\om_0^3+10\ga\om_0^3\big),\label{E:AONEDEF}\\
A_0=&\,2\big(\om_0^2(\ga-1)-\om_0(\ga-1)-\ga(\ga-1)(2-\ga)\big)\om_0^2R\notag \\
&+\big(-16\om_0^2+4(\ga-1)\om_0^2+4(4-3\ga)\om_0-2(\ga-1)\om_0-2(\ga-1)(2-\ga)(\ga+1)\big)\om_0W\notag \\
&+(6\ga-30)\om_0^4+(6\ga^2-44\ga+46)\om_0^3+(3\ga^3-12\ga^2+11\ga-2)\om_0^2\notag\\
&+(3\ga^4-10\ga^3+5\ga^2+10\ga-8)\om_0 \label{E:AZERODEF}
\end{align}
\end{lemma}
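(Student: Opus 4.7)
My approach is a direct algebraic computation exploiting the structure revealed by \eqref{E:A11}--\eqref{E:A22}. The key structural observation is that $\cala^N_{11}$ and $\cala^N_{22}$ are affine in $N$ with the \emph{same} $N$-slope, namely
\beqs
\cala^N_{11}=y_\ast(aN+b_1),\qquad \cala^N_{22}=y_\ast(aN+b_2),
\eeqs
with $a=(\ga-1)\om_0^2R-2\om_0 W-2\om_0^2$ and $b_1,b_2$ the $N$-independent remainders read off from \eqref{E:A11} and \eqref{E:A22}. Since $\cala^N_{12}$ and $\cala^N_{21}$ are independent of $N$, expanding the determinant immediately yields
\beqs
\det\cala_N=y_\ast^2a^2\,N^2+y_\ast^2a(b_1+b_2)\,N+\big(y_\ast^2 b_1b_2-\cala^N_{12}\cala^N_{21}\big),
\eeqs
which proves the structural claim \eqref{E:DETAQUADRATIC}.

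The second step is to reduce the three coefficients to the explicit forms \eqref{E:ATWODEF}--\eqref{E:AZERODEF}. I would expand $a^2$, $a(b_1+b_2)$, $b_1b_2$, and $\cala^N_{12}\cala^N_{21}$ as polynomials in $R$ and $W$ with coefficients depending on $\om_0$ and $\ga$. These raw expressions contain the quadratic monomials $R^2$, $RW$, $W^2$ alongside linear and constant pieces, whereas the claimed formulas are linear in $(R,W)$. The reduction to linear form proceeds by repeated use of the three algebraic identities from Lemma~\ref{lemma:rho1om1}: the linear constraint \eqref{eq:RWlinear} couples $R$ and $W$, while the quadratic constraints \eqref{eq:rho1quad} and \eqref{eq:om1quad} allow one to trade each of $R^2$, $RW$, $W^2$ for an affine expression in $R$ and $W$. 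A consistent substitution order---for instance, eliminating $R^2$ via \eqref{eq:rho1quad} first, then $RW$ and $W^2$ via \eqref{eq:om1quad} and \eqref{eq:RWlinear}---yields each of $A_0$, $A_1$, $A_2$ in the form stated.

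The main obstacle is purely computational: the four polynomial expansions together with the three substitutions produce many intermediate monomials that must cancel exactly into the compact expressions \eqref{E:ATWODEF}--\eqref{E:AZERODEF}. A mild conceptual subtlety is that the linear-in-$(R,W)$ representation is not unique, since \eqref{eq:RWlinear} permits the exchange $W\leftrightarrow -\om_0R+(4-3\ga-3\om_0)$; one must therefore fix one consistent reduction pathway in order to arrive at precisely the stated form rather than an algebraically equivalent alternative. Apart from this bookkeeping issue, the argument is a lengthy but entirely routine polynomial verification.
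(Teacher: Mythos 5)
Your structural observation is correct and in fact slightly cleaner than the paper's: $\mathcal{A}_{11}^N/y_\ast$ and $\mathcal{A}_{22}^N/y_\ast$ are indeed affine in $N$ with the identical slope $a=(\ga-1)\om_0^2R-2\om_0W-2\om_0^2$, and $\mathcal{A}_{12},\mathcal{A}_{21}$ are $N$-independent, so the quadratic-in-$N$ structure follows without expansion. The paper instead just expands and groups, but arrives at the same place.

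Where your proposal runs into trouble is the linearization step. The chained substitution you describe—eliminate $R^2$ via \eqref{eq:rho1quad}, then $RW$ via \eqref{eq:om1quad}, then $W^2$ via \eqref{eq:RWlinear}—does not terminate as described: \eqref{eq:rho1quad} trades $R^2$ for an expression containing $RW$, \eqref{eq:om1quad} trades $RW$ for one containing $W^2$, and \eqref{eq:RWlinear}, multiplied by $R$ or $W$, trades $W^2$ for an expression containing $RW$ or $R^2$ again. You would need either to solve the small linear system that the monomials $\{R^2, RW, W^2\}$ satisfy under all three constraints, or to eliminate $W$ entirely from the quadratic monomials via \eqref{eq:RWlinear} before invoking a quadratic constraint. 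The paper avoids all of this with a targeted observation: after collecting the purely quadratic-in-$(R,W)$ terms from both $\mathcal{A}_{11}\mathcal{A}_{22}$ and $-\mathcal{A}_{12}\mathcal{A}_{21}$, they assemble to exactly $N(N+1)\big[(\ga-1)\om_0^2R-2\om_0W\big]^2$, a perfect square; and the precise combination $(\ga-1)\om_0^2\cdot\eqref{eq:rho1quad}+2\om_0\cdot\eqref{eq:om1quad}$ (the paper's \eqref{eq:quadsubstitute}) has this very square as its left-hand side, so a single substitution linearizes everything simultaneously. Without that observation your reduction is still salvageable, but it is a different route, and—as you rightly flag—different routes give $(R,W)$-linear forms that agree with \eqref{E:ATWODEF}--\eqref{E:AZERODEF} only modulo the constraints, not term-by-term. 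Since the downstream use in Proposition~\ref{P:AN} feeds these explicit polynomials into interval-arithmetic code, reproducing the paper's literal formulas (rather than an equivalent form) does matter for the overall argument as written.
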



\begin{proof}
We begin with the following identity. Multiplying \eqref{eq:rho1quad} by $(\ga-1)\om_0^2$ and \eqref{eq:om1quad} by $2\om_0$ and summing, we get
\beqa\label{eq:quadsubstitute}
&(\ga-1)^2\om_0^4R^2-4(\ga-1)\om_0^3RW+4\om_0^2W^2\\
&=-(\ga-1)^2\om_0^2(\om_0+2-\ga)R+2(\ga-1)\om_0^3W-(\ga-1)^2(2-\ga)W\om_0\\
&-2\om_0W\big(-2(4-3\ga-3\om_0)\om_0+(\ga-1)(2-\ga)\big)\\
&-2\big((5-3\ga)\om_0^2+(5-3\ga)(\ga-1)\om_0+(\ga-1)(2-\ga)\big)\om_0^2R+4(4-3\ga-3\om_0)\om_0^3\\
&=\om_0^2R\Big(-(\ga-1)^2(\om_0+2-\ga)-2\big((5-3\ga)\om_0^2+(5-3\ga)(\ga-1)\om_0+(\ga-1)(2-\ga)\big)\Big)\\
&+\om_0W\Big(2(\ga-1)\om_0^2-(\ga-1)^2(2-\ga)-2\big(-2(4-3\ga-3\om_0)\om_0+(\ga-1)(2-\ga)\big)\Big)\\
&+4(4-3\ga-3\om_0)\om_0^3.
\eeqa
Now we expand the determinant as
\begin{align}
y_*^{-2}&\det\mathcal{A}\notag\\
=&\,\Big((N+1)(\ga-1)\om_0^2R-2NW\om_0-2(N-1)\om_0^2+\big((\ga-1)\om_0+(\ga-1)(2-\ga)\big)\Big)\notag\\
&\:\times\Big(N(\ga-1)\om_0^2R-2(N+1)W\om_0-2(N+2)\om_0^2+2(4-3\ga)\om_0-(\ga-1)(2-\ga)\Big)\notag\\
&-\Big(-2R\om_0-2\om_0+\frac{(\ga-1)(2-\ga)}{\om_0}\Big)\notag\\
&\:\times\Big(\om_0^2(\ga-1)\big(W-(4-3\ga-3\om_0)\big)-\om_0\big(2\om_0^2+(\ga-1)\om_0+(\ga-1)(2-\ga)\big)\Big)\notag\\
=&\,N(N+1)(\ga-1)^2\om_0^4R^2-\big(2N^2+2(N+1)^2\big)(\ga-1)\om_0^3RW+4N(N+1)\om_0^2W^2\notag\\
&+R\Big((N+1)(\ga-1)\om_0^2\big(-2(N+2)\om_0^2+2(4-3\ga)\om_0-(\ga-1)(2-\ga)\big)\notag\\
&\hspace{9mm}+N(\ga-1)\om_0^2\big(-2(N-1)\om_0^2+\big((\ga-1)\om_0+(\ga-1)(2-\ga)\big)\big)\Big)\notag\\
&+W\Big(-2N\om_0\big(-2(N+2)\om_0^2+2(4-3\ga)\om_0-(\ga-1)(2-\ga)\big)\notag\\
&\hspace{9mm}-2(N+1)\om_0\big(-2(N-1)\om_0^2+\big((\ga-1)\om_0+(\ga-1)(2-\ga)\big)\big)\Big)\label{eq:detA1}\\
&+\Big(-2(N-1)\om_0^2+\big((\ga-1)\om_0+(\ga-1)(2-\ga)\big)\Big)\notag\\
&\quad\times\Big(-2(N+2)\om_0^2+2(4-3\ga)\om_0-(\ga-1)(2-\ga)\Big)\notag\\
&+2\om_0^3(\ga-1)RW\notag\\
&+R\Big(-2\om_0^3(\ga-1)(4-3\ga-3\om_0)-\om_0^2\big(2\om_0^2+(\ga-1)\om_0+(\ga-1)(2-\ga)\big)\Big)\notag\\
&+W\Big(-\om_0^2(\ga-1)\big(-2\om_0+(\ga-1)(2-\ga)\frac{1}{\om_0}\big)\Big)\notag\\
&+\big(2\om_0-(\ga-1)(2-\ga)\frac{1}{\om_0}\big)\notag\\
&\quad\times\Big(-\om_0^2(\ga-1)(4-3\ga-3\om_0)-\om_0\big(2\om_0^2+(\ga-1)\om_0+(\ga-1)(2-\ga)\big)\Big).\notag
\end{align}
We first re-group the quadratic terms in $(R,W)$ and substitute \eqref{eq:quadsubstitute} to get
\beqas
N(&N+1)(\ga-1)^2\om_0^4R^2-\big(2N^2+2(N+1)^2-2\big)(\ga-1)\om_0^3RW+4N(N+1)\om_0^2W^2\\
=&N(N+1)\big((\ga-1)^2\om_0^4R^2-4(\ga-1)\om_0^3RW+4\om_0^2W^2\big)\\
=&N(N+1)\\
&\times\Big(\om_0^2R\Big(-(\ga-1)^2(\om_0+2-\ga)-2\big((5-3\ga)\om_0^2+(5-3\ga)(\ga-1)\om_0+(\ga-1)(2-\ga)\big)\Big)\\
&\quad+\om_0W\Big(2(\ga-1)\om_0^2-(\ga-1)^2(2-\ga)-2\big(-2(4-3\ga-3\om_0)\om_0+(\ga-1)(2-\ga)\big)\Big)\\
&\quad+4(4-3\ga-3\om_0)\om_0^3\Big).
\eeqas
Substituting this into \eqref{eq:detA1}, we group the terms  by order in $N$ as
\beqa
\det&\mathcal{A}_N=y_*^2\Big(A_2N^2+A_1N+A_0\Big),
\eeqa
where
\beqas
A_2=&\,\om_0^2R\Big(-(\ga-1)^2(\om_0+2-\ga)-2\big((5-3\ga)\om_0^2+(5-3\ga)(\ga-1)\om_0+(\ga-1)(2-\ga)\big)\Big)\\
&+\om_0W\Big(2(\ga-1)\om_0^2-(\ga-1)^2(2-\ga)-2\big(-2(4-3\ga-3\om_0)\om_0+(\ga-1)(2-\ga)\big)\Big)\\
&+4(4-3\ga-3\om_0)\om_0^3-4(\ga-1)\om_0^4R+8\om_0^3W+4\om_0^4,
\eeqas
\beqas
A_1=&\,\om_0^2R\Big(-(\ga-1)^2(\om_0+2-\ga)-2\big((5-3\ga)\om_0^2+(5-3\ga)(\ga-1)\om_0+(\ga-1)(2-\ga)\big)\Big)\\
&+\om_0W\Big(2(\ga-1)\om_0^2-(\ga-1)^2(2-\ga)-2\big(-2(4-3\ga-3\om_0)\om_0+(\ga-1)(2-\ga)\big)\Big)\\
&+4(4-3\ga-3\om_0)\om_0^3-4(\ga-1)\om_0^4R+(\ga-1)\om_0^2R\big(2(4-3\ga)\om_0+(\ga-1)\om_0\big)\\
&-8\om_0^3W-2\om_0W\big(2(4-3\ga)\om_0+(\ga-1)\om_0\big)\\
&+2\om_0^2\Big(4\om_0^2-2(4-3\ga)\om_0+(\ga-1)(2-\ga)\Big)-2\om_0^2\Big(2\om_0^2+(\ga-1)\om_0+(\ga-1)(2-\ga)\Big)
\eeqas
and $A_0$ is the remainder. Simplifying these expressions and that for $A_0$ results in \eqref{E:ATWODEF}--\eqref{E:AZERODEF} to conclude the proof.
\end{proof}


\subsection{Branch selection}


To find solutions that are smooth through the sonic point, we must first calculate the first order Taylor coefficients $(\rho_1,\om_1)$ as functions of the 
parameters $\gamma$ and $y_\ast$. 


\begin{lemma}[The two solution branches]\label{L:BRANCHES}
Let $\gamma\in(1,\frac43)$ be given and let $y_\ast\in[y_f(\gamma),y_F(\gamma)]$. There exist exactly two pairs
$(R_i,W_i)$, $i=1,2$ solving
the system of algebraic equations~\eqref{eq:rho1quad}, \eqref{eq:om1quad}, \eqref{eq:RWlinear}. The functions $R_i$ are given by
\begin{align}
&R_1=\frac{(9-7\ga) \om_0^2 - 
   8 \om_0^3-\sqrt{\om_0^3s(\om_0)}}{2\om_0^3(\ga+1)},\label{E:R1} \\
&R_2=\frac{(9-7\ga) \om_0^2 - 
   8 \om_0^3+\sqrt{\om_0^3s(\om_0)}}{2\om_0^3(\ga+1)},\label{E:R2}
\end{align}
where 
\beqa
s(\om_0)=&\,-4(4-3\ga)(\ga+1)(\ga-1)(2-\ga)+\big((\ga-1)(\ga^2-5\ga+5)+\ga^2+6\ga-3\big)\om_0\\
&-8(3 \ga^2- 15 \ga +14)\om_0^2+8(5-3\ga)\om_0^3
\eeqa
is strictly positive for all $\om_0\in[\frac{4-3\ga}{3},2-\ga]$, $\ga\in(1,\frac43)$.\\
For any $i=1,2$, $W_i$ is determined by
$R_i$ through the formula
\beq\label{eq:W1(R1)}
W_i=4-3\ga-3\om_0-\om_0R_i.
\eeq
\end{lemma}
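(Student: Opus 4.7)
The strategy is to use the linear constraint \eqref{eq:RWlinear} to eliminate $W$, reducing the system of three algebraic equations in $(R,W)$ to a single quadratic in $R$. First I would set $W = (4-3\ga-3\om_0) - \om_0 R$ and substitute into \eqref{eq:rho1quad}. Expanding and collecting powers of $R$, the $R^2$-coefficient becomes $(\ga-1)\om_0^2 + 2\om_0^2 = (\ga+1)\om_0^2$. After multiplying through by $\om_0$ to clear the $\om_0^{-1}$ appearing in the $(\ga-1)(2-\ga)W/\om_0$ term, I expect to land on the quadratic
\[
(\ga+1)\om_0^3 R^2 - \om_0^2\bigl((9-7\ga) - 8\om_0\bigr) R + (4-3\ga-3\om_0)\bigl((\ga-1)(2-\ga) - 2\om_0^2\bigr) = 0.
\]
The quadratic formula then produces exactly two roots $R_1, R_2$, which after an algebraic rearrangement match \eqref{E:R1}--\eqref{E:R2} with $s(\om_0)$ being (a scaled version of) the discriminant; the corresponding $W_i$ are defined through \eqref{eq:W1(R1)}.

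Because the three equations are overdetermined in $(R,W)$, the next step is to verify that the pairs $(R_i,W_i)$ produced above also satisfy the second quadratic \eqref{eq:om1quad}. Rather than re-solving \eqref{eq:om1quad} from scratch, the cleaner route is to show that, after substituting $W = (4-3\ga-3\om_0) - \om_0 R$, the quadratic in $R$ arising from \eqref{eq:om1quad} is a nonzero scalar multiple of the one in the previous display. The combination identity \eqref{eq:quadsubstitute}, used later in the proof of Lemma \ref{lemma:detAN}, already exhibits a useful way of combining \eqref{eq:rho1quad} and \eqref{eq:om1quad}; a direct if tedious calculation, eliminating $W$ throughout and using the sonic relation \eqref{eq:sonicconstraint} where needed, should collapse the resulting expression to the same quadratic up to a nonzero factor. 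This rules out spurious roots and confirms that the two solution pairs are exactly $(R_i,W_i)$, $i=1,2$.

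The remaining claim is that the discriminant factor $s(\om_0)$ is strictly positive on the closed interval $\bigl[\tfrac{4-3\ga}{3}, 2-\ga\bigr]$ for every $\ga \in (1, 4/3)$, and this is where I expect the principal obstacle to lie. The function $s$ is a cubic in $\om_0$ with polynomial coefficients in $\ga$, and the interval of interest itself depends on $\ga$, so a clean hand factorization looks unlikely. Consistent with the general methodology outlined in Section \ref{SS:METHODS}, the most robust resolution is to bound $s$ from below by rigorous interval arithmetic on the two-dimensional parameter region $\{(\ga,\om_0) : 1 < \ga < 4/3,\ \tfrac{4-3\ga}{3} \le \om_0 \le 2-\ga\}$. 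A possible by-hand alternative would be to evaluate $s$ at the two endpoints $\om_0 \in \{\tfrac{4-3\ga}{3}, 2-\ga\}$, where the expressions simplify substantially, and then to use the fact that $s'(\om_0)$ is quadratic in $\om_0$ to exclude any interior sign change. In either case, once $s>0$ is established, the two roots are real and distinct, the formulas \eqref{E:R1}--\eqref{E:R2} are well defined, and the uniqueness of the pair $(\rho_0,\om_0)$ from Lemma \ref{lemma:rho0om0} implies that no further solutions exist.
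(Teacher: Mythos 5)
Your proposal follows the same route as the paper: eliminate $W$ via \eqref{eq:RWlinear}, reduce \eqref{eq:rho1quad} to a quadratic in $R$ with $R^2$-coefficient $(\ga+1)\om_0^3$ (after clearing the $\om_0^{-1}$), solve by the quadratic formula, confirm that \eqref{eq:om1quad} collapses to the same quadratic up to a nonzero (here, $\om_0$) factor, and defer the positivity of the discriminant factor $s(\om_0)$ to interval arithmetic. One small imprecision: the ``exactly two'' conclusion at the end does not rest on Lemma~\ref{lemma:rho0om0} (which concerns uniqueness of $(\rho_0,\om_0)$ given $y_\ast$); it follows directly from the fact that both equations reduce to the same quadratic in $R$ after eliminating $W$, and a nondegenerate quadratic has exactly two roots. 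Also, your alternative ``endpoint values plus $s'$'' suggestion for $s>0$ would need more care than you indicate, since a cubic that is positive at both endpoints of a $\ga$-dependent interval can still vanish inside; the paper's appendix argument is correspondingly more involved, partitioning the $(\om_0,\ga)$-region and combining monotonicity in $\ga$ and in $\om_0$ with interval-arithmetic checks. Neither of these is a genuine gap.
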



\begin{proof}
By rearranging \eqref{eq:rho1quad}, we see
\beq\label{eq:WRidentity}
\Big(2\om_0(R+1)-\frac{(\ga-1)(2-\ga)}{\om_0}\Big)W=(\ga-1)\om_0^2 R^2+(\ga-1)(\om_0+2-\ga)R.
\eeq
Rearranging \eqref{eq:RWlinear} to solve for $W$ as 
\beq
W=4-3\ga-3\om_0-\om_0R,
\eeq
we obtain the claimed relation \eqref{eq:W1(R1)}. We then substitute this into \eqref{eq:WRidentity} to obtain the following quadratic for $R$:
\beqa\label{eq:Rquadratic}
\Big(2\om_0(R+1)-\frac{(\ga-1)(2-\ga)}{\om_0}\Big)\Big(4-3\ga-3\om_0-\om_0R\Big)=(\ga-1)\om_0^2 R^2+(\ga-1)(\om_0+2-\ga)R
\eeqa
 with roots $R_1$, $R_2$ as claimed in~\eqref{E:R1}--\eqref{E:R2} from the quadratic formula.
We postpone the verification that $s(\om_0)>0$ to Appendix \ref{A:TOTHERIGHT}. 

One can check that equation \eqref{eq:om1quad} is also satisfied by these roots by similarly substituting \eqref{eq:W1(R1)} into \eqref{eq:om1quad} then simplifying. This again yields a quadratic in $R$ which, on inspection, turns out to be exactly \eqref{eq:Rquadratic} up to a factor of $\om_0$, and hence has the same roots.
\end{proof}

We will see in the following Subsection \ref{subsec:LPH} that the physically relevant solution branch is that given by $(R_1,W_1)$. We therefore collect some useful estimates on the coefficients derived from this branch.
\begin{prop}\label{prop:R1W1}
Let $\gamma\in(1,\frac43)$ be given and let $y_\ast\in[y_f(\ga), y_F(\ga)]$
and consider the branch $(R_1,W_1)$ defined in Lemma~\ref{L:BRANCHES}.
Then 
\beq
-\frac{4}{(4-3\ga)(2-\ga)}<R_1<-\frac{1}{2-\ga}.
\eeq
Moreover, if $\ga\in[\frac{10}{9},\frac{4}{3})$, then the upper bound on $R_1$ may be taken to satisfy
\beq
R_1\leq-\frac{2\ga}{(2-\ga)(\ga+1)},
\eeq 
where the  inequality is strict provided either $\ga>\frac{10}{9}$ or $y_*<y_F$.

Finally,
\beq
W_1>0\text{ for }y_*>y_f\text{ and }W_1|_{y_*=y_f}=0.
\eeq
\end{prop}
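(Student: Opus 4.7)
The plan is as follows. By Lemma~\ref{L:BRANCHES}, $R_1<R_2$ are the two real roots of the quadratic~\eqref{eq:Rquadratic}, which after expansion reads
\[
Q(R):=-(\ga+1)\om_0^2 R^2+\om_0(9-7\ga-8\om_0)R+\Big(2\om_0^2-(\ga-1)(2-\ga)\Big)\frac{4-3\ga-3\om_0}{\om_0}=0.
\]
Since $Q$ opens downward in $R$, the upper and lower bounds for $R_1$ will be deduced by evaluating $Q$ at the candidate boundary values and using the equivalence $Q(R)>0\iff R\in(R_1,R_2)$, coupled with the Vieta identity $R_1+R_2=\tfrac{9-7\ga-8\om_0}{(\ga+1)\om_0}$ to control on which side of $R_1$ a given value with $Q<0$ lies.

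\emph{Upper bound.} To prove $R_1<-\tfrac{1}{2-\ga}$ I would evaluate $Q\bigl(-\tfrac{1}{2-\ga}\bigr)$ and, after clearing denominators by multiplying by $\om_0(2-\ga)^2>0$, reduce the claim to positivity of a cubic in $\om_0$ (with $\ga$-dependent coefficients) on the admissible interval $\om_0\in[\tfrac{4-3\ga}{3},2-\ga]$. To rule out $-\tfrac{1}{2-\ga}>R_2$ I would compare $-\tfrac{1}{2-\ga}$ with $\tfrac{R_1+R_2}{2}$, which reduces to the linear inequality $-\tfrac{2(\ga+1)\om_0}{2-\ga}<9-7\ga-8\om_0$ that can be checked at the endpoints of $\om_0$'s range. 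For the lower bound $R_1>-\tfrac{4}{(4-3\ga)(2-\ga)}$ I would show $Q\bigl(-\tfrac{4}{(4-3\ga)(2-\ga)}\bigr)<0$; since $(4-3\ga)(2-\ga)<4$ throughout, one has $-\tfrac{4}{(4-3\ga)(2-\ga)}<-\tfrac{1}{2-\ga}<R_2$, which forces this value to lie strictly to the left of $R_1$. The refined bound $R_1\leq -\tfrac{2\ga}{(2-\ga)(\ga+1)}$ for $\ga\in[\tfrac{10}{9},\tfrac{4}{3})$ is handled analogously, by showing $Q\bigl(-\tfrac{2\ga}{(2-\ga)(\ga+1)}\bigr)\geq 0$ and tracking the equality cases of the resulting polynomial; the predicted equality at the corner $(\ga,\om_0)=(\tfrac{10}{9},\tfrac{4-3\ga}{3})$ (i.e.\ $y_\ast=y_F$) is consistent with the direct computation $R_1=\tfrac{3\ga-5}{(\ga+1)(4-3\ga)}=-\tfrac{45}{38}$ at that point.

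\emph{Sign of $W_1$.} I view $W_1(\om_0)=4-3\ga-3\om_0-\om_0 R_1(\om_0)$ as a continuous function of $\om_0\in[\tfrac{4-3\ga}{3},2-\ga]$. At $\om_0=\tfrac{4-3\ga}{3}$ (i.e.\ $y_\ast=y_F$), $Q(0)=0$ forces $R=0$ to be a root; by Vieta the other root equals $\tfrac{3\ga-5}{(\ga+1)(4-3\ga)}<0$ and must be $R_1$, giving $W_1=\tfrac{5-3\ga}{3(\ga+1)}>0$. At $\om_0=2-\ga$ (i.e.\ $y_\ast=y_f$), direct substitution shows that $R=-\tfrac{2}{2-\ga}$ is a root of $Q$, and comparison with the other root (obtained from Vieta as $-\tfrac{5-3\ga}{(\ga+1)(2-\ga)}$) identifies it as the smaller root $R_1$, yielding $W_1=0$. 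Any interior zero $\om_0^\ast\in(\tfrac{4-3\ga}{3},2-\ga)$ of $W_1$ combined with~\eqref{eq:rho1quad}--\eqref{eq:RWlinear} specialised to $W=0$ forces both $R_1(\om_0^\ast)=\tfrac{4-3\ga-3\om_0^\ast}{\om_0^\ast}$ and $R_1(\om_0^\ast)=-\tfrac{\om_0^\ast+2-\ga}{(\om_0^\ast)^2}$, leading to $(\om_0^\ast-(2-\ga))(3\om_0^\ast+1)=0$, whose only admissible solution is the endpoint $\om_0^\ast=2-\ga$. By continuity and the positivity shown at $\om_0=\tfrac{4-3\ga}{3}$, one concludes $W_1>0$ throughout $[\tfrac{4-3\ga}{3},2-\ga)$.

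\emph{Main obstacle.} The hard part will be the polynomial sign checks for $Q$ at the three candidate values. After clearing denominators these are polynomials in $\om_0$ of degree at most three with $\ga$-dependent coefficients whose signs change within $\ga\in(1,\tfrac{4}{3})$ (for instance, one of the coefficients contains the factor $6\ga^2-13\ga+7$, which vanishes at $\ga=\tfrac{7}{6}$). Positivity must be verified on the curvilinear region $\om_0\in[\tfrac{4-3\ga}{3},2-\ga]$, and—consistent with the interval-arithmetic methodology adopted elsewhere in the paper—I expect this step to be resolved either by splitting $\ga$ into subintervals and arguing algebraically, or by rigorous interval arithmetic as employed in Appendix~\ref{A:IA}.
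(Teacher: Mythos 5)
Your strategy is sound and, after a sign analysis, reduces to precisely the polynomial inequalities that the paper also verifies. For the bounds on $R_1$ your use of the quadratic $Q(R)$ is essentially a repackaging of the paper's calculation: since $Q(R)=-(\ga+1)\om_0^2(R-R_1)(R-R_2)$, one has
\[
\om_0(2-\ga)^2\,Q\!\left(-\tfrac{1}{2-\ga}\right)=-\tfrac{1}{4(\ga+1)}\,r_1(\om_0),
\]
where $r_1$ is the cubic the paper exhibits in Appendix~\ref{A:TOTHERIGHT}; the same holds, \emph{mutatis mutandis}, for $r_2$ and $r_3$. Your framing is cleaner in that it avoids the paper's explicit case-split on the sign of the ``rational part'' of $R_1+c$: once you know $Q\bigl(-\tfrac{1}{2-\ga}\bigr)>0$ you automatically have $R_1<-\tfrac{1}{2-\ga}<R_2$, so the step you include to ``rule out $-\tfrac{1}{2-\ga}>R_2$'' is actually redundant. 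Your localisation of the lower-bound candidate (using $-\tfrac{4}{(4-3\ga)(2-\ga)}<-\tfrac{1}{2-\ga}<R_2$ together with $Q<0$) is a valid replacement for the paper's separate verification that the rational part has a definite sign, though note that what you really need there is the trivial inequality $4-3\ga<4$, not $(4-3\ga)(2-\ga)<4$. As in the paper, the remaining cubic sign checks will still require interval arithmetic or an equivalent case analysis in $\ga$.

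Where you depart most substantively from the paper is the sign of $W_1$. The paper rewrites $W_1$ as a quotient via~\eqref{eq:WRidentity}, reduces the problem to the positivity of another explicit polynomial $L(\om_0)$, and invokes interval arithmetic again. Your intermediate-value argument---endpoint evaluations at $y_F$ and $y_f$, together with the observation that the constraint system~\eqref{eq:rho1quad},~\eqref{eq:RWlinear} specialised to $W=0$ forces $3\om_0^2-(5-3\ga)\om_0-(2-\ga)=(\om_0-(2-\ga))(3\om_0+1)=0$---is genuinely different and removes the need for the interval-arithmetic step here entirely. It does rely on $R_1<0$ (to exclude the $R=0$ branch of the factored quadratic), so in a writeup you should prove the $R_1$ bounds first and cite them before running the $W_1$ argument; your exposition already implicitly does so. Overall the proposal is correct, with a cleaner organisation for the $R_1$ bounds and a more elementary route for $W_1$.
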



\begin{proof}
The proof relies in part on interval arithmetic and it is presented in detail in Appendix~\ref{A:TOTHERIGHT}.
\end{proof}


\begin{proposition}[Positivity of $\det\mathcal{A}_N$]\label{P:AN}
Let $\gamma\in(1,\frac43)$ be given and let $y_\ast\in[y_f(\ga), y_F(\ga)]$.
Let $A_0,A_1,A_2$ be functions of $\rho_0,\om_0, R,W$ given by~\eqref{E:ATWODEF}--\eqref{E:AZERODEF}
and assume that $R=R_1$ and $W=W_1$, where the branch $(R_1,W_1)$ is defined in Lemma~\ref{L:BRANCHES}. 
\begin{enumerate}
\item[{\em (i)}]
The following inequalities hold:
\begin{align}
A_2& >0,\label{ineq:A2} \\
4A_2+A_1&>0,\label{ineq:detA20} \\
4A_2+2A_1+A_0 &>0 \label{ineq:dNdetA0}
\end{align}
\item[{\em (ii)}]
There exist constants $c_1,c_2>0$, depending only on $\ga$,
so that
\be\label{E:ANBOUNDS}
c_1N^2\leq\det\mathcal{A}_N\leq c_2N^2, \ \ N\ge 2.
\ee
In particular, the matrix $\mathcal A_N$ is invertible for all $N\ge2$ and the formal Taylor coefficients $(\rho_N,\om_N)$ are well-defined
through the formula
\be\label{E:RHONOMNDEF}
\begin{pmatrix}
\rho_N\\
\om_N
\end{pmatrix}=\cala_N^{-1}\begin{pmatrix}
\mathcal{F}_N\\
\mathcal{G}_N
\end{pmatrix}, \ \ N\ge 2,
\ee
where the source terms $\mathcal F_N$, $\mathcal G_N$ are defined in Lemma~\ref{Lem:N}. 
\item[(iii)]
There exists a constant $\beta_0=\beta_0(y_\ast, \gamma)>0$ such that  
\begin{align}
| \rho_N|\leq  \frac{\beta_0}{N} \left(  |\mathcal F_N| + \frac{1}{N} |\mathcal G_N| \right)\label{recR1} \\
|\om_N| \leq \frac{\beta_0}{N} \left(  |\mathcal G_N| + \frac{1}{N} |\mathcal F_N| \right)\label{recW1}.
\end{align} 
\end{enumerate}
\end{proposition}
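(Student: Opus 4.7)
The statement decomposes naturally along its three parts, and the three inequalities in (i) are precisely tailored to deliver (ii).

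\emph{Part (i).} Substituting the explicit expressions for $R_1$ and $W_1$ from \eqref{E:R1} and \eqref{eq:W1(R1)} into the formulas \eqref{E:ATWODEF}--\eqref{E:AZERODEF} reduces $A_2$, $4A_2+A_1$, and $4A_2+2A_1+A_0$ to concrete algebraic functions of $(\ga,\om_0)$ alone, each involving a single radical $\sqrt{\om_0^3 s(\om_0)}$. The admissible parameter region is the compact set
\[
\bigl\{(\ga,\om_0):\ \ga\in[1,\tfrac43],\ \om_0\in[\tfrac{4-3\ga}{3},\,2-\ga]\bigr\}
\]
by Lemma~\ref{lemma:rho0om0}. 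I would rewrite each inequality in the form $P(\ga,\om_0)>Q(\ga,\om_0)\sqrt{\om_0^3 s(\om_0)}$, verify the sign of $P$, and square to obtain a pure polynomial inequality of moderate but large degree in $(\ga,\om_0)$. Because the resulting polynomials are far too intricate to analyse by hand, their strict positivity on the closed admissible region is to be checked by rigorous interval arithmetic, following the scheme of Appendix~\ref{A:IADETERMINANTS}. This is the principal technical obstacle and the only genuinely hard step in the proposition.

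\emph{Part (ii).} Once (i) is granted, (ii) is a short one-variable calculus argument. By Lemma~\ref{lemma:detAN}, $\det\mathcal{A}_N$ is a quadratic $A_2 N^2+A_1 N+A_0$ in $N$ (times the fixed positive factor $y_*^2$), and the coefficients are continuous, hence bounded, on the compact admissible region; this immediately gives the upper bound $\det\mathcal{A}_N\leq c_2 N^2$. For the lower bound I use the three inequalities of (i) in concert: the derivative $\partial_N\det\mathcal{A}_N = 2A_2 N+A_1$ satisfies $\partial_N\det\mathcal{A}_N\geq 4A_2+A_1>0$ for all real $N\geq 2$, so $N\mapsto\det\mathcal{A}_N$ is strictly increasing on $[2,\infty)$, while $\det\mathcal{A}_2 = 4A_2+2A_1+A_0>0$ supplies a positive starting value. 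Hence $\det\mathcal{A}_N\geq\det\mathcal{A}_2>0$ for every $N\geq 2$, and since $\det\mathcal{A}_N/N^2\to A_2>0$ as $N\to\infty$, the continuous ratio $\det\mathcal{A}_N/N^2$ attains a positive infimum $c_1>0$ on $[2,\infty)$. The invertibility of $\mathcal{A}_N$ and the representation \eqref{E:RHONOMNDEF} follow immediately.

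\emph{Part (iii).} This is the clean payoff via Cramer's rule applied to \eqref{E:RECURSIVE}:
\[
\begin{pmatrix}\rho_N\\ \om_N\end{pmatrix} = \frac{1}{\det\mathcal{A}_N}\begin{pmatrix}\mathcal{A}_{22}\mathcal{F}_N - \mathcal{A}_{12}\mathcal{G}_N\\ -\mathcal{A}_{21}\mathcal{F}_N + \mathcal{A}_{11}\mathcal{G}_N\end{pmatrix}.
\]
Inspection of \eqref{E:A11}--\eqref{E:A22} shows that $\mathcal{A}_{11}$ and $\mathcal{A}_{22}$ are affine in $N$ (hence $|\mathcal{A}_{ii}|\leq CN$ for $N\geq 2$), whereas $\mathcal{A}_{12}$ and $\mathcal{A}_{21}$ are independent of $N$ and uniformly bounded on the compact admissible region. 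Combining these bounds with $\det\mathcal{A}_N\geq c_1 N^2$ from (ii) yields
\[
|\rho_N|\leq \frac{CN}{c_1 N^2}|\mathcal{F}_N| + \frac{C}{c_1 N^2}|\mathcal{G}_N| = \frac{\beta_0}{N}\Bigl(|\mathcal{F}_N| + \frac{1}{N}|\mathcal{G}_N|\Bigr),
\]
and the symmetric estimate for $|\om_N|$, which are exactly \eqref{recR1}--\eqref{recW1} with $\beta_0=\beta_0(y_*,\ga)$ absorbing the constants.
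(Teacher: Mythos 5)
Your parts (ii) and (iii) coincide with the paper's proof: the lower bound $\det\mathcal{A}_N\ge\det\mathcal{A}_2>0$ via monotonicity of the quadratic, the limit argument for $c_1$, and the Cramer's-rule estimate using the fact that $\mathcal{A}_{11},\mathcal{A}_{22}$ are affine in $N$ while $\mathcal{A}_{12},\mathcal{A}_{21}$ are $N$-independent — these are exactly what the paper does (the paper merely compresses (ii) and (iii) into single lines, but your filled-in reasoning is what is meant).

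For part (i), however, your proposed route through the interval-arithmetic verification is genuinely different from the paper's. You suggest substituting the explicit formula \eqref{E:R1} for $R_1$ (with its single radical $\sqrt{\om_0^3 s(\om_0)}$), isolating the radical, and squaring to obtain a pure polynomial inequality in $(\ga,\om_0)$. This would in principle work, but the resulting polynomials are of considerably higher degree, and the squaring step requires a separate verification of the sign of the rational part, precisely where boundary degeneracies (e.g.\ $W_1=0$ at $y_*=y_f$, $A_2\to$ something at endpoints) tend to be delicate. The paper's Appendix~\ref{A:IADETERMINANTS} takes a different and lighter-weight route: it never substitutes the radical at all. Instead it uses only the \emph{a priori} sign and interval bounds $R_1\in\bigl(-\tfrac{4}{(4-3\ga)(2-\ga)},-\tfrac{1}{2-\ga}\bigr)$ and $W_1\ge0$ already established in Proposition~\ref{prop:R1W1}, together with interval-arithmetic verification that the \emph{coefficients} of $R$ and $W$ in $A_2$, $4A_2+A_1$, $4A_2+2A_1+A_0$ have definite signs (and, on subregions where the sign of a $W$-coefficient flips, a comparison of $\om_0 W$ with $\om_0^2 R$ via the ratio $B(R,\om_0)$). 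This keeps every polynomial that interval arithmetic must certify at low degree. The ``substitute-and-square'' technique you describe is what the paper uses to prove the bounds on $R_1$ in Proposition~\ref{prop:R1W1} itself (Appendix~\ref{A:TOTHERIGHT}); the determinant inequalities then piggy-back on those bounds rather than repeating the radical manipulation. Your approach is not wrong, but you should be aware it shifts substantially more work onto the computer-assisted step and would require a careful sign check before squaring; the paper's factorisation of the problem (bounds on $R_1,W_1$ first, then linear-in-$R,W$ sign analysis for the determinant coefficients) is the more economical decomposition.
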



\begin{proof}
{\em Proof of part {\em (i)}.}
The proof of~\eqref{ineq:A2}--\eqref{ineq:dNdetA0} relies on interval arithmetic and it is presented in detail in Appendix~\ref{A:IADETERMINANTS}.

\noindent
{\em Proof of part {\em (ii)}.}
Since $\det\mathcal{A}_2=y_*^2\big(4A_2+2A_1+A_0\big)>0$ by~\eqref{ineq:dNdetA0} and, for $N\ge 2$,
$\frac{\dif}{\dif N}\det\mathcal{A}_N=y_*^2\big(2NA_2+A_1\big)\geq y_*^2\big(4A_2+A_1\big)$, it follows from~\eqref{ineq:detA20}--\eqref{ineq:dNdetA0}
that 
\begin{align}
&\det\mathcal{A}_2=y_*^2\big(4A_2+2A_1+A_0\big)>0,\\
&\frac{\dif}{\dif N}\det\mathcal{A}_N=y_*^2\big(2NA_2+A_1\big)\geq y_*^2\big(4A_2+A_1\big)>0.\label{ineq:dNdetA}
\end{align}
These estimates then easily imply~\eqref{E:ANBOUNDS}.
Claim~\eqref{E:RHONOMNDEF} is an obvious consequence of the invertibility of $\cala_N$ and Lemma~\ref{Lem:N}. 

\noindent
{\em Proof of part {\em (iii)}.}
From~\eqref{E:RHONOMNDEF} it follows that
\begin{align}
\rho_N = \frac{\mathcal{A}_{22}}{\det \mathcal{A}_N} \mathcal F_N - \frac{\mathcal{A}_{12}}{\det \mathcal{A}_N } \mathcal G_N, \label{recR}  \\
\om_N = \frac{ \mathcal{A}_{11}}{\det \mathcal{A}_N} \mathcal G_N- \frac{\mathcal{A}_{21}}{\det \mathcal{A}_N} \mathcal F_N, \label{recW}
\end{align} 
and thus~\eqref{recR1}--\eqref{recW1} follow directly from~\eqref{E:ANBOUNDS} and~\eqref{E:A11}--\eqref{E:A22}.

\end{proof}


\subsection{Larson-Penston-Hunter- (LPH-) type solutions}\label{subsec:LPH}

In order to distinguish the relevant solution branch for the first order Taylor coefficients, we compare directly to the situation in the case $\ga=1$. 
\begin{lemma}
Let $\ga\in(1,\frac43)$, $y_*\in[y_f(\ga),y_F(\ga)]$, and consider the functions $R_i$, $i=1,2$ as in Lemma \ref{L:BRANCHES} as functions of both $\om_0(y_*)$ and $\ga$. As $\ga\to1$, these coefficients satisfy the limits
\beqa
R_1(\om_0)\to&\, \frac{1-4\om_0-|1-2\om_0|}{2\om_0},\\
R_2(\om_0)\to&\, \frac{1-4\om_0+|1-2\om_0|}{2\om_0}.
\eeqa
\end{lemma}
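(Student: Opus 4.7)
The plan is to substitute the explicit formulas \eqref{E:R1}--\eqref{E:R2} from Lemma \ref{L:BRANCHES} and compute the limit coefficient-by-coefficient as $\ga\to 1$. The quotient structure $R_i = \frac{(9-7\ga)\om_0^2 - 8\om_0^3 \mp \sqrt{\om_0^3 s(\om_0)}}{2\om_0^3(\ga+1)}$ reduces the problem to understanding two elementary limits: the polynomial numerator outside the square root, and the discriminant $s(\om_0)$ under the square root. The trivial limits $(9-7\ga)\om_0^2 - 8\om_0^3 \to 2\om_0^2 - 8\om_0^3$ and $2\om_0^3(\ga+1) \to 4\om_0^3$ are immediate upon inspection.

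The only nontrivial step is computing $\lim_{\ga\to 1} s(\om_0)$ and identifying that the limiting polynomial is a perfect square. Using the definition of $s$ in Lemma \ref{L:BRANCHES}, each of the four coefficients has an elementary $\ga\to1$ limit: $-4(4-3\ga)(\ga+1)(\ga-1)(2-\ga) \to 0$, $(\ga-1)(\ga^2-5\ga+5)+\ga^2+6\ga-3 \to 4$, $-8(3\ga^2-15\ga+14)\to -16$, and $8(5-3\ga)\to 16$. Therefore
\begin{equation*}
s(\om_0) \xrightarrow[\ga\to 1]{} 4\om_0 - 16\om_0^2 + 16\om_0^3 = 4\om_0(1-2\om_0)^2,
\end{equation*}
and the factorisation as a square is the crucial observation: it makes the square root available in closed form as
\begin{equation*}
\sqrt{\om_0^3 s(\om_0)} \xrightarrow[\ga\to 1]{} \sqrt{4\om_0^4(1-2\om_0)^2} = 2\om_0^2\,|1-2\om_0|.
\end{equation*}

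Substituting these three limits into \eqref{E:R1}--\eqref{E:R2} and cancelling the common factor $2\om_0^2$ from numerator and denominator gives exactly the claimed expressions
\begin{equation*}
R_1 \to \frac{1 - 4\om_0 - |1-2\om_0|}{2\om_0}, \qquad R_2 \to \frac{1 - 4\om_0 + |1-2\om_0|}{2\om_0}.
\end{equation*}
The main conceptual obstacle is simply recognising the perfect-square structure of the limit of $s(\om_0)$; without that, the $\ga\to 1$ limit would not be expressible in closed algebraic form. Since the polynomial coefficients are concrete constants, the computation is routine once that factorisation is noted, and the appearance of the absolute value $|1-2\om_0|$ is exactly what produces two continuous branches meeting at $\om_0 = \tfrac12$, matching the isothermal picture described in the introduction (cf.\ Figure \ref{fig:gamma=1}).
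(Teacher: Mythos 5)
Your proof is correct and carries out exactly the direct substitution the paper asserts but does not write out; the computation of the limiting coefficients of $s(\om_0)$, the recognition of the perfect-square factorisation $4\om_0(1-2\om_0)^2$, and the resulting cancellation are all accurate.
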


\begin{proof}
The identities for the limit as $\ga\to1^+$ for $R_i$ follow directly from the identities \eqref{E:R1}--\eqref{E:R2}.
\end{proof}

Thus, to maintain compatibility with the LP solution in the case $\ga=1$, we note that, in that case, the sonic point lies in the interval $(2,3)$ with $\frac{\rho'(y_*)y_*}{\rho(y_*)}=-1$ (compare \cite{Guo20}), and hence the LP-type branch, for $\ga>1$, should be chosen to be the $1$-branch. In this case, we find that the limit of $W_1(\om_0)$ as $\ga\to1$ is $1-2\om_0$, again in compatibility with the $\ga=1$ case resolved  in \cite{Guo20}.

This motivates the following definition.

\begin{definition}[Larson-Penston-Hunter (LPH) type solutions]\label{def:LPH}
Let $\gamma\in(1,\frac43)$ be given and let $y_\ast\in[y_f(\ga),y_F(\ga)]$.
We say that a sequence $(\rho_N,\om_N)$, $N\in\mathbb N$ associated with a 
formal
power series expansion
\begin{align}\label{E:FORMAL2}
\rho(y)=\sum_{N=0}^\infty\rho_N(y-y_*)^N,\quad \om(y)=\sum_{N=0}^\infty\om_N(y-y_*)^N,
\end{align}
is of {\em Larson-Penston-Hunter (LPH) type}
if the following properties are satisfied
\begin{enumerate}
\item[{\em (i)}] 
\beq
G(y_*,\rho_0,\om_0)=0,\quad h(\rho_0,\om_0)=0.
\eeq
\item[{\em (ii)}] 
\be\label{E:RWCOND}
\rho_1:=\frac{\rho_0R_1}{y_\ast}, \ \ \ \  \om_1:=\frac{W_1}{y_*},
\ee 
where the pair $(R_1,W_1)$ corresponds to the branch defined by~\eqref{E:R1} and~\eqref{eq:W1(R1)} from Lemma~\ref{L:BRANCHES}.
\item[{\em (iii)}] 
For any $N\ge2$, the coefficients $(\rho_N,\om_N)$ satisfy the recursive relation~\eqref{E:RHONOMNDEF}.
\end{enumerate}
If the series~\eqref{E:FORMAL2} converge, we say that the functions $\rho$ and $\om$ are of LPH-type.
\end{definition}


\begin{remark}
As shown in Proposition~\ref{P:AN}, the matrix $\cala_N$ defined in Lemma~\ref{Lem:N} is indeed invertible for all $N\ge2$ and therefore 
for any LPH-type sequence the
coefficients $(\rho_N,\om_N)$, $N\ge2$ are therefore uniquely determined as functions of $\rho_0,\om_0,\rho_1,\om_1$. 
\end{remark}


\subsection{The induction argument and the series convergence}


In order to prove the convergence of the formal power series~\eqref{E:FORMAL} we prove the crucial lemma, which establishes favourable growth bounds for the coefficients $(\rho_N,\om_N)$. 
The proof is based on involved combinatorial arguments that are presented in Appendix~\ref{A:COMB}, culminating in Lemma~\ref{lem:FGbound}.


\begin{lemma}\label{L:INDUCTION}
Let $\gamma\in(1,\frac43)$ and $\alpha\in(1,2)$ be given. Let $(\rho_N,\om_N)$, $N\in\mathbb N$ be the coefficients in the formal Taylor expansion
of $\rho,\om$ about $y=y_\ast$ given by Proposition~\ref{P:AN}. Then there exists a constant $C>1$ such that for any $y_\ast\in[y_f(\gamma),y_F(\gamma)]$ the bounds
\begin{align}
\lv \rho_N\rv \le \frac{C^{N-\alpha}}{N^3}, \label{E:ASS1}\\
\lv \om_N\rv \le \frac{C^{N-\alpha}}{N^3}, \label{E:ASS2}
\end{align}
hold for all $N\ge2$.
\end{lemma}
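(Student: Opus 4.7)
The plan is to proceed by strong induction on $N$, using the closed-form recursion~\eqref{E:RHONOMNDEF} combined with the sharp quantitative bound~\eqref{recR1}--\eqref{recW1} from Proposition~\ref{P:AN}(iii). Observe that once $C$ is chosen large enough that the bounds~\eqref{E:ASS1}--\eqref{E:ASS2} hold trivially for $N=0$ and $N=1$ (using the explicit formulas of Lemmas~\ref{lemma:rho0om0}, \ref{L:BRANCHES}, and Proposition~\ref{prop:R1W1}, together with the uniform control over $y_\ast\in[y_f,y_F]$), as well as at $N=2$ (from the explicit formula $(\rho_2,\om_2)=\cala_2^{-1}(\mathcal F_2,\mathcal G_2)$), the estimates~\eqref{recR1}--\eqref{recW1} reduce the problem to establishing a bound of the form $|\mathcal F_N|+|\mathcal G_N|\lesssim C^{N-\alpha}/N^2$ in the inductive step. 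The factor $1/N$ in front of $\mathcal F_N, \mathcal G_N$ in~\eqref{recR1}--\eqref{recW1} will then provide the desired $1/N^3$ decay.

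For the inductive step, I would decompose $\mathcal F_N$ and $\mathcal G_N$ into three types of contributions: (a) pure convolution sums of the form $\sum_{k+j=N}(\rho\om)_k$, $\sum_{k+j+l=N}\rho_k\om_j\om_l$, and their cubic analogues; (b) contributions involving $y_\ast^{-k}$ factors arising from expanding $1/y$ about $y_\ast$, which are harmless since $y_\ast$ is bounded uniformly away from zero; and (c) the Fa\`a di Bruno terms $P_j$ with $\la_N=0$, which encode the non-integer power $\rho^{\ga-1}$. For type (a), the standard convolution estimate
\[
\sum_{k+j=N,\,1\le k,j\le N-1}\frac{C^{k-\alpha}}{k^3}\cdot\frac{C^{j-\alpha}}{j^3}\le \frac{c_\alpha\, C^{N-2\alpha}}{N^3}
\]
works perfectly: since $\alpha>1$, one has $C^{N-2\alpha}\le C^{N-\alpha-1}\le C^{N-\alpha}/C$, and the same template extends to triple convolutions by induction on the number of factors. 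The boundary terms where one index equals $0$ or $1$ must be separated out and treated using the explicit bounds on $\rho_0,\om_0,\rho_1,\om_1$; they produce at most a bounded constant multiplying $C^{N-\alpha}/N^3$, which is absorbed by taking $C$ large.

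The principal obstacle, and the place where all the real combinatorial work sits, is controlling the Fa\`a di Bruno multinomial sums
\[
P_N=\sum_{(\la_1,\dots,\la_N)\in M_N}\frac{(\ga-1)\cdots(\ga-|\la|)\,\rho_0^{\ga-1-|\la|}}{\la_1!\cdots\la_N!}\prod_{j=1}^N\rho_j^{\la_j},\qquad |\la|:=\sum_j\la_j,
\]
under the inductive hypothesis $|\rho_j|\le C^{j-\alpha}/j^3$. The difficulty is two-fold: the falling factorial $(\ga-1)\cdots(\ga-|\la|)$ does not vanish for non-integer $\ga$, so the sum over partitions is infinite in the number of terms; and the product $\prod \rho_j^{\la_j}$ must be packaged into the template $C^{N-\alpha}/N^3$ despite the fact that partitions of $N$ grow super-polynomially. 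I would attack this by first factoring out $C^{N-\alpha}\rho_0^{\ga-1}$, reducing the problem to bounding $\sum_{\la\in M_N}(\la_1!\cdots\la_N!)^{-1}\prod_j(c_\ga/j^3)^{\la_j}\cdot|(\ga-1)\cdots(\ga-|\la|)|\,\rho_0^{-|\la|}$ where $c_\ga$ is a $\ga$-dependent constant. The key observation is that $|(\ga-1)\cdots(\ga-m)|\le m!$ uniformly in $m$, and that the product $\prod(1/j^3)^{\la_j}$ combined with the restriction $\sum j\la_j=N$ is controlled by the Fa\`a di Bruno structure for the function $(1-x)^{\ga-1}$ evaluated at a point strictly inside its radius of convergence; summing over $|\la|$ then gives geometric decay in $C$. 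This is precisely the combinatorial content that the authors relegate to Lemma~\ref{lem:FGbound} in Appendix~\ref{A:COMB}, and once that $P_N$-type bound is in hand, the inductive closure follows by choosing $C=C(\ga)$ large enough that the aggregate constant multiplying $C^{N-\alpha}/N^3$ is at most $1$, using that $\alpha<2$ gives slack in the factor $C^{-\alpha}$ against the convolution constants.
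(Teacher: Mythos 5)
Your proposal is correct and follows essentially the same route as the paper: strong induction using the recursion from Proposition~\ref{P:AN}(iii), with all the combinatorial heavy lifting (the convolution bounds, the boundary-index terms, and the Fa\`a di Bruno control of $P_N$) delegated to Lemma~\ref{lem:FGbound}, and the induction closed by taking $C$ large using both $C^{-(\alpha-1)}$ and $C^{-(2-\alpha)}$ smallness, exactly as the paper does.
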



\begin{proof}
We use mathematical induction to prove the lemma. When $N=2$ clearly there exists a constant $\bar C=\bar C(y_\ast,\alpha)>0$ such that the claimed bounds hold true as the recursive relation~\eqref{E:RHONOMNDEF}  defining $(\rho_2,\om_2)$ involves only products of continuous functions composed with $(y_*,\rho_0,\om_0,\rho_1,\om_1)$, all of which are bounded. 

Suppose now that for some $N\geq3$, \eqref{E:ASS1}--\eqref{E:ASS2} hold for all $2\le m\le N-1$. This implies that the assumptions~\eqref{assumptionRm}--\eqref{assumptionWm} hold true and thus by Lemma~\ref{lem:FGbound} we conclude that~\eqref{FNbound}--\eqref{GNbound} hold.
Therefore, from Proposition~\ref{P:AN} and~\eqref{FNbound}--\eqref{GNbound} we obtain
\begin{align}
\lv \rho_N\rv \le \frac{c \beta_0\beta C^{N-\alpha}}{N^3} \left(\frac1{C^{\alpha-1}}+\frac1{C^{2-\alpha}}+\frac{1}{CN}\right),
\end{align}
for some universal constant $c>0$.
Similarly, 
\begin{align}
\lv \om_N\rv \le \frac{c \beta_0\beta C^{N-\alpha}}{N^3} \left(\frac1{C^{\alpha-1}}+\frac1{C^{2-\alpha}}+\frac{1}{CN}\right).
\end{align}
It is now clear that we can choose $C=C(\gamma,y_\ast)$ sufficiently large so that the claimed estimates~\eqref{E:ASS1}--\eqref{E:ASS2} hold at $N$. Since $y_\ast$ ranges over a compact interval and all the constants involved vary continuously in $y_\ast$, we may choose the constant $C$ above uniformly in $y_\ast\in[y_f(\gamma),y_F(\gamma)]$. We conclude by induction on $N$.
\end{proof}


\begin{theorem}\label{thm:Taylor}
Let $\gamma\in(1,\frac43)$ be given and for any $y_\ast\in[y_f(\ga),y_F(\ga)]$ consider the sequence $(\rho_N,\om_N)$, $N\in\mathbb N$ which corresponds to the 
formal Taylor coefficients associated with an LPH-type solution.
Then there exists a $\nu>0$ independent of $y_\ast$ such that the series
\[
\rho(y;y_\ast): = \sum_{N=0}^\infty \rho_N (y-y_\ast)^N, \ \
\om(y;y_\ast): = \sum_{N=0}^\infty \om_N (y-y_\ast)^N
\]
converge absolutely and the functions 
$(\rho(\cdot;y_*),\om(\cdot;y_*))$ are real analytic solutions to \eqref{eq:EPSS} on the interval $(y_*-\nu,y_*+\nu)$. Moreover, $y_*$ is a sonic point for the flow, there are no other sonic points on the interval, and the solutions are continuous with respect to $y_*\in[y_f,y_F]$.
\end{theorem}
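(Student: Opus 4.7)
The plan is to deduce Theorem \ref{thm:Taylor} as a direct consequence of Lemma \ref{L:INDUCTION}, adding short verifications that \eqref{eq:EPSS} is actually satisfied, that $y_*$ is the unique sonic point on the interval, and that the dependence on $y_*$ is continuous.

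Fix any $\alpha\in(1,2)$ and invoke Lemma \ref{L:INDUCTION} to obtain the uniform bound $|\rho_N(y_*)|, |\om_N(y_*)| \le C^{N-\alpha}/N^3$ with $C$ depending only on $\gamma$. The Cauchy--Hadamard criterion then gives a radius of convergence at least $1/C$, so the choice $\nu := 1/(2C)$ yields absolute and uniform convergence of both series on $[y_*-\nu, y_*+\nu]$, with $\nu$ independent of $y_*\in[y_f, y_F]$; the resulting functions $\rho(\cdot;y_*), \om(\cdot;y_*)$ are therefore real analytic. Joint continuity in $(y,y_*)$ follows from the Weierstrass M-test: each Taylor coefficient is continuous in $y_*$ (by Lemma \ref{lemma:rho0om0} for $(\rho_0,\om_0)$, Lemma \ref{L:BRANCHES} for $(R_1,W_1)$, and the formulas \eqref{E:RHONOMNDEF} for $N\ge2$), and the uniform estimate supplies a $y_*$-independent dominating geometric series.

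To check that $(\rho,\om)$ solve \eqref{eq:EPSS}, recall that the recursion \eqref{E:RECURSIVE} was derived by expanding the cleared-denominator form \eqref{eq:EPSSSonic1}--\eqref{eq:EPSSSonic2} of the ODE in Taylor series about $y_*$, so both sides of those identities agree at every order. Both sides are analytic on the interval of convergence, hence they agree identically. Dividing by $G$ at points where $G\neq0$ recovers \eqref{eq:EPSS} verbatim, and at $y=y_*$ the derivatives $(\rho'(y_*), \om'(y_*)) = (\rho_1, \om_1)$ are finite so that \eqref{eq:EPSS} also holds in the $y\to y_*$ limit. Condition (i) of Definition \ref{def:LPH} supplies $G(y_*;\rho_0,\om_0)=0$, so $y_*$ is a sonic point.

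The only non-routine step, and the main obstacle in the plan, is to rule out other sonic points on $(y_*-\nu, y_*+\nu)$. I would consider $\Phi(y) := G(y;\rho(y),\om(y))$, real analytic on the interval of convergence (shrinking $\nu$ if necessary to ensure $\rho>0$), with $\Phi(y_*)=0$. If $y_*$ were a non-isolated zero of $\Phi$, analyticity would force $\Phi\equiv 0$, whence the cleared identity $\Phi\rho' = y\rho h$ would force $h(\rho(y),\om(y))\equiv 0$ as well. By Lemma \ref{lemma:rho0om0} the pair $(\rho(y),\om(y))$ would then coincide with the unique sonic-curve parametrisation $y\mapsto(\rho_0(y),\om_0(y))$, so $(\rho_1, \om_1)$ would have to match the derivatives of that curve at $y_*$---a condition that, by the implicit formulas \eqref{def:f1f2} for the sonic curve, is incompatible with the LPH-branch values \eqref{E:R1} and \eqref{eq:W1(R1)}. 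Equivalently, a direct computation gives $\Phi'(y_*) = y_*\om_0\bigl[(\gamma+1)\om_0 R_1 + 4\om_0 - 2(4-3\gamma)\bigr]$, which is generically nonzero on the LPH branch, with any exceptional $y_*$ where this vanishes handled by passing to $\Phi''(y_*)$. By continuity of these finitely many Taylor coefficients in $y_*$, the neighborhood on which $\Phi$ is nonvanishing away from $y_*$ can be chosen uniformly in $y_*\in[y_f,y_F]$, which justifies shrinking $\nu$ a final time.
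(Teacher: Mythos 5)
Your plan tracks the paper's argument closely: invoke Lemma \ref{L:INDUCTION}, deduce a uniform radius of convergence, observe the solution property, then examine $G$ near $y_*$. The continuity statement via Weierstrass is also fine, and you have in fact correctly computed the linear Taylor coefficient of $\Phi=G(\cdot;\rho,\om)$ (your expression $(\gamma+1)\om_0 R_1 + 4\om_0 - 2(4-3\gamma)$ is the result of substituting $W_1 = 4-3\gamma-3\om_0-\om_0 R_1$ into the paper's $(\ga-1)\om_0 R_1 - 2\om_0 - 2W_1$).

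However, there is a genuine gap at the crucial sonic-point step. You assert that $\Phi'(y_*)$ is ``generically nonzero on the LPH branch, with any exceptional $y_*$ where this vanishes handled by passing to $\Phi''(y_*)$.'' This is not a proof: you need $\Phi'(y_*)\neq 0$ for every $y_*\in[y_f,y_F]$ (and in fact bounded away from zero) to obtain a uniform $\nu$, and the fallback to $\Phi''$ is not fleshed out and would in any case complicate the uniformity argument. The paper resolves this cleanly by keeping $R_1$ and $W_1$ separate: writing the coefficient as $y_*\om_0\left((\ga-1)\om_0 R_1 - 2\om_0 - 2W_1\right)$, and using $\om_0>0$ (Lemma \ref{lemma:rho0om0}), $R_1<0$, $W_1\geq 0$ (Proposition \ref{prop:R1W1}), each term has a definite sign, so $\Phi'(y_*)<0$ strictly. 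Your eliminated-$W_1$ form obscures this --- indeed, applying only the bound $R_1<-\tfrac1{2-\gamma}$ to your expression does not obviously give negativity, which is presumably why you hedged. This sign determination is not a minor detail: besides giving a $y_*$-independent $\nu$ by continuity over the compact interval, it establishes that $G>0$ on $(y_*-\nu,y_*)$ and $G<0$ on $(y_*,y_*+\nu)$, which is the subsonic/supersonic structure used throughout Sections \ref{S:RIGHT} and \ref{S:LEFT}.

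Your alternative argument (showing $\Phi\equiv 0$ is impossible because the solution would have to lie on the sonic curve, incompatibly with the LPH branch) has the same defect: you assert incompatibility of the LPH data with the sonic-curve derivatives without proving it, and in fact that incompatibility is precisely the statement $\Phi'(y_*)\neq 0$. Moreover, isolated-zero arguments via analyticity do not by themselves give a uniform $\nu$ across $y_*\in[y_f,y_F]$; the quantitative $\Phi'(y_*)<0$ is what makes the compactness argument work.
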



\begin{proof}
Let $\alpha\in(1,2)$ be fixed. By Lemma~\ref{L:INDUCTION} there exists a constant $C=C(\gamma,\alpha)$ such that 
\begin{align}
\lv \sum_{N=2}^\infty \rho_N (y-y_\ast)^N\rv \le  \sum_{N=2}^\infty |\rho_N| |y-y_\ast|^N
\le \sum_{N=2}^\infty \frac{|C(y-y_\ast)|^N}{C^\alpha N^3} <\infty,
\end{align}
and therefore the formal power series $\sum_{N=0}^\infty \rho_N (y-y_\ast)^N$ converges absolutely as long as $|y-y_\ast|<\nu$, for any $0<\nu<\frac1C$. Similarly, the power series $\sum_{N=0}^\infty \om_N (y-y_\ast)^N$ also converges absolutely as long as $|y-y_\ast|<\nu$. The real analyticity is clear. Recalling~\eqref{def:G} we have
\begin{align}
G(y;\rho,\om)& =\ga\rho(y)^{\ga-1}-y^2\om(y)^2\notag \\
&  = \ga \left(\sum_{N=0}^\infty \rho_N (y-y_\ast)^N\right)^{\ga-1} - (y_\ast+(y-y_\ast))^2\left(\sum_{N=0}^\infty \om_N (y-y_\ast)^N\right)^2 \notag\\
& = \left(\ga(\ga-1)\rho_0^{\ga-2}\rho_1 -2y_\ast\om_0(1+y_\ast\om_1)\right)(y-y_\ast) + O(|y-y_\ast|^2) \notag \\
& = \left((\ga-1) y_\ast^2\om_0^2\frac{\rho_1}{\rho_0} -2y_\ast\om_0(1+y_\ast\om_1)\right)(y-y_\ast) + O(|y-y_\ast|^2) \notag \\
& = y_\ast\om_0 \left((\ga-1)y_\ast\om_0\frac{\rho_1}{\rho_0} - 2 - 2y_\ast\om_1 \right)(y-y_\ast) + O(|y-y_\ast|^2) \notag\\
& =  y_\ast\om_0 \left((\ga-1)\om_0 R_1 - 2 - 2W_1 \right)(y-y_\ast) + O(|y-y_\ast|^2),
\end{align}
where we have used the sonic condition $G(y_\ast,\rho,\om)=0$ in the second and the third line, and the notation $(R_1,W_1)$, see Lemma~\ref{L:BRANCHES}.
Now observe that $\om_0>0$ by Lemma~\ref{lemma:rho0om0}, and $R_1<0$, $W_1\geq0$ by Proposition~\ref{prop:R1W1}. Therefore 
$(\ga-1)\om_0 R_1 - 2 - 2W_1 <0$ and therefore, upon possibly choosing a smaller $\nu>0$, it follows that $G(y;\rho,\om)$ is strictly positive for $y\in(y_\ast-\nu,y_\ast)$ and strictly negative
for $y\in(y_\ast,y_\ast+\nu)$. In particular, the right-hand side of~\eqref{eq:EPSS} is well-defined and it is straightforward to verify that $(\rho,\om)$ is a solution to~\eqref{eq:EPSS}.
\end{proof}

In the final proposition of this section, we collect some remaining facts concerning the LPH Taylor expansions.
\begin{prop}\label{prop:far-field}
Let $\ga\in(1,\frac43)$. For $y_*\in[y_f(\ga),y_F(\ga)]$, the following properties hold at the sonic point:
\begin{itemize}
\item[(i)] The branch $(R_1,W_1)$ that we take for the re-scaled first derivatives at the sonic point $y_*$ satisfies $(R_1,W_1)(y_f)=(-\frac{2}{2-\ga},0)$, $W_1(y_*)>0$ for all $y_*\in(y_f,y_F]$.
\item[(ii)] The local LPH-type solution obtained by Theorem \ref{thm:Taylor} with $y_*=y_f$ is exactly the far-field solution $$(\rho(y;y_f),\om(y;y_f))\equiv(\rho_f(y),\om_f(y))=(ky^{-\frac{2}{2-\ga}},2-\ga).$$
\item[(iii)] The local LPH-type solution obtained by Theorem \ref{thm:Taylor} with $y_*=y_F$ is not the Friedman solution: $(\rho(\cdot;y_F),\om(\cdot;y_F))\neq(\rho_F,\om_F)$.
\end{itemize}
\end{prop}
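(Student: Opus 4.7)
The three items can be dispatched using the algebraic identities and bounds already at hand.

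For (i), note that $W_1(y_f)=0$ and $W_1(y_*)>0$ for $y_*\in(y_f,y_F]$ are explicit conclusions of Proposition~\ref{prop:R1W1}. To pin down $R_1(y_f)$, the cleanest route is not to simplify the square-root formula~\eqref{E:R1} directly but to invoke the linear constraint~\eqref{eq:RWlinear}. Since $\omega_0(y_f)=2-\ga$ by Lemma~\ref{lemma:rho0om0} and $W_1(y_f)=0$, the constraint $W_1 = 4-3\ga - 3\om_0 - \om_0 R_1$ collapses at $y_*=y_f$ to $0 = -2 - (2-\ga)R_1(y_f)$, giving $R_1(y_f)=-\tfrac{2}{2-\ga}$.

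For (ii), the plan is to show that the far-field solution is itself an LPH-type solution at $y_*=y_f$ and then appeal to the uniqueness of the Taylor coefficients at the sonic point. Concretely, $(\rho_f,\om_f)$ is real-analytic on $(0,\infty)$ (with $\rho_f(y_f)>0$, so that $\rho_f^{\ga-1}$ is analytic there as well), is an exact solution of \eqref{eq:EPSS}, and has $y_f$ as its unique sonic point. Computing its first-order Taylor data at $y_f$ gives $\om_1=0$ (because $\om_f$ is constant) and $y_f\rho_f'(y_f)/\rho_f(y_f)=-\tfrac{2}{2-\ga}$, which by part (i) matches $(R_1,W_1)(y_f)$. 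The Taylor coefficients of $(\rho_f,\om_f)$ at $y_f$ automatically satisfy the recursive relation of Lemma~\ref{Lem:N}, since that relation was obtained by matching Taylor coefficients of the ODE; and by Proposition~\ref{P:AN}, this recursion has a unique solution for $N\ge 2$ once $(\rho_0,\om_0,\rho_1,\om_1)$ are fixed. Hence the Taylor expansion of $(\rho_f,\om_f)$ at $y_f$ coincides coefficient-by-coefficient with the LPH expansion, and by analytic continuation the two solutions agree on the whole interval of convergence.

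For (iii), I would argue by the contrapositive. If $(\rho(\cdot;y_F),\om(\cdot;y_F))$ equalled the Friedman solution $(\rho_F,\om_F)$, both of which are constant in $y$, then in particular $\rho_1=\om_1=0$, i.e.\ $R_1(y_F)=W_1(y_F)=0$. But Proposition~\ref{prop:R1W1} provides the strict bound $R_1<-\tfrac{1}{2-\ga}<0$ uniformly on $[y_f,y_F]$, so $R_1(y_F)\neq 0$. This rules out the Friedman solution.

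None of these steps introduces a genuine obstacle, as the heavy lifting has already been done in Lemma~\ref{lemma:rho0om0}, Proposition~\ref{prop:R1W1}, Lemma~\ref{Lem:N}, and Proposition~\ref{P:AN}. The only subtlety to verify in part (ii) is that the far-field Taylor expansion really does satisfy the same recursion as the LPH one; this follows because the derivation of the recursive relations in Lemma~\ref{Lem:N} used only the ODE, the Fa\`a di Bruno expansion of $\rho^{\ga-1}$, and the sonic conditions, all of which apply equally to $(\rho_f,\om_f)$ at $y_f$.
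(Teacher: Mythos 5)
Your proof is correct and follows essentially the same route as the paper's: part (i) and (iii) are read off from Proposition~\ref{prop:R1W1} (you cite $R_1\neq0$ where the paper cites $W_1(y_F)>0$, and you derive $R_1(y_f)$ from the linear constraint~\eqref{eq:RWlinear} rather than from~\eqref{E:R1} directly, but these are cosmetic variations), and part (ii) is the same uniqueness-of-Taylor-coefficients argument followed by analytic continuation.
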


\begin{proof}
(i) By Lemma \ref{lemma:rho0om0}, we know  $\om_0(y_*)\in[\frac{4-3\ga}{3},2-\ga]$. Then, by Proposition \ref{prop:R1W1}, we have $W_1(\om_0)\geq 0$ for all $\om_0\in[\frac{4-3\ga}{3},2-\ga]$ with equality if and only if $\om_0=2-\ga$. In addition, $R_1(2-\ga)=-\frac{2}{2-\ga}$ by direct computation from \eqref{E:R1}.\\
(ii) To see that the solution obtained at $y_f$ is the far-field solution, it is enough to note that $\rho_0$ is uniquely determined by $y_*$ also through the relation $\rho_0=f_1(\om_0(y_*))$, and hence we have that $\om_0(y_f)=2-\ga=\om_f(y_f)$ and $\rho_0(y_f)=\rho_f({y}_f)$. Thus the solution locally around the sonic point is determined entirely by the choice of the branch $(R_1,W_1)$ for the first order terms in the Taylor expansion. As $W_1=0$, $R_1=-\frac{2}{2-\ga}$ are equal to the corresponding values for the far-field solution, the Taylor expansions of the solution derived from the choice $y_*=y_f$ and the far-field solution are equal. Thus the solutions are locally equal (as both are analytic functions) and, by uniqueness theory for the ODE system away from the sonic point and $y=0$, therefore globally equal on all of $(0,\infty)$.\\
(iii) As in item (i), we know that $W_1(y_F)>0$ by Proposition \ref{prop:R1W1}, hence $\om_1(y_F)>0$ also. As the Friedman solution satisfies $\om_F'(y)\equiv0$ for all $y$, the two solutions are not equal.
\end{proof}



\section{Solution to the right of the sonic point}\label{S:RIGHT}

Now that we have established the existence of a local solution to \eqref{eq:EPSS} around each choice of sonic point $y_*\in[y_f,y_F]$, we show in this section that the local solution can be extended to the right on the whole interval $(y_*,\infty)$ while remaining strictly supersonic and satisfying suitable asymptotics. For $y_*=y_f$, we know from Proposition \ref{prop:far-field} the obtained solution is simply the far-field solution $(\rho_f,\om_f)$ which is globally defined and supersonic for all $y>y_f$. We will therefore restrict in the sequel to the case $y_*\in(y_f,y_F]$.

The strategy of the section is to identify certain inequalities that propagate along the flow to the right and provide qualitative control on the solutions. Because the system \eqref{eq:EPSS} is non-autonomous, we cannot argue simply from a fixed phase plane analysis, but instead we make use of dynamical arguments that prevent the crossing of certain critical values by particular quantities fundamental to the flow. After a number of technical lemmas, we prove the key continuation estimates in Proposition \ref{prop:continuationright}. We then demonstrate that the flow remains strictly supersonic to the right and so deduce that it exists globally on $(y_*,\infty)$ in Lemma \ref{lemma:supersonic}. Finally, in Lemmas \ref{lemma:asymptotics} and \ref{lemma:rightmonotonicity}, we study the asymptotics and monotonicity of the solution.

For each $y_*\in[y_f(\ga),y_F(\ga)]$, let $(\rho,\om)=(\rho(\cdot;y_*),\om(\cdot;y_*))$ be the local LPH-type solution of Theorem \ref{thm:Taylor}. We define the maximal extension time to the right as 
\beq
y_{\max}(y_*):=\sup\{y>y_*\,|\,(\rho,\om) \text{ extends as a strictly supersonic solution of \eqref{eq:EPSS} on }(y_*,y)\},
\eeq
where we recall the definition of supersonicity from Definition \ref{def:subsonicitysupersonicity}.

The first lemma in this section states and proves the basic estimates that we will use to propagate the solution and verifies that they hold in a small neighbourhood of the sonic point.
\begin{lemma}[Initial inequalities]\label{lemma:rightinitial}
Let $\ga\in(1,\frac43)$, $y_*\in(y_f,y_F]$ (recall that we suppress the dependence of $y_f$, $y_F$ on $\ga$ where clear) and let $(\rho,\om)$ be the unique LPH-type solution to \eqref{eq:EPSS} to the right of $y_*$ given by Theorem \ref{thm:Taylor}. Then there exists $\bar\nu>0$ (depending on $y_*$) such that for $y\in(y_*,y_*+\bar\nu)$, the strictly supersonic flow satisfies also the inequalities
\beq\label{ineq:rightinitial}
\frac{4-3\ga}{3}<\om(y)<2-\ga,\quad \frac{4\pi y^2\rho\om}{4-3\ga}-\frac{2}{2-\ga}\ga\rho^{\ga-1}>0,\quad -\frac{4}{(4-3\ga)(2-\ga)}<\frac{\rho'y}{\rho}<-\frac{1}{2-\ga}.
\eeq
\end{lemma}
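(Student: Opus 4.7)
The plan is to verify each of the four inequalities strictly at the sonic point $y_\ast$ itself (or, for supersonicity, in an immediate punctured right-neighborhood) and then invoke the continuity of $\rho,\rho',\om,\om'$ on the analyticity interval $(y_\ast-\nu,y_\ast+\nu)$ supplied by Theorem \ref{thm:Taylor} to extend every inequality to a common right-neighborhood $(y_\ast,y_\ast+\bar\nu)$.

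Strict supersonicity $G(y;\rho,\om)<0$ on $(y_\ast,y_\ast+\bar\nu)$ is in fact already contained in the final step of the proof of Theorem \ref{thm:Taylor}: expanding $G$ around $y_\ast$ gives
\[
G(y;\rho,\om)=y_\ast\om_0\bigl((\ga-1)\om_0 R_1-2-2W_1\bigr)(y-y_\ast)+O(|y-y_\ast|^2),
\]
whose leading coefficient is strictly negative since $\om_0>0$, $R_1<0$, and $W_1\geq 0$ by Proposition \ref{prop:R1W1}. For the bounds on $\om$, Lemma \ref{lemma:rho0om0} gives $\om_0\in[\tfrac{4-3\ga}{3},2-\ga]$ with the upper equality $\om_0=2-\ga$ attained only at $y_\ast=y_f$, which is excluded by hypothesis; hence $\om_0<2-\ga$ strictly, and continuity of $\om$ propagates the upper bound. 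For the lower bound, Proposition \ref{prop:R1W1} yields $W_1>0$ for $y_\ast>y_f$, so $\om'(y_\ast)=W_1/y_\ast>0$, and combined with $\om_0\geq\tfrac{4-3\ga}{3}$ a first-order Taylor expansion gives $\om(y)>\om_0\geq\tfrac{4-3\ga}{3}$ immediately to the right of $y_\ast$.

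The bound $-\tfrac{4}{(4-3\ga)(2-\ga)}<\tfrac{\rho'y}{\rho}<-\tfrac{1}{2-\ga}$ follows by noting that at $y=y_\ast$ the quantity $\tfrac{\rho'(y_\ast)y_\ast}{\rho(y_\ast)}$ equals $\tfrac{y_\ast\rho_1}{\rho_0}=R_1$, which by Proposition \ref{prop:R1W1} lies strictly in the required open interval. Continuity of $\rho$ and $\rho'$ then yields the bound on a right-neighborhood after shrinking $\bar\nu$.

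The only inequality requiring a short computation is the middle one. Using the sonic conditions $\ga\rho_0^{\ga-1}=y_\ast^2\om_0^2$ and $h(\rho_0,\om_0)=0$, i.e.~$\tfrac{4\pi\rho_0\om_0}{4-3\ga}=2\om_0^2+(\ga-1)\om_0+(\ga-1)(2-\ga)$, I rewrite
\[
\frac{4\pi y_\ast^2 \rho_0 \om_0}{4-3\ga}-\frac{2}{2-\ga}\ga\rho_0^{\ga-1}=\frac{y_\ast^2(\ga-1)}{2-\ga}\bigl[(2-\ga)^2+(2-\ga)\om_0-2\om_0^2\bigr]=\frac{y_\ast^2(\ga-1)}{2-\ga}\bigl((2-\ga)-\om_0\bigr)\bigl(2\om_0+(2-\ga)\bigr),
\]
which is strictly positive precisely when $\om_0<2-\ga$, and this holds under the standing assumption $y_\ast>y_f$. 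Continuity then extends the strict positivity to $(y_\ast,y_\ast+\bar\nu)$. I do not anticipate any serious obstacle: all four strict signs at $y_\ast$ are already in place from Lemma \ref{lemma:rho0om0}, Proposition \ref{prop:R1W1}, and the proof of Theorem \ref{thm:Taylor}, and the sole piece of computation is the factorisation above after eliminating $\ga\rho_0^{\ga-1}$ and $\tfrac{4\pi\rho_0\om_0}{4-3\ga}$ via the two sonic constraints.
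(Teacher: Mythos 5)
Your proposal is correct and follows the same approach as the paper: verify the four strict inequalities at the sonic point $y_\ast$ (using Lemma~\ref{lemma:rho0om0}, Proposition~\ref{prop:R1W1}, and the expansion of $G$ from Theorem~\ref{thm:Taylor}, with $\om'(y_\ast)>0$ handling the borderline case $y_\ast=y_F$), then propagate by continuity. Your factorisation $(2-\ga)^2+(2-\ga)\om_0-2\om_0^2 = \bigl((2-\ga)-\om_0\bigr)\bigl(2\om_0+(2-\ga)\bigr)$ for the middle inequality is the same algebraic identity the paper uses (the paper has an apparent sign typo in its displayed factorisation, which your version avoids).
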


\begin{proof}
By Theorem \ref{thm:Taylor}, the existence of $\nu>0$ such that the solution remains supersonic on $(y_*,y_*+\nu)$ is clear. Moreover, by Lemma \ref{lemma:rho0om0}, we know that if $y_*\in(y_f,y_F)$, we have $\frac{4-3\ga}{3}<\om(y_*)<2-\ga$, and hence, as $\om$ is continuous on $[y_*,y_*+\nu]$, there exists $\bar\nu\in(0,\nu)$ such that 
$$\frac{4-3\ga}{3}<\om(y)<2-\ga\text{ for }y\in(y_*,y_*+\bar\nu).$$
On the other hand, if $y_*=y_F$, then $\om(y_*)=\frac{4-3\ga}{3}$ and, by Proposition \ref{prop:R1W1}, $\om'(y_*)>0$, hence by possibly shrinking $\bar\nu$, we again have the claimed estimate.

Similarly, by Proposition \ref{prop:R1W1} and smoothness of the flow, by possibly shrinking $\bar\nu$, we retain the final inequality of \eqref{ineq:rightinitial}
$$-\frac{4}{(4-3\ga)(2-\ga)}<\frac{\rho'y}{\rho}<-\frac{1}{2-\ga}.$$
Finally, we check the second condition in \eqref{ineq:rightinitial} through the following observation:
\beqa\label{ineq:f(y*)}
\frac{4\pi y_*^2\rho_0\om_0}{4-3\ga}-\frac{2}{2-\ga}\ga\rho_0^{\ga-1}=&\,\frac{4\pi y_*^2\rho_0\om_0}{4-3\ga}-\frac{2}{2-\ga}y_*^2\om_0^2\\
=&\,y_*^2\Big(\frac{2-2\ga}{2-\ga}\om_0^2+(\ga-1)\om_0+(\ga-1)(2-\ga)\Big)>0
\eeqa
for $\om_0\in[\frac{4-3\ga}{3},2-\ga)$, where we have used $\rho_0=f_1(\om_0)$ (compare \eqref{def:f1}) in the second line to eliminate $\rho_0$, and observe that the quadratic function of $\om_0$ in the parentheses factorises as
$$\frac{2-2\ga}{2-\ga}\om_0^2+(\ga-1)\om_0+(\ga-1)(2-\ga)=-(\ga-1)(2-\ga-\om)\big(\frac{2\om}{2-\ga}+1\big)$$
to deduce the sign. By again exploiting continuity of the flow and possibly shrinking $\bar\nu$, we conclude.
\end{proof}
We will also need the following two  lemmas.

\begin{lemma}
Let $\ga\in(1,\frac43)$. For any $C^1$ solution $(\rho,\om)$ of \eqref{eq:EPSS}, the following identities hold along the flow at any point $y>0$ such that $y$ is not a sonic point:
\begin{align}
\om'&=\frac{4-3\ga-3\om}{y}-\frac{\om}{\rho}\rho', \label{eq:invariance1} \\
\big(\rho\om y^{\frac{2}{2-\ga}}\big)'=&\,y^{\frac{2}{2-\ga}}\frac{\rho}{y}(4-3\ga)\big(1-\frac{\om}{2-\ga}\big),  \label{eq:weightedrho'}\\
\big(\om y^{\frac{2}{2-\ga}}\big)'=&\,y^{\frac{2}{2-\ga}}\Big(\frac{(4-3\ga)(1-\frac{\om}{2-\ga})}{y}-\frac{\om}{\rho}\rho'\Big),   \label{eq:weightedomega'}\\
\big(\om^2 y^{\frac{2}{2-\ga}}\big)'=&\,y^{\frac{2}{2-\ga}}\Big(2\om\frac{(4-3\ga)(1-\frac{\om}{2-\ga})}{y}-\frac{2}{2-\ga}\frac{\om^2}{y}-2\frac{\om^2}{\rho}\rho'\Big)  \label{eq:weightedomega2'}.
\end{align}
\end{lemma}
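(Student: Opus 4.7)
The four identities are algebraic consequences of the self-similar system~\eqref{eq:EPSS} (equivalently~\eqref{eq:rhoom}) together with the product rule. The plan is to establish them in the order given, since each identity feeds into the next.

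For~\eqref{eq:invariance1}, observe from~\eqref{eq:rhoom} that $\rho' = \frac{y\rho h(\rho,\om)}{G(y;\rho,\om)}$ and $\om' = \frac{4-3\ga-3\om}{y} - \frac{y\om h(\rho,\om)}{G(y;\rho,\om)}$, so multiplying the first relation by $\om/\rho$ gives $\frac{\om}{\rho}\rho' = \frac{y\om h(\rho,\om)}{G(y;\rho,\om)}$, and substituting into the expression for $\om'$ yields~\eqref{eq:invariance1}. Note that this step relies only on $G(y;\rho,\om)\ne0$, which is exactly the hypothesis that $y$ is not a sonic point.

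For~\eqref{eq:weightedrho'}, I would expand
\[
\big(\rho\om y^{\frac{2}{2-\ga}}\big)' = \big(\rho'\om+\rho\om'\big)y^{\frac{2}{2-\ga}} + \frac{2}{2-\ga}\,\rho\om\, y^{\frac{2}{2-\ga}-1},
\]
and use~\eqref{eq:invariance1} in the form $\om\rho'+\rho\om' = \rho\,\frac{4-3\ga-3\om}{y}$ (the $h/G$ terms cancel). The identity then reduces to the elementary algebraic simplification
\[
4-3\ga-3\om+\tfrac{2\om}{2-\ga} = (4-3\ga)\Bigl(1-\tfrac{\om}{2-\ga}\Bigr),
\]
which follows from $3-\frac{2}{2-\ga}=\frac{4-3\ga}{2-\ga}$.

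For~\eqref{eq:weightedomega'}, I would differentiate $\om y^{\frac{2}{2-\ga}}$ directly, substitute~\eqref{eq:invariance1} for $\om'$, and apply the same algebraic identity $-3\om+\frac{2\om}{2-\ga}=-\om\frac{4-3\ga}{2-\ga}$ to collect the terms. Identity~\eqref{eq:weightedomega2'} then follows in the same fashion: differentiate $\om^2 y^{\frac{2}{2-\ga}}$, use~\eqref{eq:invariance1} to rewrite $2\om\om'$, and verify the algebraic equality
\[
-6\om^2+\tfrac{2\om^2}{2-\ga} \;=\; -\tfrac{2\om^2(4-3\ga)}{2-\ga}-\tfrac{2\om^2}{2-\ga}
\]
by clearing the common denominator $2-\ga$ on both sides (both reduce to $\frac{2\om^2(3\ga-5)}{2-\ga}$).

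There is no substantive obstacle: the derivation is purely algebraic once identity~\eqref{eq:invariance1} is in hand, and the only point requiring care is tracking the coefficient $\frac{2}{2-\ga}$ coming from differentiating the weight $y^{\frac{2}{2-\ga}}$ and combining it consistently with the $-3\om/y$ term that originates from the source of the $\om$-equation.
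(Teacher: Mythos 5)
Your proof is correct and follows essentially the same route as the paper: derive~\eqref{eq:invariance1} by eliminating $h/G$ between the two equations of~\eqref{eq:rhoom}, then obtain the three weighted identities by differentiating and invoking~\eqref{eq:invariance1}, with the algebra coming down to the elementary simplification $3-\frac{2}{2-\ga}=\frac{4-3\ga}{2-\ga}$. You have simply written out the term-grouping step that the paper abbreviates.
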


\begin{proof}
Identity~\eqref{eq:invariance1} is a trivial consequence of~\eqref{eq:rhoom}.
Identity \eqref{eq:weightedrho'} follows from using \eqref{eq:invariance1} in the following:
\beqas
\big(\rho\om y^{\frac{2}{2-\ga}}\big)'=&\,(\rho\om)'y^{\frac{2}{2-\ga}}+\frac{2}{2-\ga}y^{\frac{2}{2-\ga}}\frac{\rho}{y}\om\\
=&\,y^{\frac{2}{2-\ga}}\frac{4-3\ga-3\om}{y}\rho+\frac{2}{2-\ga}y^{\frac{2}{2-\ga}}\frac{\rho}{y}\om
\eeqas
and grouping the $\om$ terms.

To obtain \eqref{eq:weightedomega'}, we again apply \eqref{eq:invariance1} to find
\beqas
\big(\om y^{\frac{2}{2-\ga}}\big)'=&\,y^{\frac{2}{2-\ga}}\big(\frac{4-3\ga-3\om}{y}-\frac{\om}{\rho}\rho'+\frac{2}{2-\ga}\frac{\om}{y}\big)
\eeqas
and group terms. The proof of \eqref{eq:weightedomega2'} is similar.
\end{proof}

\begin{lemma}\label{L:PMQM}
Let $\ga\in(1,\frac43)$, let $(\rho,\om)$ be a $C^1$ solution  of \eqref{eq:EPSS} and suppose that $y>0$ is not a sonic point of the flow.
\begin{itemize}
\item[(i)] For any $m\geq 0$, the derivative of $\rho$ may be expressed through the following relation: 
\beq\label{eq:rho'identitym}
\frac{\rho'y}{\rho}+\frac{m}{2-\ga}=y^{-\frac{2(\ga-1)}{2-\ga}}\frac{P_m(y,\rho,\om)}{y^2\om^2-\ga\rho^{\ga-1}}, 
\eeq
where
\beqa\label{def:Pm}
P_m(y,\rho,\om)=&-\frac{4-m-2\ga}{2-\ga}y^{\frac{2}{2-\ga}}\om^2-(\ga-1)y^{\frac{2}{2-\ga}}\big(\om+(2-\ga)\big)\\
&-\frac{m}{2-\ga}\ga\big(y^{\frac{2}{2-\ga}}\rho\big)^{\ga-1}+\frac{4\pi y^{\frac{2}{2-\ga}}\om\rho}{4-3\ga}. 
\eeqa
We usually suppress the explicit dependence of $P_m$ on $(\rho,\om)$, writing instead $P_m(y)=P_m(y,\rho(y),\om(y))$ where clear.
\item[(ii)] At any point $y_1$ at which the flow is smooth and not sonic and where $P_m(y_1)=0$, the derivative of $P_m$ satisfies the identity
\beqa\label{eq:Pm'}
{P}_m'(y_1)=\frac{y_1^{\frac{2}{2-\ga}}}{y_1\om(y_1)}Q_m\Big(\om(y_1),\frac{\ga\rho(y_1)^{\ga-1}}{y_1^2}\Big),
\eeqa
where
\beqa\label{def:Qm}
Q_m(\om, \mathcal{R})=\bigg(&\big(1-\frac{\om}{2-\ga}\big)\Big(-\frac{4-m-2\ga}{2-\ga}(4-3\ga)\om^2+(\ga-1)(4-3\ga)(2-\ga)\Big)\\
&-\frac{2(4-m-2\ga)(m-1)\om^3}{(2-\ga)^2}-\frac{m(\ga-1)}{2-\ga}\om^2-2(\ga-1)\om\\
&+\mathcal{R}\frac{m}{(2-\ga)^2}\Big((4-3\ga)(2-\ga)-\om\big(4-3\ga+(\ga-1)(2-m)\big)\Big)\bigg)\Big|_{y_1}.
\eeqa
\end{itemize}
\end{lemma}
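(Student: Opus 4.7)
For part (i), the plan is to start directly from the first equation of~\eqref{eq:rhoom}, which gives
\[
\frac{\rho'y}{\rho} = \frac{y^2 h(\rho,\om)}{G(y;\rho,\om)} = -\frac{y^2 h(\rho,\om)}{y^2\om^2 - \ga\rho^{\ga-1}}.
\]
Adding $\frac{m}{2-\ga}$ and placing over a common denominator shows that the claimed identity~\eqref{eq:rho'identitym} reduces to verifying the algebraic identity
\[
P_m(y,\rho,\om) \;=\; y^{\frac{2(\ga-1)}{2-\ga}}\Bigl(-y^2 h(\rho,\om) + \tfrac{m}{2-\ga}(y^2\om^2 - \ga\rho^{\ga-1})\Bigr).
\]
Using $y^{\frac{2(\ga-1)}{2-\ga}}\cdot y^2 = y^{\frac{2}{2-\ga}}$ and $y^{\frac{2(\ga-1)}{2-\ga}}\rho^{\ga-1} = (y^{\frac{2}{2-\ga}}\rho)^{\ga-1}$, together with the definition of $h$, this is a short, routine expansion which recovers~\eqref{def:Pm} after collecting the $\om^2$, the $(\ga-1)(\om+(2-\ga))$, the $\rho\om$, and the $(y^{\frac{2}{2-\ga}}\rho)^{\ga-1}$ terms.

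For part (ii), the starting observation is that at a non-sonic point $y_1$ with $P_m(y_1)=0$, part (i) gives the exact relation $\frac{\rho'(y_1)y_1}{\rho(y_1)} = -\frac{m}{2-\ga}$, and then~\eqref{eq:invariance1} yields $\om'(y_1) = \frac{4-3\ga-3\om}{y_1} + \frac{m\om}{(2-\ga)y_1}$. The plan is to differentiate $P_m$ by applying the four identities~\eqref{eq:invariance1}--\eqref{eq:weightedomega2'} of the preceding lemma: write $P_m = y^{\alpha}Q(\rho,\om) - \frac{m\ga}{2-\ga}(y^{\alpha}\rho)^{\ga-1}$ with $\alpha=\frac{2}{2-\ga}$, so that each term becomes the derivative of $y^\alpha\om^2$, $y^\alpha\om$, $y^\alpha$, $y^\alpha\rho\om$, or $(y^\alpha\rho)^{\ga-1}$, which can be computed in closed form from the weighted identities. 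Substituting the values of $\rho'(y_1)$ and $\om'(y_1)$ above, each term naturally factors as $\frac{y_1^\alpha}{y_1}$ times a polynomial in $\om$ and $\rho$.

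The remaining issue is that $\rho$ itself still appears explicitly through the derivative of the $\frac{4\pi\rho\om}{4-3\ga}$ term, whereas $Q_m$ depends only on $\om$ and $\mathcal R = \ga\rho^{\ga-1}/y_1^2$. The resolution is to use the constraint $P_m(y_1)=0$ itself: dividing~\eqref{def:Pm} by $y_1^\alpha$ and recognizing that $y_1^{-\alpha}(y_1^\alpha\rho)^{\ga-1} = \rho^{\ga-1}y_1^{\alpha(\ga-2)} = \rho^{\ga-1}/y_1^2 = \mathcal R/\ga$ gives a linear equation that lets us solve for $\frac{4\pi\rho\om}{4-3\ga}$ in terms of $\om$ and $\mathcal R$; substituting this eliminates the last explicit $\rho$. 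The same identity $\rho^{\ga-1}/y_1^2 = \mathcal R/\ga$ handles the $(y^\alpha\rho)^{\ga-1}$ derivative.

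The main obstacle is purely bookkeeping: after all substitutions one must verify that the five grouped contributions collapse to exactly $\frac{y_1^{2/(2-\ga)}}{y_1\om}Q_m(\om,\mathcal R)$ as in~\eqref{def:Qm}. I expect to organize the check by separating $\mathcal R$-linear terms from $\mathcal R$-independent terms: the $\mathcal R$-coefficient comes cleanly from Term~4 (the $\rho\om$ derivative, post-substitution) and Term~5 (the $(y^\alpha\rho)^{\ga-1}$ derivative), and matches the bracketed factor in the $\mathcal R$-term of $Q_m$; the $\mathcal R$-free part requires combining the $(1-\frac{\om}{2-\ga})$ factors across Terms~1, 2, and~4, with the leftover pieces $-\frac{2(4-m-2\ga)(m-1)\om^3}{(2-\ga)^2}$, $-\frac{m(\ga-1)\om^2}{2-\ga}$, and $-2(\ga-1)\om$ arising directly from Terms~1, 2, and~3, respectively. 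No deeper ideas are needed beyond this chain-rule computation and the single substitution provided by $P_m(y_1)=0$.
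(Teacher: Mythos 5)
Your proposal is correct and follows essentially the same route as the paper: part (i) is obtained by rearranging the $\rho$-equation of~\eqref{eq:rhoom}, adding $\tfrac{m}{2-\ga}$, and factoring out the power of $y$; part (ii) differentiates $P_m$ term-by-term using the weighted identities \eqref{eq:weightedrho'}--\eqref{eq:weightedomega2'}, substitutes $\tfrac{\rho'y}{\rho}=-\tfrac{m}{2-\ga}$ at $y_1$, and eliminates the explicit $\rho$ using the constraint $P_m(y_1)=0$ solved for $4\pi\rho$ (the paper's \eqref{eq:Pm=0}). The only cosmetic difference is that you also record $\om'(y_1)$ explicitly via \eqref{eq:invariance1}, whereas the paper keeps $\om'$ implicitly eliminated inside the weighted identities; the computations are otherwise identical.
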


\begin{proof}
(i) To show \eqref{eq:rho'identitym}, we let $m\geq 0$. Then, rearranging the first equation of \eqref{eq:EPSS}, we find
\beqa
\frac{\rho'y}{\rho}=&\,\frac{-2y^2\om^2-(\ga-1)y^2(\om+2-\ga)+\frac{4\pi y^2\om\rho}{4-3\ga}}{y^2\om^2-\ga\rho^{\ga-1}}\\
=&\,\frac{-\frac{m}{2-\ga}\big(y^2\om^2-\ga\rho^{\ga-1}\big)+\big(\frac{m}{2-\ga}-2\big)y^2\om^2-\frac{m}{2-\ga}\ga\rho^{\ga-1}-(\ga-1)y^2(\om+2-\ga)+\frac{4\pi y^2\om\rho}{4-3\ga}}{y^2\om^2-\ga\rho^{\ga-1}}\\
=&\,-\frac{m}{2-\ga}+\frac{-\frac{4-m-2\ga}{2-\ga}y^2\om^2-(\ga-1)y^2\big(\om+(2-\ga)\big)-\frac{m}{2-\ga}\ga\rho^{\ga-1}+\frac{4\pi y^2\om\rho}{4-3\ga}}{y^2\om^2-\ga\rho^{\ga-1}},
\eeqa
and pulling out a factor of $y^{-\frac{2(\ga-1)}{2-\ga}}$ leaves us with the claimed identity.

(ii) By \eqref{eq:weightedrho'}--\eqref{eq:weightedomega2'}, as the flow is smooth at $y_1$, 
\beqa\label{eq:P'equation}
{P}_m'(y)=y^{\frac{2}{2-\ga}}\bigg(&-\frac{4-m-2\ga}{2-\ga}\Big(2\om\frac{(4-3\ga)(1-\frac{\om}{2-\ga})}{y}-\frac{2}{2-\ga}\frac{\om^2}{y}-\frac{2\om^2}{\rho}\rho'\Big)\\
&-(\ga-1)\Big(\frac{(4-3\ga)(1-\frac{\om}{2-\ga})}{y}-\frac{\om}{\rho}\rho'\Big)-\frac{2(\ga-1)}{y}\\
&+4\pi\frac{\rho}{y}\big(1-\frac{\om}{2-\ga})-\frac{m(\ga-1)}{2-\ga}\ga\big(y^{\frac{2}{2-\ga}}\rho\big)^{\ga-2}\big(\rho'+\frac{2}{2-\ga}\frac{\rho}{y}\big)\bigg).
\eeqa
From the identity $P_m(y_1)=0$, we rearrange to find
\beq\label{eq:Pm=0}
4\pi\rho=\frac{(4-m-2\ga)(4-3\ga)}{2-\ga}\om+(\ga-1)(4-3\ga)\big(1+\frac{2-\ga}{\om}\big)+\frac{m(4-3\ga)}{2-\ga}\ga\frac{\rho^{\ga-1}}{\om y_1^2},
\eeq
where all functions are evaluated at $y_1$. In addition, by \eqref{eq:rho'identitym}, as $P_m(y_1)=0$ we also have $\frac{\rho'y}{\rho}=-\frac{m}{2-\ga}$.

Substituting \eqref{eq:Pm=0} and $\frac{\rho'y}{\rho}=-\frac{m}{2-\ga}$ into \eqref{eq:P'equation}, we have
\begin{align*}
{P}_m'(y_1)=y^{\frac{2}{2-\ga}}\bigg(&\frac{1-\frac{\om}{2-\ga}}{y}\Big(-\frac{4-m-2\ga}{2-\ga}(4-3\ga)\om+(\ga-1)(4-3\ga)\frac{2-\ga}{\om}\Big)\\
&-\frac{2(4-m-2\ga)(m-1)\om^2}{(2-\ga)^2 y}-\frac{m(\ga-1)}{2-\ga}\frac{\om}{y}-\frac{2(\ga-1)}{y}\\
&+\frac{m(4-3\ga)}{2-\ga}\big(1-\frac{\om}{2-\ga}\big)\frac{\ga\rho^{\ga-1}}{\om y^3}-\frac{m(\ga-1)(2-m)}{(2-\ga)^2}\ga\rho^{\ga-1}\frac{1}{y^3}\bigg)\Big|_{y_1}\\
=y^{\frac{2}{2-\ga}-1}\bigg(&\big(1-\frac{\om}{2-\ga}\big)\Big(-\frac{4-m-2\ga}{2-\ga}(4-3\ga)\om+(\ga-1)(4-3\ga)\frac{2-\ga}{\om}\Big)\\
&-\frac{2(4-m-2\ga)(m-1)\om^2}{(2-\ga)^2}-\frac{m(\ga-1)}{2-\ga}{\om}-2(\ga-1)\\
&+\frac{\ga\rho^{\ga-1}}{y^2}\frac{m}{(2-\ga)^2\om}\Big((4-3\ga)(2-\ga)-\om\big(4-3\ga+(\ga-1)(2-m)\big)\Big)\bigg)\Big|_{y_1},
\end{align*}
which yields the required inequality after factoring out $\om^{-1}$.
\end{proof}

With these identities, we will show that as long as the flow remains strictly supersonic, the inequalities of Lemma \ref{lemma:rightinitial} above also hold strictly. For the proof, we will require also the following technical lemma containing properties of the functions $Q_m$.


\begin{lemma}\label{lemma:Qmintervalarithmetic}
Define the functions
\beq\label{def:Qmpm}
Q_m^+(\om)=Q_m(\om,0),\quad Q_m^-(\om)=Q_m(\om,\om^2),
\eeq
where we recall the definition of $Q_m$ from Lemma~\ref{L:PMQM}.
Then, for any $\ga\in(1,\frac43)$, there exists $\de_0>0$ such that for all $\om\in[\frac{4-3\ga}{3},2-\ga]$, we have
\begin{align}
&Q_m^\pm(\om)<0 &&\text{ for all }m\in\Big[1,\frac{2\ga}{\ga+1}+\de_0\Big],\label{ineq:Qmmax}\\
&Q_{\frac{4}{4-3\ga}}^\pm(\om)>0. && \label{ineq:Qmmin}
\end{align}
\end{lemma}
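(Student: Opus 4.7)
The plan is to treat~\eqref{ineq:Qmmax}--\eqref{ineq:Qmmin} as polynomial inequalities on explicit compact domains and verify them by rigorous interval arithmetic, in line with the computer-assisted arguments used elsewhere in the paper; the detailed enumeration and validated bounds would be collected in Appendix~\ref{A:IA}. For each fixed $\ga\in(1,\frac43)$, the functions $Q_m^+(\om)=Q_m(\om,0)$ and $Q_m^-(\om)=Q_m(\om,\om^2)$ are, by~\eqref{def:Qm}, polynomials in $\om$ of degree at most $4$ whose coefficients are polynomials in $m$ and rational in $\ga$, so the inequalities to be proved are genuinely polynomial and sit on compact parameter sets.

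For~\eqref{ineq:Qmmin} I would substitute $m=\frac{4}{4-3\ga}$ and prove strict positivity of the resulting polynomials $Q_{4/(4-3\ga)}^\pm(\om)$ on the compact interval $\om\in[\frac{4-3\ga}{3},2-\ga]$. This would be done by partitioning the interval into finitely many subintervals and applying interval evaluation to produce a validated strictly positive lower bound on each, with the number and width of subintervals chosen \emph{a posteriori}, fine enough to dominate the overestimation inherent in interval arithmetic (analogously to the verification that $s(\om_0)>0$ in Lemma~\ref{L:BRANCHES}).

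For~\eqref{ineq:Qmmax} I would first establish the stronger statement that $Q_m^\pm(\om)<0$ uniformly on the compact rectangle $(m,\om)\in[1,m_1]\times[\frac{4-3\ga}{3},2-\ga]$, with $m_1:=\frac{2\ga}{\ga+1}$, again by a two-dimensional partition and interval arithmetic exactly as above. The extension to the open one-sided neighbourhood $[1,m_1+\de_0]$ is then a soft argument: $Q_m^\pm$ is jointly continuous in $(m,\om)$ and the $\om$-interval is compact, so $m\mapsto\max_{\om\in[\frac{4-3\ga}{3},2-\ga]}Q_m^\pm(\om)$ is continuous in $m$; being strictly negative on the closed interval $[1,m_1]$, it remains strictly negative on some open one-sided neighbourhood of $m_1$, which yields $\de_0=\de_0(\ga)>0$.

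The main obstacle is controlling the overestimation that plagues naive interval evaluation of products and compositions, so that the resulting enclosures are sign-correct without requiring an impractically fine partition. To mitigate this I would rewrite $Q_m^\pm$ in Horner form in $\om$ before evaluating and factor out manifestly signed factors such as $1-\frac{\om}{2-\ga}\ge0$ on the relevant range, thereby reducing the variable-dependency inflation. With this preparation the required interval arithmetic becomes routine; the continuity and compactness argument that furnishes $\de_0$ is then entirely standard.
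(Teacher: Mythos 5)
Your proposal has a structural gap that prevents it from constituting a valid computer-assisted proof. You frame the verification as happening ``for each fixed $\ga\in(1,\frac43)$'': pick $\ga$, get a concrete polynomial, partition the concrete interval $[\frac{4-3\ga}{3},2-\ga]$, and run interval arithmetic. For any single $\ga$ this would terminate, but it is not a finite certificate for the uncountable family of $\ga$'s, and as $\ga$ approaches either end of its range the number of subintervals needed diverges. A genuine interval-arithmetic proof must run \emph{once} over a parameterised compact box that includes $\ga$ (and $m$, suitably rescaled) as interval variables; the paper does this by substituting $v=\om+\ga\in[\frac43,2]$ so that all ranges become $\ga$-independent.

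Once you attempt such a uniform verification, two degeneracies appear that your mitigations (Horner form, factoring out $1-\frac{\om}{2-\ga}$) do not touch, and which a brute-force partition of $(\om,\ga)$ cannot overcome. First, $Q_1^-$ vanishes to second order in $(\ga-1)$ at the endpoint $\om=\frac{4-3\ga}{3}$: $Q_1^-(\frac{4-3\ga}{3})=-\frac{2(4-3\ga)^2(\ga-1)^2}{9(2-\ga)^2}\to0$ as $\ga\to1$, so any interval enclosure of $Q_1^-$ over a box touching $\ga=1$ will contain $0$ and fail to certify strict negativity. The paper resolves this by first factoring out $(\ga-1)$ and then verifying that the $\om$-derivative of the cofactor is uniformly negative (a nondegenerate quantity), propagating the sign from the manifestly signed endpoint value rather than evaluating the function itself. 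Second, and more seriously, with $m=\frac{4}{4-3\ga}$ the leading coefficient of $Q^\pm_m$ grows like $(4-3\ga)^{-2}$ while the endpoint value $Q_{4/(4-3\ga)}^\pm(\frac{4-3\ga}{3})$ is proportional to $(4-3\ga)$ and so vanishes as $\ga\to\frac43$. Near the corner $(\om,\ga)\to(0,\frac43)$ a naive enclosure both blows up and straddles $0$, so no fixed partition works uniformly. The paper avoids direct evaluation altogether here: it uses the Taylor expansion at $\om=\frac{4-3\ga}{3}$, showing separately that the endpoint value, first and second derivatives there, and the (constant) third derivative are all positive, with each quantity rescaled by its explicit power of $(4-3\ga)$ before applying interval arithmetic so that all verified expressions are bounded polynomials in $\ga$ alone. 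Your proposal is missing both of these structural reductions, and without them the computation you describe will not certify the required strict signs.
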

The proof is deferred to Appendix \ref{app:Qm}.

We are now able to state and prove the continuation estimates for the extension of the LPH-type solutions on their maximal supersonic interval of existence, $(y_*,y_{\max}(y_*))$.
\begin{proposition}\label{prop:continuationright}
Let $\ga\in(1,\frac43)$,  $y_*\in[y_f,y_F]$, and let $(\rho,\om)$ be the extension of the unique LPH-type solution  obtained from Theorem \ref{thm:Taylor} to $(y_*,y_{\max}(y_*))$. Then the following strict inequalities hold on the whole interval $(y_*,y_{\max}(y_*))$:
\beq\label{ineq:rightinvariance}
\frac{4-3\ga}{3}<\om<2-\ga,\quad \frac{4\pi y^2\rho\om}{4-3\ga}-\frac{2}{2-\ga}\ga\rho^{\ga-1}>0,\quad -\frac{4}{(4-3\ga)(2-\ga)}\frac{\rho}{y}<\rho'<-\frac{1}{2-\ga}\frac{\rho}{y}.
\eeq
Moreover, on this interval, we retain $\rho>0$.
\end{proposition}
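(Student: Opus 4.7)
The proof is a standard bootstrap argument. Define
\[
y_1 := \sup\bigl\{y\in(y_*,y_{\max}(y_*)) : \text{all inequalities in \eqref{ineq:rightinvariance} hold strictly on } (y_*,y)\bigr\},
\]
which by Lemma~\ref{lemma:rightinitial} satisfies $y_1>y_*$. Suppose for contradiction that $y_1<y_{\max}(y_*)$, so that by continuity at least one of the five strict inequalities becomes an equality at $y_1$. The plan is to rule out each possible equality by exhibiting a sign for the one-sided derivative of the relevant quantity at $y_1$ that is inconsistent with the approach from the invariant side.

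Three of the five cases follow directly from tools already at hand. By \eqref{eq:rho'identitym}, in the supersonic region the condition $\rho' y/\rho<-1/(2-\ga)$ is equivalent to $P_1<0$, and $\rho' y/\rho>-4/((4-3\ga)(2-\ga))$ is equivalent to $P_{4/(4-3\ga)}>0$. If either fails at $y_1$, then \eqref{eq:Pm'} combined with Lemma~\ref{lemma:Qmintervalarithmetic} yields $P_1'(y_1)<0$ (contradicting $P_1\nearrow 0$) or $P_{4/(4-3\ga)}'(y_1)>0$ (contradicting $P_{4/(4-3\ga)}\searrow 0$). If instead $\om(y_1)=(4-3\ga)/3$, then \eqref{eq:invariance1} gives $\om'(y_1)=-\om(y_1)\rho'(y_1)/\rho(y_1)>0$ since $\rho'(y_1)<0$, contradicting $\om\searrow(4-3\ga)/3$.

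The main obstacle is the upper bound $\om<2-\ga$: evaluating \eqref{eq:invariance1} at $\om=2-\ga$ using only the propagated $\rho' y/\rho<-1/(2-\ga)$ gives merely $\om'(y_1)>-1/y_1$, which is not sufficient. I would resolve this by exploiting the strict positivity $E(y_1)>0$, writing $E:=\frac{4\pi y^2\rho\om}{4-3\ga}-\frac{2\ga\rho^{\ga-1}}{2-\ga}$. At $\om=2-\ga$, the density equation \eqref{eq:EPSSSonic1} reduces to $\rho'=2y\rho(2-\ga)(1-\frac{2\pi\rho}{4-3\ga})/G$; the propagated $\rho'(y_1)<0$ (together with $G<0$) forces $\rho(y_1)<\frac{4-3\ga}{2\pi}$, while $E(y_1)>0$ supplies the strict lower bound $|G|>y_1^2(2-\ga)^2(1-\frac{2\pi\rho}{4-3\ga})$. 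Combining these yields the sharpened estimate $\rho' y/\rho>-2/(2-\ga)$ at $y_1$, which via \eqref{eq:invariance1} gives $\om'(y_1)<0$, a contradiction.

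Finally, for $E(y_1)=0$ with $\om(y_1)<2-\ga$ strictly, I would solve \eqref{eq:EPSSSonic1} for $\rho'$ under the constraint $E=0$---using the factorization $-2\om^2+(2-\ga)\om+(2-\ga)^2=2(2-\ga-\om)(\om+(2-\ga)/2)$ already exploited in \eqref{ineq:f(y*)}---and substitute into $E'=\frac{4\pi y\rho(4-3\ga-\om)}{4-3\ga}-\frac{2\ga(\ga-1)\rho^{\ga-2}\rho'}{2-\ga}$. A direct algebraic simplification collapses this to
\[
E'(y_1) = \frac{2\ga\rho^{\ga-1}(2-\ga-\om)}{y_1(2-\ga)^2}\left[\frac{4-3\ga}{\om} + \frac{2(\ga-1)^2 y_1^2(\om+\tfrac{2-\ga}{2})}{|G|}\right] > 0,
\]
contradicting $E\searrow 0$. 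The remaining degenerate possibility that $\om(y_1)=2-\ga$ and $E(y_1)=0$ coincide (which would force $\rho(y_1)=\rho_f(y_1)$) is excluded by standard ODE uniqueness in the supersonic region together with Proposition~\ref{prop:far-field}(ii)--(iii), since the LPH solution with $y_*\in(y_f,y_F]$ cannot agree with the far-field on any open interval. Strict positivity of $\rho$ on $(y_*,y_{\max})$ then follows at once by integrating the propagated lower bound $\rho'/\rho>-4/((4-3\ga)(2-\ga)y)$ starting from $\rho(y_*)>0$.
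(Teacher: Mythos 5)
Your bootstrap architecture is sound, and your treatment of the two $\rho'$ bounds via $P_1$, $P_{4/(4-3\ga)}$ and Lemma~\ref{lemma:Qmintervalarithmetic} coincides with the paper's. Where you genuinely depart from the paper is in the two $\om$ bounds and in the positivity of $E=\frac{4\pi y^2\rho\om}{4-3\ga}-\frac{2\ga\rho^{\ga-1}}{2-\ga}$. The paper proves $\om>\frac{4-3\ga}{3}$ by integrating the differential inequality $(y^3\om)'\geq(4-3\ga)y^2$ to produce a strict gap; you instead evaluate \eqref{eq:invariance1} at the boundary and observe that $\om'(y_1)=-\om(y_1)\rho'(y_1)/\rho(y_1)>0$ contradicts the one-sided approach from above. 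Both are valid; yours is slightly shorter. For $\om<2-\ga$, the paper uses an integrating factor $W(y)$ on the evolution equation for $2-\ga-\om$, which propagates strict positivity using the (weak) inequality $E\geq0$ on the whole interval. You instead argue pointwise at $y_1$: assuming $\om(y_1)=2-\ga$ and $E(y_1)>0$, you use the factorisation of $h(\rho,2-\ga)$, the sign constraint $\rho'(y_1)<0$, and the $E>0$-induced lower bound on $|G|$ to sharpen the quotient to $\rho'y/\rho>-\frac{2}{2-\ga}$ at $y_1$, then read off $\om'(y_1)<0$ from \eqref{eq:invariance1}. I verified the algebra: $E>0$ at $\om=2-\ga$ is precisely $|G|>y^2(2-\ga)^2\big(1-\frac{2\pi\rho}{4-3\ga}\big)$, which gives the stated bound, and $4-3\ga-3(2-\ga)=-2$ closes the computation. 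Your formula for $E'(y_1)$ when $E(y_1)=0$ also checks out (it coincides, after dividing by the positive prefactor $y^{-2(\ga-1)/(2-\ga)}$, with the paper's computation for the weighted quantity $y^{\frac{2}{2-\ga}}\rho\om\cdots$). The ODE-uniqueness dispatch of the simultaneous degeneracy $\om(y_1)=2-\ga$, $E(y_1)=0$ is correct: those conditions pin $(\rho,\om)(y_1)=(\rho_f,\om_f)(y_1)$ at a non-sonic point, and Proposition~\ref{prop:Picard} then forces the LPH solution to equal the far-field, which contradicts $\om_0(y_*)<2-\ga$ for $y_*>y_f$ (the case $y_*=y_f$ is excluded at the start of the section). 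The net effect is a proof that is slightly more casework-heavy but more elementary, replacing two integral/integrating-factor arguments with one-point sign computations; the paper's integrating-factor argument is arguably more robust and extends to give information on the whole interval rather than at a single point, which it reuses later in Section~\ref{S:LEFT}.
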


\begin{proof}
We begin the proof by observing that the upper and lower bounds on $\rho'$ of \eqref{ineq:rightinvariance} guarantee that as long as the flow lives to the right of $y_*$ and satisfies the weak forms of these inequalities, we always retain $|(\log\rho)'|\leq C$, and hence $\rho>0$. Thus we assume this throughout the following.

By \eqref{ineq:rightinitial}, we know that all of the inequalities \eqref{ineq:rightinvariance} hold on the interval $(y_*,y_*+\bar\nu)$. By the smoothness and extendability of the flow guaranteed by Proposition \ref{prop:Picard} and Theorem \ref{thm:Taylor}, the set  
$$\mathfrak{Y}:=\big\{y_1\in(y_*,y_{\max})\,|\, \text{\eqref{ineq:rightinvariance} holds on }(y_*,y_1]\big\}$$
 is clearly relatively open in $(y_*,y_{\max})$.

We therefore work to show that $\mathfrak{Y}$ is also relatively closed. We therefore suppose $(y_*,y_1)\subset\mathfrak{Y}$, i.e., we assume that \eqref{ineq:rightinvariance} holds on the interval $(y_*,y_1)$ with $y_1< y_{\max}$. Showing that \eqref{ineq:rightinvariance} holds strictly at $y_1$ also is then sufficient to conclude the proof. Clearly the weak versions of \eqref{ineq:rightinvariance} hold on $(y_*,y_1]$ and the flow is strictly supersonic on this whole interval. As we have guaranteed already that $\rho_0>\rho(y_1)>0$ and $\om(y_1)$ is bounded, we may apply again the local existence theorem, Proposition \ref{prop:Picard}, to deduce that the flow can be smoothly extended past $y_1$, and hence is smooth at $y_1$ itself.

From \eqref{eq:invariance1}, we see that as $\rho'\leq0$ and $\om\geq\frac{4-3\ga}{3}>0$, then
$$\om'\geq\frac{4-3\ga-3\om}{y},$$
and hence
$$(y^3\om)'\geq(4-3\ga)y^2,$$
leading to
$$y^3\om(y)\geq\frac{4-3\ga}{3}y^3+y_*^3\big(\om(y_*+\frac{\bar\nu}{2})-\frac{4-3\ga}{3}\big)>\frac{4-3\ga}{3}y^3, $$
for all $y\in[y_*,y_1]$. Clearly then $\om(y_1)>\frac{4-3\ga}{3}$ also, as required.

To close the upper bound on $\om$, we first rearrange the first equation of \eqref{eq:EPSS} as
\beqa\label{eq:rho'identity}
\frac{\rho'y}{\rho}=&\,\frac{-2y^2\om^2-(\ga-1)y^2(\om+2-\ga)+\frac{4\pi y^2\om\rho}{4-3\ga}}{y^2\om^2-\ga\rho^{\ga-1}}\\
=&\,-\frac{2}{2-\ga}+\frac{(\ga-1)y^2\big(\frac{2\om^2}{2-\ga}-\om-(2-\ga)\big)-\frac{2}{2-\ga}\ga\rho^{\ga-1}+\frac{4\pi y^2\om\rho}{4-3\ga}}{y^2\om^2-\ga\rho^{\ga-1}}.
\eeqa
Note that, by assumption, on $[y_*+\bar\nu,y_1]$, $y^2\om^2-\ga\rho^{\ga-1}>0$. We apply also \eqref{eq:invariance1} to calculate
\beqa\label{eq:2-ga-om}
\big(2&-\ga-\om\big)'=-\frac{\frac{4-3\ga}{2-\ga}\big(2-\ga-\om\big)}{y}+\frac{\om}{y}\Big(\frac{\rho'y}{\rho}+\frac{2}{2-\ga}\Big)\\
=&\,-\frac{\frac{4-3\ga}{2-\ga}\big(2-\ga-\om\big)}{y}-\frac{\om}{y}\frac{(\ga-1)y^2\big(\frac{2\om}{2-\ga}+1\big)}{y^2\om^2-\ga\rho^{\ga-1}}(2-\ga-\om)+\frac{\om}{y}\frac{\frac{4\pi y^2\rho\om}{4-3\ga}-\frac{2}{2-\ga}\ga\rho^{\ga-1}}{y^2\om^2-\ga\rho^{\ga-1}}. 
\eeqa
Defining $$W(y)=\exp\Big(\int^y_{y_*+\bar\nu}\frac{1}{\tilde{y}}\Big(\frac{4-3\ga}{2-\ga}+\om\frac{(\ga-1)\tilde y^2\big(\frac{2\om}{2-\ga}+1\big)}{\tilde y^2\om^2-\ga\rho^{\ga-1}}\Big)\dif\tilde{y}\Big),$$
we have 
$$\big(W(2-\ga-\om)\big)'=W\frac{\om}{y}\frac{\frac{4\pi y^2\rho\om}{4-3\ga}-\frac{2}{2-\ga}\ga\rho^{\ga-1}}{y^2\om^2-\ga\rho^{\ga-1}}.$$
As $\frac{4\pi y^2\rho\om}{4-3\ga}-\frac{2}{2-\ga}\ga\rho^{\ga-1}\geq0$ on $[y_*+\bar\nu,y_1]$, we have
\beqs
W(2-\ga-\om)\geq W(2-\ga-\om)\big|_{y_*+\bar\nu}>0,
\eeqs
and hence 
\beq
\om<2-\ga\text{ on }[y_*,y_1].
\eeq
Turning now to $\frac{4\pi y^2\rho\om}{4-3\ga}-\frac{2}{2-\ga}\ga\rho^{\ga-1}$, we suppose for a contradiction that 
$$\frac{4\pi y_1^2\rho(y_1)\om(y_1)}{4-3\ga}-\frac{2}{2-\ga}\ga\rho(y_1)^{\ga-1}=0.$$
From \eqref{eq:rho'identity}, at $y_1$, we therefore have
\beq\label{ineq:rho'(y_1)}
\frac{\rho'(y_1)y_1}{\rho(y_1)}+\frac{2}{2-\ga}=\frac{(\ga-1)y_1^2\big(\frac{2\om(y_1)}{2-\ga}+1\big)\big(\om(y_1)-(2-\ga)\big)}{y_1^2\om(y_1)^2-\ga\rho(y_1)^{\ga-1}}<0
\eeq
due to $\om<2-\ga$.
Note now the simple scaled identity
$$\frac{4\pi y^2\rho\om}{4-3\ga}-\frac{2}{2-\ga}\ga\rho^{\ga-1}=y^{-\frac{2(\ga-1)}{2-\ga}}\Big(\frac{4\pi y^{\frac{2}{2-\ga}}\rho\om}{4-3\ga}-\frac{2}{2-\ga}\ga\big(y^{\frac{2}{2-\ga}}\rho\big)^{\ga-1}\Big).$$
Differentiating the term in the bracket, we use \eqref{eq:weightedrho'} to see
\beqas
\Big(&\frac{4\pi y^{\frac{2}{2-\ga}}\rho\om}{4-3\ga}-\frac{2}{2-\ga}\ga\big(y^{\frac{2}{2-\ga}}\rho\big)^{\ga-1}\Big)'\Big|_{y=y_1}\\
&=4\pi y^{\frac{2}{2-\ga}}\frac{\rho}{y}\big(1-\frac{\om}{2-\ga}\big)-\frac{2\ga(\ga-1)}{2-\ga}\big(y^{\frac{2}{2-\ga}}\rho\big)^{\ga-2}y^{\frac{2}{2-\ga}}\big(\rho'+\frac{2}{2-\ga}\frac{\rho}{y}\big)\Big|_{y=y_1}\\
&\geq4\pi y^{\frac{2}{2-\ga}}\frac{\rho}{y}\big(1-\frac{\om}{2-\ga}\big)\Big|_{y=y_1}>0,
\eeqas
where we have used \eqref{ineq:rho'(y_1)} in the first inequality on the last line and $\om(y_1)<2-\ga$ in the second. But this contradicts the assumption that $y_1$ is the first point at which $\frac{4\pi y^2\rho\om}{4-3\ga}-\frac{2}{2-\ga}\ga\rho^{\ga-1}=0$, hence the derivative must be non-positive. So
$$\Big(\frac{4\pi y^2\rho\om}{4-3\ga}-\frac{2}{2-\ga}\ga\rho^{\ga-1}\Big)\Big|_{y_1}>0. $$
Next, we consider the quantity
$$\frac{\rho'y}{\rho}+\frac{1}{2-\ga}.$$
Applying \eqref{eq:rho'identitym} in the case $m=1$, we get
\beqa
\frac{\rho'y}{\rho}+\frac{1}{2-\ga}=y^{-\frac{2(\ga-1)}{2-\ga}}\frac{P_1(y,\rho,\om)}{y^2\om^2-\ga\rho^{\ga-1}},
\eeqa
where we recall from \eqref{def:Pm} that
$$P_1=-\frac{(3-2\ga)}{2-\ga}y^{\frac{2}{2-\ga}}\om^2-(\ga-1)y^{\frac{2}{2-\ga}}\big(\om+(2-\ga)\big)-\frac{1}{2-\ga}\ga\big(y^{\frac{2}{2-\ga}}\rho\big)^{\ga-1}+\frac{4\pi y^{\frac{2}{2-\ga}}\om\rho}{4-3\ga}.$$
By assumption, we have $P_1(y)<0$ for all $y\in(y_*,y_1)$. By Proposition \ref{prop:continuationright} and Proposition \ref{prop:Picard}, as the flow is assumed supersonic, the flow may be extended smoothly to the right of $y_1$, and hence is smooth at $y_1$. Suppose now that at $y_1$, $\frac{\rho'y}{\rho}=-\frac{1}{2-\ga}$ for the first time (otherwise we are done). Then we must also have that $P_1'(y_1)\geq 0$, $P_1(y_1)=0$, and hence, at $y_1$, by \eqref{eq:Pm'},
\beqas
P_1'(y_1)=\frac{y^{\frac{2}{2-\ga}}}{y\om}Q_1&\Big(\om(y_1),\frac{\ga\rho(y_1)^{\ga-1}}{y_1^2}\Big).
\eeqas
Note that $Q_m(\om,\mathcal{R})$ is linear in $\mathcal{R}$ and that, as the flow is supersonic, we have always $0\leq\frac{\ga\rho^{\ga-1}}{y^2}\leq\om^2$. Thus, 
$$Q_1\Big(\om(y_1),\frac{\ga\rho(y_1)^{\ga-1}}{y_1^2}\Big)\leq \max\big\{Q_1(\om(y_1),0),Q_1(\om(y_1),\om(y_1)^2)\big\}<0$$
by Lemma \ref{lemma:Qmintervalarithmetic}.
Thus $P'(y_1)<0$, contradicting $P'(y_1)\geq 0$. So we obtain
$$\frac{\rho'y}{\rho}+\frac{1}{2-\ga}<0,\text{ for }y\in[y_*,y_1].$$
To conclude the final inequality, the lower bound for $\frac{\rho'y}{\rho}$, we let $m=\frac{4}{4-3\ga}$ and apply again \eqref{eq:rho'identitym} to find
\beqa
\frac{\rho'y}{\rho}+\frac{4}{(4-3\ga)(2-\ga)}=y^{-\frac{2(\ga-1)}{2-\ga}}\frac{P_{m}(y,\rho,\om)}{y^2\om^2-\ga\rho^{\ga-1}}.
\eeqa
If $y_1$ is the first point where $\frac{\rho'y}{\rho}=-\frac{4}{(4-3\ga)(2-\ga)}$, then $P_m(y_1)=0$, $P_m'(y_1)\leq0$ and so, at $y_1$, by \eqref{eq:Pm'}, we have
\beqas
P_m'(y_1)=\frac{y^{\frac{2}{2-\ga}}}{y\om}Q_m\Big(\om(y_1),\frac{\ga\rho(y_1)^{\ga-1}}{y_1^2}\Big).
\eeqas
Again, as $Q_m(\om,\mathcal{R})$ is linear in $\mathcal{R}$ and  $0\leq\frac{\ga\rho^{\ga-1}}{y^2}\leq\om^2$, we have
$$Q_m\Big(\om(y_1),\frac{\ga\rho(y_1)^{\ga-1}}{y_1^2}\Big)\geq \min\big\{Q_m(\om(y_1),0),Q_m(\om(y_1),\om(y_1)^2)\big\}<0$$
by Lemma \ref{lemma:Qmintervalarithmetic}. This contradicts the assumption $P_m(y_1)=0$, and hence we have
\beq
\frac{\rho'y}{\rho}+\frac{4}{(4-3\ga)(2-\ga)}>0.
\eeq
\end{proof}
To show that the flow remains supersonic to the right, and hence the global existence to the right, we need a slightly sharper upper bound on the derivative of the density, provided by the following lemma.

\begin{lemma}\label{lemma:rho'improved}
Let $\ga\in(1,\frac{4}{3})$, $y_*\in[y_f,y_F]$ and define $R_1=\frac{\rho_1 y_*}{\rho_0}$ as in Proposition \ref{prop:R1W1}. Let $(\rho,\om)$ be the extension of the unique LPH-type solution  obtained from Theorem \ref{thm:Taylor} to $(y_*,y_{\max}(y_*))$.  Then there exists $\de>0$ such that, for any $R>\max\{R_1,-\frac{2\ga}{(2-\ga)(\ga+1)}-\de\}$, we retain the inequality $\frac{\rho'y}{\rho}<R$ on the whole of $(y_*,y_{\max})$.
\end{lemma}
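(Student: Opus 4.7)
The plan is to recast the desired bound $\frac{\rho' y}{\rho}<R$ in terms of the auxiliary function $P_m$ from Lemma~\ref{L:PMQM}. Setting $m:=-(2-\gamma)R$, identity~\eqref{eq:rho'identitym} combined with the strict positivity of the denominator $y^2\omega^2-\gamma\rho^{\gamma-1}>0$ on the supersonic interval $(y_*,y_{\max})$ turns the inequality $\frac{\rho' y}{\rho}<R$ into the equivalent statement $P_m(y)<0$. Choose $\delta:=\delta_0/(2-\gamma)$, where $\delta_0$ is the constant from Lemma~\ref{lemma:Qmintervalarithmetic}. We may assume $R\le-\frac{1}{2-\gamma}$, since otherwise the claim is already weaker than Proposition~\ref{prop:continuationright}. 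Then $m\ge1$, and the hypothesis $R>-\frac{2\gamma}{(2-\gamma)(\gamma+1)}-\delta$ gives $m<\frac{2\gamma}{\gamma+1}+\delta_0$. Moreover $R>R_1$ forces $m<-(2-\gamma)R_1$, and taken together with the previous inequality we conclude that $m$ lies safely inside the window $[1,\,\frac{2\gamma}{\gamma+1}+\delta_0]$ on which Lemma~\ref{lemma:Qmintervalarithmetic} applies.

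For the initialisation near the sonic point, the real-analytic Taylor expansion of Theorem~\ref{thm:Taylor} gives $\lim_{y\downarrow y_*}\frac{\rho' y}{\rho}=R_1<R$, so by continuity $\frac{\rho' y}{\rho}<R$, equivalently $P_m<0$, on some right-neighbourhood $(y_*,y_*+\bar\nu)$.

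The core step is a standard continuation argument. Let $y_1$ be the infimum of $\{y\in(y_*,y_{\max}):P_m(y)=0\}$, and suppose for contradiction $y_1<y_{\max}$. Then $P_m\le0$ on $(y_*,y_1]$ with $P_m(y_1)=0$, so $P_m'(y_1)\ge0$. By formula~\eqref{eq:Pm'},
\[
P_m'(y_1)=\frac{y_1^{2/(2-\gamma)}}{y_1\,\omega(y_1)}\,Q_m\!\Bigl(\omega(y_1),\,\frac{\gamma\rho(y_1)^{\gamma-1}}{y_1^2}\Bigr).
\]
Since the flow is strictly supersonic, $\mathcal{R}(y_1):=\gamma\rho(y_1)^{\gamma-1}/y_1^2\in[0,\omega(y_1)^2]$, and because $Q_m$ is linear (hence monotone) in its second argument,
\[
Q_m(\omega(y_1),\mathcal{R}(y_1))\le\max\bigl\{Q_m^+(\omega(y_1)),\,Q_m^-(\omega(y_1))\bigr\}.
\]
Proposition~\ref{prop:continuationright} gives $\omega(y_1)\in[\tfrac{4-3\gamma}{3},\,2-\gamma]$, and combined with $m\in[1,\tfrac{2\gamma}{\gamma+1}+\delta_0]$, Lemma~\ref{lemma:Qmintervalarithmetic} yields $Q_m^\pm(\omega(y_1))<0$. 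Thus $P_m'(y_1)<0$, contradicting $P_m'(y_1)\ge0$. We conclude $P_m<0$ throughout $(y_*,y_{\max})$, proving the lemma.

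The main obstacle is essentially packaged into Lemma~\ref{lemma:Qmintervalarithmetic}, whose proof in Appendix~\ref{app:Qm} uses interval arithmetic to ensure that the sign of $Q_m^\pm$ remains negative on a uniform interval of $m$-values extending strictly past the endpoint $m=1$ already exploited in Proposition~\ref{prop:continuationright}. Once that sign information is available, the continuation scheme above runs in direct parallel to the corresponding step in Proposition~\ref{prop:continuationright} and delivers the claimed sharpening of the upper bound on $\rho'$.
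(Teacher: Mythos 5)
Your proof is correct and follows essentially the same strategy as the paper's: rewrite the inequality via $P_m$ with $m=-(2-\gamma)R$, show $m\in[1,\tfrac{2\gamma}{\gamma+1}+\delta_0]$, and run a first-crossing argument using the sign of $Q_m^\pm$ from Lemma~\ref{lemma:Qmintervalarithmetic} to contradict $P_m'(y_1)\ge0$. The only cosmetic differences are that the paper leaves $m$ as an arbitrary element of the admissible window rather than setting $m=-(2-\gamma)R$, and chooses $\delta$ with $\delta(2-\gamma)<\delta_0$ rather than $=\delta_0$ (both work since the hypothesis on $R$ is strict); your explicit reduction to $R\le-\tfrac{1}{2-\gamma}$ makes the passage from Proposition~\ref{prop:continuationright} cleaner.
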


\begin{remark}
In effect, this says that if $\frac{\rho_1y_*}{\rho_0}<-\frac{2\ga}{(2-\ga)(\ga+1)}$, then we retain $\frac{\rho'y}{\rho}<-\frac{2\ga}{(2-\ga)(\ga+1)}$ as long as the flow stays supersonic. If, on the other hand, we only have $\frac{\rho_1y_*}{\rho_0}\geq-\frac{2\ga}{(2-\ga)(\ga+1)}$, then we will at least keep $\frac{\rho' y}{\rho}\leq \frac{\rho_1y_*}{\rho_0}$ as long as the flow stays supersonic.
\end{remark}
\begin{proof}
Choose $\de>0$ such that $\de(2-\ga)<\de_0$ with $\de_0$ the constant defined in Lemma \ref{lemma:Qmintervalarithmetic} and let $m\in (1,\frac{2\ga}{\ga+1}+\de(2-\ga))$ be such that $R_1<-\frac{m}{2-\ga}$. Applying again \eqref{eq:rho'identitym}, we find
\beqa
\frac{\rho'y}{\rho}+\frac{m}{2-\ga}=y^{-\frac{2(\ga-1)}{2-\ga}}\frac{{P_m}}{y^2\om^2-\ga\rho^{\ga-1}},
\eeqa
where
$$P_m=-\frac{(4-m-2\ga)}{2-\ga}y^{\frac{2}{2-\ga}}\om^2-(\ga-1)y^{\frac{2}{2-\ga}}\big(\om+(2-\ga)\big)-\frac{m}{2-\ga}\ga\big(y^{\frac{2}{2-\ga}}\rho\big)^{\ga-1}+\frac{4\pi y^{\frac{2}{2-\ga}}\om\rho}{4-3\ga}.$$
Suppose now that at $y_1$, $\frac{\rho'y}{\rho}=-\frac{m}{2-\ga}$ for the first time, so that ${P_m}(y)<0$ for all $y\in(y_*,y_1)$. By Proposition \ref{prop:continuationright} and Proposition \ref{prop:Picard}, as the flow is assumed supersonic, the flow may be extended smoothly to the right of $y_1$, and hence is smooth at $y_1$. Suppose now that at $y_1$, $\frac{\rho'y}{\rho}=-\frac{m}{2-\ga}$ for the first time (otherwise we are done). Then we must also have that $P_m'(y_1)\geq 0$, $P_m(y_1)=0$, and hence, at $y_1$, by \eqref{eq:Pm'},
\beqas
P_m'(y_1)=\frac{y^{\frac{2}{2-\ga}}}{y\om}Q_m&\Big(\om(y_1),\frac{\ga\rho(y_1)^{\ga-1}}{y_1^2}\Big).
\eeqas
Note that $Q_m(\om,\mathcal{R})$ is linear in $\mathcal{R}$ and that, as the flow is supersonic, we have always $0\leq\frac{\ga\rho^{\ga-1}}{y^2}\leq\om^2$. Then,
$$Q_m\Big(\om(y_1),\frac{\ga\rho(y_1)^{\ga-1}}{y_1^2}\Big)\leq \max\big\{Q_m(\om(y_1),0),Q_m(\om(y_1),\om(y_1)^2)\big\}.$$
Applying Lemma \ref{lemma:Qmintervalarithmetic}, for $m\in[1,\frac{2\ga}{\ga+1}+\de(2-\ga)]$, $\om\in[\frac{4-3\ga}{3},2-\ga]$ this is strictly negative, leading to the desired contradiction.
\end{proof}
With this, we may prove that the flow remains supersonic to the right for all $y>y_*$, concluding the proof of existence to the right.


\begin{lemma}\label{lemma:supersonic}
Let $\ga\in(1,\frac43)$,  $y_*\in[y_f,y_F]$. Then $y_{\max}(y_*)=\infty$, i.e.~the unique LPH-type solution $(\rho,\om)$ to the right of $y_*$ obtained from Theorem \ref{thm:Taylor} extends smoothly as a strictly supersonic solution of \eqref{eq:EPSS} to the whole of $(y_*,\infty)$.
\end{lemma}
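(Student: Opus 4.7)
The plan is to reduce global existence of the supersonic solution to a monotonicity statement for the \emph{supersonicity ratio}
\[
Z(y) := \frac{y^2 \om(y)^2}{\rho(y)^{\ga-1}},
\]
which satisfies $Z(y_\ast)=\ga$ (the sonic value) and $G(y;\rho,\om)<0$ if and only if $Z(y)>\ga$. The first step is to combine a direct differentiation of $Z$ with the identity \eqref{eq:invariance1} used to eliminate $\om'$ in favour of $\rho'$, giving
\[
\frac{Z'}{Z} = \frac{2(4-3\ga-2\om)}{y\om} - (\ga+1)\frac{\rho'}{\rho}.
\]
I would then apply the sharpened estimate $\frac{\rho' y}{\rho}<-\frac{2\ga}{(\ga+1)(2-\ga)}$ available from Lemma \ref{lemma:rho'improved} (which in turn rests on the location of $R_1$ in Proposition \ref{prop:R1W1}) to obtain
\[
\frac{y Z'}{Z} > \frac{2(4-3\ga-2\om)}{\om}+\frac{2\ga}{2-\ga}=\frac{2(4-3\ga)\bigl((2-\ga)-\om\bigr)}{\om(2-\ga)},
\]
which is strictly positive on $(y_\ast,y_{\max})$ since Proposition \ref{prop:continuationright} ensures $\om<2-\ga$ strictly and $4-3\ga>0$. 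In particular $Z$ is strictly increasing and $Z(y)>Z(y_\ast)=\ga$ on $(y_\ast,y_{\max})$, so the flow remains strictly supersonic throughout.

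Next, I would argue by contradiction that $y_{\max}=\infty$. Suppose $y_{\max}<\infty$. Proposition \ref{prop:continuationright} already gives $\rho(y)>0$, the bounds $\frac{4-3\ga}{3}<\om<2-\ga$, and uniform two-sided control on $\frac{\rho' y}{\rho}$; the latter forces $\rho$ to be bounded above by $\rho(y_\ast)$ and bounded below by the positive constant $\rho(y_\ast)(y_{\max}/y_\ast)^{-\frac{4}{(4-3\ga)(2-\ga)}}$ on the whole interval $[y_\ast, y_{\max}]$. Combined with the strict monotonicity of $Z$, on any interval $[y_\ast+\epsilon,y_{\max})$ we obtain the uniform bound
\[
-G(y;\rho,\om)=\rho^{\ga-1}\bigl(Z(y)-\ga\bigr)\ge c_\epsilon >0,
\]
so $G$ is bounded away from zero. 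Together with the bounds on $(\rho,\om)$, this places the flow in a compact subset of the strictly supersonic region on which the right-hand side of \eqref{eq:EPSS} is smooth.

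The standard local existence result (Proposition \ref{prop:Picard}) then allows us to extend the solution to an interval strictly containing $[y_\ast,y_{\max}]$ as a smooth supersonic solution of \eqref{eq:EPSS}, contradicting the maximality of $y_{\max}$. Hence $y_{\max}(y_\ast)=\infty$, completing the proof.

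The main obstacle is that the weaker bound $\frac{\rho' y}{\rho}<-\frac{1}{2-\ga}$ from Proposition \ref{prop:continuationright} alone is \emph{not} enough for $Z'>0$: replacing $-(\ga+1)\frac{\rho'}{\rho}$ by $\frac{\ga+1}{(2-\ga)y}$ makes $\frac{yZ'}{Z}$ behave like $\frac{1-\ga}{2-\ga}<0$ when $\om\uparrow 2-\ga$, so monotonicity breaks near the upper boundary of the admissible $\om$-range. Securing the sharper coefficient $\frac{2\ga}{(\ga+1)(2-\ga)}$ is precisely the content of Lemma \ref{lemma:rho'improved} and the interval-arithmetic signs of $Q_m^{\pm}$ in Lemma \ref{lemma:Qmintervalarithmetic}; once that sharpened bound is in hand, the $Z$-monotonicity argument above is essentially mechanical.
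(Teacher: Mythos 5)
Your reduction of supersonicity to the monotonicity of $Z(y)=\frac{y^2\om^2}{\rho^{\ga-1}}$ is a neat reformulation, and your identity $\frac{yZ'}{Z}=\frac{2(4-3\ga-2\om)}{\om}-(\ga+1)\frac{y\rho'}{\rho}$ is correct; the resulting quantity $\frac{2(4-3\ga)(2-\ga-\om)}{\om(2-\ga)}$ is indeed strictly positive. In spirit this is the same as the paper's argument, which tracks the quantity $S=y^{\frac{2}{2-\ga}}\om^2-\ga\big(y^{\frac{2}{2-\ga}}\rho\big)^{\ga-1}$ and isolates the same function $F(\om,R)=2(4-3\ga)(1-\frac{\om}{2-\ga})-\om(\ga+1)(R+\frac{2\ga}{(2-\ga)(\ga+1)})$ in $S'$.

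The gap is in the very bound you call ``available from Lemma~\ref{lemma:rho'improved}''. That lemma does \emph{not} always deliver $\frac{y\rho'}{\rho}<-\frac{2\ga}{(2-\ga)(\ga+1)}$. Read its statement carefully: for any $R>\max\{R_1,-\frac{2\ga}{(2-\ga)(\ga+1)}-\de\}$ it propagates $\frac{y\rho'}{\rho}<R$. If $R_1\geq -\frac{2\ga}{(2-\ga)(\ga+1)}$, the maximum is $R_1$, and the best you obtain by sending $R\downarrow R_1$ is $\frac{y\rho'}{\rho}\leq R_1$, which is \emph{not} below the threshold $-\frac{2\ga}{(2-\ga)(\ga+1)}$ needed to make $Z'>0$. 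Proposition~\ref{prop:R1W1} guarantees $R_1\leq -\frac{2\ga}{(2-\ga)(\ga+1)}$ only for $\ga\in[\frac{10}{9},\frac43)$; for $\ga\in(1,\frac{10}{9})$ the only upper bound is $R_1<-\frac{1}{2-\ga}$, which sits \emph{above} $-\frac{2\ga}{(2-\ga)(\ga+1)}$. Indeed, the paper's Lemma~\ref{lemma:supersonic} proof explicitly splits into two cases, and Case~2 (the one you omit) begins precisely with ``Suppose now that $R_1\geq -\frac{2\ga}{(2-\ga)(\ga+1)}$. By Proposition~\ref{prop:R1W1}, this forces $\ga\leq\frac{10}{9}$.'' That case is not ``essentially mechanical'': it needs that $\rho$ strictly decreases away from $\rho_0$, the structure of the level set $\{h=0\}$ from Lemma~\ref{lemma:f1structure} to infer $\om<\om_0$ when $h$ is small, and the monotonicity of $F(\cdot,R_1)$ in $\om$ for $R_1\geq -\frac{2\ga}{(2-\ga)(\ga+1)}$. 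Without that, your argument fails on part of the supercritical range of $\ga$, so the proof is incomplete.
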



\begin{proof}
Let now
$$S=y^\frac{2}{2-\ga}\om^2-\ga\big(y^{\frac{2}{2-\ga}}\rho\big)^{\ga-1}.$$
By Theorem \ref{thm:Taylor}, there exists $\de>0$ such that $S>0$ on $(y_*,y_*+\de]$.

By Proposition \ref{prop:continuationright} and the local existence and uniqueness Proposition \ref{prop:Picard}, the only obstruction to continuing the solution to the right is if strict supersonicity fails.

Suppose for a contradiction that $y_{\max}(y_*)<\infty$. Then there exists $y_0\in(y_*,y_{\max}]$ such that $\liminf_{y\to y_0^-}S(y)=0$ where $S(y)>0$ on $(y_*,y_0)$. The flow is then smooth on $(y_*,y_0)$, but may not extend smoothly up to $y_0$.

 A simple calculation using \eqref{eq:weightedrho'}--\eqref{eq:weightedomega2'} shows that, for all $y\in(y_*,y_0)$,
\beqas
S'(y)=&\,y^{\frac{2}{2-\ga}}\Big(2\om\frac{(4-3\ga)(1-\frac{\om}{2-\ga})}{y}-\frac{2}{2-\ga}\frac{\om^2}{y}-2\frac{\om^2\rho'}{\rho}\Big)\\
&-(\ga-1)\ga\big(y^{\frac{2}{2-\ga}}\rho\big)^{\ga-2}y^{\frac{2}{2-\ga}}\big(\rho'+\frac{2}{2-\ga}\frac{\rho}{y}\big)\\
=&\,y^{\frac{2}{2-\ga}}\Big(2\om\frac{(4-3\ga)(1-\frac{\om}{2-\ga})}{y}-\frac{2}{2-\ga}\frac{\om^2}{y}-2\frac{\om^2\rho'}{\rho}\Big)\\
&+(\ga-1)(S-y^{\frac{2}{2-\ga}}\om^2)\big(\frac{\rho'}{\rho}+\frac{2}{2-\ga}\frac{1}{y}\big). 
\eeqas
Rearranging this identity, we obtain
\beqa\label{eq:S'(y)}
S'(y)=&\,y^{\frac{2}{2-\ga}}\Big(2\om\frac{(4-3\ga)(1-\frac{\om}{2-\ga})}{y}-\frac{2}{2-\ga}\frac{\om^2}{y}-2\om^2\frac{\rho'}{\rho}-(\ga-1)\om^2\frac{\rho'}{\rho}-2\frac{(\ga-1)}{2-\ga}\frac{\om^2}{y}\Big)\\
&+S(y)(\ga-1)\big(\frac{\rho'}{\rho}+\frac{2}{2-\ga}\frac{1}{y}\big) \\
=&\,y^{\frac{2}{2-\ga}-1}\om F(\om,\frac{\rho'y}{\rho})+S(y)(\ga-1)\big(\frac{\rho'}{\rho}+\frac{2}{2-\ga}\frac{1}{y}\big),
\eeqa
where
$$F(\om,R):=2(4-3\ga)\big(1-\frac{\om}{2-\ga}\big)-\om(\ga+1)\big(R+\frac{2\ga}{(2-\ga)(\ga+1)}\big). $$
As the flow is smooth (analytic) through $y_*$ by construction, then this identity also holds at $y_*$, where $S(y_*)=0$. In particular, this gives us the inequality
\beq\label{ineq:F(y_*)}
F(\om_0,R_1)=\de^*>0,
\eeq
where we have defined, as usual, $R_1=\frac{\rho_1 y_*}{\rho_0}$. We distinguish now two cases: $R_1<-\frac{2\ga}{(2-\ga)(\ga+1)}$ and $R_1\geq-\frac{2\ga}{(2-\ga)(\ga+1)}$.

\textit{Case 1:} Suppose that $R_1<-\frac{2\ga}{(2-\ga)(\ga+1)}$. Then, by Lemma \ref{lemma:rho'improved}, there exists $\de>0$ such that 
$$\frac{\rho' y}{\rho}\leq-\frac{2\ga}{(2-\ga)(\ga+1)}-\de\text{ for all }y\in(y_*,y_0).$$
Thus as we have also $\frac{4-3\ga}{3}<\om<2-\ga$, we obtain $$F(\om,\frac{\rho'y}{\rho})\geq \om(\ga+1)\de\geq\de\frac{(\ga+1)(4-3\ga)}{3}=:\tilde\de>0.$$
By the estimates of Proposition \ref{prop:continuationright}, there exists $M>0$, depending only on $y_*$, $y_0$ and $\ga$, such that, for all $y\in(y_*,y_0)$,
$$y^{1-\frac{2}{2-\ga}}\om^{-1}\Big|(\ga-1)\big(\frac{\rho'}{\rho}+\frac{2}{2-\ga}\frac{1}{y}\big)\Big|\leq M.$$
Thus, if $S(y)\leq\frac{\tilde\de}{2M}$, we obtain from \eqref{eq:S'(y)} $S'(y)>0$, contradicting $\liminf_{y\to y_0}S(y)=0$.

\textit{Case 2:} Suppose now that $R_1\geq-\frac{2\ga}{(2-\ga)(\ga+1)}$. By Proposition \ref{prop:R1W1}, this forces $\ga\leq\frac{10}{9}$. As $\rho'<0$ by Proposition \ref{prop:continuationright}, we know that on $(y_*+\bar\nu,y_0)$ ($\bar\nu$ taken as in Lemma \ref{lemma:rightinitial}), we have $\rho<\rho_0-\de$ for some small $\de>0$. By Lemma \ref{lemma:f1structure} (compare also Figure \ref{fig:hlevelset}), there exists $\bar\eps>0$, depending  on $\de$, $\rho_0$ and $\ga\leq\frac{10}{9}$, such that if $0\leq h(\rho,\om)<\bar\eps$, $\om>\frac{4-3\ga}{3}$ and $\rho<\rho_0-\de$, then $\om<\om_0$. Here $h(\rho,\om)$ is as defined above in \eqref{def:h}. 

By Proposition \ref{prop:continuationright}, we have a bound $M>0$, depending only on $y_*$, $y_0$ and $\ga$, such that 
\beq\label{ineq:Sbounds}
\big|\frac{\rho'}{y\rho}\big|+y^{1-\frac{2}{2-\ga}}\om^{-1}\Big|(\ga-1)\big(\frac{\rho'}{\rho}+\frac{2}{2-\ga}\frac{1}{y}\big)\Big|\leq M\text{ on }(y_*,y_0). 
\eeq
Let $\eps>0$ be such that $\eps M<\min\{\de^*,\bar\eps\}$. As $S$ is differentiable on $(y_*,y_0)$, there exists $y_1\in(y_*,y_0)$ such that
$$S'(y_1)\leq0\text{  and }S(y_1)=\eps.$$
From the first equation of \eqref{eq:rhoom}, we obtain
$$\big|h(\rho(y_1),\om(y_1))\big|=\Big|-S(y_1)\frac{\rho'(y_1)}{y_1\rho(y_1)}\Big|\leq \eps M<\bar\eps.$$
Thus, by construction of $\bar\eps$, we also obtain $\om(y_1)<\om_0$. 

We use Lemma \ref{lemma:rho'improved} to see that $\frac{\rho' y}{\rho}\leq R_1$ on $(y_*,y_0)$ and so, noting that $\frac{\d}{\d R}F(\om,R)<0$,  we have $F(\om(y_1),\frac{\rho'(y_1)y_1}{\rho(y_1)})\geq F(\om(y_1),R_1)$. 
Now as $R_1\geq-\frac{2\ga}{(2-\ga)(\ga+1)}$, it is clear from the definition of $F$ that $\frac{\d}{\d\om}F(\om,R_1)<0$, and so, as $\om(y_1)<\om_0$, we obtain
$$F(\om(y_1),\frac{\rho'(y_1)y_1}{\rho(y_1)})\geq F(\om(y_1),R_1)>F(\om_0,R_1)=\de^*>0,$$
and so, using \eqref{ineq:Sbounds} and $\eps M<\de^*$ in \eqref{eq:S'(y)}, we find $S'(y_1)>0$, a contradiction to the definition of $y_1$.
\end{proof}


\begin{lemma}[Asymptotics]\label{lemma:asymptotics}
Let $\ga\in(1,\frac43)$,  $y_*\in[y_f,y_F]$. Then the local LPH-type solution $(\rho,\om)$ obtained from Theorem \ref{thm:Taylor} may be extended to the right as a Yahil-type solution of \eqref{eq:EPSS} on the whole interval $[y_*,\infty)$. 

Moreover, as $y\to\infty$, the asymptotics of $(\rho,\om)$ are as follows. There exist constants $\bar k_1>0$ and $\bar k_2>0$ such that
$$y^{\frac{1}{2-\ga}}(2-\ga-\om(y))\to\bar k_1,\quad y^{\frac{2}{2-\ga}}\rho(y)\to\bar k_2\quad\text{ as }y\to\infty.$$
\end{lemma}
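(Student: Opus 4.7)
The plan is to first establish the global Yahil-type extension and then derive the precise asymptotics by extracting an autonomous limit ODE for $\omega$ in the large-$y$ regime. For the extension, Lemma \ref{lemma:supersonic} provides a smooth, strictly supersonic continuation to all of $[y_*,\infty)$, and Proposition \ref{prop:continuationright} ensures $\rho > 0$ and $(4-3\gamma)/3 < \omega < 2-\gamma$ throughout. Hence $u = y(\omega-(2-\gamma)) < 0$ for every $y > 0$, and strict supersonicity on $(y_*,\infty)$ rules out additional sonic points, so the solution is Yahil-type in the sense of Definition \ref{D:YAHIL}.

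For the asymptotics, I would first observe that the upper bound $\rho' y/\rho < -1/(2-\gamma)$ from Proposition \ref{prop:continuationright} makes $\rho(y) y^{1/(2-\gamma)}$ strictly monotonically decreasing, so $\rho(y) \le C y^{-1/(2-\gamma)} \to 0$. Substituting this vanishing into $G = \gamma\rho^{\gamma-1} - y^2\omega^2$ and $h = \bar h(\omega) - 4\pi\rho\omega/(4-3\gamma)$, with $\bar h(\omega) := 2\omega^2 + (\gamma-1)(\omega + 2-\gamma)$, and using the exact equation $y\omega' = (4-3\gamma - 3\omega) - \omega y^2 h/G$, I obtain the asymptotic scalar ODE
\[
y\omega' = F(\omega) + \varepsilon(y), \qquad F(\omega) := \frac{(2-\gamma-\omega)(\omega-(1-\gamma))}{\omega},
\]
with error $\varepsilon(y) = O(y^{-1/(2-\gamma)})$; the clean factorization of $F$ rests on the identity $(3-2\gamma)^2 + 4(\gamma-1)(2-\gamma) = 1$. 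Since $F$ is strictly positive on $((4-3\gamma)/3, 2-\gamma)$ and vanishes only at $\omega = 2-\gamma$, at any critical point of $\omega$ lying past the transient regime one has $F(\omega) = -\varepsilon \to 0$, forcing critical values of $\omega$ to converge to $2-\gamma$; on each monotone interval between consecutive critical points $\omega$ is bracketed by its critical values, while the alternative of eventual monotonicity is settled by observing that a limit $L < 2-\gamma$ would give $y\omega' \to F(L) > 0$ and hence $\omega \sim F(L)\log y \to \infty$, a contradiction. Thus $\omega(y) \to 2-\gamma$.

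Setting $\psi := 2-\gamma - \omega > 0$ and linearizing $F$ near $\omega = 2-\gamma$ gives $F(\omega) = \psi/(2-\gamma) + O(\psi^2)$, so the asymptotic ODE becomes $y\psi' = -\psi/(2-\gamma) + O(\psi^2 + y^{-1/(2-\gamma)})$. For the sharp rate, I would introduce the auxiliary quantity $\Psi := y^{1/(2-\gamma)}\psi$ and show by bootstrap that $\Psi'$ is integrable in $dy$ at infinity; this yields $\Psi(y) \to \bar k_1$ for some $\bar k_1 \ge 0$, with strict positivity $\bar k_1 > 0$ because the linearization at the far-field has only the simple mode $y^{-1/(2-\gamma)}$, so any solution with $\psi > 0$ throughout cannot decay faster than this rate. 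Finally, plugging the rates for $\omega$ and $\rho$ back into $\rho'/\rho = y h/G = -2/((2-\gamma)y) + O((\psi+\rho)/y)$ and integrating (both error terms being integrable in $y$) gives $y^{2/(2-\gamma)}\rho(y) \to \bar k_2 > 0$. The main obstacle is the rate analysis for $\psi$: the perturbation $O(\psi^2 + y^{-1/(2-\gamma)})$ in the equation for $\psi$ is of the same order as the leading decay, so direct Gr\"onwall does not close; the argument must instead bracket $\Psi$ between two positive constants for large $y$ via a two-sided comparison with the autonomous reference equation $y\xi' = -\xi/(2-\gamma)$.
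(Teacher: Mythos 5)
Your overall plan tracks the paper's structure (global continuation via Lemma~\ref{lemma:supersonic}, then asymptotics by extracting the effective scalar ODE for $\omega$), and the factorization $F(\omega)=\frac{(2-\gamma-\omega)(\omega-(1-\gamma))}{\omega}$ (resting on $(3-2\gamma)^2+4(\gamma-1)(2-\gamma)=1$) is a correct and pleasant reformulation that the paper does not write down; the paper instead works directly from identity~\eqref{eq:2-ga-om} and extracts $-\frac{1}{(2-\gamma)y}\psi$ as the leading linear term. The qualitative claim $\omega\to 2-\gamma$ is also fine, though the paper skips it because it derives the quantitative rate directly.

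The genuine gap is in the source of your $\rho$ decay. You invoke only Proposition~\ref{prop:continuationright}, which gives $\frac{\rho'y}{\rho}<-\frac1{2-\gamma}$ and hence $\rho\lesssim y^{-\frac1{2-\gamma}}$; with that, the error in the $\psi$ equation is $\varepsilon(y)=O(\rho)=O(y^{-1/(2-\gamma)})$, which is \emph{exactly} the target rate. Writing $\Psi=y^{1/(2-\gamma)}\psi$, one then finds $\Psi'=y^{1/(2-\gamma)-1}\bigl(O(\psi^2)+O(\rho)\bigr)$; the $O(\rho)$ contribution is $O(y^{-1})$, which is not integrable, and your "two-sided comparison with $y\xi'=-\xi/(2-\gamma)$" can only conclude $\Psi\lesssim\log y$, i.e.\ $\psi\lesssim y^{-1/(2-\gamma)}\log y$, not boundedness of $\Psi$. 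The paper avoids this circularity by using Lemma~\ref{lemma:rho'improved} together with $R_1<-\frac{1}{2-\gamma}$ to produce a \emph{strict} improvement $\frac{\rho'y}{\rho}\le -\frac1{2-\gamma}-\epsilon$ for some $\epsilon>0$, hence $\rho\lesssim y^{-1/(2-\gamma)-\epsilon}$; then $y^{1/(2-\gamma)-1}\rho\lesssim y^{-1-\epsilon}$ is integrable and the Gr\"onwall argument closes in one pass. Without this $\epsilon$-gain your argument would need a multi-step bootstrap (first $\psi\lesssim y^{-1/(2-\gamma)}\log y$, then $y^{2/(2-\gamma)}\rho$ converges so $\rho\lesssim y^{-2/(2-\gamma)}$, then return to sharpen $\psi$), which you do not spell out. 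Relatedly, your justification of $\bar k_1>0$ ("the linearization has only the simple mode") is heuristic; the paper instead notes that once $\psi\lesssim y^{-1/(2-\gamma)}$ is known, the quadratic term $\frac{\psi^2}{y}$ is higher-order, which together with the positivity of the third term in~\eqref{eq:2-ga-om} yields the matching lower bound $\psi\gtrsim y^{-1/(2-\gamma)}$ directly, and then integrability of $(y^{1/(2-\gamma)}\psi)'$ gives convergence to a positive limit.
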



\begin{proof}
The global existence to the right follows from Proposition \ref{prop:continuationright} and Lemma \ref{lemma:supersonic}, while the negativity of $u(y)$ follows directly from the bounds $\frac{4-3\ga}{3}<\om(y)<2-\ga$.

We begin by showing the asymptotics for $\om$. Recall from \eqref{eq:2-ga-om} the identity
\beqa\label{eq:om'extraidentity}
\big(2&-\ga-\om\big)'=\\
&-\frac{\frac{4-3\ga}{2-\ga}\big(2-\ga-\om\big)}{y}-\frac{\om}{y}\frac{(\ga-1)y^2\big(\frac{2\om}{2-\ga}+1\big)}{y^2\om^2-\ga\rho^{\ga-1}}(2-\ga-\om)+\frac{\om}{y}\frac{\frac{4\pi y^2\rho\om}{4-3\ga}-\frac{2}{2-\ga}\ga\rho^{\ga-1}}{y^2\om^2-\ga\rho^{\ga-1}}.
\eeqa
From Lemma \ref{lemma:rho'improved} and the initial estimate $\frac{y_*\rho_1}{\rho_0}<-\frac{1}{2-\ga}$, we see that there exists $\epsilon>0$ such that $\rho'\leq\big(-\frac{1}{2-\ga}-\epsilon\big)\frac{\rho}{y}$. As also $\rho>0$, we easily see that
$$0<\rho(y)\leq Cy^{-\frac{1}{2-\ga}-\epsilon},$$
and so, for $y$ large, we may estimate
$$\Big|\frac{\om}{y}\frac{\frac{4\pi y^2\rho\om}{4-3\ga}-\frac{2}{2-\ga}\ga\rho^{\ga-1}}{y^2\om^2-\ga\rho^{\ga-1}}\Big|\leq Cy^{-1-\frac{1}{2-\ga}-\epsilon}.$$
We re-write the middle term of \eqref{eq:om'extraidentity} as 
\beqas
-\frac{\om}{y}&\frac{(\ga-1)y^2\big(\frac{2\om}{2-\ga}+1\big)}{y^2\om^2-\ga\rho^{\ga-1}}(2-\ga-\om)\\
=&\,-\frac{1}{y}(\ga-1)\big(\frac{2}{2-\ga}+\frac{1}{\om}\big)(2-\ga-\om)+\frac{\om}{y}\frac{\ga\rho^{\ga-1}(\ga-1)y^2\big(\frac{2\om}{2-\ga}+1\big)}{y^2\om^2(y^2\om^2-\ga\rho^{\ga-1})}(2-\ga-\om)\\
=&\,-\frac{\frac{3(\ga-1)}{2-\ga}(2-\ga-\om)}{y}-\frac{(\ga-1)(2-\ga-\om)^2}{y\om(2-\ga)}+O\big(y^{-3-\frac{1}{2-\ga}}\big).
\eeqas
Thus, we find
\beqas
\big(2&-\ga-\om\big)'\leq-\frac{\frac{1}{2-\ga}(2-\ga-\om)}{y}+Cy^{-1-\frac{1}{2-\ga}-\epsilon},
\eeqas
leading to the desired estimate
$$0<2-\ga-\om(y)\leq Cy^{-\frac{1}{2-\ga}},$$
as claimed. With this quantitative decay established, it is easier to see that this decay is also sharp by using this estimate to treat the quadratic term in $(2-\ga-\om)$ as higher order and so obtain a lower bound of the same form: $2-\ga-\om\geq cy^{-\frac{1}{2-\ga}}$. Indeed, we easily see that the quantity $\big(y^{\frac{1}{2-\ga}}(2-\ga-\om)\big)'$ is integrable as $y\to\infty$, giving the existence of $\bar k_1$ as claimed.

Treating now $\rho$, we see from \eqref{eq:rho'identity} that
\beqas
\big(y^{\frac{2}{2-\ga}}\rho\big)'=&\,\frac{y^{\frac{2}{2-\ga}}\rho}{y}\big(\frac{\rho'y}{\rho}+\frac{2}{2-\ga}\big)\\
=&\,\frac{y^{\frac{2}{2-\ga}}\rho}{y}\Big(\frac{-(\ga-1)y^2\big(\frac{2\om}{2-\ga}+1\big)(2-\ga-\om)+\frac{4\pi y^2\om\rho}{4-3\ga}-\frac{2}{2-\ga}\ga\rho^{\ga-1}}{y^2\om^2-\ga\rho^{\ga-1}}\Big)
\eeqas
and the asymptotics just obtained for $2-\ga-\om$ and $\rho$ immediately yield that $y^{\frac{2}{2-\ga}}\rho$ remains bounded as $y\to\infty$. In particular, the right hand side of this identity is integrable as $y\to\infty$, giving the claimed convergence of $y^{\frac{2}{2-\ga}}\rho$.
\end{proof}


\begin{lemma}\label{lemma:rightmonotonicity}
Let $\ga\in(1,\frac43)$, $y_*\in[y_f,y_F]$, and let $(\rho,\om)$ be the global Yahil-type solution to the right of \eqref{eq:EPSS} obtained as the extension of the LPH-type solution from Theorem \ref{thm:Taylor}. Then the solution remains monotone (strictly monotone for $y_*>y_f$) in both $\rho$ and $\om$. 
\end{lemma}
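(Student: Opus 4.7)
The $\rho$-monotonicity is immediate from Proposition~\ref{prop:continuationright}, which gives $\rho'(y) < -\rho(y)/((2-\ga)y) < 0$ throughout $(y_*,\infty)$; at the sonic point itself, $\rho'(y_*) = \rho_0 R_1/y_* < 0$ by Proposition~\ref{prop:R1W1}. So $\rho$ is strictly decreasing on $[y_*,\infty)$ in every case. For $\om$, the case $y_* = y_f$ is handled by Proposition~\ref{prop:far-field}: the solution is the constant far-field profile $\om \equiv 2-\ga$, which is monotone though not strictly. Henceforth assume $y_* \in (y_f, y_F]$, so that Proposition~\ref{prop:R1W1} delivers $\om'(y_*) = W_1/y_* > 0$.

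The plan is to establish $\om' > 0$ on all of $(y_*,\infty)$ by contradiction. Assume that $y_0 > y_*$ is the smallest point where $\om'(y_0) = 0$; then $\om''(y_0) \leq 0$. Writing $\bar B := 3\om(y_0) - (4-3\ga) > 0$ (positivity from Proposition~\ref{prop:continuationright}), the vanishing of $\om'$ in \eqref{eq:invariance1} forces $\rho'(y_0)/\rho(y_0) = -\bar B/(y_0\om(y_0))$, and combining with $\rho' = y\rho h/G$ one reads off the key identity $h(y_0) = -G(y_0)\bar B/(y_0^2\om(y_0))$.

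Differentiating \eqref{eq:invariance1} once more and computing $\rho''(y_0)$ from $\rho' = y\rho h/G$ (at $y_0$ the condition $\om' = 0$ collapses $h'$ to $4\pi\bar B\rho/((4-3\ga)y_0)$ and simplifies $G'$ via $\ga(\ga-1)\rho^{\ga-2}\rho' = (\ga-1)(G+y^2\om^2)\rho'/\rho$), all $h$-dependent contributions cancel using the identity above, and after elementary algebra one reaches the closed form
\[
\om''(y_0) = \bar B \left[\frac{(4-3\ga)(2-\ga-\om)}{y_0^2\om} + \frac{(\ga-1)\om}{G}\Big(3\om + 3\ga - 5 - \frac{2-\ga}{\om}\Big)\right],
\]
where the right-hand side is understood at $y_0$. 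The first bracketed term is positive since $\om < 2-\ga$ on $(y_*,\infty)$ by Proposition~\ref{prop:continuationright}. For the second, let $\psi(\om) := 3\om + 3\ga - 5 - (2-\ga)/\om$; then $\psi(2-\ga) = 0$ and $\psi'(\om) = 3 + (2-\ga)/\om^2 > 0$, so $\psi(\om) < 0$ for $\om \in [\tfrac{4-3\ga}{3}, 2-\ga)$. Combined with $(\ga-1)\om > 0$ and the supersonicity $G(y_0) < 0$ guaranteed by Lemma~\ref{lemma:supersonic}, the second term is also strictly positive, whence $\om''(y_0) > 0$, contradicting $\om''(y_0) \leq 0$.

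The main obstacle is the algebraic reduction of $\om''(y_0)$: many terms appear in the raw expansion, and it is not obvious a priori that everything collapses to two manifestly signed pieces. The structural content behind the argument is that $\psi$ vanishes precisely at $\om = 2-\ga$, which is consistent with the fact that $\om'$ is genuinely zero along the far-field solution where $\om \equiv 2-\ga$; the proof effectively exploits that the far-field is the only obstruction to monotonicity, and our strict bound $\om < 2-\ga$ on $(y_*,\infty)$ (available precisely because $y_* > y_f$) keeps us away from it.
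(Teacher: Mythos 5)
Your proof is correct, and the exact identity you derive is a genuine improvement on the paper's argument. The overall strategy is the same --- assume $\omega'(y_0)=0$ at a first interior point and derive $\omega''(y_0)>0$, contradicting $\omega''(y_0)\le 0$ --- but you keep the algebra exact rather than bounding, and this pays off. I verified your formula
\[
\om''(y_0)=\bar B\left[\frac{(4-3\ga)(2-\ga-\om)}{y_0^2\om}+\frac{(\ga-1)}{G}\Big(3\om^2+(3\ga-5)\om-(2-\ga)\Big)\right]
\]
directly from \eqref{eq:invariance1}, and the quadratic factors as $3(\om+\tfrac13)(\om-(2-\ga))$, negative on $(\tfrac{4-3\ga}{3},2-\ga)$, so with $G<0$ both pieces are strictly positive. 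The paper instead reaches its final lower bound via an intermediate step that, as printed, goes the wrong way: it invokes ``$h>0$ implies $\frac{4\pi\rho\om}{4-3\ga}>2\om^2+(\ga-1)\om+(\ga-1)(2-\ga)$,'' but from the definition of $h$, positivity of $h$ gives the \emph{reverse} inequality, so replacing $\frac{4\pi y^2\rho\om}{4-3\ga}$ by $y^2\bigl(2\om^2+(\ga-1)\om+(\ga-1)(2-\ga)\bigr)$ actually overestimates the bracket. Your exact decomposition explains why the paper's claimed final lower bound is nonetheless true --- it coincides precisely with the second bracketed term in your identity, and the discarded first term $\bar B(4-3\ga)(2-\ga-\om)/(y_0^2\om)$ is positive --- but the route through the exact identity is the clean way to see it. Your closing remark that $\psi(2-\ga)=0$ reflects the far-field being the unique obstruction to strict monotonicity is a nice structural observation consistent with the $y_*=y_f$ case, though not logically needed.
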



\begin{proof}
In the case $y_*=y_f$, we know that the solution to the right is exactly the far-field solution $(\rho_f,\om_f)=(ky^{-\frac{2}{2-\ga}},2-\ga)$. We therefore need only to consider the case $y_*>y_f$ for which $\om'(y_*)>0$. Moreover, by the estimate $\rho'<-\frac{1}{2-\ga}\frac{y}{\rho}$ of Proposition \ref{prop:continuationright} above, we have $\rho'<0$ for all $y>y_*$. It remains only to show that we retain also $\om'(y)>0$.

Suppose now that there exists a point $y_0>y_*$ such that $\om'(y_0)=0$. Then, from \eqref{eq:EPSS}, 
we have
\beq\label{eq:om(y0)1}
\frac{4-3\ga-3\om(y_0)}{y_0}=\frac{y_0\om(y_0)h(y_0)}{G(y_0)}.
\eeq
Differentiating $h(\rho,\om)$, we obtain
\beqa\label{eq:h'}
\frac{\dif}{\dif y}h(\rho,\om)=&\,4\om\om'+(\ga-1)\om'-\frac{4\pi}{4-3\ga}\om\rho'-\frac{4\pi}{4-3\ga}\rho\om' \\
=&\,2\om\om'-\frac{4\pi}{4-3\ga}\om\rho'+\frac{h(\rho,\om)-(\ga-1)(2-\ga)}{\om}\om' \\
=&\,\big(2\om^2-(\ga-1)(2-\ga)+h(\rho,\om)\big)\frac{4-3\ga-3\om}{y\om}-y\frac{h(4\om^2+(\ga-1)\om)}{G(\rho,\om,y)}.
\eeqa
Thus, at $y_0$, 
\beqa\label{eq:h'(y0)1}
h'(y_0)=&\,-\frac{4\pi}{4-3\ga}\om\rho'=-\frac{4\pi}{4-3\ga}\frac{y\rho\om h}{G}.
\eeqa
Arguing directly, we differentiate $G$ to obtain
\beqas
G'=&\,(\ga-1)\ga\rho^{\ga-2}\rho'-2y\om^2-2y^2\om\om'\\
=&\,(\ga-1)\ga\rho^{\ga-1}\frac{yh}{G}-2y\om^2-2y^2\om\om'.
\eeqas
Thus, at $y_0$,
\beqa\label{eq:G'(y0)1}
G'(y_0)=&\,(\ga-1)\ga\rho^{\ga-1}\frac{4-3\ga-3\om}{y\om}-2y\om^2.
\eeqa
We now further differentiate the second equation of \eqref{eq:rhoom} to obtain
\beqas
\om''=&\,-\frac{3\om'}{y}-\frac{4-3\ga-3\om}{y^2}-\frac{\om h}{G}-\frac{y\om' h}{G}-\frac{y\om h'}{G}+\frac{y\om h G'}{G^2}.
\eeqas
Hence, at $y_0$, we find
\beqas
\om''(y_0)=-\frac{4-3\ga-3\om}{y^2}-\frac{\om h}{G}-\frac{y\om h'}{G}+\frac{y\om h G'}{G^2}=-2\frac{\om h}{G}-\frac{y\om h'}{G}+\frac{y\om h G'}{G^2},
\eeqas
where we have used \eqref{eq:om(y0)1} in the second equality. Substituting \eqref{eq:h'(y0)1} into the second term and \eqref{eq:G'(y0)1} into the third term, we get
\beqas
\om''(y_0)=&\,-\frac{2\om h G}{G^2}+\frac{\frac{4\pi}{4-3\ga}y^2\om^2\rho h}{G^2}+\frac{y\om h\big((\ga-1)\ga\rho^{\ga-1}\frac{4-3\ga-3\om}{y\om}-2y\om^2\big)}{G^2}\\
\geq&\,\frac{\om h}{G^2}\Big(2(y^2\om^2-\ga\rho^{\ga-1})+y^2\big(2\om^2+(\ga-1)\om+(\ga-1)(2-\ga)\big)\\
&\quad+(\ga-1)\ga\rho^{\ga-1}\frac{4-3\ga-3\om}{\om}-2y^2\om^2\Big),
\eeqas
where we have used that $h>0$ (from $\rho'<0$) to obtain $\frac{4\pi}{4-3\ga}\rho\om>2\om^2+(\ga-1)\om+(\ga-1)(2-\ga)$.
Grouping terms, we then find
\beqas
\om''(y_0)\geq&\,\frac{\om h}{G^2}\Big(y^2\om^2\big(2+\frac{\ga-1}{\om}+\frac{(\ga-1)(2-\ga)}{\om^2}\big)+\ga\rho^{\ga-1}\big(-2+(\ga-1)\frac{4-3\ga-3\om}{\om}\big)\Big)\\
\geq&\,\frac{\om h}{G^2}y^2\om^2\Big(2+\frac{\ga-1}{\om}+\frac{(\ga-1)(2-\ga)}{\om^2}-2+(\ga-1)\frac{4-3\ga-3\om}{\om}\Big),
\eeqas
where we have used that $\ga\rho^{\ga-1}<y^2\om^2$ and $-2+(\ga-1)\frac{4-3\ga-3\om}{\om}<0$ for $\om\in(\frac{4-3\ga}{3},2-\ga)$. Thus,
\beqas
\om''(y_0)\geq&\,\frac{y^2\om h}{G^2}\Big((\ga-1)\om+(\ga-1)(2-\ga)+(\ga-1)(4-3\ga-3\om)\om\Big)>0
\eeqas
for all $\om\in(\frac{4-3\ga}{3},2-\ga)$ (indeed, one easily checks that the roots of the quadratic on the right are $-\frac13$ and $2-\ga$ while the coefficient of the quadratic term is negative), a contradiction to $\om'(y_0)=0$.
\end{proof}


\section{Solution to the left of the sonic point}\label{S:LEFT}


To construct a global solution to \eqref{eq:EPSS}, we now need to solve to the left of the sonic point. This is the core of the construction of the global self-similar solution and is the most challenging part of the proof analytically. We develop an ad hoc shooting method, varying the sonic time $y_*$ as our shooting parameter, to find a critical $\bar y_*$ for which the associated, local, LPH-type solution given by Theorem \ref{thm:Taylor} can be extended smoothly up to the origin without meeting a second sonic point.

To proceed with this shooting argument, we partition the set of sonic times into three parts, defined by the relation of the associated $\om(y;y_*)$ to the Friedman solution $\om_F\equiv\frac{4-3\ga}{3}$. The key set of values $y_*$ is those for which $\om(\cdot;y_*)$ intersects $\om_F$ before a second sonic point occurs, which we call $\mathcal{Y}$ (see definition below). As we expect a global solution to agree with the Friedman solution only at the origin, we find the critical $\bar y_*$ which leads to the global solution as the infimum of a connected component of $\mathcal{Y}$.

Throughout the section, the functions $(\rho(\cdot;y_*),\om(\cdot;y_*))$ will be taken to refer to the extension of the unique LPH-type solution obtained from Theorem \ref{thm:Taylor} as a solution of \eqref{eq:EPSS}.

Following the strategy of \cite{Guo20}, we can first define the sonic time and then partition the set $[y_f,y_F]$ as follows.
\begin{definition}[Sonic time, $\mathcal X, \mathcal Y, \mathcal Z$]\label{def:sonictime}
\beq
s(y_*)=\inf\{y\in(0,y_*)\,|\,(\rho(\cdot;y_*),\om(\cdot;y_*))\text{ extends onto }(y,y_*] \text{ and } \ga\rho(y;y_*)^{\ga-1}-y^2\om(y;y_*)^2>0\}
\eeq
and then the following sets:
\beqa
\mathcal{X}=&\,\{y_*\in[y_f,y_F)\,|\,\inf_{y\in(s(y_*),y_*)}\om(y;y_*)>\frac{4-3\ga}{3}\},\\
\mathcal{Y}=&\,\{y_*\in[y_f,y_F)\,|\,\text{ there exists }y\in(s(y_*),y_*)\text{ such that }\om(y;y_*)=\frac{4-3\ga}{3}\},\\
\mathcal{Z}=&\,\{y_*\in[y_f,y_F)\,|\,\om(y;y_*)>\frac{4-3\ga}{3}\text{ for all }y\in(s(y_*),y_*)\text{ and }\inf_{y\in(s(y_*),y_*)}\om(y;y_*)\leq\frac{4-3\ga}{3}\},
\eeqa
as well as the fundamental set
\beq
Y=\{y_*\in[y_f,y_F)\,|\,\text{for all }\tilde{y}_*\in[y_*,y_F),\text{ there exists }y\in(s(\tilde y_*),\tilde y_*)\text{ such that }\om(y;y_*)=\frac{4-3\ga}{3}\}.
\eeq
Finally, we define the value
\beq
\bar y_*=\inf Y.
\eeq
\end{definition}
Note that $y_f\in\mathcal{X}$ as $(\rho(\cdot;y_f),\om(\cdot;y_f))=(\rho_f,\om_f)$.

\begin{remark}
The unique extension of the local, unique LPH-type solution onto $(s(y_*),y_*)$ can be thought of as a maximal extension of the solution obtained by Theorem \ref{thm:Taylor}, and for the rest of this section, we will take the solution $(\rho(\cdot;y_*),\om(\cdot;y_*))$ of \eqref{eq:EPSS} to be defined on this maximal interval.
\end{remark}

To show that the solution associated to $\bar y_*$ can be extended to the origin to give a global solution, we require a number of further properties. First, we will show various continuity properties along the flow, a priori bounds away from the sonic time, upper semi-continuity of the sonic time and the openness of $\mathcal{Y}$. Next, we will demonstrate some basic invariant regions that hold as $y$ decreases. The key insight that will allow us to show the global existence of the solution is that, for $y_*\in Y$, the solution $\om(\cdot;y_*)$ must remain monotone as $y$ decreases until $\om$ meets the Friedman value $\frac{4-3\ga}{3}$. By propagating this property along ${Y}$ to $\bar y_*$ in the key Proposition \ref{prop:S=Y}, we are able to show that no second sonic point forms in the solution from $\bar y_*$, and hence the solution may be extended to the origin. In the final part of this section, we also conclude that the global solution indeed takes the value $\om(0)=\frac{4-3\ga}{3}$ at the origin and that the density remains bounded globally.


\subsection{Continuity properties}

We first show the simple positivity of the density to the left of the sonic point.
\begin{lemma}\label{L:RHOPOSITIVE}
Let $\ga\in(1,\frac43)$, $y_*\in[y_f,y_F]$ and let $(\rho,\om)$ be the associated unique LPH-type solution on $(s(y_*),y_*)$. Then $\rho(y)>0$ for all $y\in(s(y_*),y_*)$.
\end{lemma}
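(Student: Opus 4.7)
The plan is to combine two facts: first, that $\rho(y_\ast)=\rho_0>0$ by Lemma~\ref{lemma:rho0om0} (indeed $\rho_0\geq \underline\rho>0$ uniformly in $y_\ast$); second, that by the very definition of the sonic time $s(y_\ast)$ in Definition~\ref{def:sonictime}, the solution $(\rho,\om)$ extends continuously onto $(s(y_\ast),y_\ast]$ and the strict subsonicity
$$G(y;\rho,\om)=\ga\rho(y)^{\ga-1}-y^2\om(y)^2>0$$
holds on the entire open interval $(s(y_\ast),y_\ast)$. The plan is to show that these two pieces of information are incompatible with $\rho$ ever reaching zero in $(s(y_\ast),y_\ast)$.

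I would argue by contradiction. Suppose that $\rho$ vanishes at some point of $(s(y_\ast),y_\ast)$. Since $\rho$ is continuous on $(s(y_\ast),y_\ast]$ and $\rho(y_\ast)=\rho_0>0$, by the intermediate value theorem the set $\{y\in(s(y_\ast),y_\ast)\,:\,\rho(y)=0\}$ is nonempty and bounded above by $y_\ast$, so it has a supremum $y_1\in(s(y_\ast),y_\ast)$. By continuity $\rho(y_1)=0$, while $\rho>0$ on $(y_1,y_\ast]$. Evaluating $G$ at this point (using $\ga>1$ so that $\rho^{\ga-1}\to 0$ as $\rho\to 0^+$) yields
$$G(y_1;\rho,\om)=\ga\rho(y_1)^{\ga-1}-y_1^2\om(y_1)^2=-y_1^2\om(y_1)^2\leq 0,$$
which directly contradicts the strict subsonicity $G(y_1;\rho,\om)>0$ guaranteed by $y_1\in(s(y_\ast),y_\ast)$.

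This is essentially a one-line proof once the definition of $s(y_\ast)$ is unpacked, so I do not expect any real obstacle. The only minor point to be careful about is the continuity of $\rho$ up to (and including) $y_1$: because the LPH-type solution of Theorem~\ref{thm:Taylor} is analytic near $y_\ast$ with $\rho(y_\ast)=\rho_0>0$, and because the extension onto $(s(y_\ast),y_\ast]$ from the standard ODE existence theory away from sonic points (Proposition~\ref{prop:Picard}) yields a $C^1$ solution, continuity of $\rho$ on $(s(y_\ast),y_\ast]$ is automatic, and the contradiction argument goes through without further refinement. Hence $\rho(y)>0$ for all $y\in(s(y_\ast),y_\ast)$, as claimed.
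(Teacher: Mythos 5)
Your proof is correct, and it takes a genuinely more direct route than the paper's. The paper integrates: from~\eqref{eq:rhoom} it writes $(\log\rho)'=yh(\rho,\om)/G(y;\rho,\om)$, notes that this quantity is bounded on any compact $[y_1,y_*-\nu]\subset(s(y_*),y_*)$ because $G$ is bounded away from zero there and $h$ is a polynomial in the (a priori bounded) unknowns, and deduces that $\log\rho$ remains finite, hence $\rho>0$. Your argument avoids integration altogether: at the largest zero $y_1\in(s(y_*),y_*)$ of $\rho$, if one existed, $G(y_1)=\ga\cdot 0^{\ga-1}-y_1^2\om(y_1)^2\le 0$, which contradicts the strict subsonicity $G>0$ on $(s(y_*),y_*)$ built into Definition~\ref{def:sonictime}. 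Both arguments rest on subsonicity; yours extracts the conclusion from a single pointwise evaluation of $G$, while the Gronwall route also yields a compact-subinterval lower bound on $\rho$ as a by-product (though the paper does not in fact use that extra strength later, obtaining quantitative lower bounds instead directly from $G\ge\eta$ in Lemma~\ref{lemma:extension}). One small misattribution, immaterial to the argument: the nonemptiness of $\{y:\rho(y)=0\}$ comes from the contradiction hypothesis, not the intermediate value theorem, and $y_1<y_*$ follows from $\rho(y_*)=\rho_0>0$ together with continuity of $\rho$ near $y_*$.
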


\begin{proof}
From the first equation of \eqref{eq:rhoom}, we rearrange to find
$$\big(\log\rho\big)'=\frac{yh(\rho,\om)}{G(y;\rho,\om)}.$$
For any $y_1\in(s(y_*),y_*-\nu)$, where $\nu$ is as in Theorem \ref{thm:Taylor}, we know that as the solution exists, is continuous, and $G>0$ on the closed interval $[y_1,y_*-\nu]$, we have a bound
$$\Big|\frac{yh(\rho,\om)}{G(y;\rho,\om)}\Big|\leq C,$$
where $C$ may depend on $y_1$, $y_*$ etc., and so, integrating, we see that on $[y_1,y_*-\nu]$, $\log\rho$ remains bounded, and hence $\rho>0$. As $y_1\in(s(y_*),y_*-\nu)$ was arbitrary, we conclude that $\rho>0$ holds on the whole interval $(s(y_*),y_*)$.
\end{proof}

\begin{lemma}\label{lemma:trivialinvariance}
Let $\ga\in(1,\frac43)$, $y_*\in[y_f,y_F]$ and let $(\rho,\om)$ be the associated unique LPH-type solution on $(s(y_*),y_*)$
Then, if there exists $y_0\in(s(y_*),y_*)$ such that $\om(y_0)=0$, we have that 
\beq
\om(y)<0 \text{ and }\rho(y)<\rho(y_0) \text{ for all }y\in(s(y_*),y_0).\eeq
\end{lemma}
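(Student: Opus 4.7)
The plan is to exploit the mass conservation identity and a direct positivity check for the function $h$, both of which give the two claimed bounds essentially for free.

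First, recall the identity derived in the introduction during the reduction to self-similar variables, namely
\[
(y^3\rho\om)' = (4-3\ga)\,y^2\rho.
\]
This holds at every regular point of the solution. Since $\rho(y)>0$ on $(s(y_*),y_*)$ by Lemma~\ref{L:RHOPOSITIVE} and $4-3\ga>0$, the function $M(y):=y^3\rho(y)\om(y)$ is strictly increasing. The hypothesis $\om(y_0)=0$ gives $M(y_0)=0$, so $M(y)<0$ for all $y\in(s(y_*),y_0)$; combined with $y^3>0$ and $\rho(y)>0$, this forces $\om(y)<0$ on $(s(y_*),y_0)$.

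Second, I would show that $h(\rho,\om)>0$ whenever $\rho>0$ and $\om\le 0$. Writing
\[
h(\rho,\om) = \Bigl(2\om^2 + (\ga-1)\om + (\ga-1)(2-\ga)\Bigr) - \frac{4\pi\rho\om}{4-3\ga},
\]
for $\om\le 0$ and $\rho>0$ the second term is nonnegative. The first term is a quadratic in $\om$ with discriminant
\[
(\ga-1)^2 - 8(\ga-1)(2-\ga) = (\ga-1)(9\ga-17),
\]
which is negative for every $\ga\in(1,\tfrac43)$ because $\ga-1>0$ while $9\ga-17<9\cdot\tfrac43-17=-5<0$. Since the leading coefficient $2$ is positive, the quadratic is strictly positive for all $\om\in\R$, and so $h(\rho,\om)>0$ on the region $\{\rho>0,\ \om\le 0\}$.

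Combining the two observations, on $(s(y_*),y_0)$ we have $\rho>0$, $\om<0$ (by step one), and $G>0$ by definition of the sonic time $s(y_*)$. Hence
\[
\rho'(y) = \frac{y\,\rho(y)\,h(\rho(y),\om(y))}{G(y;\rho(y),\om(y))} > 0
\]
on $(s(y_*),y_0)$, so $\rho$ is strictly increasing there and therefore $\rho(y)<\rho(y_0)$ for every $y\in(s(y_*),y_0)$, completing the proof. There is no real obstacle here: the only nontrivial input is the discriminant sign, which is an elementary calculation valid throughout the supercritical range.
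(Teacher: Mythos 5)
Your proof is correct, and for the first assertion it takes a slightly different and arguably cleaner route than the paper. The paper argues pointwise at zeroes of $\om$: from~\eqref{eq:rhoom}, at any $y_1$ with $\om(y_1)=0$ one has $\om'(y_1)=\frac{4-3\ga}{y_1}>0$, which forbids $\om$ from returning to zero once it has crossed below and thus yields $\om<0$ on $(s(y_*),y_0)$ by a local sign-of-derivative argument. You instead package this into the global monotone quantity $M(y)=y^3\rho\om$, using the identity $(y^3\rho\om)'=(4-3\ga)y^2\rho>0$ (which follows from $(\rho\om)'=\frac{4-3\ga-3\om}{y}\rho$, the same identity used in Lemma~\ref{lemma:apriori}) so that $M(y)<M(y_0)=0$ immediately gives $\om<0$ to the left. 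Your approach avoids the need to discuss invariance of the region $\{\om<0\}$ and is a little more self-contained. For the second assertion both proofs are the same: $\om<0$ and $\rho>0$ make the term $-\frac{4\pi\rho\om}{4-3\ga}$ nonnegative, and the remaining quadratic $2\om^2+(\ga-1)\om+(\ga-1)(2-\ga)$ has negative discriminant $(\ga-1)(9\ga-17)$ for $\ga\in(1,\frac43)$, so $h>0$, hence $\rho'>0$ (since $G>0$ on $(s(y_*),y_*)$) and $\rho(y)<\rho(y_0)$; you spelled out the discriminant computation which the paper leaves implicit.
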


\begin{proof}
For any $y_0\in(s(y_*),y_*)$ such that $\om(y_0)=0$, the second equation of \eqref{eq:rhoom} gives $\om'(y_0)=\frac{4-3\ga}{y}>0$, which is only possible if $\om>0$ on an interval to the right of $y_0$. On the other hand, if there exists $y_0\in(s(y_*),y_*)$ such that $\om(y_0)=0$, then as $\om(y)<0$ for all $y\in(s(y_*),y_0)$, we obtain for all such $y$ that 
$$h(\rho,\om)=2\om^2+(\ga-1)\om+(\ga-1)(2-\ga)-\frac{4\pi}{4-3\ga}\rho\om>0,$$
where we have used that the quadratic function $2\om^2+(\ga-1)\om+(\ga-1)(2-\ga)>0$ for all $\om\in\R$ and $\rho>0$. Thus, from the first equation of \eqref{eq:rhoom}, we have $\rho'>0$ on $(s(y_*),y_0)$ and so $\rho(y)<\rho(y_0)$ on the whole interval.
\end{proof}

We begin by establishing some \textit{a priori} estimates on the solution to the left as long as it remains subsonic, i.e., as long as we remain on the interval $(s(y_*),y_*)$.
\begin{lemma}\label{lemma:apriori}
Let $\ga\in(1,\frac43)$, $y_*\in[y_f,y_F]$ and let $(\rho,\om)$ be the associated unique LPH-type solution on $(s(y_*),y_*)$. Let $\al>\frac{4-3\ga}{\ga-1}>0$. Then there exists $C>0$, depending on $\ga$ and $\al$ but independent of $y_*\in[y_f,y_F]$, such that the solution $(\rho,\om)$ satisfies the \textit{a priori} bounds
\begin{align}
&\rho(y)<\frac{C}{y^{3+\al}},\label{ineq:rhoapriori}\\
&|\om(y)|\leq\frac{1}{y} \sqrt{\ga}\frac{C^{\frac{\ga-1}{2}}}{y^{\frac{(3+\al)(\ga-1)}{2}}}.\label{ineq:omapriori}
\end{align}
\end{lemma}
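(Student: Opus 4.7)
The bound $|\omega(y)|\le \frac{1}{y}\sqrt{\gamma}\,C^{(\gamma-1)/2}\,y^{-(3+\alpha)(\gamma-1)/2}$ is an immediate consequence of the subsonic condition $\gamma\rho^{\gamma-1}>y^2\omega^2$ (valid on $(s(y_*), y_*)$) combined with the bound $\rho(y)<C/y^{3+\alpha}$, so the task reduces to establishing the bound on $\rho$. I shall do so via a maximum-principle argument applied to $V(y):=y^{3+\alpha}\rho(y)$. A direct computation using~\eqref{eq:rhoom} yields
\[
V'(y) = \frac{V(y)}{y\,G(y;\rho,\omega)}\bigl[(3+\alpha)G(y;\rho,\omega)+y^2 h(\rho,\omega)\bigr],
\]
so in the subsonic region the sign of $V'$ coincides with that of $(3+\alpha)G+y^2 h$. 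At $y=y_*$, $V(y_*)=y_*^{3+\alpha}\rho_0(y_*)\le C_0$ uniformly in $y_*\in[y_f,y_F]$, since $\rho_0$ is continuous on this compact interval.

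I first reduce to the region where $\omega\ge 0$. If $\omega$ vanishes at some $y_1\in(s(y_*),y_*)$ then by Lemma~\ref{lemma:trivialinvariance} one has $\omega<0$ and $\rho(y)<\rho(y_1)$ on $(s(y_*),y_1)$; hence any bound $\rho(y_1)\le C/y_1^{3+\alpha}$ propagates leftward, since for $y\le y_1$ we have $\rho(y)\le \rho(y_1)\le C/y_1^{3+\alpha}\le C/y^{3+\alpha}$. It therefore suffices to bound $V$ on the subinterval $[y_1,y_*]$ (with $y_1:=s(y_*)$ if $\omega>0$ throughout), on which $\omega\ge 0$.

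Assume for contradiction that $V(y)>C$ somewhere in $[y_1,y_*)$, and let $y_0:=\sup\{y\in[y_1,y_*):V(y)>C\}$, so $V(y_0)=C$ by continuity, $V\le C$ on $(y_0,y_*]$, and consequently $V'(y_0)\le 0$. I will derive $V'(y_0)>0$. Expanding $(3+\alpha)G+y^2h$ at $y_0$, using subsonicity to absorb $-(1+\alpha)y_0^2\omega^2\ge-(1+\alpha)\gamma\rho(y_0)^{\gamma-1}$, and using $\omega(y_0)\ge 0$ to discard the nonnegative terms $(\gamma-1)y_0^2\omega(y_0)$ and $(\gamma-1)(2-\gamma)y_0^2$, one finds
\[
(3+\alpha)G(y_0)+y_0^2 h(y_0)\ge 2\gamma\rho(y_0)^{\gamma-1}-\frac{4\pi\, y_0^2\, \rho(y_0)\,\omega(y_0)}{4-3\gamma}.
\]
To control the dangerous cross-term I exploit the conservation law $(y^3\rho\omega)'=(4-3\gamma)y^2\rho$: integrating from $y_0$ to $y_*$ and using $\rho>0$, $\omega\ge 0$ on $[y_0,y_*]$ gives $0\le y_0^3\rho(y_0)\omega(y_0)\le y_*^3\rho_0\omega_0\le C_0$, and hence $y_0^2\rho(y_0)\omega(y_0)\le C_0/y_0$. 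Substituting $\rho(y_0)=C/y_0^{3+\alpha}$, the required positivity reduces to
\[
2\gamma\, C^{\gamma-1}>\frac{4\pi C_0}{4-3\gamma}\,y_0^{(3+\alpha)(\gamma-1)-1}.
\]
Here the hypothesis $\alpha>(4-3\gamma)/(\gamma-1)$, equivalent to $(3+\alpha)(\gamma-1)>1$, is decisive: the exponent is strictly positive and $y_0\in(0,y_F]$, so $y_0^{(3+\alpha)(\gamma-1)-1}\le\max(1,y_F^{(3+\alpha)(\gamma-1)-1})$, and the inequality holds as soon as $C$ is chosen large depending only on $\gamma$, $\alpha$, $y_F$, and $C_0$. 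The main technical obstacle throughout is the coupling term $-\frac{4\pi\rho\omega}{4-3\gamma}$ in $h$: the naive subsonic bound $|\omega|\le\sqrt{\gamma}\rho^{(\gamma-1)/2}/y$ is too weak to tame it, and it is precisely the sharper bound coming from the continuity equation that makes the stated condition on $\alpha$ sufficient.
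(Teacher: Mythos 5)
Your proof is correct and takes a genuinely different route from the paper's. The paper's argument for Step~3 proceeds by a covering of the region $\{\omega>0\}$: on $\{\omega\ge\delta y^\alpha\}$ the mass-integral bound $\rho\omega<C/y^3$ (from $(y^3\rho\omega)'>0$) immediately gives $\rho<C/(\delta y^{3+\alpha})$, and in the remaining transition set $A=\{\omega\in(0,\delta y^\alpha),\ \rho\gg y^{-(3+\alpha)}\}$ it integrates a one-sided differential inequality $\rho'\ge -c_*\delta y^{\alpha+1}\rho^{3-\gamma}$ and invokes the exponent condition $(3+\alpha)(2-\gamma)<2+\alpha$ (together with $\delta\ll1$) to close. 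Your approach replaces this two-region analysis with a single maximum-principle argument on $V(y)=y^{3+\alpha}\rho(y)$: the ODE $V'=\tfrac{V}{yG}\bigl[(3+\alpha)G+y^2h\bigr]$ reduces everything to a sign computation for the bracket at a hypothetical contact point $y_0$; subsonicity absorbs $-(1+\alpha)y_0^2\omega^2$ into $(3+\alpha)\gamma\rho^{\gamma-1}$ leaving $2\gamma\rho^{\gamma-1}$, the nonnegative pieces of $y_0^2 h$ are discarded, and the same mass-integral bound $y_0^3\rho(y_0)\omega(y_0)\le C_0$ controls the dangerous $\frac{4\pi y_0^2\rho\omega}{4-3\gamma}$ term. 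Your use of the hypothesis $\alpha>\frac{4-3\gamma}{\gamma-1}$ — via positivity of the exponent $(3+\alpha)(\gamma-1)-1$ — is algebraically equivalent to the paper's condition $(3+\alpha)(2-\gamma)<2+\alpha$, but enters more transparently. The two approaches buy essentially the same result; yours avoids the $\delta$-parametrised decomposition and the explicit integration of $(\rho^{\gamma-2})'$, at the mild cost of needing to observe that $V(y_*)$ and $y_*^3\rho_0\omega_0$ are bounded uniformly over the compact interval $[y_f,y_F]$, which is anyway needed by the paper as well.

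One small stylistic remark: it is worth making explicit (as you implicitly do) that the contact point satisfies $y_0>s(y_*)$ strictly — so the ODE for $V$ is valid there — which follows because $V(y_0)=C$ requires $y_0$ to lie in the open domain $(s(y_*),y_*)$, and $y_0<y_*$ once $C$ is chosen to exceed $\sup_{y_*}V(y_*)$.
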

\begin{proof}
Throughout the proof, constants will appear depending continuously on $\rho_0$, $\om_0$ and $y_*$. We will use the continuous dependence with respect to $y_*\in[y_f,y_F]$ of these parameters to make the dependence on $\ga$ only. 

\textit{Step 1: Prove \eqref{ineq:omapriori} assuming \eqref{ineq:rhoapriori}.}\\
To prove the \textit{a priori} bounds on $\om$, we observe that it suffices to prove the claimed upper bound \eqref{ineq:rhoapriori} for $\rho$ on $(s(y_*),y_*)$ as the condition $G(y;\rho,\om)>0$ then yields the simple bound
$$|\om(y)|\leq\frac{\sqrt{\ga}\rho^{\frac{\ga-1}{2}}}{y}$$
which gives the claimed bound for $\om$ of \eqref{ineq:omapriori}.

\textit{Step 2: Prove \eqref{ineq:rhoapriori} in the region $\{\om\leq0\}$.}\\
 We first note that
\beq\label{eq:momentum}
(\rho\om)'=\frac{4-3\ga-3\om}{y}\rho,
\eeq
which follows easily from~\eqref{eq:invariance1}.
From here we see that for any $y\in(s(y_*),y_*)$, we have
$$(\rho\om)'+\frac{3}{y}\rho\om=\frac{4-3\ga}{y}\rho>0,$$
where we have used Lemma~\ref{L:RHOPOSITIVE}.
We easily deduce $(y^3\rho\om)'>0$ and thus
\beq\label{ineq:rhoupper1}
(\rho\om)(y)<\frac{\rho_0\om_0y_*^3}{y^3}.
\eeq
By Lemma \ref{lemma:trivialinvariance}, the region $\{\om\leq0\}$ is invariant under the dynamics of the flow to the left and, if there exists $y_1\in(s(y_*),y_*)$ such that $\om(y_1)=0$, then $\rho(y)<\rho(y_1)$ on the whole interval $(s(y_*),y_1)$. It is therefore sufficient to prove that \eqref{ineq:rhoapriori} holds on the interval $[y_0,y_*-\nu]$, where $$y_0=\inf\{y\in(s(y_*),y_*)\,|\,\om(y)>0\text{ on }(y,y_*)\}.$$

\textit{Step 3: Conclude the bound \eqref{ineq:rhoapriori} for $\rho$ on the remaining region, $\{\om>0\}$.}\\
 Let $\de\in(0,\frac{4-3\ga}{3})$ be fixed (and small). Then on the set $\{\om\geq\de y^\al\}$, where $\de$ and $\al>0$ are to be chosen later, we have from \eqref{ineq:rhoupper1}
\beq\label{ineq:rhoupper2}
\rho(y)<\frac{\rho_0\om_0y_*^3}{\de y^{3+\al}}=:C_*\frac{1}{\de}y^{-(3+\al)}.
\eeq
By continuity of the flow away from the sonic point, the set $$A=\{y\in(s(y_*),y_*)\,|\,\om(y)\in(0,\de y^\al),\:\rho(y)> \frac12\frac{C_*}{\de}y^{-(3+\al)}\}$$ is an open subset of $(y_0,y_*)$. If $A$ is empty, we are done. Suppose $A$ is not empty. It may therefore be written as a (possibly countable) union of disjoint, non-empty, open intervals. Taking such an interval, $(y_1,y_2)$, note that by the invariance of the set $\{\om\leq0\}$, we cannot have $\om(y_2)=0$. We must therefore have either $\om(y_2)=\de y_2^\al$ (in which case \eqref{ineq:rhoupper2} applies) or $\rho(y_2)=\frac12 \frac{C_*}{\de}y_2^{-(3+\al)}$ and hence, in either case, 
\be\label{E:RHOSQUISHED}
 \frac{C_*}{2\de}y_2^{-(3+\al)}\leq \rho(y_2)< \frac{C_*}{\de}y_2^{-(3+\al)}.
 \ee
For $\de$ sufficiently small, depending only on $\ga$ and $\al$, on $(y_1,y_2)$, we have $$\frac{\ga}{2}\rho^{\ga-1}\leq G(y;\rho,\om)\leq\ga\rho^{\ga-1}\text{ and }h(\rho,\om)\geq-\frac{4\pi}{4-3\ga}\rho\om\geq -\frac{4\pi}{4-3\ga}\de y^\al\rho.$$
Therefore, from the first equation of \eqref{eq:rhoom}, we have the lower bound
$$\rho'\geq -\frac{\de}{\ga}\frac{8\pi}{4-3\ga}y^{\al+1}\rho^{3-\ga}=:-c_*\de y^{\al+1}\rho^{3-\ga}\text{ on }(y_1,y_2).$$
Rearranging and integrating this differential inequality leads to
\beq\label{ineq:rhoupper3}
\rho(y)^{-(2-\ga)}\geq\rho(y_2)^{-(2-\ga)}-\frac{\de c_*(2-\ga)}{2+\al}y_2^{2+\al}+\frac{\de c_*(2-\ga)}{2+\al}y^{2+\al}\text{ for all }y\in(y_1,y_2).
\eeq
Note now that, by~\eqref{E:RHOSQUISHED}, $\rho(y_2)$ satisfies
$$C_1\de^{2-\ga}y_2^{(3+\al)(2-\ga)}\leq\rho(y_2)^{-(2-\ga)}\leq C_2\de^{2-\ga}y_2^{(3+\al)(2-\ga)},$$
for some constants $C_1$, $C_2>0$ depending only on $\ga$ (where we have used that the constant $C_*$ depends continuously on $y_*\in[y_f,y_F]$ to remove dependence on $y_*$). Noting that $2-\ga<1$ so that $\de\ll\de^{2-\ga}$ for $\de\ll1$, we now choose $\al>\frac{4-3\ga}{\ga-1}>0$ so that $(3+\al)(2-\ga)<2+\al$, and hence, provided $\de$ was chosen small (depending on $\ga$, $\al$), we have
$$\rho(y_2)^{-(2-\ga)}-\frac{\de c_*(2-\ga)}{2+\al}y_2^{2+\al}\geq \frac{1}{2}\rho(y_2)^{-(2-\ga)}.$$
Thus, returning to \eqref{ineq:rhoupper3}, we obtain, for all $y\in(y_1,y_2)$,
$$\rho(y)\leq \Big(\frac12 \rho(y_2)^{-(2-\ga)}\Big)^{-\frac{1}{2-\ga}}\leq C\de^{-1}y_2^{-(3+\al)}\leq C\de^{-1}y^{-(3+\al)},$$
which yields the desired claim as the obtained estimate is independent of the choice of component $(y_1,y_2)$ and $C$ depends on $\ga$ and $\al$ only.
\end{proof}
The following lemma allows us to extend any solution further to the left from a point $y_0\in(0,y_*)$ provided the solution is uniformly subsonic, i.e., $G(y;\rho,\om)\geq\eta>0$. Moreover, the time that we may extend by depends only on $y_0$ and $\eta$.


\begin{lemma}\label{lemma:extension}
Let $\ga\in(1,\frac43)$, $y_*\in[y_f,y_F]$ and let $(\rho,\om)$ be the associated unique LPH-type solution on $(s(y_*),y_*)$. Suppose that, for some $y_0\in(s(y_*),y_*-\nu)$, we have $G(y;\rho,\om)\geq\eta>0$ for all $y\in[y_0,y_*-\nu]$. Then there exists $\tau>0$, depending only on $\ga$, $y_0$ and $\eta$, such that the solution may be extended onto the interval $[y_0-\tau,y_*]$ while remaining subsonic, i.e., $s(y_*)\leq y_0-\tau$. Moreover, on the extended region, $[y_0-\tau,y_0]$, we retain the inequalities
\beqa
C_\eta\leq\rho\leq M,\quad |\om|\leq M,\quad G(y;\rho,\om)\geq\frac12 \eta,
\eeqa
where $C_\eta$ and $M$ depend only on $\ga$, $y_0$ and $\eta$.
\end{lemma}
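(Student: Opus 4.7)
The strategy is a standard continuation/bootstrap argument tailored to make the extension length $\tau$ uniform in $y_\ast$. I would work in three steps.

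First, I would establish uniform (in $y_\ast\in[y_f,y_F]$) bounds for $(\rho,\om)$ at $y_0$. Fix $\alpha>\frac{4-3\ga}{\ga-1}$ as in Lemma~\ref{lemma:apriori}; then $\rho(y_0)\le C y_0^{-(3+\alpha)}=:M_0$ and $|\om(y_0)|\le C' y_0^{-1-(3+\alpha)(\ga-1)/2}=:N_0$, with $M_0,N_0$ depending only on $\ga$ and $y_0$. The subsonicity hypothesis $G(y_0;\rho,\om)\ge\eta$ together with the identity $\ga\rho^{\ga-1}=G+y^2\om^2\ge\eta$ gives a strictly positive lower bound $\rho(y_0)\ge (\eta/\ga)^{1/(\ga-1)}=:c_0$, again depending only on $\ga,y_0,\eta$.

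Second, I would freeze the constants $C_\eta:=c_0/2$ and $M:=\max\{2M_0,2N_0\}+1$ and consider the compact region
\[
\mathcal{K}=\{(\rho,\om):\ C_\eta\le\rho\le M,\ |\om|\le M\},\quad \{G\ge \eta/2\}.
\]
On $\mathcal{K}\cap\{G\ge\eta/2\}\cap[y_0/2,y_\ast]$, the right-hand side of~\eqref{eq:EPSS} is a smooth, uniformly bounded and Lipschitz function, with uniform constants depending only on $\ga,y_0,\eta$. The local existence theorem of Appendix~\ref{A:EU} (Proposition~\ref{prop:Picard}) therefore produces an extension of $(\rho,\om)$ to an interval $[y_0-\tau_0,y_0]$ for some $\tau_0>0$, and the length of existence depends only on the uniform bounds, hence only on $\ga, y_0,\eta$.

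Third, I would run a short bootstrap to ensure that the bounds defining $\mathcal{K}$, together with $G\ge\eta/2$, actually persist on the whole extension interval. Using \eqref{eq:EPSS} and the direct differentiation
\[
G'=(\ga-1)\ga\rho^{\ga-2}\rho'-2y\om^2-2y^2\om\om',
\]
one obtains within $\mathcal{K}\cap\{G\ge\eta/2\}$ a uniform bound $|\rho'|+|\om'|+|G'|\le K$ with $K=K(\ga,y_0,\eta)$. Hence for $y\in[y_0-\tau,y_0]$,
\[
|\rho(y)-\rho(y_0)|+|\om(y)-\om(y_0)|+|G(y)-G(y_0)|\le K\tau.
\]
Choosing $\tau=\tau(\ga,y_0,\eta)>0$ small enough that $K\tau<\min\{C_\eta,M_0,N_0,\eta/2\}$, the strict bounds $\rho\in[C_\eta,M]$, $|\om|\le M$, $G\ge\eta/2$ are preserved; a standard open-closed continuation argument on the maximal subinterval where they hold then shows they persist throughout $[y_0-\tau,y_0]$, which in particular yields $s(y_\ast)\le y_0-\tau$.

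The only mildly delicate point is the uniformity of $\tau$ with respect to $y_\ast$. This is handled entirely by the $y_\ast$-independence of the bounds supplied by Lemma~\ref{lemma:apriori} and by the subsonicity hypothesis, so all constants propagated through the bootstrap depend only on $\ga,y_0,\eta$, as required.
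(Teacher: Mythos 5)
Your proposal is correct and matches the paper's proof in all essentials: fix uniform bounds at $y_0$ using Lemma~\ref{lemma:apriori} and the hypothesis $G\geq\eta$ (which gives the lower bound on $\rho$), then invoke Proposition~\ref{prop:Picard}, whose extension length and bounds depend only on the input constants $M$, $\eta$, $y_0$. Your explicit bootstrap in Step 3 is actually already built into the statement of Proposition~\ref{prop:Picard}, so it is redundant, but harmless.
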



\begin{proof}
By Lemma \ref{lemma:apriori}, we have a constant $M>0$, depending only on $\ga$ and $y_0$ such that
$$0<\rho\leq \frac12 M,\quad |\om|\leq \frac12 M\text{ on }[y_0,y_*-\nu].$$
As $G(y;\rho,\om)\geq\eta$ on the whole interval, we make the trivial estimate
$$\rho\geq\frac{1}{\ga}\Big(\eta+y^2\om^2\Big)^{\frac{1}{\ga-1}}\geq 2C_\eta,$$
where $C_\eta$ depends only on $\ga$ and $\eta$.

We are therefore in the situation of Proposition \ref{prop:Picard} with constant $M$ having only the dependence claimed in the statement of the lemma. 
All of the estimates then follow from Proposition \ref{prop:Picard}.
\end{proof}


Before we can continue, we need some continuity properties both of the sonic time, and of the flow with respect to $y_*$ away from sonic points. 


\begin{prop}\label{prop:sonictime}
Let $\ga\in(1,\frac43)$ and $y_*\in[y_f,y_F]$. Then the following hold.
\begin{itemize}
\item[(i)] The sonic time is upper semi-continuous:
$$\limsup_{\tilde y_*\to y_*}s(\tilde y_*)\leq s(y_*).$$
\item[(ii)] Suppose $(y_*^n)_{n=1}^\infty\subset [y_f,y_F]$ converge $y_*^n\to y_*$. Suppose further that there exist $y_0\in(0,y_*-\nu)$ and $\eta>0$ such that $s(y_*^n)<y_0$ for all $n$, $\rho(y;y_*^n)$ and $\om(y;y_*^n)$ are uniformly bounded on $[y_0,y_*]$, and
$$G(y;\rho(y;y_*^n),\om(y;y_*^n))\geq\eta\quad\text{ for all }n\in\N,\:y\in[y_0,y_*-\nu].$$
Then there exists $\tau=\tau(\eta,y_0)>0$ such that
$$s(y_*)<y_0-\tau,\quad s(y_*^n)<y_0-\tau\text{ for all }n\in\N.$$
\item[(iii)] Suppose that $s(y_*)<y_0$. Then for any $\eps>0$, there exist $\de>0$ and $\tau>0$ such that for all $\tilde y_*\in[y_f,y_F]$ satisfying $|\tilde y_*-y_*|<\de$, the estimate
$$\big|(\rho(y;\tilde y_*),\om(y;\tilde y_*))-(\rho(y;y_*),\om(y;y_*))\big|<\eps$$
holds uniformly in $y$ on $[y_0-\tau,y_*-\nu]$.
\end{itemize}
\end{prop}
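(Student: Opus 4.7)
The plan is to prove the three items in the order (iii), (i), (ii), since the continuous dependence statement (iii) furnishes the ODE-theoretic input that the other two require. The two essential ingredients are Theorem~\ref{thm:Taylor}, which provides continuous dependence on $y_\ast$ of the analytic LPH-type solution on the common window $(y_\ast-\nu,y_\ast+\nu)$, together with the standard local existence and uniqueness theory (Proposition~\ref{prop:Picard}) and its associated continuous dependence on initial data in the strictly subsonic regime.

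For (iii), I would fix a handoff point $y_c\in(y_\ast-\nu,y_\ast)$, for instance $y_c=y_\ast-\tfrac{\nu}{2}$. For $\tilde y_\ast$ with $|\tilde y_\ast-y_\ast|<\tfrac{\nu}{2}$, the point $y_c$ lies in both $(y_\ast-\nu,y_\ast)$ and $(\tilde y_\ast-\nu,\tilde y_\ast)$, so Theorem~\ref{thm:Taylor} yields $(\rho(y_c;\tilde y_\ast),\om(y_c;\tilde y_\ast))\to(\rho(y_c;y_\ast),\om(y_c;y_\ast))$ and $G(y_c;\cdot,\cdot)$ remains uniformly positive. Because $s(y_\ast)<y_0$, the unperturbed flow is smooth and strictly subsonic on the compact interval $[y_0-\tau_0,y_c]$ for some $\tau_0>0$, so $G\geq\eta>0$ there by continuity. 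Solving system~\eqref{eq:EPSS} backwards from $y_c$ with the perturbed initial data, classical continuous dependence, applied via Proposition~\ref{prop:Picard} whose extension time is controlled solely by $\eta$ and a uniform sup-bound on $(\rho,\om)$ supplied by Lemma~\ref{lemma:apriori}, propagates the closeness to the whole interval $[y_0-\tau,y_c]$ for some $\tau\in(0,\tau_0)$ independent of $\tilde y_\ast$. Concatenating with the analytic closeness on $[y_c,y_\ast-\nu]$ from Theorem~\ref{thm:Taylor} then yields the claimed estimate on $[y_0-\tau,y_\ast-\nu]$.

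For (i), fix $\eps>0$ and choose $y_1\in(s(y_\ast),s(y_\ast)+\eps)$. On the compact interval $[y_1,y_\ast-\nu/2]$ the unperturbed flow is strictly subsonic, so $G\geq\eta>0$ for some $\eta>0$. Applying (iii) (with $\nu/2$ in place of $\nu$) produces, for all $\tilde y_\ast$ sufficiently near $y_\ast$, the inequality $G(\cdot;\rho(\cdot;\tilde y_\ast),\om(\cdot;\tilde y_\ast))\geq\eta/2$ on $[y_1,y_\ast-\nu/2]$, while Theorem~\ref{thm:Taylor} guarantees strict subsonicity on $(\tilde y_\ast-\nu,\tilde y_\ast)$. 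Together these intervals cover $[y_1,\tilde y_\ast)$ once $|\tilde y_\ast-y_\ast|$ is small enough, so $s(\tilde y_\ast)\leq y_1<s(y_\ast)+\eps$ and sending $\eps\to0$ concludes the upper semi-continuity. For (ii), the hypothesised uniform bounds on $(\rho,\om)$ and the uniform lower bound on $G$ permit a direct application of Lemma~\ref{lemma:extension} to each $y_\ast^n$, producing an extension time $\tau=\tau(\eta,y_0)>0$ independent of $n$ and hence $s(y_\ast^n)<y_0-\tau$. To extract the same bound for the limit $y_\ast$, I would apply (iii) on $[y_0,y_\ast-\nu']$ for any $\nu'\in(0,\nu)$, pass the convergence $(\rho(y_0;y_\ast^n),\om(y_0;y_\ast^n))\to(\rho(y_0;y_\ast),\om(y_0;y_\ast))$, use continuity of $G$ to transfer the uniform lower bound $G\geq\eta/2$ to the limit solution on a neighbourhood of $[y_0,y_\ast-\nu']$, and finish with one further application of Lemma~\ref{lemma:extension} to extend the limit solution past $y_0-\tau$ (possibly after shrinking $\tau$).

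The principal technical obstacle is (iii): the ODE system~\eqref{eq:EPSS} has a genuine coordinate singularity at $y_\ast$ where the right-hand side blows up, so classical continuous-dependence theorems cannot be applied across the sonic point. The resolution hinges on synthesising the analytic representation of solutions near the sonic point (Theorem~\ref{thm:Taylor}), which implicitly encodes continuity of the $y_\ast$-parametrised family through the sonic transition, with the regular ODE theory available in the strictly subsonic region. Carefully executing the handoff at $y_c$ and verifying that uniform subsonicity on the requisite compact interval survives the small perturbation in the sonic-point parameter is where the bulk of the technical work lies.
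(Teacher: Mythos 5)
Your overall strategy---leveraging the continuous dependence encoded in Theorem~\ref{thm:Taylor} near the sonic point and handing off to classical ODE theory in the strictly subsonic region---is the right idea and is essentially what the paper does, although you reorder the items ((iii), then (i), then (ii)) while the paper establishes a single Gr\"onwall-type estimate in the course of proving (i) and deduces both (ii) and (iii) from it. Your treatment of (iii) and (i) is sound in outline; the ``bulk of technical work'' you flag is exactly the bookkeeping the paper carries out with the distance function $g(z)$ and the quantity $K(\eta,g(z))$, whose role is to control the Lipschitz constant of $\rho\mapsto\rho^{\gamma-1}$ (and hence of the vector field) through the \emph{self-consistent} lower bound $\rho\ge C_\gamma\eta^\beta$ supplied by subsonicity. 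That bootstrap---closeness $\Rightarrow$ preserved subsonicity $\Rightarrow$ uniform Lipschitz constant $\Rightarrow$ closeness---is the substance of (iii) and cannot simply be cited as ``classical continuous dependence''; it has to be run explicitly, e.g.\ via Gr\"onwall as in the paper or by a careful iteration of Proposition~\ref{prop:Picard} with stepwise control.

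The genuine gap is in (ii). You propose to ``apply (iii) on $[y_0,y_*-\nu']$'' and then pass the convergence $(\rho(y_0;y_*^n),\om(y_0;y_*^n))\to(\rho(y_0;y_*),\om(y_0;y_*))$, but (iii) has the standing hypothesis $s(y_*)<y_0$, which is precisely what (ii) is asking you to prove. As written, the argument is circular: you cannot invoke (iii) centred at $y_*$ before you know the $y_*$-solution extends below $y_0$, and you cannot speak of convergence to $(\rho(y_0;y_*),\om(y_0;y_*))$ if that object may not exist. The fix is either to run the Gr\"onwall/continuous-dependence estimate directly, taking the uniformly subsonic sequence $\{y_*^n\}$ as the reference and using the hypothesized uniform bounds to obtain a uniform modulus of continuity (this is what the paper does, invoking the lower bounds~\eqref{E:GLB} and~\eqref{E:GOODFORPARTTHREE} for the sequence), or to apply (iii) centred at $y_*^N$ for $N$ large---where $s(y_*^N)<y_0-\tau$ is already known from Lemma~\ref{lemma:extension}---taking $\tilde y_*=y_*$, after first checking that the constants $\delta,\tau$ produced by (iii) are uniform in $N$ (which follows from the hypothesized uniform bounds and the uniform $G\ge\eta$). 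Either route must be spelled out; the bare appeal to (iii) does not close the argument.
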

\begin{proof}
As the proof of this Proposition is substantially similar to the proof of \cite[Proposition 4.5]{Guo20}, we defer the details to Appendix \ref{app:continuity}.
\end{proof}
\subsection{Invariant structures}\label{sec:invariants}
\begin{definition}
We define the critical time
\beq
y_c(y_*)=\inf\{y\in(s(y_*),y_*)\,|\,\om(\tilde y,y_*)>\frac{4-3\ga}{3}\text{ for all }\tilde y\in(y,y_*)\}.
\eeq
\end{definition}

\begin{lemma}\label{lemma:leftinvariants}
Let $\ga\in(1,\frac43)$, $y_*\in[y_f,y_F]$ and let $(\rho,\om)$ be the associated unique LPH-type solution on $(s(y_*),y_*)$. Suppose that $y_0\in(s(y_*),y_*)$ is such that on $(y_0,y_*)$, we have $h(\rho,\om)<0$ and $\om>\frac{4-3\ga}{3}$. Then the following  hold.
\begin{itemize}
\item[(i)] At most one of the conditions $h(\rho,\om)=0$ and $\om=\frac{4-3\ga}{3}$ can occur at $y_0$.
\item[(ii)] If $h(\rho,\om)=0$ at $y_0$, then $\inf_{y\in(s(y_*),y_*)}\om>\frac{4-3\ga}{3}$.
\item[(iii)] If there exists $y_1\in[y_c(y_*),y_*]$ such that $y_1>0$ and $\lim_{y\searrow y_1}\om(y)=\frac{4-3\ga}{3}$, then we must have $y_1>s(y_*)$.
\end{itemize}
Thus if $\inf_{y\in(s(y_*),y_*)}\om\leq\frac{4-3\ga}{3}$, we must have that $h<0$ on $(y_c(y_*),y_*)$.
\end{lemma}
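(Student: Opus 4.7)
If both $h(\rho(y_0),\om(y_0))=0$ and $\om(y_0)=\om_F:=(4-3\ga)/3$ held, then solving $h(\rho,\om_F)=0$ forces $\rho(y_0)=\rho_F=1/(6\pi)$, and $(\rho_F,\om_F)$ is a common zero of the two right-hand sides of~\eqref{eq:rhoom} (both $h$ and $4-3\ga-3\om$ vanish there). Since $y_0\in(s(y_*),y_*)$ is non-sonic ($G(y_0)>0$), the local uniqueness in Proposition~\ref{prop:Picard} forces the solution to coincide with the constant Friedman solution near $y_0$, contradicting the strict inequality $h<0$ on $(y_0,y_*)$.

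\textbf{Part (ii).} By (i), $\om(y_0)>\om_F$, and the ODE at $y_0$ gives $\rho'(y_0)=0$ and $\om'(y_0)=(4-3\ga-3\om(y_0))/y_0<0$. Assume for contradiction $\inf\om\le\om_F$, and let $y_1\in(s(y_*),y_0)$ be the largest point with $\om(y_1)=\om_F$. From $\om'(y_1)=-y_1\om_F h(\rho(y_1),\om_F)/G(y_1)\ge 0$ (since $\om>\om_F$ on $(y_1,y_0]$), we deduce $h(\rho(y_1),\om_F)\le 0$; equality gives the Friedman contradiction of (i), so $\rho(y_1)>\rho_F$ and $\om'(y_1)>0$. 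With $\om'(y_1)>0$ and $\om'(y_0)<0$ I pick the rightmost critical point $y_m\in(y_1,y_0)$ of $\om$; then $\om$ is decreasing on $(y_m,y_0)$, making $y_m$ a local maximum, so $\om''(y_m)\le 0$. The decisive step is to show $\om''(y_m)>0$ by direct computation, paralleling Lemma~\ref{lemma:rightmonotonicity}. Differentiating the $\om$-equation of~\eqref{eq:rhoom}, substituting $\om'(y_m)=0$, the resulting identity $h(y_m)=(4-3\ga-3\om(y_m))G(y_m)/(y_m^2\om(y_m)^2)<0$, and $\rho'(y_m)=y_m\rho(y_m)h(y_m)/G(y_m)$, and eliminating $4\pi\rho\om/(4-3\ga)$ and $\ga\rho^{\ga-1}$ via the definitions of $h$ and $G$, one reduces $\om''(y_m)=\tfrac{\om h}{G^2}A(y_m)$ to a sum of a $G$-weighted polynomial in $\om$, the manifestly positive factor $3y_m^2(\ga-1)(2-\ga-\om(y_m))(\om(y_m)+1/3)$, and $-y_m^2 h(y_m)>0$; using $G>0$, $h<0$, and $\om\in(\om_F,2-\ga)$, one verifies $A<0$ and hence $\om''(y_m)>0$, a contradiction.

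\textbf{Part (iii) and conclusion.} Suppose for contradiction $y_1=s(y_*)$. Lemmas~\ref{lemma:apriori} and~\ref{lemma:extension} imply that the only obstruction to extending $(\rho,\om)$ past $s(y_*)$ is loss of strict subsonicity, so $\liminf_{y\searrow y_1}G(y)=0$. Combined with $\om\to\om_F$ and $y_1>0$, the identity $G=\ga\rho^{\ga-1}-y^2\om^2$ forces $\rho(y)\to\rho_\infty$ with $\ga\rho_\infty^{\ga-1}=y_1^2\om_F^2$; boundedness of $\rho'=y\rho h/G$ near $y_1$ forces $h\to 0$, so $h(\rho_\infty,\om_F)=0$, giving $\rho_\infty=\rho_F$ and $y_1=y_F$, contradicting $s(y_*)<y_*\le y_F$. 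The concluding statement of the lemma then follows: under $\inf\om\le\om_F$, let $y_0$ be the infimum of $y\in(s(y_*),y_*)$ such that $h<0$ and $\om>\om_F$ throughout $(y,y_*)$; part (iii) ensures $y_0>s(y_*)$, continuity places $y_0$ on the boundary $\{h=0\}\cup\{\om=\om_F\}$, (i) excludes their simultaneous occurrence, and (ii) rules out $h(y_0)=0$, leaving $\om(y_0)=\om_F$ with $h(y_0)<0$ and $y_0=y_c(y_*)$, so that $h<0$ on $(y_c(y_*),y_*)$.

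The principal difficulty lies in the sign verification for $\om''(y_m)$ in (ii): the subsonic sign $G>0$ reverses the sign of $\om h/G^2$ relative to the supersonic setting of Lemma~\ref{lemma:rightmonotonicity}, so the algebraic regrouping must be re-organised so that the positive contribution $3y_m^2(\ga-1)(2-\ga-\om)(\om+1/3)$ combined with $-y_m^2 h>0$ dominates the $G$-weighted terms uniformly in $\om\in(\om_F,2-\ga)$.
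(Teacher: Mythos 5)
Your part (i) is correct and matches the paper's proof. Your parts (ii) and (iii), however, take genuinely different routes from the paper, and the route you sketch for (ii) has a fatal flaw.

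\textbf{On part (ii).} You propose to derive a contradiction from a local maximum $y_m$ of $\om$ by establishing $\om''(y_m)>0$ through a computation modelled on Lemma~\ref{lemma:rightmonotonicity}. That computation does not close. At the critical point one finds
\[
\om''(y_m)=\frac{h}{G^2}\Big[G\big(-2\om+(\ga-1)(4-3\ga-3\om)\big)+3y_m^2\om(\ga-1)(2-\ga-\om)\big(\om+\tfrac13\big)-y_m^2\om\, h\Big],
\]
with $h(y_m)<0$, $G(y_m)>0$. Since $h/G^2<0$, you need the bracket to be negative. But the $G$-weighted term is the only negative contribution (the other two are positive because $h<0$ and $\om\in(\om_F,2-\ga)$), and $G$ can be arbitrarily small; the bracket then need not be negative. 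In fact, the paper's Lemma~\ref{lemma:omega3} explicitly identifies admissible values of $(G,h,\rho)$ at a critical point at which $\om''$ vanishes with $\om'=0$ and $\om\in(\om_F,2-\ga)$, so $\om''(y_m)\le 0$ is genuinely possible there and your contradiction cannot be forced by this computation. There is also a structural issue: between $y_0$ (where $h=0$) and your $y_m$, the sign of $h$ changes (the paper shows $h>0$ on $(y_0-\delta,y_0)$), so the sign assumptions you rely on are not uniformly available on $(y_1,y_0)$.

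The paper's argument for (ii) is of a different nature and does work: after ruling out $\om(y_0)<\om_*$ directly from $h'(y_0)\le 0$ (Case 1), one shows in Case 2 that $h>0$ on a left-neighbourhood of $y_0$ and that $\{h>0,\ \om>\om_F\}$ is \emph{invariant} as $y$ decreases — because there $\rho'>0$ and $\om'<0$, so $\rho$ decreases and $\om$ increases as $y$ decreases, moving strictly away from the level set $\{h=0\}$ thanks to the convexity of $f_1$. Monotonicity of $\om$ then prevents $\om$ from ever reaching $\om_F$, with no second-derivative computation needed. If you want a working proof of (ii) this is the route to take.

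\textbf{On part (iii).} Your argument begins correctly with the observation that if $y_1=s(y_*)$ then $\liminf_{y\searrow y_1}G=0$, but the inference that $\rho(y)\to\rho_\infty$ and that boundedness of $\rho'$ forces $h\to0$ is unjustified: $\liminf G=0$ gives convergence only along a subsequence, and no a priori bound on $\rho'$ has been established. The paper's proof is much more direct: from $h<0$ one has $\rho'<0$, hence $\rho(y)>\rho_0$, while $\om(y)\to\om_F<\om_0$ and $y<y_*$ give $y^2\om(y)^2<y_*^2\om_0^2=\ga\rho_0^{\ga-1}<\ga\rho(y)^{\ga-1}$, so $G$ is bounded below by a positive constant near $y_1$ and the solution extends past $y_1$ by Lemma~\ref{lemma:extension}, giving $y_1>s(y_*)$.

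Your concluding paragraph is structurally sound once (ii) and (iii) are repaired. You also need to be careful that $y_1$ with $\om(y_1)=\om_F$ actually exists: the hypothesis $\inf\om\le\om_F$ does not immediately give such a $y_1$ in $(s(y_*),y_*)$ (the infimum might be a limit as $y\searrow s(y_*)$); the paper handles this via the definition of $y_c(y_*)$ and part (iii).
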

\begin{remark}
Recalling the notation of Definition \ref{def:sonictime}, (i) if $y_*\in\mathcal{Y}\cup\mathcal{Z}$ then $h<0$ on $(y_c(y_*),y_*)$;\\
(ii) if $y_*\in[y_f,y_F]\setminus\mathcal{Y}$ and $y_c(y_*)=s(y_*)>0$, then
$$\limsup_{y\searrow s(y_*)+}\om(y)>\frac{4-3\ga}{3}.$$ 
\end{remark}

\begin{proof}
(i) Suppose that at $y_0$ both $h=0$ and $\om=\frac{4-3\ga}{3}$. Solving the condition $h(\rho,\frac{4-3\ga}{3})=0$ leads directly to $\rho=\frac{1}{6\pi}$. Using the local existence and uniqueness of the ODE system around a non-sonic (and non-zero) point $y_0$ from Proposition \ref{prop:Picard}, we therefore get that the solution is locally, and hence also globally, the Friedman solution, $\om_F\equiv\frac{4-3\ga}{3}$, $\rho_F\equiv\frac{1}{6\pi}$. In particular, at the sonic point $y_*$ we must also have $(\rho_0,\om_0)=(\frac{1}{6\pi},\frac{4-3\ga}{3})$ and hence $y_*=y_F$. But this is a contradiction as the Friedman solution is not of LPH-type by Proposition \ref{prop:far-field}(iii).

\noindent (ii) Suppose that $h(\rho,\om)=0$ at $y_0$ (for short, we will write $h(y_0)=0$). As $h<0$ on $(y_0,y_*)$, we must have $h'(y_0)\leq 0$. Note also that, by part (i), we have $\om(y_0)>\frac{4-3\ga}{3}$. As $y_0$ is not a sonic point and $h(y_0)=0$, we have that $\rho'(y_0)=0$ also. Thus, at $y_0$, from \eqref{eq:h'}, we have
\beqa
\frac{\dif}{\dif y}h(\rho,\om)\big|_{y=y_0}=&\,\big(2\om-\frac{(\ga-1)(2-\ga)}{\om}\big)\om'\\
=&\,\frac{1}{\om}\big(2\om^2-(\ga-1)(2-\ga)\big)\big(\frac{4-3\ga-3\om}{y}\big),
\eeqa
where we have again used that $h=0$ in the $\om'$ equation of \eqref{eq:rhoom}. Clearly as $\om(y_0)>\frac{4-3\ga}{3}$, the second bracket is strictly negative (and $\om>0$). The first bracket satisfies
$$2\om^2-(\ga-1)(2-\ga)\begin{cases}<0&\text{ if }|\om|<\om_*,\\
>0 &\text{ if }|\om|>\om_*,
\end{cases}$$
where we recall $\om_*=\sqrt{\frac{(\ga-1)(2-\ga)}{2}}$ from Lemma \ref{lemma:f1structure}.\\
\textit{Case 1:}  $\om(y_0)\in(\frac{4-3\ga}{3},\om_*)$. In this case, we arrive at a contradiction to $h'(y_0)\leq0$.\\
(recall from Lemma \ref{lemma:f1structure} that $\om_*\geq\frac{4-3\ga}{3}$ only for $\ga>\frac{10}{9}$ with equality at $\ga=\frac{10}{9}$.)\\
\textit{Case 2:}  $\om(y_0)\geq \om_*$. In this case, we break the proof into several steps.

\textit{Step 1: We first show that there exists $\de>0$ such that for $y\in(y_0-\de,y_0)$, we have $h>0$.}\\
We treat this in two sub-cases. First, suppose that $\om(y_0)>\om_*$. Then we have $h'(y_0)<0$, and hence the existence of such a $\de>0$ is clear.

If, on the other hand, $\om(y_0)=\om_*$, we have that $h'(y_0)=h(y_0)=0$. By part (i), we must have $\om_*>\frac{4-3\ga}{3}$ (and hence we have $\ga>\frac{10}{9}$). Recall from \eqref{eq:rhoom} that
$$\rho'=\frac{y\rho h}{\ga\rho^{\ga-1}-y^2\om^2}.$$
So $$\rho''=\Big(\frac{y\rho}{\ga\rho^{\ga-1}-y^2\om^2}\Big)h'+\Big(\frac{y\rho}{\ga\rho^{\ga-1}-y^2\om^2}\Big)'h.$$
Thus also $\rho''(y_0)=\rho'(y_0)=0$.

Differentiating the middle line of \eqref{eq:h'} further, we use again $h(y_0)=h'(y_0)=0$ to see
\beqas
h''(y_0)=&\,\Big(2(\om')^2+2\om\om''-\frac{(\ga-1)(2-\ga)}{\om}\om''+\frac{(\ga-1)(2-\ga)}{\om^2}(\om')^2\Big)\Big|_{y=y_0}\\
=&\,\Big(2+\frac{(\ga-1)(2-\ga)}{\om(y_0)^2}\Big)(\om'(y_0))^2>0,
\eeqas
where we have used $\om(y_0)=\om_*$ in the second line and $\om'(y_0)<0$. But this forces $h$ to have a minimum at $y_0$, contradicting $h(y_0)=0$ and $h(y)<0$ for $y>y_0$. 

\textit{Step 2: Conclude the invariance of the region $\{h>0\}$.}\\
 Now for $y\in(y_0-\de,y_0)$, as $h>0$, we must have $\rho'>0$ and $\om'<0$ as we also have $\om>\frac{4-3\ga}{3}$. Thus, as we decrease $y$, we are in an invariant region as $\rho$ decreases and $\om$ increases, taking us further away from the level set $\{h=0\}$. Compare Figure \ref{fig:hlevelset}. Thus as long as the flow exists, we will retain in particular for $y\in(s(y_*),y_0)$ the inequality $\om(y)>\om_*>\frac{4-3\ga}{3}$.

\noindent (iii) Suppose $\limsup_{y\searrow y_1}\om(y_1)=\frac{4-3\ga}{3}$. Then we must have, for $y$ close to $y_1$,
$$y^2\om(y)^2<y_*^2\om_0^2=\ga\rho_0^{\ga-1}<\ga\rho(y)^{\ga-1},$$
where we have used that $\om_0>\frac{4-3\ga}{3}$ and also $\rho'<0$ on $(y_c(y_*),y_*)$ by part (ii). Thus the flow is still uniformly subsonic at $y_1$ and hence either $y_1=0$ or $s(y_*)<y_1$.
\end{proof}

\subsection{Properties of the fundamental set $Y$}
We begin by proving a basic topological property of $\mathcal{Y}$, and hence of $Y$: that the set is open.
\begin{lemma}
Let $\ga\in(1,\frac43)$. The set  $\mathcal{Y}$ is open. Therefore also $Y$ is the open interval $(\bar y_*,y_F)$.
\end{lemma}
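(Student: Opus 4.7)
The plan is to first prove $\mathcal Y$ is open, and then deduce $Y=(\bar y_\ast, y_F)$ by simple topology using the fact that $Y$ is upward-closed. The key observation is that for $y_\ast\in\mathcal Y$ the trajectory $\om(\cdot;y_\ast)$ crosses the Friedman value $\frac{4-3\ga}{3}$ \emph{strictly} at $y_c(y_\ast)$, so that the crossing survives under small perturbations of $y_\ast$ via the continuity properties in Proposition~\ref{prop:sonictime}.

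The main obstacle is establishing this strict crossing, i.e.~showing $\om'(y_c;y_\ast)>0$. Fix $y_\ast\in\mathcal Y$ and set $y_c:=y_c(y_\ast)$. Since membership in $\mathcal Y$ forces $\om(\cdot;y_\ast)$ to attain $\frac{4-3\ga}{3}$ at some point strictly inside $(s(y_\ast),y_\ast)$, we automatically have $y_c>0$, and by Lemma~\ref{lemma:leftinvariants}(iii) also $y_c>s(y_\ast)$, so the flow is smooth at $y_c$ and $\om(y_c;y_\ast)=\frac{4-3\ga}{3}$. The remark following Lemma~\ref{lemma:leftinvariants} yields $h<0$ on $(y_c,y_\ast)$, hence $h(y_c)\le 0$ by continuity; Lemma~\ref{lemma:leftinvariants}(i) rules out $h(y_c)=0$, since at a point where both $\om=\frac{4-3\ga}{3}$ and $h=0$ the flow would have to coincide with the Friedman solution, contradicting the LPH-type property via Proposition~\ref{prop:far-field}(iii). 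Therefore $h(y_c)<0$, and evaluating the $\om$-equation of~\eqref{eq:rhoom} at $y_c$ yields
\[
\om'(y_c;y_\ast)=-\frac{y_c\,\om(y_c;y_\ast)\,h(y_c)}{G(y_c;\rho,\om)}>0.
\]
In particular there exists $y_1\in(s(y_\ast),y_c)$ with $\om(y_1;y_\ast)<\frac{4-3\ga}{3}$.

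Next, apply Proposition~\ref{prop:sonictime}(i), (iii) with $y_0=y_1$: for $\tilde y_\ast$ sufficiently close to $y_\ast$ one has $s(\tilde y_\ast)<y_1$ and $\om(y_1;\tilde y_\ast)<\frac{4-3\ga}{3}$, while continuity of $\om_0(\cdot)$ from Lemma~\ref{lemma:rho0om0} yields $\om(\tilde y_\ast;\tilde y_\ast)=\om_0(\tilde y_\ast)>\frac{4-3\ga}{3}$ (since $\tilde y_\ast<y_F$). The intermediate value theorem then produces a point in $(y_1,\tilde y_\ast)$ where $\om(\cdot;\tilde y_\ast)=\frac{4-3\ga}{3}$, proving $\tilde y_\ast\in\mathcal Y$. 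Hence $\mathcal Y$ is open.

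For the description of $Y$: clearly $Y$ is upward-closed in $[y_f,y_F)$, so it is either empty or an interval of the form $[\bar y_\ast,y_F)$ or $(\bar y_\ast,y_F)$. By Proposition~\ref{prop:far-field}(ii), $y_f\in\mathcal X$ (the far-field solution has $\om_f\equiv 2-\ga>\frac{4-3\ga}{3}$), so $y_f\notin Y$ and $\bar y_\ast>y_f$. If $\bar y_\ast$ belonged to $Y$, openness of $\mathcal Y$ at $\bar y_\ast$ would produce some $y_\ast'\in\mathcal Y$ with $y_\ast'<\bar y_\ast$; combined with $[\bar y_\ast,y_F)\subset\mathcal Y$ this would give $[y_\ast',y_F)\subset\mathcal Y$, i.e.~$y_\ast'\in Y$, contradicting $\bar y_\ast=\inf Y$. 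Therefore $Y=(\bar y_\ast,y_F)$.
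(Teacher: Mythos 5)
Your proof of openness of $\mathcal{Y}$ is correct and follows essentially the same strategy as the paper's (strict crossing at $y_c$ because $h(y_c)<0$, then continuity of the flow and upper semi-continuity of the sonic time from Proposition~\ref{prop:sonictime} to propagate the crossing to nearby $\tilde y_*$); you simply spell out the strictness of the crossing in more detail.

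The deduction that $Y=(\bar y_*,y_F)$, however, has a genuine gap: you never establish that $Y$ is non-empty. Your trichotomy, ``$Y$ is either empty, or $[\bar y_*,y_F)$, or $(\bar y_*,y_F)$,'' is correct, and your openness argument rules out $[\bar y_*,y_F)$, but nothing you wrote excludes $Y=\emptyset$ (in which case $\bar y_*=\inf Y$ is undefined and the stated conclusion is meaningless). The paper supplies a separate short argument for non-emptiness: as $y_*\to y_F$, $\om_0(y_*)\to\frac{4-3\ga}{3}$ by Lemma~\ref{lemma:rho0om0}, and the uniform positivity $W_1>0$ from Proposition~\ref{prop:R1W1} together with the continuity of Theorem~\ref{thm:Taylor} gives $\de_1,\epsilon>0$ with $\om'(y;y_*)\ge\epsilon$ on $[y_*-\nu,y_*]$ for all $y_*\in[y_F-\de_1,y_F]$. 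For $y_*$ close enough to $y_F$ this forces $\om(\cdot;y_*)$ to drop below $\frac{4-3\ga}{3}$ already inside $(y_*-\nu,y_*)\subset(s(y_*),y_*)$, hence $(y_F-\de,y_F)\subset\mathcal{Y}\cap Y$ for some $\de>0$. Without this step the lemma is not proved, and the rest of Section~\ref{S:LEFT} (which constructs the global solution from $\bar y_*$) would have nothing to build on.

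A smaller logical slip: from $y_f\in\mathcal{X}$ you conclude ``$y_f\notin Y$ and $\bar y_*>y_f$,'' but the second claim does not follow from the first, since $Y=(y_f,y_F)$ with $\bar y_*=y_f$ is entirely consistent with $y_f\notin Y$. The strict inequality $\bar y_*>y_f$ is a deeper fact, established separately in Lemma~\ref{lemma:y*notyf}. This does not affect the present lemma, but the inference as written is unsound.
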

\begin{proof}
Let $y_*\in\mathcal{Y}$. As $h(\cdot;y_*)<0$ on $[y_c(y_*),y_*-\nu]$ by Lemma \ref{lemma:leftinvariants}, we must have at $y_c(y_*)$ that $\om'(y_c(y_*);y_*)>0$, and so there exists $\tau>0$ such that $\om(y;y_*)<\frac{4-3\ga}{3}$ for $y\in(y_c(y_*)-2\tau,y_c(y_*))$ and $s(y_*)<y_c(y_*)-2\tau$ (by definition of $\mathcal{Y}$, $y_c(y_*)>s(y_*)$, so this may be achieved by taking $\tau$ smaller if necessary).  Let $\eps>0$ be sufficiently small. By parts (i) and (iii) of Proposition \ref{prop:sonictime}, there exists $\de>0$ such that for all $\tilde y_*\in[y_f,y_F]$ satisfying $|\tilde y_*-y_*|<\de$, we have $s(\tilde y_*)<y_c(y_*)-\frac32\tau$ and 
$$\big|(\rho(y;\tilde y_*),\om(y;\tilde y_*))-(\rho(y;y_*),\om(y;y_*))\big|<\eps$$
for all $y\in[y_c(y_*)-\tau,y_*-\nu]$. By taking $\eps<\frac{4-3\ga}{3}-\om(y_c(y_*)-\tau;y_*)$, we get that for all $\tilde y$ satisfying $|\tilde y_*-y_*|<\de$, 
$$\om(y_c(y_*)-\tau;\tilde y_*)<\frac{4-3\ga}{3},$$
and hence $\tilde y_*\in\mathcal{Y}$ also. We have shown that $\mathcal{Y}$ is open.

To show the claim for $Y$, we note that clearly $Y$ is a connected component of $\mathcal{Y}$. As $\mathcal{Y}$ is open, $Y$ is therefore an open interval. Moreover, by Lemma \ref{L:BRANCHES} and the continuity with respect to both $y$ and $y_*$ of Theorem \ref{thm:Taylor}, we have that there exist $\de_1>0$ and $\epsilon>0$ such that, for $y_F-\de_1\leq y_*\leq y_F$ and $y\in[y_*-\nu,y_*]$ we have $\om'(y)\geq\epsilon$. A simple continuity argument then reveals, as $\om(y_F;y_F)=\frac{4-3\ga}{3}$, there exists $\de>0$ such that $(y_F-\de,y_F)\subset\mathcal{Y}$. Thus $Y$ is non-empty and we have
$$Y=(\bar y_*,y_F).$$
\end{proof}
We wish to prove that the LPH-type solution associated to $\bar y_*$ exists on all of $(0,\bar y_*)$, i.e., that $s(\bar y_*)=0$. To prove this, we show the stronger property that, for all $y_*\in Y$, the function $\om(\cdot;y_*)$ remains strictly monotone on the set $[y_c(y_*),y_*]$. This is not simply a technical observation but is a key stage in constructing a globally defined LPH-type solution. In providing the additional qualitative information of monotonicity for $\om$, this represents a significant advance over earlier work in the isothermal case. We therefore make the following definition.
\begin{definition}
Let $\ga\in(1,\frac43)$. The set of $y_*\in Y$ for which the relative velocity $\om$ remains strictly monotone to the right of the critical time $y_c(y_*)$ is defined to be
\beq
\mathcal{S}:=\{y_*\in Y\,|\,\text{ for all }\tilde y_*\in[y_*,y_F),\:\om'(y;\tilde{y}_*)>0\text{ for all }y\in[y_c(\tilde{y}_*),\tilde{y}_*]\}.
\eeq
\end{definition}
Note that if $y_*$ is close to $y_F$, then the monotonicity holds on $[y_c(y_*),y_*]$ and $y_*\in\mathcal{S}$. 

The key property that we will now prove is that $\mathcal{S}=Y$. In addition to giving the monotonicity of $\om(\cdot;y_*)$ for all $y_*\in Y$, this also guarantees a uniform lower bound on the function $G$, and hence ensures that the flow remains strictly subsonic. Before stating and proving this result, we first note a technical lemma that will be essential for the proof.

\begin{lemma}\label{lemma:omega3}
Let $\ga\in(1,\frac43)$, $y_*\in[y_f,y_F]$ and let $(\rho,\om)$ be the associated unique LPH-type solution on $(s(y_*),y_*)$. Suppose that at a point $y_0\in(s(y_*),y_*)$ such that $\om(y_0)\in(\frac{4-3\ga}{3},2-\ga)$, we have that $\om'(y_0)=\om''(y_0)=0$. Then $\om^{(3)}(y_0)<0$.
\end{lemma}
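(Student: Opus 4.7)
The plan is to compute $\om^{(3)}(y_0)$ explicitly under the hypotheses $\om'(y_0) = \om''(y_0) = 0$ and then determine its sign by a factorisation argument. Write $a := 4-3\ga-3\om(y_0)$ and $b := 4-3\ga-4\om(y_0) = a - \om(y_0)$. The assumption $\om(y_0) \in ((4-3\ga)/3, 2-\ga)$ forces $a, b < 0$, and subsonicity on $(s(y_\ast), y_\ast)$ guarantees $G(y_0) > 0$. Substituting $\om'(y_0) = 0$ into \eqref{eq:invariance1}, $\om' = \frac{4-3\ga-3\om}{y} - \frac{\om}{\rho}\rho'$, gives $\rho'(y_0) = \rho(y_0)a/(y_0\om(y_0))$; differentiating \eqref{eq:invariance1} once more and imposing $\om''(y_0) = 0$ yields $\rho''(y_0) = \rho(y_0)ab/(y_0^2\om(y_0)^2)$.

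The next step is to match this against the value of $\rho''(y_0)$ obtained by differentiating the first equation of \eqref{eq:rhoom}, $\rho' = y\rho h/G$. At $y_0$ this involves $h'(y_0) = -\frac{4\pi\om(y_0)}{4-3\ga}\rho'(y_0)$ (from \eqref{eq:h'}, using $\om'(y_0)=0$) and $G'(y_0) = \ga(\ga-1)\rho(y_0)^{\ga-2}\rho'(y_0) - 2y_0\om(y_0)^2$, both explicit at $y_0$. Equating the two expressions for $\rho''(y_0)$ and invoking the factorisation $-3\om^2+(5-3\ga)\om+(2-\ga) = (2-\ga-\om)(3\om+1)$, a factor $(2-\ga-\om(y_0))$ cancels and produces the key algebraic identity
\begin{equation}\label{E:GKEYPLAN}
G(y_0) = \frac{y_0^2\,\om(y_0)(\ga-1)(3\om(y_0)+1)}{4-3\ga},
\end{equation}
equivalently $\ga\rho(y_0)^{\ga-1} = y_0^2\om(y_0)(\om(y_0)+\ga-1)/(4-3\ga)$. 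With \eqref{E:GKEYPLAN} in hand, $h(y_0), h'(y_0), h''(y_0), G(y_0), G'(y_0), G''(y_0)$ all become explicit rational functions of $\om(y_0)$ and $\ga$. Differentiating the $\om$-equation a third time, substituting all lower-order derivatives (including $\rho'''(y_0)$, obtained by one more differentiation of $\rho'=y\rho h/G$), and using the relations $b + 4\om(y_0) = 4-3\ga$ and $(\ga-1)(3\om(y_0)+1)+(4-3\ga)\om(y_0) = \om(y_0)+\ga-1$ to collapse the many cross-terms, one arrives at
\[
y_0^3\om(y_0)^2(\ga-1)(3\om(y_0)+1)\,\om^{(3)}(y_0) = a\cdot(4-3\ga)(2-\ga-\om(y_0))\cdot Q\bigl(\om(y_0),\ga\bigr),
\]
where $Q(\om,\ga) := (3\ga-1)\om^2 + (\ga-1)(6\ga-5)\om - (\ga-1)^2(4-3\ga)$.

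The sign analysis is now elementary. The upward-opening quadratic $Q(\cdot,\ga)$ has discriminant $9(\ga-1)^2$ and roots $\om_- = 1-\ga < 0$ and $\om_+ = (\ga-1)(4-3\ga)/(3\ga-1)$. The inequality $\om_+ < (4-3\ga)/3$ is equivalent (after clearing the positive denominators $3(3\ga-1)$) to $3(\ga-1) < 3\ga-1$, i.e.\ $-3 < -1$, which is trivially true; hence $Q(\om(y_0),\ga) > 0$ on the entire range $[(4-3\ga)/3, 2-\ga]$. Combined with $a < 0$, $4-3\ga > 0$, $2-\ga-\om(y_0) > 0$ and the strict positivity of the prefactor $y_0^3\om(y_0)^2(\ga-1)(3\om(y_0)+1)$ of $\om^{(3)}(y_0)$ on the left, this gives $\om^{(3)}(y_0) < 0$, as desired. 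The principal difficulty is the tedious algebra required to reduce the raw form of $\om^{(3)}(y_0)$ to the neat factorisation $(4-3\ga)(2-\ga-\om(y_0))Q$; no interval arithmetic is needed, as both \eqref{E:GKEYPLAN} and the subsequent factorisation are exact identities.
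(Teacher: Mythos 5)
Your proposal is correct and follows essentially the same route as the paper's own proof: both impose $\om'(y_0)=\om''(y_0)=0$, extract the key identity $G(y_0) = y_0^2\om(y_0)(\ga-1)(3\om(y_0)+1)/(4-3\ga)$, express $h,h',h'',G',G''$ at $y_0$ in terms of $\om(y_0)$ and $\ga$ alone, and reduce $\om^{(3)}(y_0)$ to an explicit cubic in $\om(y_0)$. The only cosmetic difference is in presentation: you derive the key identity by matching two expressions for $\rho''(y_0)$ (one from \eqref{eq:invariance1}, one from $\rho'=y\rho h/G$), whereas the paper imposes $\om''(y_0)=0$ directly on the differentiated $\om$-equation, and at the end you explicitly factor the cubic as $(\om-(2-\ga))Q(\om,\ga)$ and analyze the quadratic $Q$ via its discriminant and roots $1-\ga$ and $(\ga-1)(4-3\ga)/(3\ga-1)$, while the paper simply lists all three roots of the cubic ($-(\ga-1)$, $\frac{(4-3\ga)(\ga-1)}{3\ga-1}$, $2-\ga$); these are the same computation, and the decisive inequality $\frac{(\ga-1)(4-3\ga)}{3\ga-1}<\frac{4-3\ga}{3}$ appears in both.
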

The proof of this lemma is delayed until after Corollary~\ref{cor:Glowerbound} and the proof of Proposition \ref{prop:S=Y} further below.


\begin{prop}\label{prop:S=Y}
Let $\ga\in(1,\frac43)$. Then, for all $y_*\in Y$, the solution $(\rho(\cdot;y_*),\om(\cdot;y_*))$ defined by Theorem \ref{thm:Taylor} and extended to the interval $(s(y_*),y_*)$ satisfies $\om'(y;{y}_*)>0\text{ for all }y\in[y_c({y}_*),{y}_*]$, and so $$\mathcal{S}=Y.$$
\end{prop}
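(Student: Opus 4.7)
The plan is a connectedness/continuity argument. By its definition $\mathcal{S}$ is upward-closed in $Y$, and the remark preceding this proposition already guarantees that $\mathcal{S}$ contains a left-neighbourhood of $y_F$ (using Proposition~\ref{prop:R1W1} to get $\om_1(y_\ast)>0$ near $y_F$ and the Taylor expansion to see that $y_c(y_\ast)$ sits close to $y_\ast$). It therefore suffices to prove $y_\ast^\sharp:=\inf\mathcal{S}$ equals $\bar y_\ast$. Arguing by contradiction, I would assume $y_\ast^\sharp>\bar y_\ast$, so that $y_\ast^\sharp\in Y=(\bar y_\ast,y_F)$, and derive a contradiction by showing both that $y_\ast^\sharp\in\mathcal{S}$ and that $\mathcal{S}$ extends slightly below $y_\ast^\sharp$.

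The first step is a limiting argument. Pick $y_\ast^n\searrow y_\ast^\sharp$ with $y_\ast^n\in\mathcal{S}$. Proposition~\ref{prop:sonictime} provides locally uniform convergence of $(\rho(\cdot;y_\ast^n),\om(\cdot;y_\ast^n))$ to $(\rho(\cdot;y_\ast^\sharp),\om(\cdot;y_\ast^\sharp))$ on compact subsets of $(s(y_\ast^\sharp),y_\ast^\sharp)$. Since $\om'(y_c(y_\ast^n);y_\ast^n)>0$ for each $n$, the implicit function theorem applied to $\om(y;y_\ast)=\tfrac{4-3\ga}{3}$ yields $y_c(y_\ast^n)\to y_c(y_\ast^\sharp)$. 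Passing $\om'(\cdot;y_\ast^n)>0$ to the limit gives the weak inequality $\om'(\cdot;y_\ast^\sharp)\geq 0$ on $[y_c(y_\ast^\sharp),y_\ast^\sharp]$.

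The main obstacle is upgrading this to strict monotonicity. Suppose for contradiction that $\om'(y_0;y_\ast^\sharp)=0$ at some interior $y_0\in(y_c(y_\ast^\sharp),y_\ast^\sharp)$. Since $\om'\geq 0$ on both sides, $y_0$ is a local minimum of $\om'$, forcing $\om''(y_0)=0$ as well. One verifies $\om(y_0)\in(\tfrac{4-3\ga}{3},2-\ga)$: the lower bound is immediate from $y_0>y_c(y_\ast^\sharp)$, while the upper bound follows from the weak monotonicity $\om(y_0)\leq\om(y_\ast^\sharp;y_\ast^\sharp)=\om_0(y_\ast^\sharp)<2-\ga$ (Lemma~\ref{lemma:rho0om0}, using $y_\ast^\sharp>\bar y_\ast\geq y_f$). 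Lemma~\ref{lemma:omega3} then gives $\om'''(y_0)<0$, so $\om'(y)\sim\tfrac12\om'''(y_0)(y-y_0)^2<0$ in a punctured neighbourhood of $y_0$, contradicting $\om'\geq 0$. Thus $\om'(\cdot;y_\ast^\sharp)>0$ on the open interval. At the right endpoint, $\om'(y_\ast^\sharp;y_\ast^\sharp)=W_1/y_\ast^\sharp>0$ by Proposition~\ref{prop:R1W1}. At the left endpoint, Lemma~\ref{lemma:leftinvariants} and its accompanying remark give $h<0$ on $(y_c(y_\ast^\sharp),y_\ast^\sharp)$ and, by Lemma~\ref{lemma:leftinvariants}(i), the case $h(y_c)=0$ together with $\om(y_c)=\tfrac{4-3\ga}{3}$ is excluded; hence $h(y_c)<0$ strictly, and the second equation of~\eqref{eq:rhoom} yields $\om'(y_c)=-y_c\om h/G>0$. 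Therefore $\om'(\cdot;y_\ast^\sharp)>0$ on the closed interval $[y_c(y_\ast^\sharp),y_\ast^\sharp]$, i.e.~$y_\ast^\sharp\in\mathcal{S}$.

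To close the contradiction, the strict positive lower bound for $\om'(\cdot;y_\ast^\sharp)$ on the compact set $[y_c(y_\ast^\sharp),y_\ast^\sharp]$, together with continuity of $y_c(\cdot)$ at $y_\ast^\sharp$ and Proposition~\ref{prop:sonictime}(iii), produces $\epsilon>0$ such that $\om'(\cdot;y_\ast)>0$ on $[y_c(y_\ast),y_\ast]$ for every $y_\ast\in(y_\ast^\sharp-\epsilon,y_\ast^\sharp]$. Combined with the monotonicity already valid for $\tilde y_\ast\geq y_\ast^\sharp$, this gives $(y_\ast^\sharp-\epsilon,y_\ast^\sharp]\subset\mathcal{S}$, contradicting $y_\ast^\sharp=\inf\mathcal{S}$. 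Hence $y_\ast^\sharp=\bar y_\ast$ and $\mathcal{S}=Y$.
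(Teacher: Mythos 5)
Your proof is correct and takes essentially the same route as the paper: you run an open-closed argument in the connected set $Y$ (phrased via $\inf\mathcal{S}$), pass the weak inequality $\om'\geq 0$ to the limit using Proposition~\ref{prop:sonictime}, upgrade to strict positivity via Lemma~\ref{lemma:omega3}, and handle the two endpoints by $W_1>0$ and by $h(y_c)<0$. The one place you're imprecise is the convergence $y_c(y_\ast^n)\to y_c(y_\ast^\sharp)$: the implicit function theorem is the wrong tool here, since it would require knowing in advance that $\om'(y_c(y_\ast^\sharp);y_\ast^\sharp)\neq 0$, which is what you're trying to prove. The correct justification (which the paper spells out in Step~4) is that $\limsup_n y_c(y_\ast^n)\le y_c(y_\ast^\sharp)$ follows from uniform convergence together with the strict inequality $\om(\cdot;y_\ast^\sharp)>\tfrac{4-3\ga}{3}$ on $(y_c(y_\ast^\sharp),y_\ast^\sharp)$, while the lower bound on $y_c$ for nearby parameters (needed in your openness step) comes from the $\tau$-extension below $y_c(y_\ast^\sharp)$ built into Proposition~\ref{prop:sonictime}(iii) / Lemma~\ref{lemma:extension} and the fact that $\om(\cdot;y_\ast^\sharp)$ drops strictly below $\tfrac{4-3\ga}{3}$ just left of $y_c(y_\ast^\sharp)$ once you know $\om'(y_c(y_\ast^\sharp);y_\ast^\sharp)>0$. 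With that substitution the argument is complete and matches the paper's.
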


We note the following important corollary.


\begin{corollary}\label{cor:Glowerbound}
Let $\ga\in(1,\frac43)$, $y_*\in[y_f,y_F]$ and let $(\rho,\om)$ be the associated unique LPH-type solution on $(s(y_*),y_*)$. There exists $\eta>0$ such that, for all $y_*\in Y$, 
$$G(y;\rho(y;y_*),\om(y;y_*))\geq\eta>0\quad\text{ for all }y\in[y_c(y_*),y_*-\nu].$$
\end{corollary}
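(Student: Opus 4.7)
The plan is to deduce Corollary~\ref{cor:Glowerbound} directly from the strict monotonicity $\om'(\cdot; y_*) > 0$ provided by Proposition~\ref{prop:S=Y}. The key observation is that for $y_* \in Y$, on the interval $(y_c(y_*), y_*]$ one has all the ingredients needed to show that $G(\cdot; y_*)$ is itself a strictly decreasing function of $y$, so the minimum of $G$ on the relevant sub-interval is attained at its right endpoint, $y_*-\nu$, where it can be controlled by a continuity and compactness argument on $y_*$.

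The first step is to establish the sign properties on $(y_c(y_*), y_*)$. Since $y_* \in Y \subset \mathcal{Y}$, the remark following Lemma~\ref{lemma:leftinvariants} gives $h(\rho,\om) < 0$ on $(y_c(y_*), y_*)$, Lemma~\ref{L:RHOPOSITIVE} gives $\rho > 0$, and strict subsonicity gives $G > 0$ on this interval. Combining these with the first equation of~\eqref{eq:rhoom} yields
\[
\rho'(y; y_*) = \frac{y \rho h(\rho,\om)}{G(y;\rho,\om)} < 0.
\]
Proposition~\ref{prop:S=Y} simultaneously gives $\om'(y; y_*) > 0$, and by construction of $y_c(y_*)$ one also has $\om > \tfrac{4-3\ga}{3} > 0$. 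Therefore every term in
\[
G'(y) = \ga(\ga-1)\rho^{\ga-2}\rho'(y) - 2y\om(y)^2 - 2y^2\om(y)\om'(y)
\]
is strictly negative, so $G(\cdot; y_*)$ is strictly decreasing on $(y_c(y_*), y_*)$ and in particular
\[
G(y; y_*) \geq G(y_*-\nu; y_*) \quad \text{for every } y \in [y_c(y_*), y_*-\nu].
\]

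The second step is to produce a uniform positive lower bound on $G(y_*-\nu; y_*)$ for $y_* \in Y$. By Theorem~\ref{thm:Taylor}, the constant $\nu$ may be chosen independent of $y_* \in [y_f, y_F]$, and on the interval $(y_*-\nu, y_*+\nu)$ the Taylor series for $(\rho(\cdot; y_*), \om(\cdot; y_*))$ converge absolutely and the solutions depend continuously on $y_*$; moreover (also from Theorem~\ref{thm:Taylor}), $G(\cdot; y_*) > 0$ strictly on $(y_*-\nu, y_*)$. Replacing the original $\nu$ by any strictly smaller value if necessary, the map $y_* \mapsto G(y_*-\nu; y_*)$ is then continuous and strictly positive on the compact interval $[y_f, y_F]$, hence bounded below by a positive constant $\eta$. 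Since $Y \subset [y_f, y_F]$, this $\eta$ is the desired uniform lower bound.

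The main -- and in fact only substantive -- obstacle is the first step, which hinges on the non-trivial combination of Lemma~\ref{lemma:leftinvariants} ($h < 0$), Lemma~\ref{L:RHOPOSITIVE} ($\rho > 0$), and Proposition~\ref{prop:S=Y} ($\om' > 0$) to pin down the signs of all three summands in $G'$. Once $G$ is known to be monotone in $y$, the compactness argument of the second step is routine.
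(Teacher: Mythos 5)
Your proof is correct and follows essentially the same two-step route as the paper's: first obtain a uniform bound $G(y_*-\nu;y_*)\ge\eta$ from the continuity of the sonic-point data and compactness of $[y_f,y_F]$, then propagate it leftward by observing that $\om'>0$ (Proposition~\ref{prop:S=Y}) and $\rho'<0$ force $G'<0$ on $[y_c(y_*),y_*]$. The only difference is that you spell out the derivation of $\rho'<0$ (via $h<0$ from Lemma~\ref{lemma:leftinvariants}, $\rho>0$ from Lemma~\ref{L:RHOPOSITIVE}, and strict subsonicity), which the paper's proof leaves implicit.
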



\begin{proof}
By continuity properties at the sonic point $y_*$ (from Theorem \ref{thm:Taylor}), there exist $\nu>0$ and $\eta>0$ (independent of $y_*$) such that $G(y_*-\nu;y_*)\geq\eta>0$ for all $y_*\in[y_f,y_F]$. Then, for any $y_*\in\mathcal{S}$, as $\om'>0$ and $\rho'<0$ on $[y_c(y_*),y_*]$, we retain $G(y;y_*)\geq\eta$ on $[y_c(y_*),y_*-\nu]$ as
$$\frac{\dif}{\dif y}G(y;\rho,\om)=\ga(\ga-1)\rho^{\ga-2}\rho'-2y\om^2-2y^2\om\om'<0.$$
Thus we have a uniform lower bound on $G$ for $y_*\in\mathcal{S}$ and, as $\mathcal{S}=Y$ by Proposition \ref{prop:S=Y}, we conclude.
\end{proof}


\begin{proof}[Proof of Proposition \ref{prop:S=Y}]
We note by the proof of Corollary \ref{cor:Glowerbound} above that for $y_*\in\mathcal{S}$ we have a uniform lower bound $G(y;y_*)\geq\eta$ on $[y_c(y_*),y_*-\nu]$ for $y_*\in\mathcal{S}$.

Note in addition that \eqref{eq:rhoom} gives that $\om'=\mathcal{W}(y,\om,\rho)$ for some continuous function $\mathcal{W}$ away from sonic points. Continuity (respectively uniform continuity) of $\om$, $\rho$ etc with respect to $y$ or $y_*$ then leads to continuity (respectively uniform continuity) of $\om'$.

To conclude the proof of the Proposition, we will proceed in several steps to show that $\mathcal{S}$ is both open and relatively closed in $Y$.

\textit{Step 1: We first show that $\mathcal{S}$ is open.}\\
 Take $y_*\in\mathcal{S}$. Then we have the lower bounds $G\geq\eta$, $\om'\geq c_1$, $-h\geq c_2$ on $[y_c(y_*),y_*-\nu]$ for some $c_1,c_2>0$. By Lemma \ref{lemma:extension}, we can therefore extend the solution onto an interval $[y_c-\tau,y_*]$, where $\tau=\tau(\eta,y_c)>0$, and retain the inequality $\om'\geq \frac12 c_1>0$. By upper semi-continuity of the sonic time, there exists $\de>0$ such that if $|\tilde{y}_*-y_*|<\de$, we have 
$$s(\tilde{y}_*)<s(y_*)+\frac{\tau}{2}< y_c(y_*)-\frac{\tau}{2}.$$
Using that $\mathcal{S}\subset Y$ and open-ness of $Y$, by possibly shrinking $\de>0$, we may assume that if $|\tilde{y}_*-y_*|<\de$, then $\tilde{y}_*\in Y$ and that, by the uniform continuity property of Proposition \ref{prop:sonictime}(iii), $y_c(\tilde{y}_*)\geq y_c(y_*)-\frac{\tau}{4}$ and, as $\om'$ is a continuous function of $(y,\rho,\om)$, also $\om'(\cdot;y_*)>\frac{c_1}{4}$ on $[y_c(y_*)-\frac{\tau}{4},\tilde{y}_*]$, in particular, $\tilde{y}_*\in\mathcal{S}$.

\textit{Step 2: We collect properties associated to a sequence of $y_*^n\in \mathcal{S}$ with $y_*^n\to y_*\in Y$.}\\
To show $\mathcal{S}$ is relatively closed in $Y$, first suppose $y_*^n\in\mathcal{S}$ are such that $y_*^n\to y_*\in Y$. Clearly if any of the $y_*^n\leq y_*$, then also $y_*\in \mathcal{S}$. It therefore suffices to suppose that $y_*^n$ decreases monotonically to $y_*$. Suppose for a contradiction that there exists $y_0\in[y_c(y_*),y_*]$ such that $\om'(y_0;y_*)=0$. Clearly, as $h<0$ on $[y_c(y_*),y_*-\nu]$ and $\om'(y_*-\nu;y_*)>0$, we must have $y_0\in(y_c(y_*),y_*-\nu)$ (and we suppose without loss of generality that we are working with the largest such $y_0$). Moreover, as each of the $y_*^n\in\mathcal{S}$, we have the uniform lower bound $G(y;y_*^n)\geq \eta$ on $[y_c(y_*^n),y_*-\nu]$ (we have used the monotonicity of $y_*^n$ to replace the upper limit on the interval with $y_*-\nu$ rather than $y_*^n-\nu$).

Note that, by assumption, $y_*\in Y$. Therefore $y_c(y_*)>s(y_*)$.

\textit{Step 3: We show that there exists $\tau>0$ such that $\om'(y;y_*)<0$ on $(y_0-\tau,y_0)$ and $y_0-\tau>y_c(y_*)+\tau$.}\\
  By definition of $y_0$, we must have $\om''(y_0;y_*)\geq0$. If $\om''(y_0;y_*)>0$, the claim easily follows. On the other hand, as $y_*\in(y_f,y_F)$, then $\om(y_*;y_*)\in(\frac{4-3\ga}{3},2-\ga)$ and, by definition of $y_0\in(y_c(y_*),y_*)$, we see that $\om'(y;y_*)>0$ on $(y_0,y_*)$, leading to $\frac{4-3\ga}{3}<\om(y_0;y_*)<\om(y_*;y_*)<2-\ga$. Thus, by Lemma \ref{lemma:omega3}, if $\om'(y_0;y_*)=\om''(y_0;y_*)=0$, we have $\om^{(3)}(y_0;y_*)<0$. This then forces $\om'(\cdot;y_*)<0$ on a punctured interval centred at $y_0$, a contradiction. The existence of the claimed $\tau$ is proved.
  
\textit{Step 4: Apply uniform convergence to obtain a contradiction and deduce $\mathcal{S}$ is relatively closed.}\\
  Upper semi-continuity of the sonic time from Proposition \ref{prop:sonictime} again gives that, for $n$ sufficiently large, $s(y_*^n)<s(y_*)+\frac{\tau}{2}<y_c(y_*)+\frac{\tau}{2}<y_0-\frac{3\tau}{2}$. \\
Suppose for a contradiction that $\limsup_{n\to\infty}y_c(y_*^n)=\bar y_c> y_0-\tau$. Without loss of generality, we take a further subsequence $y_*^n$ such that $y_c(y_*^n)\to\bar y_c$. By Lemma \ref{lemma:extension}, there exists $T=T(\eta,\bar y_c)\in(0,\tau)$ such that 
  $$G(y;\rho(y;y_*^n),\om(y;y_*^n))\geq\frac12\eta\quad\text{ for }y\in[\bar y_c-T,y_*-\nu],\text{ all }n\in\N.$$
  Therefore, applying the uniform convergence of Proposition \ref{prop:sonictime}(iii), we obtain
  $$\om(\bar y_c-T;y_*)=\lim_{n\to\infty}\om(\bar y_c-T;y_*^n)\leq \frac{4-3\ga}{3},$$
  a contradiction to $y_c(y_*)<y_0-2\tau$ as $\bar y_c>y_0-\tau$ and $T<\tau$.
  
 Thus, for $n$ sufficiently large, we obtain that $y_c(y_*^n)\leq y_0-\frac{\tau}{2}$ and hence $\om'(y;y_*^n)>0$ on $(y_0-\frac{\tau}{2},y_0)$ as well as $G(y;\rho(y;y_*^n),\om(y;y_*^n))\geq\eta$ on $[y_0-\frac{\tau}{2},y_*-\nu]$. But this gives a contradiction to the convergence $$\om'(y_0-\frac{\tau}{4};y_*^n)\to\om'(y_0-\frac{\tau}{4};y_*)<0\text{ as }n\to\infty.$$
 Thus $y_*\in\mathcal{S}$ and so $\mathcal{S}$ is relatively closed in $Y$.
 
  As $\mathcal{S}$ is relatively open and closed in $Y$ and $Y$ is connected, we must therefore have $\mathcal{S}=Y$.
\end{proof}


\begin{proof}[Proof of Lemma \ref{lemma:omega3}]
\textit{Step 1: Derive identities for $\om(y_0)$, $h'(y_0)$ and $G'(y_0)$.}\\
 We begin by recalling from \eqref{eq:om(y0)1} and \eqref{eq:h'(y0)1} the identities
\begin{align}
\frac{4-3\ga-3\om(y_0)}{y_0}=&\,\frac{y_0\om(y_0)h(y_0)}{G(y_0)},\label{eq:om(y0)}\\
\frac{h'}{h}(y_0)=&\,-\frac{4\pi}{4-3\ga}\frac{y_0\rho(y_0)\om(y_0)}{G(y_0)}\\
=&\,\frac{yh}{G}-\frac{y(2\om^2+(\ga-1)\om+(\ga-1)(2-\ga))}{G}\notag\\
=&\,\frac{4-3\ga-3\om}{y\om}-\frac{y(2\om^2+(\ga-1)\om+(\ga-1)(2-\ga))}{G}.\label{eq:h'(y0)}
\end{align}
Also, from \eqref{eq:h'}, we recall that
\beqs
h'(y)=2\om\om'-\frac{4\pi}{4-3\ga}\om\rho'+\frac{h-(\ga-1)(2-\ga)}{\om}\om'.
\eeqs
Arguing directly, we differentiate $G$ to obtain
\beqas
G'=&\,(\ga-1)\ga\rho^{\ga-2}\rho'-2y\om^2-2y^2\om\om'\\
=&\,(\ga-1)\ga\rho^{\ga-1}\frac{yh}{G}-2y\om^2-2y^2\om\om'\\
=&\,(\ga-1)(G+y^2\om^2)\frac{yh}{G}-2y\om^2-2y^2\om\om'.
\eeqas
Thus, at $y_0$,
\beqa\label{eq:G'(y0)}
G'(y_0)=&\,(\ga-1)(G+y^2\om^2)\frac{4-3\ga-3\om}{y\om}-2y\om^2.
\eeqa
\textit{Step 2: Derive identities for $\om''(y_0)$ and solve for $\rho(y_0)$, $G(y_0)$ and $h(y_0)$.}\\ We now further differentiate the ODE for $\om$ to obtain
\beqas
\om''=&\,-\frac{3\om'}{y}-\frac{4-3\ga-3\om}{y^2}-\frac{\om h}{G}-\frac{y\om' h}{G}-\frac{y\om h'}{G}+\frac{y\om h G'}{G^2}.
\eeqas
Hence, at $y_0$, we find
\beqa
\om''(y_0)=-\frac{4-3\ga-3\om}{y^2}-\frac{\om h}{G}-\frac{y\om h'}{G}+\frac{y\om h G'}{G^2}=-2\frac{4-3\ga-3\om}{y^2}-\frac{y\om h'}{G}+\frac{y\om h G'}{G^2},
\eeqa
where we have used \eqref{eq:om(y0)} in the second equality. Recalling that at $y_0$ we have $\om''(y_0)=0$, this gives the identity
\beq\label{eq:om''}
\frac{y\om h G'}{G^2}-\frac{y\om h'}{G}=2\frac{4-3\ga-3\om}{y^2}.
\eeq
Applying \eqref{eq:om(y0)}, \eqref{eq:h'(y0)} and \eqref{eq:G'(y0)} to expand the left hand side, we find at $y_0$
\beqas
2\frac{4-3\ga-3\om}{y^2}=&\,\frac{y\om h}{G}\big(\frac{G'}{G}-\frac{h'}{h}\big)\\
=&\,\frac{4-3\ga-3\om}{y}\Big((\ga-1)(G+y^2\om^2)\frac{4-3\ga-3\om}{y\om G}-\frac{2y\om^2}{G}\\
& \hspace{23mm}-\frac{4-3\ga-3\om}{y\om}+\frac{y(2\om^2+(\ga-1)\om+(\ga-1)(2-\ga))}{G}\Big).
\eeqas
Simplifying, we find
\beqas
\frac{2}{y}=&\,(\ga-2)\frac{4-3\ga-3\om}{y\om}+\frac{(\ga-1)y\om(4-3\ga-3\om)+y((\ga-1)\om+(\ga-1)(2-\ga))}{G}\\
=&\,(\ga-2)\frac{4-3\ga-3\om}{y\om}+(\ga-1)y\frac{-3\om^2 +(5-3\ga)\om+2-\ga}{G},
\eeqas
which we rearrange to solve for $G(y_0)$ as
\beqs
G(y_0)\frac{(4-3\ga)(2-\ga-\om)}{y\om}=(\ga-1)y\big(-3\om^2 +(5-3\ga)\om+2-\ga\big)=y(\ga-1)(2-\ga-\om)(3\om+1),
\eeqs
so that
\beq\label{eq:G(y0)}
G(y_0)=y^2\om\frac{(\ga-1)(3\om+1)}{4-3\ga}.
\eeq
Note therefore that
\beq\label{eq:rho(y0)}
\ga\rho^{\ga-1}(y_0)=G(y_0)+y_0^2\om(y_0)^2=y^2\om(\frac{(\ga-1)(3\om+1)}{4-3\ga}+\om)=y^2\om\frac{\om+\ga-1}{4-3\ga}
\eeq
and, from \eqref{eq:om(y0)},
\beq\label{eq:h(y0)}
h(y_0)=\frac{4-3\ga-3\om}{y^2\om}G=\frac{(\ga-1)(3\om+1)(4-3\ga-3\om)}{4-3\ga}.
\eeq
Therefore also
\beqa\label{eq:rho(y0)2}
\rho(y_0)=&\,-\frac{(4-3\ga)\big(h-2\om^2-(\ga-1)\om-(\ga-1)(2-\ga)\big)}{4\pi\om}\\
=&\,\frac{\om^2(3\ga-1)+\om(6\ga-5)(\ga-1)-(\ga-1)^2(4-3\ga)}{4\pi\om}.
\eeqa
\textit{Step 3: Collect necessary identities for $h''(y_0)$ and $G''(y_0)$.}\\
 To compute $\om^{(3)}(y_0)$, we first need $h''(y_0)$ and $G''(y_0)$. Clearly, from \eqref{eq:h'}, we have
\beqas
h''(y_0)=&\,-\frac{4\pi}{4-3\ga}\om\rho''=-\frac{4\pi}{4-3\ga}\om\Big(\frac{\rho h+y\rho' h}{G}+\frac{y\rho h'}{G}-\frac{y\rho h G'}{G^2}\Big)\\
=&\,-\frac{4\pi}{4-3\ga}\om\Big(\frac{\rho(4-3\ga-3\om)}{y^2\om}+\frac{4-3\ga-3\om}{y\om}\frac{y\rho h}{G}-2\frac{\rho(4-3\ga-3\om)}{y^2\om}\Big)\\
=&\,-\frac{4\pi}{4-3\ga}\rho\Big(-\frac{4-3\ga-3\om}{y^2}+\frac{(4-3\ga-3\om)^2}{y^2\om}\Big),
\eeqas
where we have used \eqref{eq:om''} in the middle line and \eqref{eq:om(y0)} repeatedly.

Similarly, we compute $G''(y_0)$ as
\beqas
G''&\,(y_0)=\ga(\ga-1)^2\rho^{\ga-1}\frac{y^2h^2}{G^2}+\ga(\ga-1)\rho^{\ga-1}\Big(\frac{h}{G}+\frac{yh'}{G}-\frac{yhG'}{G^2}\Big)-2\om^2\\
=&\,(G+y^2\om^2)\Big((\ga-1)^2\frac{(4-3\ga-3\om)^2}{y^2\om^2}+(\ga-1)\Big(\frac{4-3\ga-3\om}{y^2\om}-2\frac{4-3\ga-3\om}{y^2\om}\Big)\Big)-2\om^2\\
=&\,\frac{(\om+\ga-1)(4-3\ga-3\om)}{(4-3\ga)\om}\big((\ga-1)^2(4-3\ga-3\om)-(\ga-1)\om\big)-2\om^2,
\eeqas
where we have again used \eqref{eq:om''} in the middle line and \eqref{eq:rho(y0)} in the last line.

\textit{Step 4: Conclude an identity for $\om^{(3)}(y_0)$ and prove the sign condition.}\\
 Finally, we compute $\om^{(3)}(y_0)$:
\beqas
\om^{(3)}(y_0)=&\,2\frac{4-3\ga-3\om}{y^3}-2\frac{\om h'}{G}+2\frac{\om hG'}{G^2}-\frac{y\om h''}{G}+2\frac{y\om h'G'}{G^2}+\frac{y\om h G''}{G^2}-2\frac{y\om h (G')^2}{G^3}\\
=&\,6\frac{4-3\ga-3\om}{y^3}-4\frac{G'}{G}\frac{4-3\ga-3\om}{y^2}-\frac{h''}{h}\frac{4-3\ga-3\om}{y}+\frac{G''}{G}\frac{4-3\ga-3\om}{y},
\eeqas
by using again \eqref{eq:om''}. Substituting in the identities for $h''(y_0)$, $G''(y_0)$, we get
\beqa\label{eq:om3bar}
&\,\frac{\om^{(3)}(y_0)y_0^3}{4-3\ga-3\om}\\
&\,=6-4y\big((\ga-1)(1+\frac{y^2\om^2}{G})\frac{4-3\ga-3\om}{y\om}-\frac{2y\om^2}{G}\big)\\
&\quad+\frac{4\pi}{4-3\ga}\frac{y^2\rho}{h}\Big(-\frac{4-3\ga-3\om}{y^2}+\frac{(4-3\ga-3\om)^2}{y^2\om}\Big)\\
&\quad+\frac{y^2}{G}\Big(\frac{(\om+\ga-1)(4-3\ga-3\om)}{(4-3\ga)\om}\big((\ga-1)^2(4-3\ga-3\om)-(\ga-1)\om\big)-2\om^2\Big).
\eeqa
By inserting \eqref{eq:G(y0)} for $G(y_0)$, \eqref{eq:h(y0)} for $h(y_0)$, and \eqref{eq:rho(y0)2} for $\rho(y_0)$, this becomes a polynomial in $\om$ with coefficients depending on $y_0$. Taking it term-by-term, we substitute \eqref{eq:G(y0)} into the second term to find
\beqa
-&4y\big((\ga-1)(1+\frac{y^2\om^2}{G})\frac{4-3\ga-3\om}{y\om}-\frac{2y\om^2}{G}\big)\\
&=-4y\bigg((\ga-1)\Big(1+\frac{(4-3\ga)\om}{(\ga-1)(3\om+1)}\Big)\frac{4-3\ga-3\om}{y\om}-\frac{2\om(4-3\ga)}{y(\ga-1)(3\om+1)}\bigg)\\
&=-4\frac{\om^3(3\ga-5)-\om^2(6\ga-7)(\ga-1)+\om(\ga-1)^2(4-3\ga)}{(\ga-1)\om^2(3\om+1)}.
\eeqa
For the third term, we use \eqref{eq:h(y0)} and \eqref{eq:rho(y0)2} to get
\beqas
&\frac{4\pi}{4-3\ga}\frac{y^2\rho}{h}\Big(-\frac{4-3\ga-3\om}{y^2}+\frac{(4-3\ga-3\om)^2}{y^2\om}\Big)\\
&\:=(4-3\ga-4\om)\frac{\om^2(3\ga-1)+\om(6\ga-5)(\ga-1)-(\ga-1)^2(4-3\ga)}{(\ga-1)\om^2(3\om+1)}\\
&\:=\frac{-4(3\ga-1)\om^3-(33\ga^2-59\ga+24)\om^2+(\ga-1)(4-3\ga)(10\ga-9)\om-(\ga-1)^2(4-3\ga)^2}{(\ga-1)\om^2(3\om+1)}.
\eeqas
For the last term, we again substitute \eqref{eq:G(y0)} to get
\beqas
\frac{y^2}{G}&\Big(\frac{(\om+\ga-1)(4-3\ga-3\om)}{(4-3\ga)\om}\big((\ga-1)^2(4-3\ga-3\om)-(\ga-1)\om\big)-2\om^2\Big)\\
=&\,\frac{4-3\ga}{(\ga-1)\om(3\om+1)}\Big(\frac{(\om+\ga-1)(4-3\ga-3\om)}{(4-3\ga)\om}\big((\ga-1)^2(4-3\ga-3\om)-(\ga-1)\om\big)-2\om^2\Big)\\
=&\,\frac{(4-3\ga)^2(\ga-1)^3-9(\ga-1)^3(4-3\ga)\om+\big(27(\ga-1)^3-(\ga-1)\big)\om^2+\big(9\ga^2-9\ga-2\big)\om^3}{(\ga-1)\om^2(3\om+1)}.
\eeqas
Substituting in all of these identities and simplifying, we find
\beqa
&\frac{\om^{(3)}(y_0)y_0^3}{4-3\ga-3\om}\\
&=\frac{-(4 - 3 \ga)}{(\ga-1) \om^2 (1 + 3 \om)}\\
&\quad\times\Big((3\ga-1)\om^3+(9\ga^2-18\ga+7)\om^2+(\ga-1)(9\ga^2-24\ga+14)\om+(\ga-1)^2(2-\ga)(4-3\ga)\Big)
\eeqa
It is simple to verify that the roots of the cubic in $\om$ on the right hand side are
$$\om=-(\ga-1),\:\frac{(4-3\ga)(\ga-1)}{3\ga-1},\:2-\ga,$$
and so, as $\frac{(4-3\ga)(\ga-1)}{3\ga-1}<\frac{4-3\ga}{3}$ for all $\ga\in(1,\frac{4}{3})$, we easily see that for $\om\in(\frac{4-3\ga}{3},2-\ga)$, the right hand side of this formula is strictly positive. As $4-3\ga-3\om(y_0)<0$, this yields $\om^{(3)}(y_0)<0$, as required.
\end{proof}


\begin{remark}
The arguments of Proposition \ref{prop:S=Y} may be extended also to the isothermal case, $\ga=1$, treated previously in \cite{Guo20}, to show that the obtained Larson-Penston solution 
satisfies the inequality
\[
\om'>0.
\]
This can be seen by following the proof of Lemma \ref{lemma:omega3} with $\ga=1$. It can be seen that it is impossible to have $\om(y_0)\in(\frac13,1)$ and  $\om'(y_0)=\om''(y_0)=0$ simultaneously. Indeed, computing as far as \eqref{eq:om''} and making the necessary substitutions as in the following equation, the fact that $G$ is independent of $\rho$ when $\ga=1$ allows us to solve directly for $\om(y_0)$ and find either $\om(y_0)=\frac13$ or $\om(y_0)=1$. We then follow the proof of Proposition \ref{prop:S=Y} to obtain the monotonicity of $\om$  in the isothermal case $\ga=1$.
\end{remark}


The next key result in this section is to show that the LPH-type solution associated to the critical value $\bar y_*$ exists on the whole of $(0,\bar y_*)$ and hence is a global solution of \eqref{eq:EPSS}. This is the content of the following proposition.


\begin{prop}\label{P:LEFTGLOBAL}
Let $\ga\in(1,\frac43)$. The sonic time and critical time associated to $\bar y_*$ satisfy $s(\bar y_*)=y_c(\bar y_*)=0$.
\end{prop}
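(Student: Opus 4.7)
My plan is to leverage the monotonicity of $\om(\cdot;y_\ast)$ for $y_\ast\in Y$ established in Proposition~\ref{prop:S=Y}, together with the uniform subsonic lower bound from Corollary~\ref{cor:Glowerbound}, and propagate both properties to the limiting value $\bar y_\ast=\inf Y$ by continuity. Once this is done, an extension argument via Lemma~\ref{lemma:extension} rules out a strictly positive sonic time for $\bar y_\ast$, while the characterization of $\bar y_\ast$ as $\inf Y$ rules out the possibility that $y_c(\bar y_\ast)>s(\bar y_\ast)$.

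The first step is to observe that $\bar y_\ast\notin\mathcal Y$. Indeed, $Y=(\bar y_\ast,y_F)$ and $\mathcal Y$ is open, so $\bar y_\ast\in\mathcal Y$ would force $\bar y_\ast-\epsilon\in Y$ for some small $\epsilon>0$, contradicting $\bar y_\ast=\inf Y$. It follows that $\om(y;\bar y_\ast)>\frac{4-3\ga}{3}$ throughout the subsonic interval $(s(\bar y_\ast),\bar y_\ast)$, and consequently $y_c(\bar y_\ast)=s(\bar y_\ast)$. Next, I take a decreasing sequence $y_\ast^n\searrow\bar y_\ast$ inside $Y$. By Proposition~\ref{prop:S=Y} and Corollary~\ref{cor:Glowerbound}, $\om'(\cdot;y_\ast^n)>0$ and $G(\cdot;y_\ast^n)\geq\eta>0$ on $[y_c(y_\ast^n),y_\ast^n-\nu]$, uniformly in $n$. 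A key intermediate claim is that $\limsup_{n}y_c(y_\ast^n)\leq s(\bar y_\ast)$: if instead $y_c(y_\ast^{n_k})\to y_c^\ast>s(\bar y_\ast)$ along a subsequence, then the uniform convergence statements in Proposition~\ref{prop:sonictime}(ii)--(iii), applied on a compact interval where $G$ stays bounded below, would force $\om(y_c^\ast;\bar y_\ast)=\frac{4-3\ga}{3}$, placing $\bar y_\ast\in\mathcal Y$ in contradiction with the previous paragraph. Combining this with the uniform continuity away from sonic points, I can transfer the monotonicity $\om'(\cdot;\bar y_\ast)\geq 0$ and the subsonicity estimate $G(\cdot;\bar y_\ast)\geq\eta$ to the whole interval $(s(\bar y_\ast),\bar y_\ast-\nu]$.

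To conclude, suppose toward a contradiction that $s^\ast:=s(\bar y_\ast)>0$. By the previous step the LPH-type flow associated with $\bar y_\ast$ is uniformly subsonic on $(s^\ast,\bar y_\ast-\nu]$ with $G\geq\eta$, and the a priori bounds of Lemma~\ref{lemma:apriori} give uniform control on $(\rho,\om)$ there. Choose $y_0\in(s^\ast,\bar y_\ast-\nu)$. Lemma~\ref{lemma:extension} then furnishes a constant $\tau(\ga,y_0,\eta)>0$ such that $(\rho(\cdot;\bar y_\ast),\om(\cdot;\bar y_\ast))$ extends to $[y_0-\tau,\bar y_\ast]$ while remaining subsonic. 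Since $s^\ast>0$, the constant $\tau$ depends continuously on $y_0$ and stays bounded away from zero as $y_0\searrow s^\ast$, so one may choose $y_0$ close enough to $s^\ast$ to ensure $y_0-\tau<s^\ast$, contradicting the definition of $s^\ast$. Hence $s(\bar y_\ast)=0$, and combined with $y_c(\bar y_\ast)=s(\bar y_\ast)$ from the first step, the claim follows.

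The main obstacle is the limit passage just described---specifically the claim $\limsup_n y_c(y_\ast^n)\leq s(\bar y_\ast)$, which lies at the intersection of the openness of $\mathcal Y$, the uniform monotonicity of $\om$ across $Y$ supplied by Proposition~\ref{prop:S=Y}, and the continuity statements of Proposition~\ref{prop:sonictime}. Once this coordination is in place, the extension step via Lemma~\ref{lemma:extension} is essentially mechanical.
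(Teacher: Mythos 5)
Your proof is correct, and it uses the same essential toolkit as the paper's: the openness of $\mathcal Y$, the uniform lower bound on $G$ from Corollary~\ref{cor:Glowerbound}, the extension Lemma~\ref{lemma:extension}, and the continuity and upper semi-continuity statements of Proposition~\ref{prop:sonictime}. The organization differs, however. The paper runs a three-way case split on the relation between $y_c(\bar y_*)$ and $s(\bar y_*)$; in its main Case~3 ($y_c(\bar y_*)=s(\bar y_*)>0$) it shows, via the same sequence-plus-uniform-convergence argument you use, that $\om(\bar y_c;\bar y_*)=\frac{4-3\ga}{3}$ while simultaneously $\bar y_c>s(\bar y_*)$, and the contradiction then drops out immediately against the case hypothesis, with no further extension step needed. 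You instead first establish $\bar y_*\notin\mathcal Y$ (absorbing the paper's Cases~1--2, and arguably more carefully than the paper's terse Case~2, which claims $\bar y_*\in Y$ where strictly one only gets $\bar y_*\in\mathcal Y$ and then has to invoke openness), then prove the unconditional bound $\limsup_n y_c(y_*^n)\leq s(\bar y_*)$, transfer the $G\geq\eta$ bound down to $(s(\bar y_*),\bar y_*-\nu]$, and finally apply Lemma~\ref{lemma:extension} directly to the $\bar y_*$-flow to push past $s(\bar y_*)$. That last transfer-plus-extension leg is a detour relative to the paper --- the paper never applies the extension lemma to $\bar y_*$ itself --- though it is perfectly valid, and it does rely (correctly, but implicitly) on the observation that the constant $\tau$ in Lemma~\ref{lemma:extension} stays bounded below as $y_0\searrow s(\bar y_*)>0$, which one should briefly justify by inspecting how $\tau$ arises in Propositions~\ref{prop:Picard} and Lemma~\ref{lemma:apriori}. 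Net effect: same proof idea, different packaging, with your final step slightly longer than necessary.
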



\begin{proof}
As in \cite[Proposition 4.12]{Guo20}, there are 3 cases. \\
\textit{Case 1:} $y_c(\bar y_*)=0$. Then we are done as, by definition, $s(\bar y_*)\leq y_c(\bar y_*)$. \\
\textit{Case 2:} $y_c(\bar y_*)>s(\bar y_*)\geq 0$. Then by continuity of the solution, we must have $\om(y_c(\bar y_*);\bar y_*)=\frac{4-3\ga}{3}$, and hence $\bar y_*\in Y$, a contradiction to $Y=(\bar y_*,y_F)$. \\
\textit{Case 3:} $y_c(\bar y_*)=s(\bar y_*)>0$. 
Now take a sequence $y_*^n\to \bar y_*$ such that all $y_*^n\in Y$. Then by definition of $Y$, $y_c(y_*^n)>s(y_*^n)$ for all $n\in\N$. We define $$\bar y_c=\limsup(y_c(y_*^n)).$$
Without relabelling, we take a subsequence such that $y_c(y_*^n)\to \bar y_c$. Then from Lemma \ref{lemma:extension} and Proposition \ref{prop:S=Y}, we know that there exist $\eta>0$ and $\tau=\tau(\eta,\bar y_c)>0$ such that for all $n$ sufficiently large
$$G(y;\rho(y;y_*^n),\om(y;y_*^n))\geq\eta>0\quad\text{ for all }y\in[\bar y_c-\tau,y_*-\nu].$$
From Proposition \ref{prop:sonictime}(ii), we therefore find that, possibly shrinking $\tau$, we have $s(\bar y_*),s(y_*^n)<\bar y_c-\tau$ for all $n$. Therefore, using the uniform convergence of Proposition \ref{prop:sonictime}(iii) on the interval $[\bar y_c-\tau,y_*-\nu]$, we find that the limit 
$$\om(\bar y_c;\bar y_*)=\lim_{n\to\infty}\om(y_c(y_*^n),y_*^n)=\frac{4-3\ga}{3},$$
and thus $y_c(\bar y_*)\geq \bar y_c>s(\bar y_*)$, a contradiction to the assumption $y_c(\bar y_*)=s(\bar y_*)$.
\end{proof}


\subsection{Asymptotics at the scaling origin, $y=0$}


It is straightforward to exploit the uniform convergence property of Proposition \ref{prop:sonictime} to obtain the weak monotonicity of $\om(\cdot;\bar y_*)$. However, in order to obtain the strict monotonicity and the correct boundary value at the origin, $y=0$, we must rule out the possibility that $\bar y_*=y_f$.
\begin{lemma}\label{lemma:y*notyf}
Let $\ga\in(1,\frac43)$. The critical sonic  point $\bar y_*$  is not equal to $y_f$. In particular, the global solution $(\rho(\cdot;\bar y_*),\om(\cdot;\bar y_*))$ is not the far-field solution $(\rho_f,\om_f)$, defined in~\eqref{def:far-field}.
\end{lemma}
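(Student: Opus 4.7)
The argument will proceed by contradiction. Suppose $\bar y_* = y_f$. By Proposition~\ref{prop:far-field}(ii), the LPH-type solution with sonic point $y_f$ is the far-field $(\rho_f, \om_f)$ defined in~\eqref{def:far-field}, which extends globally on $(0,\infty)$ with $\om_f \equiv 2-\ga$ and $G(y;\rho_f,\om_f) > 0$ for $y \in (0,y_f)$; in particular $s(y_f)=0$ and $\om_f$ never meets $\tfrac{4-3\ga}{3}$, so $y_f \notin Y$ directly. Since $Y=(\bar y_*,y_F)$ is open (shown earlier in this section), the assumption $\bar y_* = y_f$ yields a sequence $y_*^n \searrow y_f$ with $y_*^n \in Y$, and for each $n$ a critical time $y_c^n := y_c(y_*^n) \in (s(y_*^n),y_*^n)$ with $\om(y_c^n;y_*^n)=\tfrac{4-3\ga}{3}$. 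By Proposition~\ref{prop:S=Y} and Lemma~\ref{lemma:leftinvariants}, on $[y_c^n,y_*^n]$ the map $\om(\cdot;y_*^n)$ is strictly increasing, $h(\cdot;y_*^n) < 0$, and hence $\rho(\cdot;y_*^n)$ is strictly decreasing.

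The plan is to combine uniform continuity on compact subsets of $(0,y_f-\nu]$ (Proposition~\ref{prop:sonictime}(iii)) with sharp endpoint estimates to force a contradiction. Uniform convergence $\om(\cdot;y_*^n) \to 2-\ga$ on any $[a,y_f-\nu]$ forces $y_c^n < a$ for $n$ large, so $y_c^n \to 0$. At $y_c^n$, the strict inequality $h(y_c^n;y_*^n) < 0$ combined with the identity $h(\tfrac{1}{6\pi},\tfrac{4-3\ga}{3}) \equiv 0$ (which holds for every $\gamma \in (1,\tfrac{4}{3})$, by direct substitution into~\eqref{def:h}) together with the strict monotonicity of $\rho \mapsto h(\rho,\tfrac{4-3\ga}{3})$ yields the uniform lower bound $\rho(y_c^n;y_*^n) > \tfrac{1}{6\pi}$. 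On the other hand, identity~\eqref{eq:weightedrho'} implies that $y^{2/(2-\ga)}\rho\om$ is strictly increasing on $[y_c^n,y_*^n]$ (since $\om < 2-\ga$ there), giving the upper bound
\[
(y_c^n)^{2/(2-\ga)} \rho(y_c^n;y_*^n) \cdot \tfrac{4-3\ga}{3} < (y_*^n)^{2/(2-\ga)} \rho_0(y_*^n)\,\om_0(y_*^n),
\]
whose right-hand side converges to the finite limit $y_f^{2/(2-\ga)} \rho_0(y_f)(2-\ga)$. In parallel, monotonicity of $\rho(\cdot;y_*^n)$ in $y$ on $[y_c^n,y_*^n]$ together with the convergence $\rho(a;y_*^n) \to \rho_f(a) = k a^{-2/(2-\ga)}$ on any fixed $[a,y_f-\nu]$ gives $\rho(y_c^n;y_*^n) > \rho(a;y_*^n) \to k a^{-2/(2-\ga)}$ for $n$ large (whenever $y_c^n < a$).

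I expect the main obstacle to lie in converting these competing estimates into the quantitative lower bound on $y_c^n$ needed to contradict $y_c^n \to 0$: the upper bound constrains $\rho(y_c^n)$ to grow no faster than the far-field profile $k(y_c^n)^{-2/(2-\ga)}$, while the chain of comparisons with fixed intermediate scales $a$ forces $\rho(y_c^n)$ to grow essentially like $\rho_f(y_c^n)$; the strategy is to exploit the mismatch between the two by selecting $a$ tied to the scale $y_c^n$ (for instance $a_n = c y_c^n$ for a fixed $c > 1$) and using a refined version of Proposition~\ref{prop:sonictime}(iii) to propagate the convergence along this shrinking scale. Once $\bar y_* \neq y_f$ is established, the second assertion of the lemma follows immediately: by Theorem~\ref{thm:Taylor} and Proposition~\ref{prop:far-field}(ii), which identifies the far-field as the unique LPH-type solution with sonic point $y_f$, the global solution $(\rho(\cdot;\bar y_*),\om(\cdot;\bar y_*))$ cannot coincide with $(\rho_f,\om_f)$.
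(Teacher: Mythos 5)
Your proposal identifies the correct object of comparison (the quantity $y^{2/(2-\ga)}\rho\om$ measuring distance from the far-field) and correctly sets up the contradiction hypothesis $\bar y_* = y_f$, $Y=(y_f,y_F)$, but the concrete estimates you produce do not in fact contradict each other, and you acknowledge as much. Let me make the gap precise. Writing $\mu_n:=(y_c^n)^{2/(2-\ga)}\rho(y_c^n;y_*^n)$, your monotonicity of $y^{2/(2-\ga)}\rho\om$ gives the upper bound $\mu_n\cdot\frac{4-3\ga}{3} < (y_*^n)^{2/(2-\ga)}\rho_0(y_*^n)\om_0(y_*^n)\to k(2-\ga)$, i.e.\ $\limsup_n\mu_n\le \frac{3(2-\ga)}{4-3\ga}k$. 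Meanwhile, your lower bounds (either $\rho(y_c^n)>\frac{1}{6\pi}$, or the comparison $\rho(y_c^n)>\rho(a;y_*^n)\to ka^{-2/(2-\ga)}$ for a \emph{fixed} $a$) yield only $\mu_n > \frac{1}{6\pi}(y_c^n)^{2/(2-\ga)}\to 0$ or $\mu_n > (y_c^n/a)^{2/(2-\ga)}k\to 0$: both lower bounds vanish as $y_c^n\to 0$. Since $\frac{3(2-\ga)}{4-3\ga}>1$ for $\ga\in(1,\frac43)$, a solution tracking $\rho\approx k y^{-2/(2-\ga)}$ would have $\mu_n\approx k$, which is strictly between your upper and (vanishing) lower bound; there is simply no contradiction from these two estimates.

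The refinement you gesture at — choosing $a_n=cy_c^n$ and propagating Proposition~\ref{prop:sonictime}(iii) along shrinking scales — is exactly where the real work lies, and the proposition as stated does not give it to you: the uniform convergence there is over a \emph{fixed} compact interval $[y_0-\tau,y_*-\nu]$ bounded away from the origin, with $\tau$ and $\de$ depending on $y_0$, and there is no uniformity as $y_0\to 0$. So the step you flag as ``the main obstacle'' is a genuinely missing lemma, not a routine verification. The paper closes this by abandoning the limit-along-a-sequence strategy entirely and instead running, for each $y_*$ close to $y_f$, a bootstrap argument on the coupled quantities $f(y)=\frac{4\pi}{\ga(4-3\ga)}y^2\om\rho^{2-\ga}-\frac{2}{2-\ga}$ and $2-\ga-\om$ (the former is your $y^{2/(2-\ga)}\rho$ comparison in disguise). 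It shows, via the ODE identity~\eqref{eq:f'2}, the invariance of $\{f<0\}$, and carefully tuned linear combinations $g_A=Af+(2-\ga-\om)$, that the defining inequalities of a set $F\subset(s(y_*),y_0]$ are simultaneously open and closed, hence hold down to $s(y_*)$ — which forces $\inf\om>\frac{4-3\ga}{3}$ and contradicts $y_*\in Y$ directly. This avoids any need to control behaviour at a vanishing scale $y_c^n$. Your approach would need a substantively new small-$y$ continuity result to be salvageable, whereas the paper's bootstrap sidesteps the difficulty; I would recommend studying that invariant-region structure rather than trying to fix the sequential comparison.
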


Before presenting the proof of this lemma, we collect some identities for an important auxiliary function.

\begin{lemma}
Let $\ga\in(1,\frac43)$, $y_*\in[y_f,y_F]$ and let $(\rho,\om)$ be the associated unique LPH-type solution on $(s(y_*),y_*)$. We define a function
\beq
f(y)=\frac{4\pi}{\ga(4-3\ga)}y^2\om\rho^{2-\ga}-\frac{2}{2-\ga}.
\eeq
Then the following identity holds for $f(y)$:
\beqa\label{eq:f'2}
f'(y)=&\,\frac{4\pi}{\ga(4-3\ga)}y\rho^{2-\ga}\bigg(f(y)(\ga-1)\Big(2-\ga+\frac{y^2\om^3}{G}\Big)\\
&+ (2-\ga-\om)\Big(1 -(\ga-1)\frac{4\pi}{\ga(4-3\ga)}y^2\om\rho^{2-\ga} -(\ga-1)\frac{y^2\om(\ga-1)(\frac{2\om}{2-\ga}+1)}{G}\Big)\bigg).
\eeqa
\end{lemma}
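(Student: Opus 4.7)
The proof is a direct calculation from the ODE system~\eqref{eq:rhoom}. Setting $A:=\frac{4\pi}{\ga(4-3\ga)}$ for notational convenience, so that $f=Ay^2\om\rho^{2-\ga}-\frac{2}{2-\ga}$, I begin by applying the product and chain rule:
\[
f'(y) \;=\; Ay\rho^{2-\ga}\Big( 2\om + y\om' + (2-\ga)y\om\,\tfrac{\rho'}{\rho}\Big).
\]
Substituting $\tfrac{\rho'}{\rho}=\frac{yh}{G}$ and $\om'=\frac{4-3\ga-3\om}{y}-\frac{y\om h}{G}$ from~\eqref{eq:rhoom}, the two $\frac{y^2\om h}{G}$ contributions combine with coefficient $(2-\ga)-1 = 1-\ga$, and the explicit $\om$'s collapse via $2\om-3\om=-\om$, yielding the compact intermediate identity
\[
f'(y) \;=\; Ay\rho^{2-\ga}\Big( 4-3\ga-\om \;-\;\tfrac{(\ga-1)y^2\om h}{G}\Big).
\]

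The remainder is an algebraic verification that the right-hand side of the claimed identity coincides with this compact form. I would divide both sides by $Ay\rho^{2-\ga}$ and split the claimed expression into a part free of $G$ and a part carrying the factor $y^2/G$. The organising identity is the rearranged definition
\[
Ay^2\om\rho^{2-\ga} \;=\; f + \tfrac{2}{2-\ga},
\]
which eliminates the $Ay^2\om\rho^{2-\ga}$ appearing inside the parenthesis of the claimed formula in favour of $f$. A short computation then collapses the non-$G$ part to $(4-3\ga-\om)+(\ga-1)Ay^2\om^2\rho^{2-\ga}$; this carries an ``extra'' term compared with the compact form.

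For the $G$-part, the task reduces to checking that its sum with $\frac{(\ga-1)y^2\om h}{G}$ equals precisely $-(\ga-1)Ay^2\om^2\rho^{2-\ga}$, which would then cancel the extra term from the non-$G$ part. After factoring out $\frac{(\ga-1)y^2\om}{G}$ and expanding $(2-\ga-\om)(2\om+2-\ga)=\om(2-\ga)+(2-\ga)^2-2\om^2$, the $\pm(\ga-1)\om$ and $\pm(\ga-1)(2-\ga)$ pieces cancel against the corresponding terms of $h$ from~\eqref{def:h}. What remains inside the bracket is $\om^2(f+\tfrac{2}{2-\ga})-\tfrac{4\pi\rho\om}{4-3\ga}$, which by the organising identity together with the conversion $\tfrac{4\pi}{4-3\ga}=A\ga$ simplifies to $A\om\rho^{2-\ga}(y^2\om^2-\ga\rho^{\ga-1})=-A\om\rho^{2-\ga}G$. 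The outer $G$ cancels and we recover $-(\ga-1)Ay^2\om^2\rho^{2-\ga}$ as required.

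There is no real obstacle here: the entire calculation is purely algebraic bookkeeping. The only two non-obvious ingredients are the rearranged definition of $f$ and the conversion $\tfrac{4\pi}{4-3\ga}=A\ga$, which together let one trade $\rho^{2-\ga}$ factors for $\rho^{\ga-1}$ and so reconstruct the combination $G=\ga\rho^{\ga-1}-y^2\om^2$ that makes the $G$-denominators cancel.
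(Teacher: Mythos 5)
Your proposal is correct and matches the paper's own argument: both first differentiate $f$ and substitute the ODE \eqref{eq:rhoom} to obtain the compact form $f'=\frac{4\pi}{\ga(4-3\ga)}y\rho^{2-\ga}\big(4-3\ga-\om-\frac{(\ga-1)y^2\om h}{G}\big)$, and both then use the rearranged definition $\frac{4\pi}{\ga(4-3\ga)}y^2\om\rho^{2-\ga}=f+\frac{2}{2-\ga}$ together with $\frac{4\pi}{4-3\ga}=\frac{4\pi}{\ga(4-3\ga)}\cdot\ga$ to reconstitute $G=\ga\rho^{\ga-1}-y^2\om^2$ and cancel the denominators. Running the verification from the claimed right-hand side toward the compact form, as you do, rather than expanding the compact form outward, is merely the same algebra read in the opposite direction.
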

\begin{remark}
The principal utility of the function $f$ is in comparing the density of an LPH-type solution to the density of the far-field solution, $\rho_f$. Indeed, by construction (compare~\eqref{def:far-field}),  $$\frac{4\pi}{\ga(4-3\ga)}y^2\om_f\rho_f^{2-\ga}-\frac{2}{2-\ga}\equiv0.$$ Moreover, for $y_*\in(y_f,y_F]$, we have $f(y_*)>0$ by \eqref{ineq:f(y*)}. 
\end{remark}

\begin{proof}
Let $y_*\in[y_f,y_F]$ and let $(\rho,\om)=(\rho(\cdot;y_*),\om(\cdot;y_*))$. 
Direct differentiation yields
\beqa\label{eq:f'}
f'(y)=&\,\frac{4\pi}{\ga(4-3\ga)}\Big(2y\om\rho^{2-\ga}+y(4-3\ga-3\om)\rho^{2-\ga}-\frac{y^3\om h\rho^{2-\ga}}{G}+\frac{(2-\ga)y^3\om h\rho^{2-\ga}}{G}\Big)\\
=&\,\frac{4\pi}{\ga(4-3\ga)}y\rho^{2-\ga}\Big(2-\ga-\om+(\ga-1)\big(-\frac{y^2\om h}{G}-2\big)\Big).
\eeqa
Next, we rearrange the equation for $f'$. We  expand
\begin{align*}
-&\frac{y^2\om h}{G}-2\\
=&\,\frac{\frac{4\pi}{4-3\ga}y^2\om^2\rho-y^2\om\big(2\om^2+(\ga-1)\om+(\ga-1)(2-\ga)\big)}{\ga\rho^{\ga-1}-y^2\om^2}-2\\
=&\,\frac{4\pi}{\ga(4-3\ga)}y^2\om^2\rho^{2-\ga}-2+\frac{y^2\om^2\frac{4\pi}{4-3\ga}y^2\om^2\rho}{\ga\rho^{\ga-1}(\ga\rho^{\ga-1}-y^2\om^2)}-\frac{y^2\om\big(2\om^2+(\ga-1)\om+(\ga-1)(2-\ga)\big)}{\ga\rho^{\ga-1}-y^2\om^2}\\
=&\,(2-\ga)f(y)-(2-\ga-\om)\frac{4\pi}{\ga(4-3\ga)}y^2\om\rho^{2-\ga} \\
&+\frac{y^2\om^3\big(f(y)+\frac{2}{2-\ga}\big)}{\ga\rho^{\ga-1}-y^2\om^2}-\frac{y^2 \om \big(2\om^2+(\ga-1)\om+(\ga-1)(2-\ga)\big)}{\ga\rho^{\ga-1}-y^2\om^2}\\
=&\,(2-\ga)f(y)-(2-\ga-\om)\frac{4\pi}{\ga(4-3\ga)}y^2\om\rho^{2-\ga} + \frac{y^2\om^3f(y)}{G} \\
&+\frac{y^2\om\big(\frac{2(\ga-1)}{2-\ga}\om^2-(\ga-1)\om-(\ga-1)(2-\ga)\big)}{G}.
\end{align*}
Note that
$$\frac{2(\ga-1)}{2-\ga}\om^2-(\ga-1)\om-(\ga-1)(2-\ga)=-(\ga-1)(2-\ga-\om)(\frac{2\om}{2-\ga}+1).$$
Therefore, substituting this into \eqref{eq:f'}, we have
\beqas
f'(y)=&\,\frac{4\pi}{\ga(4-3\ga)}y\rho^{2-\ga}\bigg(f(y)(\ga-1)\Big(2-\ga+\frac{y^2\om^3}{G}\Big)\\
&+ (2-\ga-\om)\Big(1 -(\ga-1)\frac{4\pi}{\ga(4-3\ga)}y^2\om\rho^{2-\ga} -(\ga-1)\frac{y^2\om(\ga-1)(\frac{2\om}{2-\ga}+1)}{G}\Big)\bigg),
\eeqas
that is, \eqref{eq:f'2}.
\end{proof}

\begin{proof}[Proof of Lemma \ref{lemma:y*notyf}]
\textit{Step 1: Setup for a contradiction argument.}\\
 Suppose for a contradiction that $\bar y_*=y_f$, so that $Y=(y_f,y_F)$. We will use the fact that for any $y_*\in(y_f,y_F)$, we have $\om'(\cdot;y_*)\geq0$ on $[y_c(y_*),y_*]$ by Proposition \ref{prop:S=Y}, and so on this interval, $2-\ga-\om(\cdot;y_*)>0$.
Along with \eqref{eq:f'2}, we  also note
\beqa\label{eq:2-g-om}
\big(2&-\ga-\om\big)'
=-\frac{\frac{4-3\ga}{2-\ga}\big(2-\ga-\om\big)}{y}+\frac{\om}{y}\frac{(\ga-1)y^2\big(\frac{2\om}{2-\ga}+1\big)}{G}(2-\ga-\om)-\frac{\om}{y}\frac{\frac{4\pi y^2\rho\om}{4-3\ga}-\frac{2}{2-\ga}\ga\rho^{\ga-1}}{G},
\eeqa
which is a reformulation of~\eqref{eq:2-ga-om}.

\textit{Step 2: Collect initial estimates for $f$ and $2-\ga-\om$ and define the basic set for a continuity argument to propagate the estimates.}\\
  Let $\eps>0$, $\al>0$ and $y_0>0$ be sufficiently small (to be fixed later), then by Proposition \ref{prop:sonictime} there exists $\de>0$ such that if $y_*-y_f<\de$, we have 
\beq\label{ineq:finitial}
|2-\ga-\om(y_0)|+ A|f(y_0)|<\eps,\quad \rho(y_0)>\rho_f(y_0)-\eps>M,\eeq
where $A>\max\{\frac{(2-\ga)^2}{\ga-1},1\}$ is a fixed, $\ga$-dependent constant and $M$ is assumed sufficiently large so that $\rho(y_0)>M$ and $\om(y_0)\in(0,2-\ga)$ implies $\frac{1}{G(y_0)}<\al$. Moreover, by upper semi-continuity of the sonic time $s(y_*)$ from Proposition \ref{prop:sonictime}(i), as $s(y_f)=0$, we may take $|y_*-y_f|<\de$ with $\de$ sufficiently small so that $s(y_*)\leq \frac{y_0}{8}$. Using now the uniform continuity from Proposition \ref{prop:sonictime}(iii) for $y\geq\frac{y_0}{4}$, we may take $\de$ smaller if necessary to ensure $|\om(y;y_*)-(2-\ga)|$ is small enough that $\om(y;y_*)>\frac{4-3\ga}{3}$ for $y\in[\frac{y_0}{4},y_*]$ and hence also $y_c(y_*)<\frac{y_0}{2}$ giving, in total,
$$s(y_*)<y_c(y_*)<\frac{y_0}{2},$$
where the first inequality follows from $y_*\in Y$ (so that $y_c(y_*)>s(y_*)$).

We take $y_0$ small enough (depending only on $\ga$) so that in all of the (finitely many) positive constants $C=C(\ga)$ below depending only on $\ga$, $y_0<C$.

Let the set $F$ be defined as 
$$F=\{y\in(s(y_*),y_0]\,|\,\om(\tilde y)\geq2-\ga-C_0\eps,\:-C_1\eps\leq f(\tilde y)\leq |f(y_0)|\text{ for all }\tilde y\in[y,y_0]\},$$
where $C_1>C_0>1$ depend only on $\ga$ (and are to be chosen later). By taking $C_0>1$ and $C_1>\frac{1}{A}$, we have by \eqref{ineq:finitial} that $y_0\in F$, so that $F$ is clearly non-empty and relatively closed. 

We will assume $\eps>0$ is small enough so that $2-\ga-C_0\eps>\frac34(2-\ga)>\frac{4-3\ga}{3}$. Note that if $y\in F$, then as $y_*\in Y$ and $\om(y)>\frac{4-3\ga}{3}$, we must have $\rho(y)>\rho(y_0)$ by Lemma \ref{lemma:leftinvariants}, and so also $\frac{1}{G(y)}<\al$.

Our goal is to prove that $F=(s(y_*),y_0]$ (by showing that $F$ is relatively open in $(s(y_*),y_0]$). This then gives $\inf_{(s(y_*),y_*)}\om(\cdot;y_*)\geq2-\ga-C_0\eps>\frac{4-3\ga}{3}$, a contradiction to $y_*\in Y$.

\textit{Step 3: Show that $f<0$ is an invariant property as $y$ decreases and partition the set $F$.}\\
 Now for any $\bar y\in F$ such that $0\leq f(\bar y)<\eps$, we use \eqref{eq:f'2} along with the uniform bound on $\om$ and the estimate $G^{-1}<\al$ to see that
\beqa\label{ineq:f'lower}
f'(\bar y)=&\,\frac{4\pi}{\ga(4-3\ga)}\bar y\rho^{2-\ga}\bigg(f(\bar y)(\ga-1)\Big(2-\ga+O(\al \bar y^2)\Big)\\
&+ (2-\ga-\om)\Big(1 -(\ga-1)\frac{4\pi}{\ga(4-3\ga)}y^2\om\rho^{2-\ga} +O(\al \bar y^2)\Big)\bigg)\\
\geq&\frac{4\pi}{\ga(4-3\ga)}\bar y\rho^{2-\ga}(2-\ga-\om)\Big(1 -(\ga-1)\frac{2}{2-\ga} +O(|f(\bar y)|+\al \bar y^2)\Big)>0
\eeqa
as $1 -(\ga-1)\frac{2}{2-\ga}=\frac{4-3\ga}{2-\ga}>0$, $|f(\bar y)|<\eps$ and $0<\bar y\leq y_0$ is small. Thus the region $\{f(y)<0\}$ is an invariant region in $F$.

In particular, we may define a point $y_1$ such that $\inf F\leq y_1\leq y_0$ as follows:
\beq
y_1=\begin{cases}
\inf\{y\in F\,|\, f(y)>0\} &\text{ if }f(y_0)>0,\\
y_0 &\text{ if }f(y_0)\leq 0.
\end{cases}
\eeq If $f(y_0)>0$, we therefore have (by the invariance of $\{f(y)<0\}$) that $f(y)< 0$ for $y\in[\inf F,y_1)$, $f(y)> 0$ for $y\in(y_1,y_0]$. On the other hand, if $f(y_0)\leq0$, then $f(y)<0$ for all $y\in F\setminus\{y_0\}$.

In addition, we conclude that $F$ is not a singleton set as follows: if $f(y_0)\geq 0$, then we have from the inequality just shown for $f'(y_0)$ that there is an interval to the left of $y_0$ such that $f(y)<f(y_0)$ and the other defining inequalities of $F$ follow from simple continuity considerations. If $f(y_0)< 0$, then the upper bound $f(y)<|f(y_0)|$ follows trivially on an open neighbourhood of $y_0$, while the other defining estimates for $F$ likewise follow from simple continuity considerations on an open neighbourhood of $y_0$. This yields in particular that 
$$\inf F<y_0.$$
\textit{Step 4: Obtain a uniform lower bound $f(y)>-C_1\eps$ on $F$.}\\
 We  note the identity
\be\label{E:HILFE}
\frac{\om}{y}\frac{\frac{4\pi y^2\rho\om}{4-3\ga}-\frac{2}{2-\ga}\ga\rho^{\ga-1}}{G}=\frac{\om}{y}f(y)+\frac{\om}{y}f(y)\frac{y^2\om^2}{G},
\ee 
and then use \eqref{eq:2-g-om} along with $G>0$ and $\om'>0$ (as $y_*\in Y=S$ by Proposition \ref{prop:S=Y} gives $\om'>0$ on $[y_c(y_*),y_*]$ which contains $\overline{F}$) to see
\beq
f(y)\geq-\frac{4-3\ga}{2-\ga}\frac{(2-\ga-\om)}{\om}-f(y)\frac{y^2\om^2}{G}\text{ for all }y\in F.
\eeq
Using now that $\om(y)\in(\frac{3}{4}(2-\ga),2-\ga)$ and $G>0$, if $f(y)<0$, then this estimate yields 
\beq
f(y)\geq-\frac{4-3\ga}{2-\ga}\frac{(2-\ga-\om)}{\om},
\eeq
while if $f(y)\geq0$, then this estimate holds trivially (as the right hand side is negative due to $\om(y)<2-\ga$). Thus, we have obtained
\beq\label{ineq:flower}
f(y)\geq-\frac{4-3\ga}{2-\ga}\frac{(2-\ga-\om)}{\om}\geq -C_2C_0\eps\ \text{ for all }y\in F,
\eeq
where $C_2$ depends only on $\ga$ as we have assumed the estimate $\om\geq\frac{3(2-\ga)}{4}$, and $C_1$ was chosen originally so that $C_1>C_2C_0$.

 \textit{Step 5: Obtain the uniform bound $2-\ga-\om(y)<\eps$ on $[y_1,y_0]$.}\\
If $f(y_0)\leq 0$, then, by definition of $y_1$, we have $y_1=y_0$ and the inequality follows trivially. 

Suppose that $f(y_0)>0$. Then $y_1\in[\inf F,y_0)$. We  then have from \eqref{ineq:f'lower} that for all $y\in[y_1,y_0]$, $f'(y)> 0$, and so $0\leq f(y)< f(y_0)<\eps$ for all $y\in[y_1,y_0)\cap F$.
 
 We recall the constant $A>\max\{\frac{(2-\ga)^2}{\ga-1},1\}$ is a fixed, $\ga$-dependent constant and consider the quantity
$$g_A(y)=Af(y)+(2-\ga-\om(y)).$$
Using \eqref{eq:f'2},~\eqref{eq:2-g-om}, and~\eqref{E:HILFE} we get 
\beqa
g_A'(y)=&\,A\frac{f(y)+\frac{2}{2-\ga}}{y\om}\bigg(f(y)(\ga-1)\Big(2-\ga+\frac{y^2\om^3}{G}\Big)\\
&+ (2-\ga-\om)\Big(1 -(\ga-1)\frac{4\pi}{\ga(4-3\ga)}y^2\om\rho^{2-\ga} -(\ga-1)\frac{y^2\om(\ga-1)(\frac{2\om}{2-\ga}+1)}{G}\Big)\bigg)\\
&-\frac{\frac{4-3\ga}{2-\ga}\big(2-\ga-\om\big)}{y}+\frac{\om}{y}\frac{(\ga-1)y^2\big(\frac{2\om}{2-\ga}+1\big)}{G}(2-\ga-\om)-\frac{\om}{y}f(y)\Big(1+\frac{y^2\om^2}{G}\Big).
\eeqa
By writing $\om^{-1}=\frac{1}{2-\ga}+O(|2-\ga-\om|)$, we treat terms that are quadratic in $f(y)$ and $2-\ga-\om(y)$ as higher order and recall $0<\om<2-\ga$, $G^{-1}<\al$ where $\al$ is small to rearrange this as
\beqa\label{eq:gA'}
g_A'(y)= &\,\frac{Af(y)}{y}\Big(\frac{2(\ga-1)}{2-\ga}-\frac{2-\ga}{A}+O\big(|f(y)|+|2-\ga-\om(y)|+y^2\big)\Big)\\
&+\frac{2-\ga-\om}{y}\Big(\frac{2A(4-3\ga)}{(2-\ga)^3}-\frac{4-3\ga}{2-\ga}+O\big(|f(y)|+|2-\ga-\om(y)|+y^2\big)\Big). 
\eeqa
For $y\in[y_1,y_0]\cap F$, as $f(y)\geq0$ and $A>\frac{(2-\ga)^2}{\ga-1}$, this gives us $g_A'(y)\geq0$  (using both $|f(y)|+|2-\ga-\om|\leq C\eps$ and $y_0$ small relative to $\ga$), and hence $g(y)\leq g(y_0)$ on this interval. In particular, we obtain
\beq\label{ineq:2-ga-omonF}
2-\ga-\om(y)<\eps\text{ for all }y\in[y_1,y_0]\cap F,
\eeq
and so clearly $[y_1,y_0]\subset F$ (using also~\eqref{ineq:flower}). 

If $y_1=\inf F$, the strict inequality, along with \eqref{ineq:flower} (recall $C_1>C_2C_0$ by definition), shows that $F$ is also relatively open in $(s(y_*),y_0]$, i.e., $F=(s(y_*),y_0]$, and hence we conclude $y_c(y_*)=s(y_*)$ and $\inf_{(s(y_*),y_0)} \om>\frac{4-3\ga}{3}$, a contradiction to $y_*\in Y$.

\textit{Step 6: Obtain the final remaining estimate $2-\ga-\om(y)<4\eps$ on $[\inf F,y_1]$.}\\
 We now suppose that $y_1>\inf F$ (as we are already done by Step 5 if not) and work with either the case $f(y_0)>0$ or the alternative, $f(y_0)\leq 0$. Then the interval $[\inf F,y_1]\cap F$ is non-empty and non-singleton. 

By definition of $y_1$, for $y\in[\inf F,y_1)$, we trivially have the estimate $f(y)< 0\leq |f(y_0)|$. 

Choosing $\tilde A=\frac{(2-\ga)^2}{2}+a$, where $a>0$ will be taken small depending only on $\ga$, we obtain from \eqref{eq:gA'} that
\beqas
g_{\tilde{A}}'(y)=&\,\frac{\tilde{A}f(y)}{y}\Big(-2+O\big(a+|f(y)|+|2-\ga-\om(y)|+y^2\big)\Big)\\
&+\frac{2-\ga-\om}{y}\Big(\frac{2a(4-3\ga)}{(2-\ga)^3}+O\big(|f(y)|+|2-\ga-\om(y)|+y^2\big)\Big)\geq0,
\eeqas
on $F$, where we have used that $f<0$ on $[\inf F,y_1)$ and $2-\ga-\om>0$.

Thus, for $y\in[\inf F,y_1)$,
\beqas
(2-\ga-\om)(y)\leq&\, g_{\tilde{A}}(y_1)-\big(\frac{(2-\ga)^2}{2}+a\big)f(y)\\
\leq &\,(2-\ga-\om)(y_1)+\big(\frac{(2-\ga)^2}{2}+a\big)\frac{4-3\ga}{2-\ga}\frac{2-\ga-\om(y)}{\om(y)},
\eeqas
where we have used the first bound in~\eqref{ineq:flower}. Noting that the coefficient $\big(\frac{(2-\ga)^2}{2}+a\big)\frac{4-3\ga}{2-\ga}\frac{1}{\om}\leq\frac{3}{4}$ provided $\om>\frac{3(2-\ga)}{4}$ and  $a$ is small, depending only on $\ga$, we absorb the last term on the right onto the left and conclude that 
$$(2-\ga-\om)(y)\leq 4(2-\ga-\om)(y_1)<4\eps,$$
where the last estimate follows from  \eqref{ineq:2-ga-omonF} in the case $y_1<y_0$ and \eqref{ineq:finitial} in the case $y_1=y_0$. So provided $C_0>4$ initially, we obtain that $F$ is open.
Applying again \eqref{ineq:flower}, we obtain the estimate $0>f(y)>-C_2C_0\eps>-C_1\eps$, and hence we again find $F$ is relatively open, leading to a contradiction as before.
\end{proof}


We are now able to give a proof of the strict monotonicity of $\om(\cdot;\bar y_*)$ and the correct boundary value at the origin, $\om(0;\bar y_*)=\frac{4-3\ga}{3}$. These two properties are proved in the following two lemmas.

\begin{lemma}\label{L:OMEGAMONOTONE}
Let $\ga\in(1,\frac43)$. Then the global solution $(\rho(\cdot;\bar y_*),\om(\cdot;\bar y_*))$ satisfies $\om'(y;\bar y_*)>0$ for all $y\in(0,\bar y_*)$.
\end{lemma}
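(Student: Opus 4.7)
The plan is to first establish weak monotonicity $\om'(\cdot;\bar y_*)\ge 0$ on $(0,\bar y_*]$ by passing to the limit from $y_*^n\in Y$ with $y_*^n\searrow\bar y_*$, and then to upgrade this to strict positivity by a contradiction argument based on Lemma~\ref{lemma:omega3}.

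For the weak monotonicity, let $y_*^n\in Y=(\bar y_*,y_F)$ satisfy $y_*^n\to\bar y_*$. Proposition~\ref{prop:S=Y} yields $\om'(y;y_*^n)>0$ on $[y_c(y_*^n),y_*^n]$, and Corollary~\ref{cor:Glowerbound} gives a uniform bound $G(\cdot;y_*^n)\ge\eta>0$ on $[y_c(y_*^n),y_*^n-\nu]$. The first step is to show $y_c(y_*^n)\to 0$: otherwise, along a subsequence $y_c(y_*^n)\to\bar y_c>0$, Lemma~\ref{lemma:extension} would produce a uniform lower bound $G\ge\eta/2$ on $[\bar y_c-\tau,y_*-\nu]$, and Proposition~\ref{prop:sonictime}(ii)--(iii) combined with $\om(y_c(y_*^n);y_*^n)=\frac{4-3\ga}{3}$ would force $\om(\bar y_c;\bar y_*)=\frac{4-3\ga}{3}$, contradicting Proposition~\ref{P:LEFTGLOBAL} (which, via $y_c(\bar y_*)=0$, gives $\om(y;\bar y_*)>\frac{4-3\ga}{3}$ for all $y\in(0,\bar y_*)$). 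Hence $y_c(y_*^n)\to 0$, and on any compact $K\subset(0,\bar y_*)$ we have $K\subset[y_c(y_*^n),y_*^n-\nu]$ for $n$ large. Uniform convergence of $(\rho(\cdot;y_*^n),\om(\cdot;y_*^n))$ from Proposition~\ref{prop:sonictime}(iii), together with continuity of the ODE vector field on $\{G\ge\eta\}$, upgrades to uniform convergence of $\om'(\cdot;y_*^n)$ to $\om'(\cdot;\bar y_*)$ on $K$, giving $\om'(\cdot;\bar y_*)\ge 0$ throughout $(0,\bar y_*)$. At $y=\bar y_*$ itself, Proposition~\ref{prop:far-field}(i) combined with $\bar y_*\in(y_f,y_F)$ (Lemma~\ref{lemma:y*notyf}) gives $\om'(\bar y_*;\bar y_*)=W_1(\bar y_*)/\bar y_*>0$.

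To upgrade to strict positivity, suppose for contradiction that $\om'(y_0;\bar y_*)=0$ for some $y_0\in(0,\bar y_*)$. Since $\om'\ge 0$ on all of $(0,\bar y_*]$ and $\om$ is real-analytic, $y_0$ is an interior minimum of $\om'$, so $\om''(y_0)=0$ and $\om^{(3)}(y_0)\ge 0$. The value $\om(y_0)$ must then lie in the open interval $(\frac{4-3\ga}{3},2-\ga)$: indeed, $\om(y_0)=2-\ga$ is excluded by weak monotonicity, since $\om(y_0)\le\om(\bar y_*)=\om_0(\bar y_*)<2-\ga$ (as $\bar y_*>y_f$); and $\om(y_0)=\frac{4-3\ga}{3}$ is excluded because, combined with $\om\ge\frac{4-3\ga}{3}$ on $(0,\bar y_*]$ and $\om'\ge 0$, it would force $\om\equiv\frac{4-3\ga}{3}$ on $(0,y_0]$, hence via~\eqref{eq:rhoom} both $h\equiv 0$ and $\rho\equiv\frac{1}{6\pi}$ on this interval, so by real analyticity the solution would coincide with the Friedman solution on $(0,\bar y_*)$, contradicting Proposition~\ref{prop:far-field}(iii) (the Friedman solution has sonic point $y_F\ne\bar y_*$). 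With $\om(y_0)\in(\frac{4-3\ga}{3},2-\ga)$ in hand, Lemma~\ref{lemma:omega3} delivers $\om^{(3)}(y_0)<0$, contradicting $\om^{(3)}(y_0)\ge 0$.

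The hardest part will be carefully establishing weak monotonicity up to the origin, in particular verifying $y_c(y_*^n)\to 0$ so that the uniform convergence of Proposition~\ref{prop:sonictime}(iii) is available on arbitrary compact subintervals of $(0,\bar y_*)$. Once this is in hand, the upgrade to strict positivity is a clean application of Lemma~\ref{lemma:omega3}, mirroring the decisive step in the proof of Proposition~\ref{prop:S=Y}.
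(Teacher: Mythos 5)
Your proof is correct and follows essentially the same approach as the paper: pass to the limit from $y_*^n\in Y$ using Proposition~\ref{prop:sonictime} to get $\om'(\cdot;\bar y_*)\ge 0$, then upgrade to strict positivity by invoking Lemma~\ref{lemma:omega3} at a hypothetical critical point. You are somewhat more explicit than the paper about why $y_c(y_*^n)\to 0$ (needed to deploy the uniform convergence on compact subintervals of $(0,\bar y_*)$) and in excluding $\om(y_0)=\frac{4-3\ga}{3}$, but the paper handles both directly from $y_c(\bar y_*)=0$ established in Proposition~\ref{P:LEFTGLOBAL}.
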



\begin{proof}
For each $y\in(0,\bar y_*)$, by the convergence with respect to $y_*$ of $\om'(y;y_*)$ from Proposition \ref{prop:sonictime}, as $\om'(y,y_*)>0$ for all $y_*\in Y$, we easily obtain $\om'(y;\bar y_*)\geq0$. If we then suppose for a contradiction that $\om'(y;\bar y_*)=0$, $y$ is a local minimum of $\om'$, and hence $\om''(y;\bar y_*)=0$. By Lemma \ref{lemma:y*notyf}, we have that $\bar y_*\neq y_f$, and hence $\om(\bar y_*;\bar y_*)<2-\ga$. By the weak monotonicity, this yields moreover that $\om(y;\bar y_*)<2-\ga$ for all $y\in(0;\bar y_*)$. In addition, from $y_c(\bar y_*)=0$ from Proposition \ref{P:LEFTGLOBAL}, we obtain that, for all $y\in(0,\bar y_*)$, $\om(y;\bar y_*)>\frac{4-3\ga}{3}$ and so we may apply  Lemma \ref{lemma:omega3} to obtain $\om^{(3)}(y;\bar y_*)<0$, a contradiction.
\end{proof}


We therefore obtain that $\om$ is strictly monotone decreasing as we decrease $y$ towards the origin.

\begin{prop}\label{P:LIMITATZERO}
Let $\ga\in(1,\frac43)$ and consider the global solution $(\rho,\om)=(\rho(\cdot;\bar y_*),\om(\cdot;\bar y_*))$. The relative velocity $\om$ extends continuously up to the origin and satisfies the limit 
$$\om(0;\bar y_*)=\lim_{y\to0}\om(y;\bar y_*)=\frac{4-3\ga}{3}.$$
\end{prop}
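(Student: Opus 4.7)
The plan is to argue by contradiction using the monotonicity of $\om(\cdot;\bar y_*)$. First I would observe that by Lemma~\ref{L:OMEGAMONOTONE}, $\om(\cdot;\bar y_*)$ is strictly increasing on $(0,\bar y_*)$, and by Proposition~\ref{P:LEFTGLOBAL} together with $y_c(\bar y_*)=0$ it is bounded below by $\frac{4-3\ga}{3}$; hence $L:=\lim_{y\to 0^+}\om(y;\bar y_*)$ exists and satisfies $L\ge\frac{4-3\ga}{3}$. Moreover, since $\bar y_*>y_f$ by Lemma~\ref{lemma:y*notyf}, Lemma~\ref{lemma:rho0om0} yields $\om_0(\bar y_*)<2-\ga$, and monotonicity then gives $L\le\om(y)\le\om_0(\bar y_*)$ for all $y\in(0,\bar y_*)$. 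The crucial arithmetic fact is that $\om_0(\bar y_*)<2-\ga$ is equivalent to
\[
\alpha_*:=\frac{3\om_0(\bar y_*)-(4-3\ga)}{\om_0(\bar y_*)}<\frac{2}{2-\ga}.
\]

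Suppose for contradiction that $L>\frac{4-3\ga}{3}$. I would first exploit the momentum identity $(\rho\om)'=\tfrac{4-3\ga-3\om}{y}\rho$ from \eqref{eq:momentum}, which after dividing by $\rho\om$ gives $(\ln\rho\om)'=(4-3\ga-3\om)/(y\om)$. Using the two-sided bound $L\le\om(y)\le\om_0(\bar y_*)$, the right-hand side lies in $[-\alpha_*/y,-|\alpha_L|/y]$ with $|\alpha_L|:=(3L-(4-3\ga))/L>0$. Integration from $y$ to a fixed $y_0\in(0,\bar y_*)$ yields
\[
C_1\,y^{-|\alpha_L|}\le \rho(y)\om(y)\le C_2\,y^{-\alpha_*}\qquad\text{for }y\text{ small.}
\]
The lower bound forces $\rho(y)\to\infty$ as $y\to 0^+$, while the upper bound combined with $\alpha_*(2-\ga)<2$ gives the decisive asymptotic
\[
y^2\rho(y)^{2-\ga}\le C\,y^{\,2-\alpha_*(2-\ga)}\to 0\quad\text{as }y\to 0^+.
\]

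Next I would show that the non-singular convective term in the $\om$-equation vanishes at the origin. Because $\rho\to\infty$, we have $G=\ga\rho^{\ga-1}-y^2\om^2\ge\tfrac12\ga\rho^{\ga-1}$ for $y$ small, and $|h|\le C_3+\tfrac{4\pi\rho\om}{4-3\ga}$, so
\[
\frac{y^2\om\,|h|}{G}\le \frac{2C_3\,y^2\om}{\ga\rho^{\ga-1}}+\frac{8\pi\,y^2\om^2\,\rho^{2-\ga}}{\ga(4-3\ga)}\to 0\quad\text{as }y\to 0^+,
\]
the first term vanishing because $\rho^{\ga-1}\to\infty$ and the second by the preceding estimate. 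Multiplying the $\om$-equation in \eqref{eq:rhoom} by $y$ then yields
\[
y\om'(y)=\bigl(4-3\ga-3\om(y)\bigr)-\frac{y^2\om(y)h(y)}{G(y)}\longrightarrow (4-3\ga-3L)<0,
\]
so $\om'(y)<0$ for $y$ sufficiently small, directly contradicting the strict monotonicity $\om'>0$ from Lemma~\ref{L:OMEGAMONOTONE}; we conclude $L=\frac{4-3\ga}{3}$.

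The main obstacle is securing the sharp upper bound $\rho(y)\le Cy^{-\alpha_*}$ with the correct exponent. The monotonicity of $\om$ from Lemma~\ref{L:OMEGAMONOTONE} is what furnishes the required two-sided control of $(4-3\ga-3\om)/\om$, and it is precisely the strict inequality $\bar y_*>y_f$ from Lemma~\ref{lemma:y*notyf}---and the resulting $\om_0(\bar y_*)<2-\ga$---that keeps $\alpha_*(2-\ga)<2$ so that the singular convective term drops out in the limit, leaving the residual of the $\om$-equation to single out the Friedman value $L=\frac{4-3\ga}{3}$.
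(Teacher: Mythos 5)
Your proof is correct and takes a genuinely cleaner route than the paper's. The paper argues from $\bar\al := A(L)$ where $L := \lim_{y\to 0^+}\om(y)$: it first uses $\rho'/\rho = yh/G \le -\bar\al/y$ to get the lower bound $\rho\ge c_1 y^{-\bar\al}$, and then must invoke the $h'$ identity~\eqref{eq:h'} together with a $\de$-bootstrap to obtain the complementary upper bound $\rho\le c_2 y^{-\bar\al(1+\de)}$ with a slightly degraded exponent, and only then can it control $y\om h/G$. Your observation is that the momentum identity~\eqref{eq:momentum} yields the exact logarithmic derivative $(\ln\rho\om)' = (4-3\ga-3\om)/(y\om) = -A(\om)/y$, and since $A(\om)\in[\alpha_L,\alpha_*]$ by monotonicity of $\om$ and of $A$, a single integration furnishes the two-sided bound $C_1 y^{-\alpha_L}\le\rho\om\le C_2 y^{-\alpha_*}$ with no bootstrap. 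This buys two things: you avoid the second-order $h'$ computation entirely, and you get to use the \emph{fixed} known value $\alpha_* = A(\om_0(\bar y_*))<\tfrac2{2-\ga}$ (immediate from $\om_0(\bar y_*)<2-\ga$, established via Lemma~\ref{lemma:y*notyf}) rather than the inf-limit $\bar\al = A(L)$, so the decisive exponent inequality $\alpha_*(2-\ga)<2$ is manifest rather than requiring separate justification for $L<2-\ga$. The concluding step — showing $y^2\om h/G\to 0$ via $G\ge\tfrac12\ga\rho^{\ga-1}$ and $y^2\rho^{2-\ga}\lesssim y^{2-\alpha_*(2-\ga)}\to 0$, then passing to the limit in $y\om' = (4-3\ga-3\om) - y^2\om h/G$ — is the same endgame as in the paper and is correctly executed.
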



\begin{proof}
Suppose that $\lim_{y\to0^+}\om(y;\bar y_*)\neq \frac{4-3\ga}{3}$. We will derive a contradiction. Recall first of all that, by construction and Lemma \ref{lemma:y*notyf}, we have $\om(\bar y_*)<2-\ga$, and thus $\om(y;\bar y_*)\in(\frac{4-3\ga}{3},2-\ga)$ for all $y\in(0,\bar y_*)$, where the strict lower bound comes from the fact that $y_c(\bar y_*)=0$, proved in Proposition \ref{P:LEFTGLOBAL}.

Define \beq
\bar\al=\lim_{y\to0^+}\frac{3\om-(4-3\ga)}{\om}.
\eeq
(Note that the limit exists by monotonicity of $\om$ and that $\bar\al>0$.) One easily sees that the function
$$\om\mapsto A(\om)=\frac{3\om-(4-3\ga)}{\om}$$
is monotone increasing on $(\frac{4-3\ga}{3},2-\ga)$ and achieves its maximum value $\al_{\max}=\frac{2}{2-\ga}$ at $\om=2-\ga$. We therefore have the crucial inequality
\beq\label{ineq:alphabar}
\bar\al<\frac{2}{2-\ga}.
\eeq
Now from the inequality $\om'\geq 0$ from Lemma \ref{L:OMEGAMONOTONE}, we derive
\beqas
0\leq&\,\om'=\frac{4-3\ga-3\om}{y}-\frac{y\om h}{G}=
-\frac{A(\om)\om}{y}  - \frac{y\om h}{G}  \leq-\frac{\bar\al\om}{y}-\frac{y\om h}{G},
\eeqas
where we have again used the monotonicity of $A(\om)$ to see $-A(\om(y))\leq-A(\om(0))=-\bar\al$ by the monotonicity of $\om$. Thus
$$\frac{yh}{G}\leq -\frac{\bar\al}{y},$$
and so, using now the equation for $\rho$ from \eqref{eq:rhoom}, we find that
\beqs
\rho'=\frac{y\rho h}{G}\leq-\frac{\bar\al\rho}{y}.
\eeqs
Thus, for $y$ sufficiently small, we must have 
\beq
\rho\geq c_1y^{-\bar\al},\quad\text{for some }c_1>0.
\eeq
Recalling the definitions~\eqref{def:h} and~\eqref{def:G} of $h$ and $G$ respectively, this then yields that, for some possibly different constant $\tilde{c}_1>0$, for $y$ sufficiently small, we must have
$$h\leq-\tilde c_1 y^{-\bar\al},\quad G\geq \tilde c_1 y^{-(\ga-1)\bar\al}.$$
We recall \eqref{eq:h'}:
\beqa
\frac{\dif}{\dif y}h(\rho,\om)=&\,\big(2\om^2-(\ga-1)(2-\ga)+h(\rho,\om)\big)\frac{4-3\ga-3\om}{y\om}-y\frac{h(4\om^2+(\ga-1)\om)}{G(\rho,\om,y)}\\
=&\,h\Big(\frac{4-3\ga-3\om}{y\om}-y\frac{4\om^2+(\ga-1)\om}{G}\Big)+\big(2\om^2-(\ga-1)(2-\ga)\big)\frac{4-3\ga-3\om}{y\om}.
\eeqa
Using the upper bound for $h$ and lower bound for $G$ just obtained, given $\de>0$ (to be chosen later), we may take $y$ sufficiently small so that
\beq
-(1-\de)\frac{\bar\al}{y}h\leq(1-\frac12\de)\frac{4-3\ga-3\om}{y\om}h\leq h'\leq (1+\frac12\de)\frac{4-3\ga-3\om}{y\om}h\leq-(1+\de)\frac{\bar\al}{y}h.
\eeq
This allows us to get the complementary bound
\beq
h\geq -\tilde{c}_2y^{-\bar\al(1+\de)},\text{ and hence }\rho\leq c_2y^{-\bar\al(1+\de)},\quad G\leq \bar{c}_2y^{-\bar\al(\ga-1)(1+\de)}.
\eeq
Thus we may make the estimate, for $y$ sufficiently small,
$$\Big|\frac{y\om h}{G}\Big|\leq C y^{1-\bar\al(1+\de)+(\ga-1)\bar\al}.$$
Recall from \eqref{ineq:alphabar} that, by construction, $\bar\al<\frac{2}{2-\ga}$. We take $\de>0$ such that
$$\de<\frac{2-(2-\ga)\bar\al}{\bar\al}.$$ Then the exponent here is such that $ 1-\bar\al(1+\de)+(\ga-1)\bar\al>-1$. Thus, again taking $y$ sufficiently small once more,
$$\om'(y)=\frac{4-3\ga-3\om}{y}-\frac{y\om h}{G}\leq \frac12\frac{4-3\ga-3\om}{y}<0,$$
a contradiction to the fact that $\om'\geq 0$ for all $y\in(0,\bar y_*)$. Thus $\lim_{y\to0^+}\om(y)=\frac{4-3\ga}{3}$.
\end{proof}


\begin{lemma}\label{L:DENSITYMONOTONE}
Let $\ga\in(1,\frac43)$ and consider the global solution $(\rho,\om)=(\rho(\cdot;\bar y_*),\om(\cdot;\bar y_*))$. The density $\rho$ remains bounded and monotone as $y\to0$, i.e.~$\rho$ converges monotonically to some $\rho(0)>\frac{1}{6\pi}$. 
\end{lemma}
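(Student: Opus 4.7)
My plan proceeds in three steps: monotonicity, boundedness, and the strict lower bound $\rho(0)>1/(6\pi)$.

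First, I would argue monotonicity from the sign of $h$. Since $\om(\cdot;\bar y_*)>(4-3\ga)/3$ strictly on $(0,\bar y_*)$ by Lemma~\ref{L:OMEGAMONOTONE} while its infimum equals $(4-3\ga)/3$ (attained only at the origin, by Proposition~\ref{P:LIMITATZERO}), $\bar y_*$ lies in $\mathcal{Z}$. Combined with $y_c(\bar y_*)=s(\bar y_*)=0$ from Proposition~\ref{P:LEFTGLOBAL}, Lemma~\ref{lemma:leftinvariants} then yields $h(\rho,\om)<0$ throughout $(0,\bar y_*)$, while $G>0$ on this interval by definition of the sonic time. The equation $\rho'=y\rho h/G$ immediately gives $\rho'<0$, so $\rho$ is strictly decreasing in $y$ and the limit $\rho(0):=\lim_{y\to 0^+}\rho(y)\in(0,+\infty]$ exists.

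Next, I would prove boundedness by contradiction. Suppose $\rho(0)=+\infty$. For $y$ small, the dominant terms in $h$ and $G$ give $-h\geq\tfrac{1}{2}\cdot\tfrac{4\pi\rho\om}{4-3\ga}$ and $\tfrac{1}{2}\ga\rho^{\ga-1}\leq G\leq\ga\rho^{\ga-1}$, so the $\rho$-equation reduces to a differential inequality of the form $\tfrac{d}{dy}\rho^{-(2-\ga)}\gtrsim y$. Integrating in both directions from an auxiliary $y_0>0$ yields matching bounds $C_2 y^{-2/(2-\ga)}\leq\rho(y)\leq C_1 y^{-2/(2-\ga)}$ near the origin. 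Plugging the lower bound back into $-y\om h/G$ produces a lower estimate of the form $\tilde c/y$ with $\tilde c>0$ independent of $y$; together with $\om-(4-3\ga)/3\to 0$ this forces $\om'(y)\geq c/y$ for small $y$. Integrating from $y_0$ down to $y$ then gives $\om(y)\to -\infty$, contradicting $\om>0$. Hence $\rho(0)$ must be finite.

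For the strict lower bound, passing $y\to 0^+$ in $h(\rho,\om)<0$ and using $h(\rho,(4-3\ga)/3)=\tfrac{4\pi}{3}(\tfrac{1}{6\pi}-\rho)$ gives $\rho(0)\geq 1/(6\pi)$. To upgrade to strict inequality I would split on $\ga$. When $\ga\in(1,10/9]$, Lemma~\ref{lemma:f1structure} makes $f_1$ strictly increasing on $[(4-3\ga)/3,2-\ga]$, so $\rho(\bar y_*)=f_1(\om_0(\bar y_*))>1/(6\pi)$ (using $\om_0(\bar y_*)>(4-3\ga)/3$, which follows from $\bar y_*<y_F$ by the same monotonicity), and strict monotonicity of $\rho$ then delivers $\rho(0)>\rho(\bar y_*)>1/(6\pi)$. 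The harder case is $\ga\in(10/9,4/3)$, where $f_1'((4-3\ga)/3)<0$ and $\rho(\bar y_*)$ may lie below $1/(6\pi)$. There I would again argue by contradiction: assume $\rho(0)=1/(6\pi)$ and set $u:=\rho-1/(6\pi)<0$, $v:=\om-(4-3\ga)/3>0$, both tending to $0$. A Taylor expansion of $W:=\rho-f_1(\om)>0$ about $\om=(4-3\ga)/3$ reads $W=u+|f_1'((4-3\ga)/3)|\,v+O(v^2)$; with $W>0$ and $u<0$, this forces $|u|\leq C v$ for small $y$, hence $|h|=\tfrac{4\pi\om}{4-3\ga}W\leq C' v$. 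Substituting into the $\om$-equation produces $v'(y)\leq -3v/y+C'' y v$; dividing by $v>0$ and integrating from $y$ up to a fixed $y_0$ then yields $v(y)\geq c_0\,y^{-3}$ for small $y$, contradicting $v(y)\to 0$. This last step is the main obstacle: because the level curve $\{h=0\}$ passes through the Friedman point with negative slope in the $(\rho,\om)$-plane, no obvious monotonicity comparison rules out $\rho(0)=1/(6\pi)$, and the contradiction must be extracted via the linearized geometry of $f_1$ together with careful use of the $\om$-equation.
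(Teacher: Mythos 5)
Your proof is correct. The boundedness step matches the paper's argument: the same asymptotic rate $\rho\sim y^{-2/(2-\ga)}$ is forced, then fed into the $\om$-equation to contradict the existence of a finite limit for $\om$ at the origin. For monotonicity you argue more directly than the paper: you invoke $\bar y_*\in\mathcal{Z}$, $y_c(\bar y_*)=0$, and Lemma~\ref{lemma:leftinvariants} to conclude $h<0$ on all of $(0,\bar y_*)$ and hence $\rho'<0$ there, whereas the paper passes to the limit $y_*\to\bar y_*$ in the bound $\rho'(y;y_*)<0$ for $y_*\in Y$ via Proposition~\ref{prop:sonictime}(iii); your version has the small advantage of yielding strict monotonicity directly (the paper's limiting argument on its face only gives $\rho'\le 0$, and one would anyway return to $h<0$ to upgrade to strictness).

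The genuinely different part is the strict lower bound $\rho(0)>\tfrac{1}{6\pi}$. The paper, assuming $\rho(0)=\tfrac{1}{6\pi}$, treats $(\rho,\om)$ and the Friedman constant solution as two solutions of the same ODE with the same data at $y=0$, drops the $-3\int \tfrac{\om-\om_F}{\tilde y}\,\dif\tilde y$ term by sign, and invokes a Gr\"onwall/uniqueness argument away from the singularities to force $(\rho,\om)\equiv(\rho_F,\om_F)$, a contradiction. This is uniform in $\ga$ and needs no case split. You instead linearise $f_1$ about the Friedman value: for $\ga\le\tfrac{10}{9}$ the monotonicity of $f_1$ together with $\om_0(\bar y_*)>\tfrac{4-3\ga}{3}$ already gives $\rho(\bar y_*)>\tfrac{1}{6\pi}$ and monotonicity finishes; for $\ga>\tfrac{10}{9}$ the sign $f_1'(\tfrac{4-3\ga}{3})<0$ means $W=\rho-f_1(\om)>0$ and $u=\rho-\tfrac{1}{6\pi}<0$ are compatible only if $W\lesssim v$, whence $|h|\lesssim v$ and the $\om$-equation gives $v'\le -3v/y+O(yv)$, which upon integration contradicts $v\to 0$. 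Both routes are sound. The paper's is shorter, more structural, and avoids the $\ga$-split; yours is more explicit, and the case $\ga\le\tfrac{10}{9}$ becomes essentially free. One simplification available to you in the $\ga>\tfrac{10}{9}$ branch: once you have $v'\le -3v/y + Cyv$, this already forces $\om'<0$ for $y$ small, which contradicts Lemma~\ref{L:OMEGAMONOTONE} directly and spares the integration.
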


\begin{proof}
The monotonicity of $\rho$ follows from the inequality $\rho'(y;y_*)<0$ for all $y\in[y_c(y_*),y_*]$ for all $y_*\in Y$ (by Lemma \ref{lemma:leftinvariants}) and the strong convergence $\rho'(y;y_*)\to \rho'(y;\bar y_*)$ for all $y\in(0,\bar y_*)$ as $y_*\to \bar y_*$ given by Proposition \ref{prop:sonictime}(iii).

 To show that $\rho$ stays bounded, suppose for a contradiction that it is not. Note that as $\om$ is bounded (away from 0) and convergent as $y\to 0$, in this limit,
$$h(\rho,\om)\sim -\frac{4\pi}{4-3\ga}\rho\om\sim-\frac{4\pi}{3}\rho\text{ in the sense that }\lim_{y\to0^+}\frac{-h(\rho,\om)}{\frac{4\pi}{3}\rho}=1.$$
Moreover, we clearly also have the asymptotic form
$$\lim_{y\to0^+}\frac{\ga\rho^{\ga-1}-y^2\om^2}{\ga\rho^{\ga-1}}=1.$$
So, given $\de>0$ the ODE for $\rho$ in \eqref{eq:rhoom} becomes, for $y$ sufficiently small,
$$-\big(\frac{4\pi}{3\ga}+\de\big)y\rho^{3-\ga}\leq\rho'\leq-\big(\frac{4\pi}{3\ga}-\de\big)y\rho^{3-\ga}.$$
The solution to an ODE of the form
$$f'(x)=-axf(x)^p \quad\text{ is }\quad f(x)=\Big((p-1)\big(c_1+\frac{ax^2}{2}\big)\Big)^{\frac{1}{1-p}}.$$
Thus solving this pair of ordinary differential inequalities lead to exactly two possibilities: either $\rho$ remains bounded up to the origin, a contradiction to the assumption that it is unbounded, or $\rho= \ka y^{-\frac{2}{2-\ga}}\big(1+o(1)\big)$ as $y\to0$. To see this, choose $y\ll1$ and rearrange the differential inequalities to yield
$$(2-\ga)\big(\frac{4\pi}{3\ga}-\de\big)y\leq \big(\rho^{\ga-2}\big)'\leq(2-\ga)\big(\frac{4\pi}{3\ga}+\de\big)y.$$
Thus, for $\tilde y\in(0,y)$, we have first from the lower bound, integrating from $\tilde y$ to $y$,
$$\rho^{\ga-2}(\tilde y)\leq\rho^{\ga-2}(y)-\frac{2-\ga}{2}\big(\frac{4\pi}{3\ga}-\de\big)y^2+\frac{2-\ga}{2}\big(\frac{4\pi}{3\ga}-\de\big)\tilde y^2.$$
Using that $\rho>0$ and sending $\tilde y\to0$ (as $\rho(\tilde y)\to\infty$, we have $\rho(\tilde y)^{\ga-2}\to0$), this easily gives $$\rho(y)^{\ga-2}\geq \frac{2-\ga}{2}\big(\frac{4\pi}{3\ga}-\de\big)y^2.$$
On the other hand, from the upper bound for $(\rho^{\ga-2})'$, we get the inequality
$$\rho(\tilde y)\leq\Big(\rho^{\ga-2}(y)-\frac{2-\ga}{2}\big(\frac{4\pi}{3\ga}+\de\big)y^2+\frac{2-\ga}{2}\big(\frac{4\pi}{3\ga}+\de\big)\tilde y^2\Big)^{-\frac{1}{2-\ga}},$$
and hence $$\rho^{\ga-2}(y)\leq \frac{2-\ga}{2}\big(\frac{4\pi}{3\ga}+\de\big)y^2,$$ else $\rho(\tilde y)$ would be bounded as $\tilde y\to0$.
Combining these inequalities, we have obtained that 
$$\Big(\frac{2-\ga}{2}\big(\frac{4\pi}{3\ga}+\de\big)\Big)^{-\frac{1}{2-\ga}}y^{-\frac{2}{2-\ga}}\leq\rho(y)\leq\Big(\frac{2-\ga}{2}\big(\frac{4\pi}{3\ga}-\de\big)\Big)^{-\frac{1}{2-\ga}}y^{-\frac{2}{2-\ga}},$$
as required. Noting then that
$$\frac{4-3\ga}{3}\frac{4\pi}{3\ga}y\rho^{2-\ga}\geq \frac{c_1}{y}.$$
We return to the ODE for $\om$ from \eqref{eq:rhoom} to obtain that, for $y$ sufficiently small,
$$\om'=\frac{4-3\ga-3\om}{y}-\frac{y\om h}{G}\geq \frac{4-3\ga-3\om}{y}+\frac{4-3\ga}{6}\frac{4\pi}{3\ga} y\rho^{2-\ga}\geq\frac{c_1}{2y},$$
for $y$ sufficiently small, using $\om\to\frac{4-3\ga}{3}$, $\frac{G}{\ga\rho^{\ga-1}}\to1$, $\frac{-h}{\frac{4\pi}{3}\rho}\to1$. But this leads to a contradiction as $\om$ has a finite limit at the origin.

Thus, as $\rho$ is both monotone and bounded, it has a finite limit $\rho(0)=\lim_{y\to0+}\rho(y)$.

To finish the proof, suppose that $\rho(0)\leq\frac{1}{6\pi}$. As we have $h(\rho(y),\om(y))<0$ for all $y\in(0,\bar y_*)$ and $\om(y)\to\frac{4-3\ga}{3}$, we must have $\rho(0)=\frac{1}{6\pi}$. In this case, we may use that $\om\geq \om_F\equiv \frac{4-3\ga}{3}$, $\rho\leq \rho_F\equiv\frac{1}{6\pi}$ to get the following:
\beqa
|(\om-\om_F)(y)|=&\,(\om-\om_F)(\de)-3\int_\de^y\frac{\om-\om_F}{\tilde y}\dif \tilde y-\int_\de^y\Big(\frac{\tilde y \om h(\rho,\om)}{G(\tilde y,\rho,\om)}-\frac{\tilde y \om_F h(\rho_F,\om_F)}{G(\tilde y,\rho_F,\om_F)}\Big)\dif\tilde y\\
&\,\leq |(\om-\om_F)(\de)|+\int_\de^y\Big|\frac{\tilde y \om h(\rho,\om)}{G(\tilde y,\rho,\om)}-\frac{\tilde y \om_F h(\rho_F,\om_F)}{G(\tilde y,\rho_F,\om_F)}\Big|\dif\tilde y,\\
|(\rho_F-\rho)(y)|\leq&\,|(\rho_F-\rho)(\de)|+\int_\de^y\Big|\frac{\tilde y \rho h(\rho,\om)}{G(\tilde y,\rho,\om)}-\frac{\tilde y \rho_F h(\rho_F,\om_F)}{G(\tilde y,\rho_F,\om_F)}\Big|\dif\tilde y.
\eeqa
Sending $\de\to0$, and then applying a simple Gronwall argument using the the Lipschitz continuity of the expression $$(\rho,\om)\mapsto \frac{h(\rho,\om)}{G(y;\rho,\om)},$$
on bounded sets of $(\rho,\om)$ away from the sonic points $y_*$ and $y_F$, we obtain that $\om\equiv\om_F$, $\rho\equiv\rho_F$, and so conclude the contradiction as, by construction, $(\rho(\cdot;\bar y_*),\om(\cdot;\bar y_*))\neq(\rho_F,\om_F)$.
\end{proof}


\begin{lemma}\label{L:REGULARITYATZERO}
Let $\ga\in(1,\frac43)$ and consider the global solution $(\rho,\om)=(\rho(\cdot;\bar y_*),\om(\cdot;\bar y_*))$. The derivatives of $\rho$ and $\om$ converge to zero as $y\to0$ and $\rho'(0)=\om'(0)=0$. Moreover, the density is $C^2$ up to the origin.
\end{lemma}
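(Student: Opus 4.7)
The plan is to read off the required limits directly from the ODE system \eqref{eq:rhoom}, using the asymptotic values of $\rho$ and $\om$ at the origin established in Proposition \ref{P:LIMITATZERO} and Lemma \ref{L:DENSITYMONOTONE}. First I identify the limiting values of $h$ and $G$. Setting $\om_F=\frac{4-3\ga}{3}$ and $\rho_F=\frac{1}{6\pi}$, we have $\om(y)\to\om_F$ and $\rho(y)\to\rho(0)>\rho_F$ as $y\to0^+$. Substituting these into \eqref{def:G} gives $G(0;\rho(0),\om_F)=\ga\rho(0)^{\ga-1}>0$, and using the identity $h(\rho_F,\om_F)=0$ together with the linearity of $h$ in $\rho$ yields $h(\rho(0),\om_F)=-\frac{4\pi}{3}\big(\rho(0)-\rho_F\big)<0$. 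Hence $F(y):=\om(y)h(\rho(y),\om(y))/G(y;\rho(y),\om(y))$ is continuous on $[0,\bar y_*)$ with finite, strictly negative limit $F(0)=\om_F h(\rho(0),\om_F)/G(0;\rho(0),\om_F)$.

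Next, the claim $\rho'(0)=0$ follows directly from \eqref{eq:rhoom}: $\rho'(y)=\frac{y\rho h}{G}\to 0$ as $y\to0^+$. For $\om'$ the ODE gives the $0/0$ singularity $\om'=\frac{4-3\ga-3\om}{y}-\frac{y\om h}{G}$, so more work is needed. Multiplying the $\om$-equation by $y^3$ and reorganizing yields the identity
\beq\label{E:PLANOMIDENTITY}
\big(y^3(\om-\om_F)\big)'=y^3\om'+3y^2(\om-\om_F)=-\frac{y^4\om h}{G}.
\eeq
Since $\om(y)-\om_F\to0$ as $y\to0^+$, integrating \eqref{E:PLANOMIDENTITY} from $0$ to $y$ gives $y^3(\om(y)-\om_F)=-\int_0^y s^4 F(s)\,\dif s$. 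Because $F$ is continuous at $0$, L'Hôpital's rule (or the mean value theorem applied to $\int_0^y s^4 F(s)\,\dif s$) yields
\beq
\om(y)-\om_F=-\frac{F(0)}{5}y^2+o(y^2)\quad\text{as }y\to0^+.
\eeq
Substituting this expansion back into the ODE gives $\om'(y)=-\frac{3(\om-\om_F)}{y}-yF(y)=\big(\frac{3F(0)}{5}-F(0)\big)y+o(y)=-\frac{2F(0)}{5}y+o(y)\to0$, proving $\om'(0)=0$.

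Finally, to show that $\rho\in C^2$ up to the origin, differentiate the ODE to obtain
\beq
\rho''(y)=\frac{\rho h}{G}+y\Big(\frac{\rho h}{G}\Big)'.
\eeq
The first term converges to the finite value $\rho(0)h(\rho(0),\om_F)/G(0;\rho(0),\om_F)$ as $y\to0^+$. For the second, expanding $\big(\rho h/G\big)'$ using the product and quotient rules produces a sum of terms each of which contains at least one factor of $\rho'$, $\om'$ (via $h'$), or $\rho'$ and $y$ (via $G'=\ga(\ga-1)\rho^{\ga-2}\rho'-2y\om^2-2y^2\om\om'$), all of which are bounded (and in fact vanish at $y=0$). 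Hence $y\big(\rho h/G\big)'\to 0$ as $y\to0^+$, giving $\rho''(y)\to\rho(0)h(\rho(0),\om_F)/G(0;\rho(0),\om_F)$. Consistency with the right-sided second derivative of $\rho$ at $0$ follows from $\rho'(y)/y=\rho h/G\to\rho''(0)$, confirming $\rho\in C^2([0,\bar y_*))$.

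The mildly delicate step is the identity \eqref{E:PLANOMIDENTITY} and the asymptotic expansion of $\om-\om_F$ it yields, since a direct limit of the ODE for $\om$ is an indeterminate form. Everything else reduces to continuity of $h$, $G$, and their first derivatives at the origin, which is immediate from the limits $\rho'(0)=\om'(0)=0$ obtained in the first two steps.
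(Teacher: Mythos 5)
Your proof is correct, and its core arguments differ in flavour from the paper's. The paper exploits the monotonicity $\om'\geq 0$ from Lemma~\ref{L:OMEGAMONOTONE} together with $\om\geq\om_F$ to run a simple squeeze: $0\leq\om'\leq -\frac{y\om h}{G}\to 0$, and then reads off $\om'(0)=0$ from $\frac{4-3\ga-3\om}{y}=\om'+\frac{y\om h}{G}\to 0$. You instead bypass monotonicity and use the conservation-law form $\big(y^3(\om-\om_F)\big)'=-y^4 F(y)$, integrate, and obtain the sharper asymptotics $\om-\om_F=-\frac{F(0)}{5}y^2+o(y^2)$ and hence $\om'(y)=-\frac{2F(0)}{5}y+o(y)$. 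This buys you a quantitative rate (and even the second-order Taylor coefficient of $\om$ at $0$), at the modest cost of the integration step, and it would apply verbatim even if monotonicity of $\om$ were unavailable. For $\rho''$ the two arguments are essentially the same, though you take the slightly longer route of directly expanding $\rho''(y)=\frac{\rho h}{G}+y\big(\frac{\rho h}{G}\big)'$ and killing the second term, while the paper obtains $\rho''(0)$ directly as $\lim\frac{\rho'(y)}{y}=\lim\frac{\rho h}{G}$. One small presentational point: for $\rho'(0)=0$ you only observe $\lim_{y\to0^+}\rho'(y)=0$; to conclude that the difference quotient at $0$ also vanishes you should invoke the mean value theorem (as the paper does) or the standard fact that a function continuous at $0$ with a convergent one-sided derivative limit is one-sided differentiable there with that limit. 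This is the same standard fact you implicitly use later when reconciling $\rho''(0)$ with $\lim\rho''(y)$, so it is a wording gap rather than a mathematical one.
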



\begin{proof}
Now for the solution $(\rho,\om)$ (suppressing the dependence on $\bar y_*$),  we may use the facts that $\om'(y)\geq 0$ for $y>0$ and $\om\geq\frac{4-3\ga}{3}$, to find
$$0\leq \om'(y)=\frac{4-3\ga-3\om}{y}-\frac{y\om h}{\ga\rho^{\ga-1}-y^2\om^2}\leq -\frac{y\om h}{\ga\rho^{\ga-1}-y^2\om^2}\to0 \text{ as }y\to0,$$
leading to $\lim_{y\to0+}\om'(y)=0$. In addition, 
$$-3\om'(0)=\lim_{y\to0^+}\frac{4-3\ga-3\om(y)}{y}\to0$$
by the above inequalities.

Similarly,
$$|\rho'(y)|\leq\Big|\frac{y\rho h}{\ga\rho^{\ga-1}-y^2\om^2}\Big|\leq Cy,$$
so $\rho'(0)=\lim_{y\to0^+}\frac{\rho(y)-\rho(0)}{y}=\lim_{\xi\to0^+}\rho'(\xi)=0$ by the mean value theorem. 

Finally,
$$\rho''(0)=\lim_{y\to0^+}\frac{\rho'(y)-\rho'(0)}{y}=\lim_{y\to0^+}\frac{\rho h}{\ga\rho^{\ga-1}-y^2\om^2}=\frac{\rho(0) h(\rho(0),\om(0))}{\ga\rho(0)^{\ga-1}}$$
and one easily checks that this is also the limit of $\rho''(y)$ as $y\to0$ as required.
\end{proof}



\section{Proof of the main theorem}\label{S:PROOF}


We now prove Theorem~\ref{T:MAIN}.
Let $\ga\in(1,\frac43)$ be fixed. Consider the local real analytic solution
associated with the sonic point $\bar y_\ast\in(y_f,y_F)$:
\begin{align}\label{E:SOLUTION}
(\rho(\cdot;\bar y_\ast), \ \om(\cdot;\bar y_\ast)).
\end{align}
By Lemma~\ref{lemma:supersonic} the solution extends globally to the right, and by 
Proposition~\ref{P:LEFTGLOBAL} the solution extends to the left to the whole interval $[0,\bar y_\ast]$.
We therefore obtain a global solution, which is real analytic at $(0,\infty)$ and $C^1$ at $y=0$ by Lemma~\ref{L:REGULARITYATZERO}.

By Lemmas~\ref{lemma:rightmonotonicity},~\ref{L:OMEGAMONOTONE} and~\ref{L:DENSITYMONOTONE}, it follows that both $\rho(\cdot;\bar y_\ast)$ and
$\om(\cdot;\bar y_\ast)$ are strictly monotone on $(0,\infty)$: $\om$ is increasing and $\rho$ is decreasing. This proves~\eqref{E:MONOTONICITY}.
We now recall~\eqref{E:OMEGADEF}, which implies $u(y)= y \om(y)-(2-\ga)y$. Since $\lim_{y\to0}\om(y)=\frac{4-3\ga}{3}$ by Proposition~\ref{P:LIMITATZERO} and 
$\lim_{y\to\infty}\om(y)=2-\ga$ by Lemma~\ref{lemma:asymptotics}, the strict monotonicity of $\om$ on $(0,\infty)$ implies the second claim of~\eqref{E:RHOOMBOUNDS}. 
The strict positivity of $\rho$ is obvious.


\appendix


\section{Well-posedness away from singular points}\label{A:EU}


\renewcommand{\theequation}{\thesection.\arabic{equation}}

At several points throughout the paper, we make use of the following straightforward local existence and uniqueness theorem for \eqref{eq:rhoom} provided the solution is away from both the singularities $y=0$ and any sonic points. Although the proof is essentially standard, we include it here to make explicit the dependence of the time of existence on the uniform subsonicity or supersonicity. This is made precise in the following proposition.
\begin{prop}\label{prop:Picard}
Suppose that $y_0>0$ and $(\bar\rho,\bar\om)$ are given such that $\bar\rho>\frac{1}{M}$, $|\bar\rho|+|\bar\om|\leq M$ and $\big|G(y_0,\bar\rho,\bar\om)\big|\geq\eta>0$. Then there exists $\de>0$, depending on $y$, $M$ and $\eta$, such that the flow \eqref{eq:rhoom} has a local, unique solution on the interval $[y_0-\de,y_0+\de]$. Moreover, on $[y_0-\de,y_0+\de]$, we have the estimates
$$\bar\rho\geq\frac{1}{2M},\quad |\bar\rho|+|\bar\om|\leq 2M\text{ and }\big|G(y_0,\bar\rho,\bar\om)\big|\geq\frac{\eta}{4}>0.$$
\end{prop}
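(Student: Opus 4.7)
The plan is to apply a standard Picard--Lindel\"of iteration in a carefully chosen closed neighbourhood of $(y_0,\bar\rho,\bar\om)$, with the quantitative bounds serving only to fix the size of the existence interval. Let me sketch the three steps.

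First, I would fix the closed set $\mathcal{K}=[y_0-\delta_0,y_0+\delta_0]\times[\tfrac{1}{2M},2M]\times[-2M,2M]$, with $\delta_0>0$ to be chosen small, depending on $y_0,M,\eta$. Since $G(y,\rho,\om)=\gamma\rho^{\gamma-1}-y^2\om^2$ is jointly continuous on $\{\rho>0\}\times\mathbb{R}^2\times(0,\infty)$ and $|G(y_0,\bar\rho,\bar\om)|\geq\eta$, one can choose $\delta_0$ small enough that $|G(y,\rho,\om)|\geq\eta/2$ throughout $\mathcal{K}$. Consequently the right-hand side of~\eqref{eq:rhoom},
\begin{equation*}
F(y,\rho,\om)=\Bigl(\tfrac{y\rho h(\rho,\om)}{G(y;\rho,\om)},\,\tfrac{4-3\gamma-3\om}{y}-\tfrac{y\om h(\rho,\om)}{G(y;\rho,\om)}\Bigr),
\end{equation*}
is a smooth function on $\mathcal{K}$ and therefore Lipschitz with some constant $L=L(\gamma,y_0,M,\eta)$ and uniformly bounded by some $C=C(\gamma,y_0,M,\eta)$.

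Second, I would invoke the standard Picard--Lindel\"of theorem for the initial value problem $(\rho(y_0),\om(y_0))=(\bar\rho,\bar\om)$, which produces a unique $C^1$ solution on some interval $[y_0-\delta_1,y_0+\delta_1]$ as long as $\delta_1\leq\delta_0$ and the Picard iterates remain within $\mathcal{K}$.

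Third, I would shrink $\delta$ to preserve all three quantitative bounds. Since $|F|\leq C$ on $\mathcal{K}$, we have $|\rho(y)-\bar\rho|+|\om(y)-\bar\om|\leq 2C|y-y_0|$ as long as the solution stays in $\mathcal{K}$, so I take $\delta\leq\min\{\delta_0,\delta_1,\tfrac{1}{4MC},\tfrac{M}{2C}\}$ to guarantee $\rho\geq\tfrac{1}{2M}$ and $|\rho|+|\om|\leq 2M$. Continuity of $G$ combined with $|G|\geq\eta/2$ on $\mathcal{K}$ and the fact that $(y,\rho(y),\om(y))$ moves by at most $O(\delta)$ lets me further shrink $\delta$ so that $|G(y,\rho(y),\om(y))|\geq\eta/4$. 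There is no real obstacle here beyond bookkeeping: the Lipschitz property of $F$ on the sub-/supersonic region is immediate, and the uniform bounds propagate by continuity provided the time-step is chosen small in terms of $M,\eta,y_0,\gamma$.
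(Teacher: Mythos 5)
Your proposal follows essentially the same route as the paper's proof: fix a compact box around $(y_0,\bar\rho,\bar\om)$ on which $G$ stays bounded away from zero, observe that the right-hand side of~\eqref{eq:rhoom} is then bounded and Lipschitz with constants depending only on $\gamma,y_0,M,\eta$, run the Picard iteration, and shrink $\delta$ so that the solution and $G$ stay in the desired ranges. The paper's version writes out the self-mapping and contraction estimates for the Picard operator more explicitly, but the content and the dependence of $\delta$ on the data are the same as in your sketch.
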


\begin{proof}
This follows directly from the usual existence and uniqueness theory for ODEs with a locally Lipschitz right hand side. However, for the convenience of the reader and to emphasise the dependence on $M$, $\eta$ and $y$, we provide a proof.

By the local Lipschitz continuity of the map $y\mapsto G(y;\rho,\om)$ on the set $\{|\rho|+|\om|\leq 2M,\: \rho\geq\frac{1}{2M}\}$, there exists $\de_1>0$ such that $G(y,\bar\rho,\bar\om)\geq\frac{\eta}{2}$ for all $y\in[y_0-\de_1,y_0+\de_1]$. For any $\tilde\eta,\tilde{M},\tilde\de>0$, we define the set
$$\Omega_{\tilde{M},\tilde\eta,\tilde\de}=\big\{(\rho,\om)\in\R^2\,|\,\rho\geq\frac{1}{\tilde{M}},\:|\rho|+|\om|\leq \tilde{M},\:G(y;\rho,\om)\geq\tilde\eta\text{ for all }y\in[y_0-\tilde\de,y_0+\tilde\de]\big\}.$$
Clearly by definition we have $(\bar\rho,\bar\om)\in\Omega_{M,\eta/2,\de_1}$.

For notational convenience, we define two new functions,
\begin{align}
\overline{\mathcal{F}}(y,\rho,\om)=&\,\frac{y\rho h(\rho,\om)}{G(y;\rho,\om)},  \label{E:FDEF}\\
\overline{\mathcal{G}}(y,\rho,\om)=&\,\frac{4-3\ga-3\om}{y}-\frac{y\om h(\rho,\om)}{G(y;\rho,\om)}. \label{E:GDEF}
\end{align}
Then for given constants $M$, $\eta$, there exist constants $N>0$, $L>0$ and $l>0$, depending also on $y_0$, such that
\beqa
&|\overline{\mathcal{F}}(y,\rho_1,\om_1)|+|\overline{\mathcal{G}}(y,\rho_1,\om_1)|\leq N,\\
&|G(y;\rho_1,\om_1)-G(y;\rho_2,\om_2)|\leq l\big(|\rho_1-\rho_2|+|\om_1-\om_2|\big),\\
 &|\overline{\mathcal{F}}(y,\rho_1,\om_1)-\overline{\mathcal{F}}(y,\rho_2,\om_2)|+|\overline{\mathcal{G}}(y,\rho_1,\om_1)-\overline{\mathcal{G}}(y,\rho_2,\om_2)|\leq L\big(|\rho_1-\rho_2|+|\om_1-\om_2|\big)
\eeqa
for all $y\in[y_0-\de_1,y_0+\de_1]$, $(\rho_i,\om_i)\in\Omega_{2M,\eta/4,\de_1}$, $i=1,2$.

We define a Picard operator via 
$$\mathcal{T}[\rho,\om](y)=\begin{pmatrix}
\bar\rho+\int_{y_0}^y\overline{\mathcal{F}}(\tilde y,\rho(\tilde y),\om(\tilde y))\dif \tilde y\\
\bar\om+\int_{y_0}^y\overline{\mathcal{G}}(\tilde y,\rho(\tilde y),\om(\tilde y))\dif\tilde y
\end{pmatrix}.$$
We fix $\de\in(0,\de_1)$ such that 
$$\de l N<\frac{\eta}{4},\quad \de N\leq \frac{1}{2M}<M,\quad \de L\leq\frac12.$$
 Then for any $(\rho,\om)\in C([y_0-\de,y_0+\de];\Omega_{2M,\eta/4,\de})$, we let $(\tilde\rho,\tilde\om)=\mathcal{T}[\rho,\om]$ and see that for any $y\in[y_0-\de,y_0+\de]$ we have
\beqas
|\tilde\rho(y)|+|\tilde\om(y)|\leq&\, |\bar\rho|+|\bar\om|+\Big|\int_{y_0}^y\Big(\big|\overline{\mathcal{F}}(\tilde y,\rho(\tilde y),\om(\tilde y))\big|+\big|\overline{\mathcal{G}}(\tilde y,\rho(\tilde y),\om(\tilde y))\big|\Big)\dif\tilde y\Big|\\
\leq&\,M+N|y-y_0|\leq 2M.
\eeqas
Moreover, estimating $\tilde\rho(y)$, we have
$$\tilde\rho(y)\geq\bar\rho-\Big|\int_{y_0}^y\big|\overline{\mathcal{F}}(\tilde y,\rho(\tilde y),\om(\tilde y))\big|\dif\tilde y\Big|\geq\frac{1}{M}-\de N\geq\frac{1}{2M}.$$
In addition, for all $y\in[y_0-\de,y_0+\de]$ such that $(\tilde\rho,\tilde\om)(y)\in\Omega_{2M,\eta/4,\de}$ (note that this set is non-empty and open by construction of $\mathcal{T}$ and continuity of $G$ away from sonic points), we have that
\beqas
|G(y,&\tilde\rho,\tilde\om)-G(y,\bar\rho,\bar\om)|\\
\leq&\, l\big(|\tilde\rho-\bar\rho|+|\tilde\om-\bar\om|\big)\\
\leq&\, l\int_{y_0}^y\Big(\big|\overline{\mathcal{F}}(\tilde y,\rho(\tilde y),\om(\tilde y))\big|+\big|\overline{\mathcal{G}}(\tilde y,\rho(\tilde y),\om(\tilde y))\big|\Big)\dif\tilde y\\
\leq&\, l \de N<\frac{\eta}{4},
\eeqas
so that as $G(y,\bar\rho,\bar\om)\geq\frac{\eta}{2}$ for every such $y$ (as $\de<\de_1$), a simple continuity argument shows that $(\tilde\rho,\tilde\om)(y)\in\Omega_{2M,\eta/4,\de}$ for all $y\in[y_0-\de,y_0+\de]$. Thus we have shown
$$\mathcal{T}:C([y_0-\de,y_0+\de];\Omega_{2M,\eta/4,\de})\to C([y_0-\de,y_0+\de];\Omega_{2M,\eta/4,\de}).$$
We equip $C\big([y_0-\de,y_0+\de];\Omega_{2M,\eta/4,\de}\big)$ with the norm $\|(\rho,\om)\|_{X}=\|\rho\|_{C^0}+\|\om\|_{C^0}$ and observe that it is a complete metric space. To see $\mathcal{T}$ is a contraction, take $(\rho_1,\om_1),(\rho_2,\om_2)\in C([y_0-\de,y_0+\de];\Omega_{2M,\eta/4,\de})$, write $\mathcal{T}[\rho_j,\om_j]=(\tilde\rho_j,\tilde\om_j)$ for $j=1,2$, and observe
\beqas
&\big|\tilde\rho_1-\tilde\rho_2\big|(y)+\big|\tilde\om_1-\tilde\om_2\big|(y)\\
&\leq\int_{y_0}^y\Big(\big|\overline{\mathcal{F}}(\tilde y,\rho_1(\tilde y),\om_1(\tilde y))-\overline{\mathcal{F}}(\tilde y,\rho_2(\tilde y),\om_2(\tilde y))\big|+\big|\overline{\mathcal{G}}(\tilde y,\rho_1(\tilde y),\om_1(\tilde y))-\overline{\mathcal{G}}(\tilde y,\rho_2(\tilde y),\om_2(\tilde y))\big|\Big)\dif\tilde y\\
&\leq L\int_{y_0}^y\big(|\rho_1-\rho_2|(\tilde y)+|\om_1-\om_2|(\tilde y)\big)\dif\tilde y\\
&\leq\de L\big(\|\rho_1-\rho_2\|_{C^0}+\|\om_1-\om_2\|_{C^0}\big)\\
&\leq\frac{1}{2}\|(\rho_1,\om_1)-(\rho_2,\om_2)\|_X,
\eeqas
as required. Thus there is a fixed point of the operator $\mathcal{T}$, $(\rho,\om)\in C([y_0-\de,y_0+\de];\Omega_{2M,\eta/4,\de})$ satisfying the ODE system and the claimed estimates.
\end{proof}

\section{Combinatorial bootstrap - convergence of the series at the sonic point}\label{A:COMB}


\renewcommand{\theequation}{\thesection.\arabic{equation}}
The central outcome of this section is  Lemma~\ref{lem:FGbound}, which establishes that key $N$-dependent growth bounds for the 
coefficients in the formal Taylor expansion~\eqref{E:FORMAL} around the sonic point can be bootstrapped. This is the key ingredient of the induction argument
used in Lemma~\ref{L:INDUCTION}. Our arguments are combinatorial in nature and
we first prove some technical lemmas. In the following, $\lfloor\alpha\rfloor$ is the usual floor function, denoting the greatest integer not bigger than $\alpha$, for any $\alpha\in\mathbb R$.


\begin{lemma} There exists a universal constant $c>0$ such that for all $N\in \mathbb N$, the following holds
\begin{align}
\sum_{\substack{l+m=N\\ l,m \geq 1}}  \frac{1}{l^{3} m^{3}} \leq  \frac{c}{N^3}, \label{2.51}\\
\sum_{\substack{l+m=N\\ l,m \geq 1}}  \frac{1}{l^{2} m^{2}} \leq  \frac{c}{N^2}, \label{2.50}\\
\sum_{\substack{l+m=N\\ l,m \geq 1}}  \frac{1}{l^{3} m^{2}} \leq  \frac{c}{N^2}, \label{2.54}\\
\sum_{\substack{l+m+n=N\\ l, m, n \geq 1}} \frac{1}{l^{3} m^{3} n^{3}} \leq  \frac{c}{N^3}, \label{2.52}\\
\sum_{\substack{l+m+n=N\\ l, m, n \geq 1}} \frac{1}{l^{3} m^{2} n^{3}} \leq  \frac{c}{N^2}. \label{2.53}
\end{align}
\end{lemma}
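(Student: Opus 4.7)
All five bounds follow the same pigeonhole template, so I would prove them in parallel rather than one at a time. The common observation is: in a sum $\sum l_1+\ldots+l_k = N$ with $k\in\{2,3\}$, at least one index must satisfy $l_i \geq N/k$. I would split the index set according to which variable is largest, pull out the corresponding power, and observe that the remaining sum is a tail of a convergent $p$-series (with $p\geq 2$), hence bounded by a universal constant.

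Concretely, for \eqref{2.51} I would use symmetry of $l,m$ to restrict to $m\geq N/2$, use $m^{-3} \leq 8/N^{3}$, and estimate $\sum_{l\geq 1} l^{-3} \leq \zeta(3)$, producing the bound $C/N^{3}$. The estimate \eqref{2.50} is identical with $m^{-2}\leq 4/N^{2}$ and $\sum l^{-2}=\pi^{2}/6$. For the asymmetric case \eqref{2.54}, I would split the sum into $\{l\geq N/2\}$ and $\{m\geq N/2\}$: on the first piece, $l^{-3}\leq 8/N^{3}$ and $\sum_m m^{-2}$ converges, giving $C/N^{3}$; on the second, $m^{-2}\leq 4/N^{2}$ and $\sum_l l^{-3}$ converges, giving $C/N^{2}$; the larger of the two, $C/N^{2}$, dominates.

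For the triple sums I would perform the same reduction with threshold $N/3$. For \eqref{2.52}, by symmetry assume $n\geq N/3$, so $n^{-3}\leq 27/N^{3}$, and the remaining double sum $\sum_{l,m\geq 1} l^{-3} m^{-3} \leq \zeta(3)^{2}$ is finite, yielding $C/N^{3}$. For \eqref{2.53} I would partition into three pieces according to which of $l,m,n$ is largest: if $l\geq N/3$ or $n\geq N/3$, one gains a factor $N^{-3}$ and the remaining double sum converges; if $m\geq N/3$, one only gains $N^{-2}$ (since $m$ carries the $m^{-2}$ weight), but again the remaining double sum $\sum_{l,n} l^{-3} n^{-3}$ converges. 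The worst of the three contributions is $C/N^{2}$.

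The only subtlety to be careful about is that the splits into $\{l\geq N/2\}$ vs $\{m\geq N/2\}$ (and their three-variable analogues) may overlap on a set of measure zero in the indices, but since we are dealing with finite sums this at worst doubles the constant; I would simply absorb this into the universal $c$. No other obstacles arise — the estimates are elementary and the constant $c$ can be taken as $c=\max(16\zeta(3),\, 4\pi^{2},\, 27\zeta(3)^{2},\,\text{etc.})$ extracted from the pieces above.
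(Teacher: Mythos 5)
Your proof is correct. The pigeonhole reduction (at least one index is $\geq N/k$), split by which index is largest, pull out the corresponding power, and bound the remainder by a convergent $p$-series, works uniformly for all five sums, and the observation that for each pair $(l,m)$ the third index $n=N-l-m$ is determined (so the "remaining double sum" really is bounded by $\zeta(3)^2$, not $N\cdot\zeta(3)^2$) is the only point that needs care, and you handle it correctly.

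The paper's route is slightly different in flavour though ultimately equivalent. For the double sums it uses the algebraic identity
$\frac{1}{lm} = \frac{1}{l+m}\left(\frac{1}{l}+\frac{1}{m}\right)$ when $l+m=N$, so for instance
$\frac{1}{l^3m^3}=\frac{1}{N^3}\left(\frac1l+\frac1m\right)^3$, and then sums the binomial expansion term-by-term; for the triple sums it does \emph{not} repeat a three-variable pigeonhole but instead peels off one variable and invokes the already-proved two-variable estimates \eqref{2.51} and \eqref{2.54} on the inner sum, reducing \eqref{2.52} to $\sum_l l^{-3}(N-l)^{-3}$ and \eqref{2.53} to $\sum_m m^{-2}(N-m)^{-3}$, which are exactly of the form already bounded. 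Your direct approach buys uniformity (the same argument works for two and three variables, and extends to any number of variables without new lemmas); the paper's approach buys a neat closed-form identity in the two-variable case and a cleaner inductive reduction in the many-variable case. Both give the same conclusion with an explicit universal constant.
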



\begin{proof}
The first bound \eqref{2.51} follows from 
\[
\sum_{\substack{l+m=N\\ l,m \geq 1}}  \frac{1}{l^{3} m^{3}}= \sum_{m=1}^{N-1} \frac{1}{(N-m)^{3} m^{3}}= \sum_{m=1}^{N-1} \frac{1}{N^3} \left( \frac{1}{N-m}+ \frac{1}{m}\right)^3\leq \frac{2}{N^3}\sum_{m=1}^\infty\frac{1}{m^3} \lesssim N^{-3}.
\]
The second bound \eqref{2.50} is entirely analogous. The third bound \eqref{2.54} follows from 
\[
\begin{split}
\sum_{\substack{l+m=N\\ l,m \geq 1}}  \frac{1}{l^{3} m^{2}} &= \sum_{m=1}^{N-1}\frac{1}{m^{2}} \frac{1}{(N-m)^3} = \sum_{m=1}^{N-1}\frac{1}{N^2} \left( \frac{1}{m}+ \frac{1}{N-m}\right)^2\frac{1}{N-m} \\
&\lesssim \frac{1}{N^2} \left( \sum_{m=1}^{\lfloor \frac{N}{2}\rfloor} \frac{1}{m^3}+ \sum_{m= \lfloor\frac{N}{2}\rfloor}^{N-1} \frac{1}{ (N-m)^3}  \right) \lesssim {N^{-2}}.
\end{split}
\]
For \eqref{2.52}, 
\[
\sum_{\substack{l+m+n=N\\ l, m, n \geq 1}} \frac{1}{l^{3} m^{3} n^{3}} \leq \sum_{l=1}^{N-1} \frac{1}{l^3} \sum_{\substack{m+n=N-l \\ m, n \geq 1} } \frac{1}{m^{3} n^{3} }\leq 
\sum_{l =1}^{N-1}  \frac{1}{l^3} \frac{c}{(N-l)^3} \lesssim N^{-3},
\] where we have used \eqref{2.51}  twice. 

For \eqref{2.53}, using   \eqref{2.51} and \eqref{2.54}, we have 
\[
\begin{split}
\sum_{\substack{l+m+n=N\\ l, m, n \geq 1}} \frac{1}{l^{3} m^{2} n^{3}} &\leq \sum_{m=1}^{N-1}\frac{1}{m^{2}}\sum_{\substack{l+n=N-m\\ l, n \geq 1}} \frac{1}{l^{3}  n^{3}}
 \lesssim \sum_{m=1}^{N-1}\frac{1}{m^{2}} \frac{1}{(N-m)^3} 
 \lesssim {N^{-2}}.
\end{split}
\] 
This finishes the proof. 
\end{proof}


Define the set 
\begin{equation}\label{pi}
\pi(n,m)= \left\{(\lambda_1, \dots,\lambda_n): \lambda_i \in \mathbb Z_{\geq 0}, \sum_{i=1}^n \lambda_i =m, \sum_{i=1}^n i \lambda_i =n \right\}.
\end{equation}
An element of $\pi(n,m)$ encodes the partitions of the first $n$ numbers into $\lambda_i$ classes of cardinality $i$ 
for $i\in \{1,\dots,m\}$. Observe that by necessity $\lambda_j=0$ for any $n-m+2\leq  j\leq n$. With this partition set, the coefficient $P_N$ of Taylor series for $\rho^{\ga-1}=\sum_{N=0}^\infty P_N(y-y_*)^N$ in 
\eqref{eq:seriesPN} can be written as 
\begin{equation}\label{eq:PN}
P_N = 
\begin{cases}
\rho_0^{\ga-1}, & \ \text{ if } \  N=0, \\
\rho_0^{\ga -1} \sum_{m=1}^N \frac{1}{\rho_0^m}\sum_{\pi(N,m)}  \frac{(\ga -1)_m}{\lambda_1 ! \dots \lambda_N !}  \prod_{j=1}^N {\rho_j}^{\lambda_j}  & \ \text{ if } \  N\ge 1,
\end{cases}
\end{equation}
where $(\gamma-1)_m = \prod_{j=1}^m(\ga -j)$. 

To obtain bounds of $P_N$ in terms of the coefficients $\rho_j$, we will make use of the following combinatorial identities and inequalities. For any $\alpha\in \mathbb R$, we let
\[
\binom{\alpha}{j} =\frac{(\alpha)_j}{j!}= \frac{\alpha (\alpha-1)\cdots (\alpha-j+1)}{j!} \ \  \text{for} \ \ j\in \mathbb N, \ \ \text{and} \ \  \binom{\alpha}{0}=1.
\]

\begin{lemma}\label{formula1} Recall the set  $\pi(n,m)$ defined in \eqref{pi}. 
\begin{enumerate}
\item  For each $n\in \mathbb N$, 
\begin{equation}
\sum_{m=1}^n\sum_{\substack{\pi(n,m) } } \frac{(-1)^m m! }{ \lambda_1 ! \dots \lambda_n!} \binom{\frac12}{1}^{\lambda_1} \cdots  \binom{\frac12}{n}^{\lambda_n}  = 2 (n+1)\binom{\frac12}{n+1}
\end{equation} holds. 
\item There exist universal constants $c_1,c_2>0$ such that 
\begin{equation}\label{12n}
 c_1 \frac{1}{n^\frac32}\leq (-1)^{n-1} \binom{\frac12}{n} \leq c_2 \frac{1}{n^\frac32}, \quad n\in \mathbb N.
\end{equation}
\end{enumerate}
\end{lemma}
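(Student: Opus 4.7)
The plan for part (1) is to recognize the left-hand side as a Faà di Bruno-style coefficient arising from the composition of two generating functions. Specifically, set $f(x)=(1+x)^{1/2}-1 = \sum_{j\ge 1}\binom{1/2}{j}x^j$ and $g(y)=\frac{1}{1+y}=\sum_{m\ge 0}(-y)^m$. Then $g\circ f(x) = (1+x)^{-1/2}$. Expanding $f(x)^m$ via the multinomial theorem produces
\[
f(x)^m=\sum_{n\ge m}x^n\sum_{\pi(n,m)}\frac{m!}{\lambda_1!\cdots\lambda_n!}\prod_{j=1}^n\binom{1/2}{j}^{\lambda_j},
\]
so matching the coefficient of $x^n$ in the identity $(1+x)^{-1/2}=1+\sum_{m\ge 1}(-1)^m f(x)^m$ yields, for $n\ge 1$,
\[
\binom{-1/2}{n}=\sum_{m=1}^n\sum_{\pi(n,m)}\frac{(-1)^m m!}{\lambda_1!\cdots\lambda_n!}\prod_{j=1}^n\binom{1/2}{j}^{\lambda_j}.
\]
It then remains a pure algebra calculation to show $\binom{-1/2}{n}=2(n+1)\binom{1/2}{n+1}$: both sides simplify to $(-1)^n(2n)!/(4^n(n!)^2)$, as can be checked by writing out the descending products defining each generalized binomial.

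For part (2), the plan is to obtain an explicit closed form for the central quantity. A direct computation of the descending product gives, for $n\ge 1$,
\[
(-1)^{n-1}\binom{1/2}{n}=\frac{(2n-2)!}{2^{2n-1}(n-1)!\,n!}=\frac{1}{2n}\cdot\frac{1}{4^{n-1}}\binom{2n-2}{n-1}.
\]
Combining this with the standard asymptotic for the central binomial coefficient $\binom{2k}{k}\sim 4^k/\sqrt{\pi k}$ (a consequence of Stirling's formula) gives
\[
(-1)^{n-1}\binom{1/2}{n}\sim\frac{1}{2\sqrt{\pi}\,n^{3/2}},
\]
from which the two-sided bound $c_1 n^{-3/2}\le(-1)^{n-1}\binom{1/2}{n}\le c_2 n^{-3/2}$ follows immediately (the ratio to $n^{-3/2}$ is continuous in $n$ and tends to a positive limit, hence is bounded above and below on $\mathbb{N}$).

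Neither step presents any real obstacle: part (1) is entirely mechanical once the right composition is identified, and part (2) is a textbook Stirling estimate. The only minor subtlety worth flagging is ensuring the $n=1$ (and small-$n$) cases are handled correctly in the closed form for $(-1)^{n-1}\binom{1/2}{n}$, since the representation involves $(2n-2)!$ and $(n-1)!$; a direct check at $n=1,2$ confirms the formula, and the uniform constants $c_1,c_2$ can simply be taken as the minimum and maximum of the bounded sequence $n^{3/2}\cdot(-1)^{n-1}\binom{1/2}{n}$.
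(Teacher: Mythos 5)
Your argument is correct, and for part (1) it is genuinely more self-contained than what appears in the paper: the paper simply cites Lemma~1.5.2 of Krantz--Parks \cite{KP02}, whereas you derive the identity from scratch via the formal power series composition $g\circ f$ with $f(x)=(1+x)^{1/2}-1$ and $g(y)=(1+y)^{-1}$, so that $g(f(x))=(1+x)^{-1/2}$. Matching the coefficient of $x^n$ via the multinomial expansion of $f(x)^m$ gives exactly the left-hand side equal to $\binom{-1/2}{n}$, and the remaining equality $\binom{-1/2}{n}=2(n+1)\binom{1/2}{n+1}$ is, as you say, a short computation (both equal $(-1)^n(2n)!/(4^n(n!)^2)$). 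This buys the reader a transparent, reference-free proof of the combinatorial identity at essentially no extra cost, and it also illuminates \emph{why} the particular generating function structure appears (it is precisely the reciprocal of the square-root series that governs $\rho^{\gamma-1}$ in the Fa\`a di Bruno estimates). For part (2), your closed form $(-1)^{n-1}\binom{1/2}{n}=\frac{(2n-2)!}{2^{2n-1}(n-1)!\,n!}=\frac{1}{2n}\cdot\frac{1}{4^{n-1}}\binom{2n-2}{n-1}$ agrees with the paper's, and your appeal to the central binomial asymptotic $\binom{2k}{k}\sim 4^k/\sqrt{\pi k}$ is equivalent to the paper's direct application of two-sided Stirling bounds; the only cosmetic difference is that the paper makes the uniform constants explicit from Stirling, while you obtain them by observing that the sequence $n^{3/2}(-1)^{n-1}\binom{1/2}{n}$ is positive and converges to a positive limit, hence is bounded above and below. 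Both are complete.
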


\begin{proof}
The first statement follows from Lemma 1.5.2 of \cite{KP02}. 

For the second statement, \eqref{12n} is trivial for $n=1$. Let $n\geq2$. Then 
\be\label{3.167}
(-1)^{n-1} \binom{\frac12}{n} = \frac{ \frac{1}{2}\cdot \frac{1}{2} \cdots \frac{2n-3}{2}}{  n!} = \frac{(2n-2)! }{ 2^{2n-1} (n-1)! n! } = \frac{1}{2n-1}\frac{(2n)!}{2^{2n} (n!)^2}.
\ee
To estimate the last fraction, we invoke Stirling's formula $ 
n! \sim \sqrt{2\pi n}\left( \frac{n}{e}\right)^n, \ \ n \gg 1$. We will use the following version with upper and lower bounds valid for all $n$: 
\be
\sqrt{2\pi} n^{n+\frac{1}{2}} e^{-n} \leq n! \leq e n^{n+\frac12}e^{-n}, \quad n\in \mathbb N.
\ee
Then we have
\be
\frac{ \sqrt{2\pi}\sqrt{2} }{e^2n^\frac12}=\frac{\sqrt{2\pi}  (2n)^{2n+\frac12}e^{-2n} }{2^{2n} e^2 (n^{n+\frac{1}{2}} e^{-n})^2 }  \leq \frac{(2n)!}{2^{2n} (n!)^2} \leq \frac{ e (2n)^{2n+\frac12}e^{-2n} }{ 2^{2n}2\pi (n^{n+\frac{1}{2}} e^{-n})^2 }= \frac{e \sqrt{2}}{ 2\pi n^\frac12}.
\ee
Hence, combining this with \eqref{3.167}, we have \eqref{12n}. 
\end{proof}

\begin{lemma}\label{lem:comb} Let $p>0$ be a given positive number. Let $(\lambda_1,\dots, \lambda_l ) \in \pi (l ,m)$ where $1\leq m\leq l $ and $l\geq2$ be given. 
\begin{enumerate}
\item If $1\leq m\leq \lfloor \tfrac{\sqrt l}{\sqrt3}\rfloor$, there exists a constant $c_3=c_3(p)>0$ such that 
\be\label{E:comb1}
 \prod_{n=1}^l \left(\frac{1}{n^{\lambda_n}}\right)^p  \leq \frac{c_3}{l^{p}}.
\ee
\item There exist  $c_4=c_4(p)>0$ and $L_0=L_0(p)>1$ such that, if $L\geq L_0$,  the following holds:  
\be\label{E:comb2}
\frac{1}{L^{m-1}}\prod_{n=1}^l \left(  \frac{1}{n^{\lambda_n}} \right)^p \leq \frac{c_4}{l^{p}}   \text{ for all }  1\leq m \leq l. 
\ee
\item Let $l \geq 3$. Then there exists  $c_5=c_5(p)>0$ such that, if $L\geq L_0$,  the following holds:  
\be\label{E:comb3}
\frac{1}{L^{m-2}}\prod_{n=1}^l \left(  \frac{1}{n^{\lambda_n}} \right)^p \leq \frac{c_5}{l^{p}}   \text{ for all }  2\leq m \leq l. 
\ee
\end{enumerate}
\end{lemma}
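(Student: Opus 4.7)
The argument reduces to a single combinatorial estimate, after which everything else is routine bookkeeping. An element $(\lambda_1,\dots,\lambda_l)\in\pi(l,m)$ corresponds bijectively to an unordered multiset of $m$ positive integers $\{n_1,\dots,n_m\}$ satisfying $\sum_{i=1}^m n_i = l$, under the correspondence $\lambda_n=\#\{i:n_i=n\}$, and under this bijection $\prod_{n=1}^l n^{\lambda_n}=\prod_{i=1}^m n_i$. The key claim is the minimum-product bound
\[
\prod_{i=1}^m n_i\geq l-m+1,
\]
which I would prove by a two-variable rearrangement: whenever $a,b\geq 2$ both appear among the $n_i$, replacing them with $(a+b-1,\,1)$ preserves the sum and weakly decreases the product, since $ab\geq a+b-1$ is equivalent to $(a-1)(b-1)\geq 1$. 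Iterating at most $m-1$ times concentrates all the excess onto a single coordinate, producing the extremal configuration $(l-m+1,\,1,\dots,1)$.

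With this bound in hand, part~(1) is immediate: the elementary inequality $\sqrt{l/3}\leq l/2$ for $l\geq 2$ yields $l-m+1\geq l/2$ whenever $m\leq\lfloor\sqrt{l/3}\rfloor$, and hence $\prod_n (1/n^{\lambda_n})^p\leq(l/2)^{-p}=2^p/l^p$. One takes $c_3=2^p$, enlarged if necessary to absorb the finitely many small values of $l$.

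For parts~(2) and~(3), I would split into two regimes. If $m\leq l/2$, the previous estimate already gives $\prod_n n^{p\lambda_n}\geq (l/2)^p$, and since $L^{m-1}\geq 1$ (respectively $L^{m-2}\geq 1$, using $m\geq 2$ in part~(3)) for $L\geq 1$, the bound holds with constant $2^p$. If $m>l/2$, the lower bound $l-m+1\geq 1$ is essentially vacuous and one must rely entirely on the decay from the $L$-factor. The integer constraint forces $m-1\geq(l-1)/2$, and analogously $m-2\geq(l-3)/2$ when $m\geq 2$, so $L^{m-1}\geq L^{(l-1)/2}$ and $L^{m-2}\geq L^{(l-3)/2}$. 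For any fixed $L>1$ the exponential $L^{(l-1)/2}$ dominates any polynomial $l^p$, so
\[
\sup_{l\geq 1}\frac{l^p}{L^{(l-1)/2}}<\infty\quad\text{and}\quad\sup_{l\geq 3}\frac{l^p}{L^{(l-3)/2}}<\infty,
\]
and both are monotonically decreasing in $L$. I would fix any $L_0=L_0(p)>1$ (e.g.\ $L_0=2$) and set $c_4=c_4(p)$ and $c_5=c_5(p)$ equal to the maximum of $2^p$ and the corresponding supremum evaluated at $L=L_0$; then for all $L\geq L_0$ the bounds hold uniformly, with constants depending only on $p$.

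I do not foresee a serious obstacle. The one point requiring care is verifying the minimum-product bound, where termination of the iterated two-variable rearrangement must be tracked (each step strictly reduces the count of $n_i\geq 2$, so at most $m-1$ steps suffice). The remainder is a comparison of polynomial against exponential growth, and the only subtlety is ensuring that $L_0$ and the constants $c_4,c_5$ can be chosen to depend on $p$ alone, which follows because $K(L,p)=\sup_l l^p/L^{(l-1)/2}$ is monotonically decreasing in $L$.
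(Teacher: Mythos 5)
Your proof is correct and takes a genuinely different route from the paper's. You identify the multiset/partition correspondence $\prod_{n=1}^l n^{\lambda_n}=\prod_{i=1}^m n_i$ explicitly, and then derive the clean structural lower bound $\prod_{i=1}^m n_i\geq l-m+1$ by the two-variable rearrangement $(a,b)\mapsto(a+b-1,1)$. With this single inequality in hand, all three parts follow by elementary manipulations, and in particular parts (2) and (3) reduce to the standard polynomial-vs-exponential comparison after splitting at $m\leq l/2$. The paper's argument is more ad hoc: for part (1) it avoids any extremal statement and instead argues case-by-case on how many large indices $j\geq\bar m:=\lfloor\sqrt{l/3}\rfloor$ carry a nonzero $\lambda_j$ (exactly one with $\lambda_{j_0}=1$, exactly one with $\lambda_{j_0}\geq2$, or at least two), using $\bar m^2\geq l/3$ to force a factor $\geq l/3$ in each case; for part (2) it then splits at $m\leq\bar m$ versus $m>\bar m$, which leaves only the much weaker bound $L^{m-1}\geq L^{\bar m}\sim L^{\sqrt{l/3}}$ in the large-$m$ regime and thus forces the specific choice $L_0=e^{2p}$. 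Your split at $m\leq l/2$ gives the stronger exponent $(l-1)/2$ and lets any fixed $L_0>1$ work. The trade-off is that the rearrangement lemma, while a classical fact, requires its own proof (which you provide correctly: each step reduces the number of parts $\geq 2$ and the product decreases since $(a-1)(b-1)\geq0$). Both approaches yield constants depending only on $p$; yours is a bit tighter and structurally cleaner.
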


\begin{proof} {\it Proof of \eqref{E:comb1}}. Let $ \overline  m:= \lfloor\tfrac{\sqrt l}{\sqrt3}\rfloor$. 
We first claim that there exists at least one $\lambda_j \geq 1$ for $j \geq  \overline  m$. If not, $\lambda_j =0$ for all $j \geq  \overline  m $. Then we would have for $1\leq m\leq \overline  m$
\[
l =\sum_{j=1}^l j \lambda_j = \sum_{j < \overline  m } j \lambda_j < \overline  m  \sum_{j=1}^l \lambda_j = \overline  m m \leq \overline  m^2 \leq \frac{l }{3}
\]
which is a contradiction. 

We are now ready to prove \eqref{E:comb1}. Consider two cases.  

\noindent{\it Case 1.} Suppose there exists exactly one $\lambda_{j_0} \geq 1$ for $j_0\geq \overline  m$. If $\lambda_{j_0} = 1$, 
 we must have $j_0 \geq \frac{l}{2}$,  for otherwise we would have 
\[
l =\sum_{j=1}^l j \lambda_j = \sum_{j < \overline  m } j \lambda_j + j_{0}\lambda_{j_0}  <  \overline  m^2 + \frac{l}{2} \leq \frac{l }{3} +\frac{l}{2} 
\]
which leads to a contradiction. 
Therefore, we have 
\[
 \prod_{n=1}^l \left(\frac{1}{n^{\lambda_n}}\right)^p \leq \left(\frac{1}{j_0}\right)^p \leq  \frac{2^p}{l^p}.
\]
If $\lambda_{j_0}\geq 2$, then $j_0^{\lambda_{j_0}} \geq j_0^2 \geq \overline m^2$, which leads to 
\[
 \prod_{n=1}^l \left(\frac{1}{n^{\lambda_n}}\right)^p \leq \left(\frac{1}{j_0^{\lambda_{j_0}}}\right)^p \leq  \left(\frac{1}{\overline m^{2}}\right)^p = \frac{3^p}{l^p}.
\]

\noindent{\it Case 2.} Suppose there exist at least two $\lambda_{j_1}, \lambda_{j_2} \geq 1$ for $j_1, j_2\geq \overline m$. Then $j_1^{\lambda_{j_1}} j_2^{\lambda_{j_2}} \geq j_1 j_2\geq \overline m^2$, which gives 
\[
 \prod_{n=1}^l \left(\frac{1}{n^{\lambda_n}}\right)^p \leq \left(\frac{1}{j_1^{\lambda_{j_1}}} \frac{1}{j_2^{\lambda_{j_2}}}\right)^p \leq  \left(\frac{1}{\overline m^{2}}\right)^p = \frac{3^p}{l^p}.
\]
This finishes the proof of \eqref{E:comb1}. 

\noindent{\it Proof of \eqref{E:comb2}}.  If $1\leq m\leq \lfloor\tfrac{\sqrt l}{\sqrt3}\rfloor$, then by \eqref{E:comb1}, for all $L>1$, 
\[
\frac{1}{L^{m-1}}\prod_{n=1}^l \left(  \frac{1}{n^{\lambda_n}} \right)^p \leq  \frac{c_3}{l^{p}}.
\]
If $\lfloor\tfrac{\sqrt l}{\sqrt3}\rfloor+1\leq m \leq l$, note  $$
\frac{1}{L^{m-1}}\prod_{n=1}^l \left(  \frac{1}{n^{\lambda_n}} \right)^p \leq \frac{1}{L^{m-1}} \leq 
\frac{1}{L^{\lfloor\frac{\sqrt l}{\sqrt3}\rfloor}}\leq 
\begin{cases} 
1 & \text{ if }l =2,\\
\frac{1}{L^{\frac{\sqrt l}{\sqrt3} -1}} &   \text{ if }l \geq 3.
\end{cases}
$$
Now letting $L_0= e^{2p}$, it is easy to see that $(\frac{\sqrt l}{\sqrt3} -1) \log L - p \log l  + p\log 3\geq 0$ for all $l \geq 3$ and $L\geq L_0$. Hence we obtain 
\[
\frac{1}{L^{m-1}}\prod_{n=1}^l \left(  \frac{1}{n^{\lambda_n}} \right)^p \leq \frac{3^p}{l^p}
\]
for all $l \geq 2$ and $L\geq L_0=e^{2p}$. 

\noindent{\it Proof of \eqref{E:comb3}}. The proof is analogous to \eqref{E:comb2}. We omit the details. 
\end{proof}

Let $M>0$ be a fixed upper bound of $|\rho_0|, \ |\om_0|, \  |\rho_1|, \  |\om_1|$ such that 
\begin{align}\label{E:MBOUND}
|\rho_0^{-1}|, \ |\rho_0|, \ |\om_0|, \  |\rho_1|, \  |\om_1| \leq M.
\end{align}
Note that such an $M$ may be taken to depend only on $\ga$ by continuity of these values as functions of $y_*\in[y_f,y_F]$ and the uniform lower bound on $\rho_0$ given by Lemma \ref{lemma:rho0om0}.

\begin{lemma} Let  $\alpha\in (1,2)$ be given. Assume that 
\begin{align}
|\rho_m| \leq \frac{C^{m-\alpha}}{m^3}, \quad 2\leq m\leq N-1, \label{assumptionR}\\
|\om_m| \leq \frac{C^{m-\alpha}}{m^3}, \quad 2\leq m\leq N-1 \label{assumptionW}
\end{align} for some $C\geq 1$ and $N\geq3$. Then there exists a  constant $D=D(M)>0$ such that 
\begin{align}
|(\om^2)_l| +|(\rho\om)_l | +|(\rho^2)_l| &\le 
\begin{cases}
D & \ \text{ if } \ l=0,1, \\
D \frac{C^{l-\alpha}}{l^3} & \ \text{ if } \ 2\le l \le N-1,
\end{cases}\label{eq:quadratic} \\
|(\om^3)_l| +|(\rho\om^2)_l | + |(\rho^2 \om)_l| & \le 
\begin{cases}
D & \ \text{ if } \ l=0,1, \\
D \frac{C^{l-\alpha}}{l^3} & \ \text{ if } \ 2\le l \le N-1.
\end{cases}\label{eq:cubic}
\end{align} 
\end{lemma}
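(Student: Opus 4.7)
The plan is to reduce both estimates to the already-established combinatorial inequalities \eqref{2.51}, \eqref{2.52} and \eqref{2.54} by splitting every convolution according to whether the individual indices are low ($0$ or $1$) or high ($\ge 2$). Throughout, I would use the uniform bound \eqref{E:MBOUND} together with the inductive hypotheses \eqref{assumptionR}--\eqref{assumptionW}. The constant $D$ is then built up additively from finitely many $M$-powers and the universal constant $c$ from the combinatorial lemma.

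First I would dispose of the easy base cases $l=0,1$. For instance $(\om^2)_0=\om_0^2$ and $(\om^2)_1=2\om_0\om_1$, both of which are bounded by $2M^2$. The analogous trivial bounds $|( \rho\om)_l|, |(\rho^2)_l|\leq 2M^2$, $|(\om^3)_l|,|(\rho\om^2)_l|,|(\rho^2\om)_l|\leq 4M^3$ hold for $l=0,1$, and this is where the first branch of the constant $D$ comes from.

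Next, for $2\le l\le N-1$, I would split a generic bilinear convolution $(ab)_l=\sum_{k+j=l}a_kb_j$ into three pieces: (i) the boundary terms $a_0 b_l + a_l b_0$; (ii) the near-boundary terms $a_1 b_{l-1}+a_{l-1}b_1$ (relevant only for $l\ge 3$); and (iii) the bulk $\sum_{2\le k\le l-2} a_k b_{l-k}$. For (i), the factors of index $0$ or $l$ are estimated by $M$ and by $C^{l-\alpha}/l^3$ respectively via the hypothesis, giving $2M C^{l-\alpha}/l^3$. For (ii), one uses $M$ on the index-$1$ factor and the hypothesis on the other; since $(l-1)^{-3}\le 8 l^{-3}$ for $l\ge 2$ and $C^{l-1-\alpha}\le C^{l-\alpha}$ for $C\geq 1$, this contributes at most $16 M C^{l-\alpha}/l^3$. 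For (iii), the hypothesis yields
\[
\Bigl|\sum_{2\le k\le l-2} a_k b_{l-k}\Bigr|\le C^{l-2\alpha}\sum_{\substack{k+j=l\\ k,j\ge 1}}\frac{1}{k^3 j^3}\le c\,C^{l-2\alpha}l^{-3}\le c\,C^{l-\alpha}l^{-3},
\]
where the middle inequality is \eqref{2.51} and the last uses $C\geq 1$ and $\alpha>0$. Summing the three pieces with a fixed $D=D(M)$ gives the desired estimate for each of $(\om^2)_l$, $(\rho\om)_l$, $(\rho^2)_l$.

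For the cubic convolutions $(abc)_l=\sum_{k+j+n=l}a_k b_j c_n$ I would perform the same partitioning, now on three indices. The terms in which two of the three indices lie in $\{0,1\}$ reduce to bilinear convolutions already controlled by the first part of the lemma and lose at most a factor of $M^2$. Terms with exactly one low index reduce to a bilinear convolution multiplied by $M$, again acceptable. The only genuinely new ingredient is the fully interior sum with all three indices $\ge 2$, which the hypothesis bounds by $C^{l-3\alpha}\sum \frac{1}{k^3 j^3 n^3}\le c\,C^{l-\alpha}l^{-3}$ by \eqref{2.52}. Collecting yields the second bound with a (possibly larger) $D=D(M)$. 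I expect no genuine difficulty beyond careful bookkeeping: the main thing to verify is that $\alpha>1$ is never actually needed here (only $\alpha>0$ together with $C\ge 1$), which is why the same shape of bound $C^{l-\alpha}/l^3$ is preserved — the extra decrement $\alpha$ in the exponent is the ``headroom'' that will later allow Lemma~\ref{L:INDUCTION} to close the induction at the level of $\rho_N$ and $\om_N$.
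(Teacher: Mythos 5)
Your decomposition --- indices in $\{0,1\}$ estimated by the uniform $M$-bound of \eqref{E:MBOUND}, the bulk by the inductive hypotheses together with the convolution inequalities \eqref{2.51}, \eqref{2.52} --- is exactly the paper's, and the argument is essentially correct. One small bookkeeping hole: by declaring piece (ii) ``relevant only for $l\ge 3$'' you orphan the middle term $a_1 b_1$ at $l=2$, and likewise the all-low-index cubic contributions at $l=2,3$. Absorbing these $O(M^2)$ and $O(M^3)$ constants into $D\,C^{l-\alpha}/l^3$ requires $C^{l-\alpha}\ge 1$, i.e.\ $\alpha<l$, and the binding case is $l=2$, so one needs $\alpha<2$. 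The paper handles this by treating $l=2$ separately (see \eqref{E:CASETWO}, where $C^{2-\alpha}\ge 1$ is invoked). Thus your closing observation that $\alpha>1$ is never used in this particular lemma is correct, but the assertion that only $\alpha>0$ (with $C\ge 1$) is needed is not quite right --- you also use $\alpha<2$, which is the \emph{other} half of the standing hypothesis $\alpha\in(1,2)$.
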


\begin{proof} We first prove the bounds for $|(\om^2)_l|$, $l\ge0$. 
The bounds $|(\om^2)_0|\le M^2$ and $|(\om^2)_1|\le 2M^2$ are obvious from~\eqref{E:MBOUND}.
Clearly 
\be\label{E:CASETWO}
|(\om^2)_2|\le 2M|\om_2|+M^2 \le 2M \frac{C^{2-\alpha}}{2^3}+M^2\le (2M +2^3 M^2) \frac{C^{2-\alpha}}{2^3},
\ee where we have used $C^{2-\alpha}\geq 1$. 
If $l\ge3$ we then have
\begin{align}
|(\om^2)_l| & \le \sum_{m=0}^l|\om_m||\om_{l-m}| \notag \\
&\le 2|\om_0||\om_l| + 2|\om_1||\om_{l-1}| + \sum_{m=2}^{l-2}|\om_m||\om_{l-m}| \notag\\
& \le 2M \frac{C^{l-\alpha}}{l^3}  + 2M  \frac{C^{l-1-\alpha}}{(l-1)^3}
+ \sum_{m=2}^{l-2} \frac{C^{l-2\alpha}}{m^3(l-m)^3} \notag \\
& \le 2M C^{l-\alpha} \left(\frac1{l^3} + \frac1{(l-1)^3} + \frac{1}{2M}\sum_{m=2}^{l-2} \frac{1}{m^3(l-m)^3} \right) \notag \\
& \le 2M \tilde C \frac{C^{l-\alpha}}{l^3},
\end{align}
for some constant $\tilde C$.  
It is now clear that the estimates for $(\rho\om)_l$ and $(\rho^2)_l$, $l\ge0$ follow in the same way, as the only estimates we have used are~\eqref{E:MBOUND} and the inductive assumptions~\eqref{assumptionR}--\eqref{assumptionR}, which both depend only on the index and are symmetric with respect to $\rho$ and $\om$. The bound \eqref{eq:cubic} can be obtained analogously. 
\end{proof}


\begin{lemma} Let $\alpha\in (1,2)$ be given. Assume that \eqref{assumptionR} and \eqref{assumptionW} hold for  $N\geq3$ and 
 some large enough $C>1$ satisfying 
 \be\label{largeC}
 C >\frac{L_0}{c_1 \rho_0},
 \ee
 where $c_1$ and $L_0=L_0(\frac32)$ are universal constants in \eqref{12n} and Lemma \ref{lem:comb}. Then there exists a constant $D=D(M, \ga)>0$ such that 
\begin{align}
|P_l  |& \le
\begin{cases}
 D 
& \ \text{ if } \ l=1, \\
 D \left( \frac{C^{l-\alpha}}{l^3}+\frac{C^{l-2}}{l^2} \right) & \ \text{ if } \ 2\le l \le N-1,
\end{cases} \label{faaRl} 
\end{align} 
where we recall~\eqref{eq:PN}.
\end{lemma}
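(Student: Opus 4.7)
The case $l=1$ is immediate: by~\eqref{eq:PN}, $P_1 = (\gamma-1)\rho_0^{\gamma-2}\rho_1$, and since $|\rho_0|,|\rho_0^{-1}|,|\rho_1| \leq M$ by~\eqref{E:MBOUND}, we conclude $|P_1| \leq (\gamma-1)M^\gamma$. So from now on I fix $l\geq 2$ and plan to reorganise the Fa\`a di Bruno sum in~\eqref{eq:PN} into the ``ordered composition'' form
\[
P_l = \rho_0^{\gamma-1}\sum_{m=1}^l \binom{\gamma-1}{m}\rho_0^{-m}\sum_{\substack{j_1+\cdots+j_m=l\\ j_i\geq 1}}\rho_{j_1}\cdots\rho_{j_m},
\]
which follows from the standard identity $\sum_{\pi(l,m)} \frac{m!}{\lambda_1!\cdots\lambda_l!}\prod_j \rho_j^{\lambda_j} = \sum_{j_1+\cdots+j_m=l}\rho_{j_1}\cdots\rho_{j_m}$, and to estimate the two regimes $m=1$ and $m\geq 2$ separately.

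The $m=1$ contribution is exactly $(\gamma-1)\rho_0^{\gamma-2}\rho_l$, which by~\eqref{assumptionR} is bounded by $D_1\,C^{l-\alpha}/l^3$. For $m\geq 2$ the key technical step is to establish the uniform ``one-size-fits-all'' bound
\[
|\rho_j| \leq K\,\frac{C^{j-1}}{j^2}, \qquad j\geq 1,
\]
with $K:=\max(M,1)$: for $j=1$ this is just $|\rho_1|\le M$, and for $j\geq 2$ the inductive bound $C^{j-\alpha}/j^3$ implies it since $C^{1-\alpha}/j \leq 1 \leq K$ whenever $C\geq 1$ and $\alpha>1$. Plugging this into each ordered composition and summing with repeated applications of the convolution estimate~\eqref{2.50} (induction on $m$, base case $m=2$) yields
\[
\sum_{\substack{j_1+\cdots+j_m=l\\ j_i\geq 1}}\prod_{i=1}^m \frac{1}{j_i^2} \leq \frac{c^{\,m-1}}{l^2}.
\]
Combining this with the elementary bound $|\binom{\gamma-1}{m}| \leq C_\gamma\,m^{-\gamma}$ (valid for $m\geq 1$ since $\gamma-1\in(0,\tfrac13)$), the $m$-th slice satisfies
\[
|P_l^{(m)}| \leq C_\gamma\,\rho_0^{\gamma-1}\cdot \frac{(Kc/\rho_0)^m}{c\, m^\gamma}\cdot \frac{C^{l-m}}{l^2}.
\]
Factoring $C^{l-2}$ out of the $m=2$ term leaves a series in the variable $Kc/(\rho_0 C)$, which is strictly less than one precisely when $C$ exceeds the threshold in~\eqref{largeC} (and this is where the assumption on $C$, together with the constants $c_1$ and $L_0$ inherited from Lemma~\ref{lem:comb}, is used). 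Summing this geometric tail gives $\sum_{m\geq 2}|P_l^{(m)}| \leq D_2\,C^{l-2}/l^2$, completing the bound.

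\emph{Main obstacle.} The principal difficulty is to identify the ``right'' uniform estimate for $|\rho_j|$ valid for all $j\geq 1$ simultaneously. The optimal bound $C^{j-\alpha}/j^3$ is ideal for $j\geq 2$ but fails at $j=1$; conversely, using $|\rho_j|\leq M$ everywhere wastes all the $C$-decay. The compromise $|\rho_j|\leq K C^{j-1}/j^2$ is what balances these two constraints, and the resulting trade-off --- one power of $C$ gained (going from $C^{l-\alpha}$ to $C^{l-2}$, noting $\alpha<2$) at the cost of one power of $l$ lost (from $l^{-3}$ to $l^{-2}$) --- is exactly encoded in the two terms of the claim. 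The remainder of the argument is then a careful application of the convolution estimates~\eqref{2.50}--\eqref{2.53} and the absolute convergence of the binomial series $\sum_m\binom{\gamma-1}{m}x^m$ for $|x|<1$.
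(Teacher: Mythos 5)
Your proof is correct, and it takes a genuinely different route from the paper's. The paper works directly in the partition-index representation of the Fa\`a di Bruno formula: it writes the product $\prod_n \rho_n^{\lambda_n}$ in terms of $C^{l-m}$, the binomial factors $\binom{1/2}{n}^{\lambda_n}$, and the residual product $\prod_n (1/n^{\lambda_n})^{3/2}$, then invokes the combinatorial identity of Lemma~\ref{formula1} to collapse the sum over partitions and Lemma~\ref{lem:comb} to bound the residual product by $c_5/l^{3/2}$. You instead switch to the ordered-composition representation (this reorganisation is valid), replace both regimes $j=1$ and $j\geq 2$ by the single estimate $|\rho_j|\leq K\,C^{j-1}/j^2$, and then iterate the binary convolution bound~\eqref{2.50} to get $\sum_{j_1+\cdots+j_m=l}\prod_i j_i^{-2}\leq c^{m-1}/l^2$. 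Your inductive step $S_m(l)\leq c^{m-2}\sum_{j_m}j_m^{-2}(l-j_m)^{-2}\leq c^{m-1}/l^2$ is correct, your unified bound on $|\rho_j|$ does hold with $K=\max(M,1)$ for all $j\geq 1$, and $|\binom{\gamma-1}{m}|\lesssim_\gamma m^{-\gamma}$ follows from the $\Gamma$-function ratio asymptotics. The effect is to entirely bypass Lemma~\ref{lem:comb} and the specific $\binom{1/2}{n}$ machinery in favour of the tail of the binomial series; this is a cleaner route and makes the origin of the $C^{l-2}/l^2$ term quite transparent. The one imprecision worth flagging: the geometric ratio in your argument is $Kc/(\rho_0 C)$, so your proof requires $C>Kc/\rho_0$, which is \emph{not} literally the threshold~\eqref{largeC}, since $L_0$ and $c_1$ are constants tied specifically to the $\binom{1/2}{n}$ bounds you do not use. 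This does not affect the substance --- both thresholds are of the form ``universal constant $\times\,\rho_0^{-1}$'', and the lemma is applied downstream with $C$ chosen freely large --- but the phrase ``precisely when $C$ exceeds the threshold in~\eqref{largeC}'' should be weakened to ``provided $C$ is large enough, with a threshold of the same type as~\eqref{largeC}.''
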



\begin{proof} The bound of $P_1$ immediately follows by recalling $P_1= (\ga-1)\rho_0^{\gamma-2}\rho_1$. For $P_2$, observe that 
\[
P_2= \rho_0^{\gamma-1} \left[ \frac{1}{\rho_0} \frac{(\ga-1)}{1!} \rho_2 + \frac{1}{\rho_0^2} \frac{(\ga-1)(\ga-2)}{2!} \rho_1^2  \right]
\]
from which we deduce 
\[
|P_2|\le (\ga-1) \left( M^{2-\ga} \frac{C^{2-\alpha}}{2^3} + \frac{(2-\ga) } {2}M^{5-\ga}  \right)\le 2(\ga-1)M^{5-\ga}\left(  \frac{C^{2-\alpha}}{2^3} +  \frac{1}{2^2} \right).
\]
Now let $l \ge 3$ and split $P_l$ into two parts, $m=1$ and $m\ge 2$:  
\begin{equation}\label{eq:Pell}
P_l = \rho_0^{\ga-1} \frac{1}{\rho_0} \frac{(\ga-1)}{1!} \rho_l +  \rho_0^{\ga -1} \sum_{m=2}^l  \frac{1}{\rho_0^m}\sum_{\pi(l,m)}  \frac{(\ga -1)_m}{\lambda_1 ! \dots \lambda_l !}  \prod_{j=1}^l {\rho_j}^{\lambda_j} =: P_{l,1} + P_{l,2},
\end{equation}
where we note  $\pi(l, 1)=\{(0,\dots,0,1) \}$. By  \eqref{assumptionR}, it is clear that 
\begin{equation}
|P_{l,1}|\le D \frac{C^{l-\alpha}}{l^3}
\end{equation}
for some constant $D>0$ depending only on $M$ and $\gamma$. Next we claim that there exists $D>0$ such that 
\begin{equation}\label{eq:Pell2}
|P_{l,2}| \le D \frac{C^{l-2}}{l^2}.
\end{equation}
To prove the claim, using  \eqref{assumptionR} and  Lemma  \ref{formula1}, we first observe 
\[
\begin{split}
\left| \prod_{n=1}^l \rho_n^{\lambda_n} \right| &\le \left(\tfrac{1}{1^3}\right)^{\lambda_1} \left(\tfrac{C^{2-\alpha}}{2^3}\right)^{\lambda_2}  \dots 
\left(\tfrac{C^{l-\alpha}}{l^3}\right)^{\lambda_l} \\
&= C^{(\alpha-1)\lambda_1 + \sum_{i=1}^l(i\lambda_i -\alpha \lambda_i)}\left[ \prod_{n=1}^l \left( \frac{1}{n^{\frac32}}\right)^{\lambda_n} \right] \left[\prod_{n=1}^l \left( \frac{1}{n^{\lambda_n}}\right)^\frac32 \right] \\
&\leq C^{l - m} c_1^{-m}\left[ \prod_{n=1}^l \left( (-1)^{n-1} \binom{\frac12}{n}\right)^{\lambda_n} \right]
\left[\prod_{n=1}^l \left( \frac{1}{n^{\lambda_n}}\right)^\frac32\right],
\end{split}
\]
where we have used $(\alpha-1)\lambda_1\leq (\alpha-1)m$ in the third line since $\alpha>1$ and $ \lambda_1\leq m$.  Hence, using $|(\ga-1)_m| \le (\ga-1) (m-1)! $ for $1<\ga<\frac43$, we have  
\be\label{3.176}
\begin{split}
&\left|   \frac{(\ga-1)_m}{\lambda_1 ! \dots \lambda_l !}  \prod_{n=1}^l \rho_n^{\lambda_n} \right| \\
&\quad\leq (\ga-1)C^{l-m} c_1^{- m } \frac{1}{m} (-1)^{l}   \frac{(-1)^m m!}{\lambda_1 ! \dots \lambda_l !} \binom{\frac12}{1}^{\lambda_1}\dots 
\binom{\frac12}{l}^{\lambda_l}  \left[\prod_{n=1}^l \left( \frac{1}{n^{\lambda_n}}\right)^\frac32\right].
\end{split}
\ee  
Now recalling $P_{l,2}$ from \eqref{eq:Pell} and using Lemma \ref{formula1} and Lemma \ref{lem:comb} with $p=\frac32$, we have 
\be
\begin{split}
&|P_{l,2} | =\left|  \rho_0^{\ga -1} \sum_{m=2}^l  \frac{1}{\rho_0^m}\sum_{\pi(l,m)}  \frac{(\ga -1)_m}{\lambda_1 ! \dots \lambda_l !}  \prod_{j=1}^l {\rho_j}^{\lambda_j}  \right| \\
&\leq (\ga -1) \rho_0^{\ga -1} \frac{C^l (-1)^l }{2(c_1C\rho_0)^2} \sum_{m=2}^l \sum_{\pi(l,m)} \left[ \frac{1}{(c_1C\rho_0)^{m-2}} \prod_{n=1}^l \left( \frac{1}{n^{\lambda_n}}\right)^\frac32\right] \frac{(-1)^m m!}{\lambda_1 ! \dots \lambda_l !} \binom{\frac12}{1}^{\lambda_1}\dots 
\binom{\frac12}{l}^{\lambda_l} \\
&\leq  (\ga -1) \rho_0^{\ga -1} \frac{C^l }{2(c_1C\rho_0)^2}\frac{c_5}{l^\frac32} (-1)^l 2 (l+1) \binom{\frac12}{l+1} \\
&\leq  (\ga -1) \rho_0^{\ga-3} \tfrac{c_2c_5}{c_1^2}\tfrac{C^{l-2}}{l^\frac32(l+1)^\frac12},
\end{split}
\ee
where $C$ is large enough so that \eqref{largeC} holds. 
This proves \eqref{eq:Pell2} and \eqref{faaRl}. 
\end{proof}


We are now ready to estimate the source terms $\mathcal F_N$ and $\mathcal G_N$.

\begin{lemma}\label{lem:FGbound} Let $\alpha\in (1,2)$ be given. Then there exists a constant $C_\ast=C_\ast(y_\ast))>0$ such that if $C>C_\ast$ and for any $N\geq3$, the following assumptions hold  
\begin{align}
|\rho_m| \leq \frac{C^{m-\alpha}}{m^3}, \quad 2\leq m\leq N-1, \label{assumptionRm}\\
|\om_m| \leq \frac{C^{m-\alpha}}{m^3}, \quad 2\leq m\leq N-1, \label{assumptionWm}
\end{align}then we have 
\begin{align}
|\mathcal F_N| &\leq \beta  \frac{C^{N-\alpha}}{N^2}\left[ \frac{1}{C^{\alpha-1}}+\frac{1}{C^{2-\alpha}}+  \frac{1}{CN}  \right], \label{FNbound}\\
|\mathcal G_N| &\leq \beta   \frac{C^{N-\alpha}}{N^2}\left[ \frac{1}{C^{\alpha-1}}+\frac{1}{C^{2-\alpha}}+ \frac{1}{CN}  \right], \label{GNbound}
\end{align}
for some constant $\beta=\beta(y_\ast, \gamma))>0$. 
\end{lemma}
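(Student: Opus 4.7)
The plan is to unwind the explicit definitions of $\mathcal F_N$ and $\mathcal G_N$ coming from~\eqref{E:FNDEF}--\eqref{E:GNDEF} (equivalently, the expressions $\mathcal F_N^{II}-\mathcal F_N^I$ and $\mathcal G_N^{II}-\mathcal G_N^I$ arising in the proof of Lemma~\ref{Lem:N}), group the terms by their structural type, and bound each type using the convolution estimates~\eqref{eq:quadratic}--\eqref{eq:cubic} together with the Fa\`a di Bruno bound~\eqref{faaRl}. Since $\rho_N$ and $\om_N$ were explicitly separated out when writing the recursion $\cala_N(\rho_N,\om_N)^\top=(\mathcal F_N,\mathcal G_N)^\top$, every surviving summand in $\mathcal F_N,\mathcal G_N$ is a product of coefficients $\rho_j,\om_j$ with indices $\le N-1$, possibly multiplied by a $P_j$ factor with $j\le N-1$ or by a restricted partition sum with $\lambda_N=0$. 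The target inequalities~\eqref{FNbound}--\eqref{GNbound} isolate three dominant scales: $C^{N-\alpha}/N^{3}$, $C^{N-2}/N^{2}$ (which after factoring out $C^{N-\alpha}/N^2$ becomes the $1/C^{2-\alpha}$ summand), and the boundary terms at $N-1$ (which supply the $1/C^{\alpha-1}$ summand).

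First I would handle the ``direct'' terms of the form $y_\ast\rho_{N-1}$, $y_\ast\om_{N-1}$, $(\rho\om)_{N-1}$, $(\rho^2\om)_{N-1}$, and so on. By~\eqref{assumptionRm}--\eqref{assumptionWm} and~\eqref{eq:quadratic}--\eqref{eq:cubic} these are each bounded by $D\,C^{N-1-\alpha}/(N-1)^3\lesssim C^{N-\alpha}/(CN^3)$, which is absorbed into the $1/(CN)$ summand after pulling out $C^{N-\alpha}/N^2$. Second I would treat the convolution sums over indices $k+j=N$ with $j,k\ne N$ (and $j,k,l\ne N$ in the cubic case): these are precisely $(\om^2)_N$, $(\rho\om)_N$, $(\rho^2\om)_N$, $(\rho\om^2)_N$ and $(\om^3)_N$ after removing the endpoint contributions, so~\eqref{eq:quadratic}--\eqref{eq:cubic} give bounds $\lesssim C^{N-\alpha}/N^3$, again absorbed into the $1/(CN)$ summand. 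The factors $\frac{(-1)^k}{y_\ast^k}$ appearing in $\mathcal G_N$ come from expanding $1/y$ around $y_\ast$; provided $C>C_\ast(y_\ast)$ with $C_\ast\ge 1/y_\ast$, these geometric factors can be absorbed into $C^{-k}$ and cause no loss.

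The delicate piece is the set of terms involving $P_j$. There are two sources: the factor $(k+1)\rho_{k+1}$ or $(k+1)\om_{k+1}$ multiplied by $\ga P_j$ on the left-hand side of~\eqref{eq:coefficients1}--\eqref{eq:coefficients2}, and the explicit leftover Fa\`a di Bruno block
\[
\ga\rho_1\!\!\!\!\!\sum_{\substack{(\lambda_1,\ldots,\lambda_N)\in M_N\\\lambda_N=0}}\!\!\!\!\!\frac{(\ga-1)\cdots(\ga-(\lambda_1+\cdots+\lambda_N))\rho_0^{\ga-1-(\lambda_1+\cdots+\lambda_N)}}{\lambda_1!\cdots\lambda_N!}\prod_{j=1}^N\rho_j^{\lambda_j},
\]
together with its $\om_1$-analogue appearing in $\mathcal F_N^I$ and $\mathcal G_N^I$ (and a further analogue in $\mathcal G_N^{II}$). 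For the first source, using~\eqref{faaRl} and the convolution inequalities~\eqref{2.51}--\eqref{2.54}, the sum $\sum_{k+j=N,\,j\ne0,1,N}(k+1)\rho_{k+1}P_j$ splits into a piece of order $C^{N-\alpha}/N^3$ from the $C^{j-\alpha}/j^3$ part of $P_j$ and a piece of order $C^{N-2}/N^2$ from the $C^{j-2}/j^2$ part of $P_j$; the latter is precisely what produces the $1/C^{2-\alpha}$ summand. For the second source, the restriction $\lambda_N=0$ forces the effective length of the partition to be at most $N-1$, so the identical computation as in the proof of~\eqref{eq:Pell2}, using Lemma~\ref{formula1} to replace $\frac{(-1)^m m!}{\lambda_1!\cdots\lambda_N!}\prod_n\binom{1/2}{n}^{\lambda_n}$ by $2(N+1)\binom{1/2}{N+1}$ modulo sign, combined with Lemma~\ref{lem:comb} (taking $p=3/2$ and $L=c_1C\rho_0\ge L_0$, which is legitimate for $C>C_\ast$ large depending on $y_\ast$), yields a bound of the same order $C^{N-2}/N^2$. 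The main obstacle is therefore precisely this combinatorial step: keeping the product $\frac{1}{L^{m-1}}\prod_n n^{-3\lambda_n/2}$ uniformly of size $\lesssim l^{-3/2}$ across the full range $1\le m\le l$ is what forces the lower bound on $C$, and after summing against the $(l+1)\binom{1/2}{l+1}\sim l^{-1/2}$ factor from Lemma~\ref{formula1} one recovers the decay $N^{-2}$ with prefactor $C^{N-2}$.

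Finally, I would collect all contributions: the direct $N-1$ boundary terms give $\beta C^{N-\alpha}/(N^2 C^{\alpha-1})$, the pure convolution terms give $\beta C^{N-\alpha}/(N^3 C)$ absorbed into $\beta C^{N-\alpha}/(N^2\cdot CN)$, and the $P_j$-driven terms give $\beta C^{N-\alpha}/(N^2 C^{2-\alpha})$. Summing these yields~\eqref{FNbound}. The estimate~\eqref{GNbound} follows by the exact same scheme applied to $\mathcal G_N$, the only additional book-keeping being the triple sums $\sum_{k+j+l=N}\om_l\frac{(-1)^k}{y_\ast^k}(\cdots)$, which after the argument above reduce to convolutions handled by~\eqref{2.52}--\eqref{2.53}.
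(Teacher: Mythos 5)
Your plan mirrors the paper's proof: decompose $\mathcal F_N$ and $\mathcal G_N$ into boundary terms, convolution sums, $P_j$-weighted sums, and the leftover Fa\`a di Bruno block with $\lambda_N=0$, then bound each piece via the convolution estimates \eqref{2.51}--\eqref{2.54}, the $P_j$ bound \eqref{faaRl}, and Lemmas~\ref{formula1}--\ref{lem:comb}, and finally handle the $1/y_\ast^k$ factors (the paper uses $1/y_\ast^k\lesssim C^{k-2}/k^2$). One note: in your closing summary you attribute the $1/C^{\alpha-1}$ scale to the $N-1$ boundary terms and the $C^{j-\alpha}/j^3$ part of $\sum(k+1)\rho_{k+1}P_j$ to $C^{N-\alpha}/N^3$; in fact the boundary terms give $1/(CN)$ (as you correctly say earlier), while the convolution $\sum(k+1)\rho_{k+1}P_j$ with the $C^{j-\alpha}/j^3$ piece gives $C^{N+1-2\alpha}/N^2$, i.e.\ the $1/C^{\alpha-1}$ contribution — a bookkeeping slip, but the final sum of the three scales is unchanged.
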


\begin{proof} We start with \eqref{FNbound}. Recall $\mathcal F_N=\mathcal{F}_N^{II} - \mathcal{F}_N^I$ where 
\begin{align}
\mathcal{F}_N^I=&-\rho_1 y_*^2\sum_{\substack{j+k=N,\\j,k\neq N}}\om_j\om_k-\rho_1\big(2y_*(\om^2)_{N-1}-(\om^2)_{N-2}\big)\label{eq:FN1}\\
&+\sum_{\substack{k+j=N\\j\neq0,1,N}}(k+1)\rho_{k+1}\big(\ga P_j-y_*^2(\om^2)_j-2y_*(\om^2)_{j-1}-(\om^2)_{j-2}\big)\label{eq:FN2}\\
&+\ga\rho_1 \rho_0^{\ga-1} \sum_{m=2}^N  \frac{1}{\rho_0^m}\sum_{\pi(N,m)}  \frac{(\ga -1)_m}{\lambda_1 ! \dots \lambda_N !}  \prod_{j=1}^N {\rho_j}^{\lambda_j} \label{eq:FN3}
\end{align}
and 
\begin{align}
&\mathcal{F}_N^{II}=\,(\ga-1)(2-\ga)\rho_{N-1}+(\ga-1)\Big(y_*\sum_{\substack{k+j=N\\k\neq0,N}}\rho_k\om_j+(\rho\om)_{N-1}\Big)\label{eq:FN4}\\
&-\frac{4\pi}{4-3\ga}\Big(y_*\sum_{\substack{k+j+l=N\\k,j,l\neq N}}\rho_k\rho_j\om_l+(\rho^2\om)_{N-1}\Big)+2\Big(y_*\sum_{\substack{k+j+l=N\\ k,j,l\neq N}}\rho_k\om_j\om_l+(\rho\om^2)_{N-1}\Big).\label{eq:FN5}
\end{align}
For the first term of \eqref{eq:FN1}, we use \eqref{E:MBOUND}, \eqref{assumptionWm}, and \eqref{2.51} 
\be\label{5.208}
\begin{split}
\Big| \rho_1 y_*^2\sum_{\substack{j+k=N,\\j,k\neq N}}\om_j\om_k\Big| &= \Big|\rho_1 y_*^2\big( 2\om_1\om_{N-1} + \sum_{k=2}^{N-2} \om_{N-k}\om_k \big) \Big|\\
&\lesssim \frac{C^{N-1-\alpha}}{(N-1)^3} + \sum_{k=2}^{N-2} \frac{C^{N-2\alpha}}{ (N-k)^3 k^3}\lesssim \frac{C^{N-1-\alpha}}{N^3}.
\end{split}
\ee
For the last two terms of  \eqref{eq:FN1}, we have from \eqref{eq:quadratic} 
\be
\big| \rho_1\big(2y_*(\om^2)_{N-1}-(\om^2)_{N-2}\big) \big|\lesssim \frac{C^{N-1-\alpha}}{(N-1)^3}
\ee
The first term of \eqref{eq:FN2} can be estimated as follows. By \eqref{assumptionRm}, \eqref{faaRl},  \eqref{2.54} and  \eqref{2.50},  
\be
\begin{split}
\Big| \sum_{\substack{k+j=N\\j\neq0,1,N}}(k+1)\rho_{k+1} \ga P_j\Big|& \lesssim \sum_{\substack{k+j=N\\j\neq0,1,N}}  \frac{C^{k+1-\alpha}}{(k+1)^2} \Big(\frac{C^{j-\alpha}}{j^3} + \frac{C^{j-2}}{j^2}  \Big)\\
&\lesssim \frac{C^{N+1-2\alpha}}{N^2} + \frac{C^{N-1-\alpha}}{N^2}\lesssim  \frac{C^{N+1-2\alpha}}{N^2},
\end{split}
\ee
where we have used $\alpha<2$ at the last step. The rest of  \eqref{eq:FN2} can be bounded by $\frac{C^{N+1-2\alpha}}{N^2}$ similarly by using \eqref{eq:quadratic} in place of  \eqref{faaRl}. 

For \eqref{eq:FN3}, we first note that $\lambda_N=0$ and hence it does not depend on $\rho_N$. The estimation is identical to the estimation of $P_{N,2}$ in \eqref{eq:Pell}. Therefore, as in \eqref{eq:Pell2} we have 
\be
\Big| \ga\rho_1 \rho_0^{\ga-1} \sum_{m=2}^N  \frac{1}{\rho_0^m}\sum_{\pi(N,m)}  \frac{(\ga -1)_m}{\lambda_1 ! \dots \lambda_N !}  \prod_{j=1}^N {\rho_j}^{\lambda_j}  \Big|\lesssim \frac{C^{N-2}}{N^2}.
\ee
For \eqref{eq:FN4}, by  \eqref{assumptionRm},  \eqref{eq:quadratic} and the same argument as in   \eqref{5.208}, we see that 
\be
\Big|(\ga-1)(2-\ga)\rho_{N-1}+(\ga-1)\Big(y_*\sum_{\substack{k+j=N\\k\neq0,N}}\rho_k\om_j+(\rho\om)_{N-1}\Big)\Big| \lesssim \frac{C^{N-1-\alpha}}{N^3}.
\ee
Next we claim 
\be
| \eqref{eq:FN5} |\lesssim  \frac{C^{N-1-\alpha}}{N^3} .
 \ee
It suffices to verify the bound for the first term of \eqref{eq:FN5}, while \eqref{eq:cubic} gives the desired bound for the second and fourth terms. We rewrite the sum as 
\[
\begin{split}
\sum_{\substack{k+j+l =N\\k,j,l \neq N}}\rho_k\rho_j\om_l &= \om_0 \sum_{\substack{k+j =N\\k,j \neq N}}\rho_k\rho_j + \om_1 \sum_{\substack{k+j =N-1}}\rho_k\rho_j + \sum_{l=2}^{N-1} \om_l \sum_{k+j=N-l} \rho_k\rho_j \\
&=\om_0 \Big(2\rho_1\rho_{N-1} + \sum_{j=2}^{N-2}\rho_{N-j}\rho_j  \Big) + 
\om_1 \Big(2\rho_0\rho_{N-1}+2\rho_1\rho_{N-2} + \sum_{j=2}^{N-3}\rho_{N-j-1}\rho_j  \Big)\\
&\quad+  \sum_{l=2}^{N-2} \om_l  \Big( 2\rho_0\rho_{N-l}+2\rho_1\rho_{N-l-1}+ \sum_{j=2}^{N-l-2} \rho_{N-j-l}\rho_j \Big) + 2\rho_0\rho_1 \om_{N-1}.
\end{split}
\]
Using the induction assumptions and \eqref{2.51}, \eqref{2.52}, we have 
\begin{align}
&\Big| \frac{4\pi}{4-3\ga} y_*\sum_{\substack{k+j+l =N\\k,j,l \neq N}}\rho_k\rho_j\om_l \Big|\notag \\
&\lesssim \frac{C^{N-1-\alpha}}{(N-1)^3} + \sum_{j=2}^{N-2} \frac{C^{N-j-\alpha}}{(N-j)^3}\frac{C^{j-\alpha}}{j^3} +\frac{C^{N-1-\alpha}}{(N-1)^3} +\frac{C^{N-2-\alpha}}{(N-2)^3}+\sum_{j=2}^{N-3} \frac{C^{N-j-1-\alpha}}{(N-j-1)^3}\frac{C^{j-\alpha}}{j^3}\notag  \\
&+ \sum_{l=2}^{N-2}\frac{C^{l-\alpha}}{l^3} \Big(\frac{C^{N-l-\alpha}}{(N-l)^3}+\frac{C^{N-l-1-\alpha}}{(N-l-1)^3} +  \sum_{j=2}^{N-l-2} \frac{C^{N-j-l-\alpha}}{(N-j-l)^3}\frac{C^{j-\alpha}}{j^3}  \Big) + \frac{C^{N-1-\alpha}}{(N-1)^3} \notag \\
&\lesssim\frac{C^{N-\alpha-1}}{N^3}
\end{align}
which shows the desired bound. Combining all the bounds above, we obtain \eqref{FNbound}. 

Next, we recall  $\mathcal G_N=\mathcal{G}_N^{II} - \mathcal{G}_N^I $, where 
\begin{align*}
\mathcal{G}_N^I=&-\om_1\big(2y_*(\om^2)_{N-1}-(\om^2)_{N-2}\big)-\om_1 y_*^2\sum_{\substack{j+k=N,\\j,k\neq N}}\om_j\om_k\\
&+\sum_{\substack{k+j=N\\j\neq0,1,N}}(k+1)\om_{k+1}\big(\ga P_j-y_*^2(\om^2)_j-2y_*(\om^2)_{j-1}-(\om^2)_{j-2}\big)\\
&+\ga\om_1\hspace{-4mm}\sum_{\substack{(m_1,\ldots,m_N)\in M_N\\m_N=0}}\hspace{-4mm}\frac{(\ga-1)\cdots(\ga-(m_1+\cdots+m_N))\rho_0^{\ga-1-(m_1+\cdots+m_N)}}{m_1!\cdots m_N!}\prod_{j=1}^N\rho_j^{m_j},
\end{align*}
and 
\beqas
\mathcal{G}_N^{II}=&\,\widetilde{\mathcal{G}}_N^{II}-\frac{4-3\ga-3\om_0}{y_*}y_*^2\sum_{\substack{j+k=N\\j,k\neq N}}\om_j\om_k\\
&+\frac{4-3\ga-3\om_0}{y_*}\ga\hspace{-3.5mm}\sum_{\substack{(m_1,\ldots,m_N)\in M_N\\m_N=0}}\hspace{-3.5mm}\frac{(\ga-1)\cdots(\ga-(m_1+\cdots+m_N))\rho_0^{\ga-1-(m_1+\cdots+m_N)}}{m_1!\cdots m_N!}\prod_{j=1}^N\rho_j^{m_j},
\eeqas
and 
\beqa\label{eq:GN2tilde}
&\widetilde{\mathcal{G}}_N^{II}=\frac{4-3\ga-3\om_0}{y_*}\big(-2y_*(\om^2)_{N-1}-(\om^2)_{N-2}\big)\\
&+\frac{4-3\ga}{y_*}\sum_{\substack{k+j=N\\j\neq N}}\frac{(-1)^k}{y_*^k}\big(\ga P_j-y_*^2(\om^2)_j-2y_*(\om^2)_{j-1}-(\om^2)_{j-2}\big)\\
&-\frac{3}{y_*}\sum_{\substack{k+j+l=N\\l,j\neq N}}\om_l\frac{(-1)^k}{y_*^k}\big(\ga P_j-y_*^2(\om^2)_j-2y_*(\om^2)_{j-1}-(\om^2)_{j-2}\big)-(\ga-1)(2-\ga)\om_{N-1}\\
&-(\ga-1)\Big(y_*\sum_{\substack{k+j=N\\k\neq0,N}}\om_k\om_j+(\om^2)_{N-1}\Big)+\frac{4\pi}{4-3\ga}\Big(y_*\sum_{\substack{k+j+l=N\\k,j,l\neq N}}(\rho_k\om_j\om_l)+(\rho\om^2)_{N-1}\Big)\\
&-2\Big(y_*\sum_{\substack{k+j+l=N\\k,j,l\neq N}}(\om_k\om_j\om_l)+(\om^3)_{N-1}\Big).
\eeqa
Note that the structure of $\mathcal G_N^I$ and $\mathcal G_N^{II}$ is similar structure to the structure of $\mathcal F_N^I$ and $\mathcal F_N^{II}$ except for the second and third lines of \eqref{eq:GN2tilde}.  Hence we focus on the second and third lines of \eqref{eq:GN2tilde}. 

We may take $C>0$ sufficiently large if necessary to ensure 
\be\label{boundy}
\frac{1}{y_\ast^k} \lesssim  \frac{C^{k-2}}{k^2} \text{ for all } k\ge 2.
\ee
 Now for the first term in the second line of \eqref{eq:GN2tilde}, we split indices into $j=0,1$ and $j\ge 2$ and  use  \eqref{faaRl} and \eqref{boundy} to deduce 
\[
\begin{split}
\Big| \sum_{\substack{k+j=N\\j\neq N}}\frac{(-1)^k}{y_*^k} \ga P_j \Big|&=   
\Big|  \frac{(-1)^N}{y_*^N} \ga P_0 +\frac{(-1)^{N-1}}{y_*^{N-1}} \ga P_1  + \sum_{j=2}^{N-1}\frac{(-1)^{N-j}}{y_*^{N-j}} \ga P_j  \Big| \\
&\lesssim \frac{1}{y_\ast^{N}} +  \frac{1}{y_\ast^{N-1}} + \sum_{j=2}^{N-1} \frac{1}{y_\ast^{N-j}} \left(\frac{C^{j-\alpha}}{j^3} + \frac{C^{j-2}}{j^2}\right)\\
&\lesssim \frac{C^{N-2}}{N^2} + \frac{C^{N-1-\alpha}}{(N-1)^3} + \frac{C^{N-3}}{(N-1)^2} .
\end{split}
\]
This yields the desired bound. The remaining terms in the second line can be estimated in the same way by using \eqref{eq:quadratic} in place of \eqref{faaRl}. 

We may proceed analogously for the third line and use \eqref{boundy}. We present the details for the first term in the third line of  \eqref{eq:GN2tilde}. First, we split indices as
\[
\begin{split}
&\Big|\sum_{\substack{k+j+l=N\\l, j\neq N}}\om_l\frac{(-1)^k}{y_*^k} \ga P_j\Big| \\
&\le  
\Big| \ga P_0 \sum_{\substack{k+l=N\\l \neq N}}\om_l\frac{(-1)^k}{y_*^k} \Big| +\Big|  \ga P_1 \sum_{k+l=N-1} \om_l\frac{(-1)^k}{y_*^k} \Big| +\Big|  \sum_{j=2}^{N-1} \ga P_j \sum_{k+l=N-j} \om_l\frac{(-1)^k}{y_*^k}\Big|\\
&=: S_1 +S_2 +S_3.
\end{split}
\]
For $S_1$, using \eqref{assumptionWm}, \eqref{boundy}, and \eqref{2.54}, we have 
\[
\begin{split}
S_1& \lesssim |\om_{N-1}| +| \om_0|\frac{1}{y_\ast^N} + |\om_1|\frac{1}{y_*^{N-1}} +  \sum_{k=2}^{N-2} |\om_{N-k}|\frac{1}{y_*^{k}}\\
&\lesssim \frac{C^{N-1-\alpha}}{(N-1)^3} + \frac{C^{N-2}}{N^2} + \frac{C^{N-3}}{(N-1)^2} + \sum_{k=2}^{N-2} \frac{C^{N-k-\alpha}}{(N-k)^3}\frac{C^{k-2}}{k^2}\\
&\lesssim \frac{C^{N-1-\alpha}}{N^3} + \frac{C^{N-2}}{N^2}.
\end{split}
\]
The estimation of $S_2$ is entirely analogous, while for $S_3$ we split the indices further to deduce 
\[
\begin{split}
S_3&\le \Big|\ga P_{N-1} \Big( \om_1 + \om_0\frac{(-1)}{y_*}\Big) \Big| +\Big| \ga P_{N-2} \om_1\frac{(-1)}{y_*}  \Big|
+\Big|  \sum_{j=2}^{N-2} \ga P_j \sum_{k=2}^{N-j-2}\om_{N-j-k}\frac{(-1)^k}{y_*^k}\Big|\\
&+ \Big| \sum_{j=2}^{N-2} \ga P_j \Big(\om_0 \frac{(-1)^{N-j}}{y_*^{N-j}}+ \om_{N-j} \Big)\Big| +\Big| \sum_{j=2}^{N-3} \ga P_j \Big( \om_1 \frac{(-1)^{N-j-1}}{y_*^{N-j-1}} +\om_{N-j-1}\frac{(-1)}{y_*}  \Big) \Big|   \\
&\lesssim \frac{C^{N-1-\alpha}}{(N-1)^3} + \frac{C^{N-3}}{(N-1)^2} + \sum_{j=2}^{N-2}\left( \frac{C^{j-\alpha}}{j^3} + \frac{C^{j-2}}{j^2} \right)\sum_{k=2}^{N-j-2} \frac{C^{N-j-k-\alpha}}{(N-j-k)^3}\frac{C^{k-2}}{k^2}\\
&+ \sum_{j=2}^{N-2} \left( \frac{C^{j-\alpha}}{j^3} + \frac{C^{j-2}}{j^2} \right) \left(\frac{C^{N-j-2}}{(N-j)^2} +\frac{C^{N-j-\alpha}}{(N-j)^3}  \right)\\
&+ \sum_{j=2}^{N-3} \left( \frac{C^{j-\alpha}}{j^3} + \frac{C^{j-2}}{j^2} \right) \left(\frac{C^{N-j-3}}{(N-j-1)^2} +\frac{C^{N-j-1-\alpha}}{(N-j-1)^3}  \right)\\
&\lesssim \frac{C^{N-1-\alpha}}{N^3} + \frac{C^{N-3}}{N^2}.
\end{split}
\]
Other terms  in the third line of~\eqref{eq:GN2tilde} can be estimated in the same way. This finishes the proof of~\eqref{GNbound}. 
\end{proof}


\section{Interval arithmetic}\label{A:IA}


\renewcommand{\theequation}{\thesection.\arabic{equation}}

Interval arithmetic is a numerical technique that allows for the rigorous proof of inequalities and estimates through replacing real numbers by closed intervals with end-points representable as floating point numbers. A survey of some of the uses of interval arithmetic in PDE theory may be found in \cite{GS18}. For our purposes, we require only a very basic level of application of this method in order to estimate the signs of somewhat complicated polynomials in two variables over rectangular domains, and so we use the straightforward interval arithmetic packages available in the Julia computing language. 

{\footnotesize{\begin{lstlisting} 
using IntervalArithmetic
using IntervalOptimisation
\end{lstlisting}}}

In this section of the appendix, we give the proofs of Proposition \ref{prop:R1W1}, Lemma \ref{lemma:Qmintervalarithmetic} and inequalities \eqref{ineq:A2}--\eqref{ineq:dNdetA0}. In each proof, we will insert the relevant Julia commands and state the outputs at the relevant point in the proof.

Maximisation or minimisation at ech step is taken either over a fixed interval of $\ga$ or a vector v=(v[1],v[2])$=(\om+\ga,\ga)$. This ensures that the domain of v[1] is a fixed numerical interval (for example, the full range $\om\in[\frac{4-3\ga}{3},2-\ga]$ becomes v[1]$\in[\frac43,2]$). The two principal ranges over which we will work are then defined by
{\footnotesize\begin{lstlisting}
V=IntervalBox((4/3)..2,1..(4/3))
G=1..(4/3)
\end{lstlisting}}
\noindent When defining functions of $\om$ and $\ga$, we use the characters w and g respectively for $\om$ and $\ga$.


\subsection{Proofs of $s(\om_0)>0$ and Proposition~\ref{prop:R1W1}}\label{A:TOTHERIGHT}
Before verifying the claimed inequalities on $R_1$ stated in Proposition \ref{prop:R1W1}, we first complete the proof of Lemma \ref{L:BRANCHES} to show that $R_1$ and $R_2$ are well-defined functions of $\om_0$ and $\ga$, that is, that the square root of $s(\om_0)$ in the definitions \eqref{E:R1} and \eqref{E:R2} always yields a real number.
\begin{proof}[Proof of Lemma \ref{L:BRANCHES}, continued]
 Consider the definitions of $R_1$ and $R_2$ stated in \eqref{E:R1} and \eqref{E:R2}. The argument of the square root is $\om_0^3s(\om_0)$, and so to show that these are well-defined functions, it suffices to prove that $s(\om_0,\ga)>0$ for all $\om_0\in[\frac{4-3\ga}{3},2-\ga]$ where we now make explicit the dependence on $\ga$, so that
\beqas
s(\om_0,\ga)=&-4(4-3\ga)(\ga+1)(\ga-1)(2-\ga)+(57 - 114 \ga + 73 \ga^2 - 12 \ga^3)\om_0\\
&-8(14 - 15 \ga + 3 \ga^2)\om_0^2+8(5-3\ga)\om_0^3.\eeqas
We verify with interval arithmetic that $s(\om_0,\ga)>0$ for $\om_0\in[\frac{4-3\ga}{3},2-\ga]$, $\ga\in(1,\frac43)$ as follows:\\
First, we note that when $\ga=1$, $s(\om_0,1)=4\om_0(2\om_0-1)^2$, which is non-negative on the domain. Next, we differentiate with respect to $\ga$ to find
\beqas
\d_\ga s(\om_0,\ga)=&\,-8(6\ga^3-15\ga^2+5\ga+5)-(36\ga^2-146\ga+114)\om_0-8(6\ga-15)\om_0^2-24\om_0^3.
\eeqas
For $\om_0\in[1.42-\ga,2-\ga]$, we find that this is strictly positive by
{\footnotesize\begin{lstlisting}
sg(w,g)=-8*(5+5*g-15*g^2+6*g^3)-(114-146*g+36*g^2)*w-8*(-15+6*g)*w^2-24*w^3
Sg(v)=sg(v[1]-v[2],v[2])
V6=IntervalBox((1.42)..2,1..(4/3))
minimise(Sg,V6,tol=1e-3)
\end{lstlisting}}
\noindent which gives the minimum in $[1.06209, 1.26472]$, hence for $\om_0\geq 1.42-\ga$, $\ga\in[1,\frac{4}{3}]$, $s(\om_0,\ga)>0$. Next, for $\ga\in[1,1.1]$ and $\om_0\in[\frac{4}{3}-\ga,1.42-\ga]$, we check that $s(\om_0,\ga)>0$ by
{\footnotesize\begin{lstlisting}
s(w,g)=-4*(4-3*g)*(g+1)*(g-1)*(2-g)+(57-114*g+73*g^2-12*g^3)*w
          -8*(14-15*g+3*g^2)*w^2+8*(5-3*g)*w^3
S(v)=s(v[1]-v[2],v[2])
V7=IntervalBox((4/3)..(1.42),1..(1.1))
minimise(S,V7,tol=1e-4)
\end{lstlisting}}
\noindent which gives the minimum in $[0.0334093, 0.0431525]$. Finally, for $\ga\in[1.1,\frac43]$, we check first that
\beqas
s(\frac{4-3\ga}{3},\ga)=\frac{1}{27}(5-3\ga)^2(4-3\ga)>0
\eeqas
and then 
\beqas
s_{\om_0}(\om_0,\ga)=&(57 - 114 \ga + 73 \ga^2 - 12 \ga^3)-16(14 - 15 \ga + 3 \ga^2)\om_0+24(5-3\ga)\om_0^2\eeqas
is uniformly positive by
{\footnotesize\begin{lstlisting}
sw(w,g)=(57-114*g+73*g^2-12*g^3)-16*(14-15*g+3*g^2)*w+24*(5-3*g)*w^2
Sw(v)=sw(v[1]-v[2],v[2])
V8=IntervalBox((4/3)..2,(1.1)..(4/3))
minimise(Sw,V8,tol=1e-2)
\end{lstlisting}}
\noindent which puts the minimum in $[0.336312, 2.0698]$, and hence $s(\om_0,\ga)$ is strictly increasing with respect to $\om_0$.
\end{proof}


\begin{proof}[Proof of Proposition \ref{prop:R1W1}]
We will first show that for $\om_0\in[\frac{4-3\ga}{3},2-\ga]$, we have $R_1<-\frac{1}{2-\ga}$ for all $\ga\in(1,\frac43)$, while for $\om_0\geq\frac{4-3\ga}{3}$ and $\ga\geq\frac{10}{9}$, we have $R_1\leq-\frac{2\ga}{(2-\ga)(\ga+1)}$ with strict inequalities when either $\ga>\frac{10}{9}$ or $\om_0>\frac{4-3\ga}{3}$.

To check the claimed inequalities on $R_1$, we use the following method:\\
\textit{Step 1: We prove $R_1<-\frac{1}{2-\ga}$.}\\
 First, 
\beqas
R_1&+\frac{1}{2-\ga}=\frac{(2-\ga)(9\om_0^2-7\ga\om_0^2-8\om_0^3)+2(\ga+1)\om_0^3}{2(2-\ga)(\ga+1)\om_0^3}\\
& - \frac{2-\ga}{2(2-\ga)\om_0^{3}(\ga+1)}\Big(-4(4-3\ga)(\ga+1)(\ga-1)(2-\ga)\om_0^3+(57 - 114 \ga + 73 \ga^2 - 12 \ga^3)\om_0^4\\
&\hspace{4cm}-8(14 - 15 \ga + 3 \ga^2)\om_0^5+8(5-3\ga)\om_0^6\Big)^{\frac12}.
\eeqas
It is therefore sufficient to check the sign of the numerator. When the terms in the numerator on the first line are negative, as the contribution of the square root is negative, we are clearly done. We claim 
\beqa
&\Big((2-\ga)(9\om_0^2-7\ga\om_0^2-8\om_0^3)+2(\ga+1)\om_0^3\Big)^2\\
&-(2-\ga)^2\Big(-4(4-3\ga)(\ga+1)(\ga-1)(2-\ga)\om_0^3+(57 - 114 \ga + 73 \ga^2 - 12 \ga^3)\om_0^4\\
&\quad-8(14 - 15 \ga + 3 \ga^2)\om_0^5+8(5-3\ga)\om_0^6\Big)<0
\eeqa
for all $\om_0\in[\frac43-\ga,2-\ga]$, $\ga\in(1,\frac43)$. This implies the claimed inequality as, in the remaining case that the first terms are positive, this establishes that the contribution of the square root is strictly larger, and hence the difference is negative. To verify this claim, we first cancel a factor of $\om_0^3$ and consider instead
\beqas
r_1(\om_0):=&\,\om_0\Big((2-\ga)(9-7\ga-8\om_0)+2(\ga+1)\om_0\Big)^2\\
&-(2-\ga)^2\Big(-4(4-3\ga)(\ga+1)(\ga-1)(2-\ga)+(57 - 114 \ga + 73 \ga^2 - 12 \ga^3)\om_0\\
&\qquad-8(14 - 15 \ga + 3 \ga^2)\om_0^2+8(5-3\ga)\om_0^3\Big)\\
&=4(\ga^2-1)\Big((6\ga-9)\om_0^3+(6\ga^2-19\ga+14)\om_0^2+(3\ga^3-18\ga^2+36\ga-24)\om_0\\
&\qquad+(3\ga^4-22\ga^3+60\ga^2-72\ga+32)\Big).
\eeqas
Eliminating the strictly positive factor $4(\ga^2-1)$, we check that at $\om_0=\frac{4-3\ga}{3}$, the remainder satisfies
\beqas
\frac{r_1(\frac{4-3\ga}{3})}{4(\ga^2-1)}=&\,\Big((6\ga-9)\om_0^3+(6\ga^2-19\ga+14)\om_0^2+(3\ga^3-18\ga^2+36\ga-24)\om_0\\
&\qquad+(3\ga^4-22\ga^3+60\ga^2-72\ga+32)\Big)\bigg|_{\om_0=\frac{4-3\ga}{3}}\\
=&\,-\frac{2}{9}(4-3\ga)^2(\ga-1)<0.
\eeqas
We then take a derivative with respect to $\om_0$ to arrive at
\beqas
\d_{\om_0}\Big(\frac{r_1(\om_0)}{4(\ga^2-1)}\Big)=3(6\ga-9)\om_0^2+2(6\ga^2-19\ga+14)\om_0+(3\ga^3-18\ga^2+36\ga-24)<0
\eeqas
for all $\om_0\in[\frac{4-3\ga}{3},2-\ga]$ and $\ga\in[1,\frac43]$ by interval arithmetic:
{\footnotesize\begin{lstlisting}
quad10(w,g)=3*(6*g-9)*w^2+ 2*(6*g^2-19*g+14)*w+3*g^3-18*g^2+36*g-24
q10(v)=quad10(v[1]-v[2],v[2])
maximise(q10,V,tol=1e-2)
\end{lstlisting}}
\noindent with output in the closed interval $[-0.910166, -0.627474]$,
 thus finishing the proof that $R_1<-\frac{1}{2-\ga}$.

\textit{Step 2: Prove $R_1\leq-\frac{2\ga}{(2-\ga)(\ga+1)}$ for $\ga\geq\frac{10}{9}$ with equality only for $\ga=\frac{10}{9}$ and $\om_0=\frac{4-3\ga}{3}$.}\\
 We argue similarly  to Step 1. First, we apply \eqref{E:R1} to find 
 \beqas
R_1&\,+\frac{2\ga}{(2-\ga)(\ga+1)}=\frac{(2-\ga)(9\om_0^2-7\ga\om_0^2-8\om_0^3)+4\ga\om_0^3}{2(2-\ga)(\ga+1)\om_0^3}\\
& - \frac{2-\ga}{2(2-\ga)\om_0^{3}(\ga+1)}\Big(-4(4-3\ga)(\ga+1)(\ga-1)(2-\ga)\om_0^3+(57 - 114 \ga + 73 \ga^2 - 12 \ga^3)\om_0^4\\
&\qquad\qquad-8(14 - 15 \ga + 3 \ga^2)\om_0^5+8(5-3\ga)\om_0^6\Big)^{\frac12}.
\eeqas
We again need only to compare the quantities in the numerator, and so we will prove that
\beqas
&\Big((2-\ga)(9\om_0^2-7\ga\om_0^2-8\om_0^3)+4\ga\om_0^3\Big)^2\\
&-(2-\ga)^2\big(-4(4-3\ga)(\ga+1)(\ga-1)(2-\ga)\om_0^3+(57 - 114 \ga + 73 \ga^2 - 12 \ga^3)\om_0^4\\
&\qquad\quad-8(14 - 15 \ga + 3 \ga^2)\om_0^5+8(5-3\ga)\om_0^6\big)\leq 0
\eeqas
with equality only when both $\ga=\frac{10}{9}$ and $\om_0=\frac{4-3\ga}{3}$.
Simplifying, we find that this expression is equal to $4(\ga-1)(\ga+1)^2\om_0^3r_2(\om_0)$, where
\beqas
r_2(\om_0)=&\,\Big((6\ga^2+8\ga-24)\om_0^3+(6\ga^3-6\ga^2-28\ga+32)\om_0^2\\
&+(3\ga^4-15\ga^3+18\ga^2+12\ga-24)\om_0+3\ga^5-19\ga^4+38\ga^3-12\ga^2-40\ga+32\Big).
\eeqas
Considering only $r_2(\om_0)$ (as the remaining factors are positive), we check
\beqas
r_2(\frac{4-3\ga}{3})=-\frac{2}{27}\ga(4-3\ga)^2(9\ga-10)\leq0
\eeqas
with equality only for $\ga=\frac{10}{9}$. Moreover, differentiating with respect to $\om_0$ yields
\beqas
\d_{\om_0}r_2(\om_0)=&\,3(6\ga^2+8\ga-24)\om_0^2+2(6\ga^3-6\ga^2-28\ga+32)\om_0\\
&+(3\ga^4-15\ga^3+18\ga^2+12\ga-24)<0
\eeqas
for all $\om_0\in[\frac{4-3\ga}{3},2-\ga]$ and $\ga\geq\frac{10}{9}$ (actually all $\ga\in[1,\frac43]$) by interval arithmetic:
{\footnotesize\begin{lstlisting}
quad11(w,g)=3*(6*g^2+8*g-24)*w^2+ 2*(6*g^3-6*g^2-28*g+32)*w
               +3*g^4-15*g^3+18*g^2+12*g-24
q11(v)=quad11(v[1]-v[2],v[2])
maximise(q11,V,tol=1e-2)
\end{lstlisting}}
\noindent with output in $[-2.12454, -1.63927]$, concluding the proof.

\textit{Step 3: Prove $R_1>-\frac{4}{(4-3\ga)(2-\ga)}$.}\\
 The only remaining estimate for $R_1$ is the lower bound, and again we follow the above approach. We first group
\beqa\label{eq:R1loweridentity}
R_1&\,+\frac{4}{(2-\ga)(4-3\ga)}=\frac{(2-\ga)(4-3\ga)(9\om_0^2-7\ga\om_0^2-8\om_0^3)+8(\ga+1)\om_0^3}{2(2-\ga)(4-3\ga)(\ga+1)\om_0^3}\\
& - \frac{(2-\ga)(4-3\ga)}{2(2-\ga)(4-3\ga)(\ga+1)\om_0^{3}}\Big(8(5-3\ga)\om_0^6-8(14 - 15 \ga + 3 \ga^2)\om_0^5\\
&\hspace{28mm}+(57 - 114 \ga + 73 \ga^2 - 12 \ga^3)\om_0^4-4(4-3\ga)(\ga+1)(\ga-1)(2-\ga)\om_0^3\Big)^{\frac12}.
\eeqa
One easily sees that
$$(2-\ga)(4-3\ga)(9\om_0^2-7\ga\om_0^2-8\om_0^3)+8(\ga+1)\om_0^3>0$$
provided $\om_0>\bar\om=\frac{ (2-\ga)(4-3\ga)(9-7\ga)}{8(3\ga^2-11\ga+7)}$. As this value is always less than $\frac{4-3\ga}{3}$ (indeed, $\frac{4-3\ga}{3}-\bar\om=\frac{(4-3\ga)(3\ga^2-19\ga+2)}{24 (3\ga^2-11\ga+7)}>0$), we conclude that the quantity is positive always.

It is therefore sufficient to compare the sizes of the squares of the terms in the numerator of \eqref{eq:R1loweridentity}. We therefore consider
\beqas
&\Big((2-\ga)(4-3\ga)(9\om_0^2-7\ga\om_0^2-8\om_0^3)+8(\ga+1)\om_0^3\Big)^2\\
&-(2-\ga)^2(4-3\ga)^2\Big(-4(4-3\ga)(\ga+1)(\ga-1)(2-\ga)\om_0^3+(57 - 114 \ga + 73 \ga^2 - 12 \ga^3)\om_0^4\\
&\qquad\quad -8(14 - 15 \ga + 3 \ga^2)\om_0^5+8(5-3\ga)\om_0^6\Big)\\
&=4(\ga+1)\om_0^3r_3(\om_0),
\eeqas
where
\beqas
r_3(\om_0)=&\,6 (9\ga^4-60\ga^3+132\ga^2-104\ga+24)\om_0^3+2 (2 - \ga) (4 - 3 \ga) (9\ga^3-42\ga^2+50\ga-14)\om_0^2\\
&\qquad-3 (2 - \ga)^3 (\ga-1) (4 - 3 \ga)^2\om_0+(2 - \ga)^3 (\ga-1) (4 - 3 \ga)^3.
\eeqas
As usual, we evaluate  at $\om_0=\frac{4-3\ga}{3}$ and find
\beqas
r_3(\frac{4-3\ga}{3})=-\frac{4}{9} (4 - 3 \ga)^3 (\ga^2-5\ga+2)>0
\eeqas
as $\ga^2-5\ga+2<0$ for $\ga\in(1,\frac43)$. The derivative with respect to $\om_0$ is then
\beqas
r_3'(\om_0)=&\,18 (9\ga^4-60\ga^3+132\ga^2-104\ga+24)\om_0^2+4 (2 - \ga) (4 - 3 \ga) (9\ga^3-42\ga^2+50\ga-14)\om_0\\
&-3 (2 - \ga)^3 (\ga-1) (4 - 3 \ga)^2.
\eeqas
This is strictly positive as, at $\om_0=\frac{4-3\ga}{3}$, we have
$$r_3'(\frac{4-3\ga}{3})=\frac13(4 - 3 \ga)^2 (27\ga^4-183\ga^3+418\ga^2-348\ga+104)>0$$
as the quartic in $\ga$ is uniformly positive:
{\footnotesize\begin{lstlisting}
g1(g)=104 - 348*g + 418*g^2 - 183*g^3 + 27*g^4
minimise(g1,G)
\end{lstlisting}}
\noindent gives lower bound in the interval $[17.4556, 18.0143]$. Moreover, the further $\om_0$ derivative is
\beqas
r_3''(\om_0)=&\,36 (9\ga^4-60\ga^3+132\ga^2-104\ga+24)\om_0+4 (2 - \ga) (4 - 3 \ga) (9\ga^3-42\ga^2+50\ga-14)\\
\geq&\, (4-3\ga)\Big(12 (9\ga^4-60\ga^3+132\ga^2-104\ga+24)+4 (2 - \ga)  (9\ga^3-42\ga^2+50\ga-14)\Big),
\eeqas
where we have used $\om_0\geq\frac{4-3\ga}{3}$ in the second line and that $9\ga^4-60\ga^3+132\ga^2-104\ga+24>0$ by 
{\footnotesize\begin{lstlisting}
g13(g)=9*g^4-60*g^3+132*g^2-104*g+24
minimise(g13,G)
\end{lstlisting}}
\noindent which gives the minimum in $[0.827639, 1.00486]$. We then further apply interval arithmetic to show the positivity of the last quantity:
{\footnotesize\begin{lstlisting}
g2(g)=12*(9*g^4-60*g^3+132*g^2-104*g+24)+4*(2-g)*(9*g^3-42*g^2+50*g-14)
minimise(g2,G)
\end{lstlisting}}
\noindent gives lower bound in the interval $[21.7206, 24.0462]$ which concludes the proof of the estimates for $R_1$.

\textit{Step 4: Prove the lower bound $W_1\geq 0$ with equality only for $y_*=y_f$.}\\
 The final step is the lower bound for $W_1$. We first rearrange \eqref{eq:WRidentity}  to see
\beq
W_1=(\ga-1)R_1\frac{\om_0^2 R_1+(\om_0+2-\ga)}{2\om_0(R_1+1)-\frac{(\ga-1)(2-\ga)}{\om_0}},
\eeq
where we note that, as $R_1<-\frac{1}{2-\ga}$, the denominator satisfies
\beqas
2\om_0(R_1+1)-\frac{(\ga-1)(2-\ga)}{\om_0}<&\,\frac{1}{\om_0}\Big(-\frac{2(\ga-1)}{2-\ga}\om_0^2-(\ga-1)(2-\ga)\Big)<0.
\eeqas
It is therefore sufficient to verify that $\om_0^2 R_1+(\om_0+2-\ga)\geq0$ with strict equality for $\om_0\in[\frac{4-3\ga}{3},2-\ga)$ (equivalently $y_*\in(y_f,y_F]$). At the end-point $y_*=y_f$, equivalently $\om_0=2-\ga$, a direct computation reveals $R_1=-\frac{2}{2-\ga}$ and $W_1=0$.

To prove the lower bound, we substitute $R_1$ from \eqref{E:R1} and rearrange to find
\beqas
\om_0^2 R_1+(\om_0+2-\ga)=&\,\frac{(9-7\ga) \om_0^2 - 8 \om_0^3-\sqrt{\om_0^3s(\om_0)}+2(\ga+1)\om_0^2+2(\ga+1)(2-\ga)\om_0}{2\om_0(\ga+1)}\\
=&\frac{(11-5\ga) \om_0 - 8 \om_0^2-\sqrt{\om_0s(\om_0)}+2(\ga+1)(2-\ga)}{2(\ga+1)}.
\eeqas
It is a simple exercise to check that the quadratic
$$-8\om_0^2+(11-5\ga)\om_0+2(\ga+1)(2-\ga)>0\text{ for all }\om_0\in[\frac{4-3\ga}{3},2-\ga],$$
and so it suffices to show that
\beq
L(\om_0)=\Big(-8\om_0^2+(11-5\ga)\om_0+2(\ga+1)(2-\ga)\Big)^2-\om_0s(\om_0,\ga)>0
\eeq 
for $\om_0\in[\frac{4-3\ga}{3},2-\ga)$.

We first obtain the lower bound for $\om_0\in[\frac{4-3\ga}{3},1.8-\ga]$ by interval arithmetic:
{\footnotesize\begin{lstlisting}
L(w,g)=((11-5*g)*w-8*w^2+2*(g+1)*(2-g))^2-w*s(w,g)
L1(v)=L(v[1]-v[2],v[2])
V9=IntervalBox((4/3)..1.8,1..(4/3))
minimise(L1,V9,tol=1e-2)
\end{lstlisting}}
\noindent which gives that the minimum lies in $[8.32454, 9.37091]$.

On the remaining region, we recall that $W_1(2-\ga)=0$ and hence $L(2-\ga)=0$. A direct computation shows that
$$L'(\om_0)=4 (\ga+1) (24 \om_0^3+6(3\ga-8)\om_0^2 +2\ga(3\ga-7)\om_0 -(2 - \ga) (-7 - 2 \ga + 3 \ga^2))$$
and further interval arithmetic shows that this is strictly negative for $\om_0\in[1.8-\ga,2-\ga]$ by
{\footnotesize\begin{lstlisting}
DL(w,g)=4*(g+1)*(24*w^3+6*(3*g-8)*w^2 +2*g*(3*g-7)*w-(2-g)*(-7- 2*g+3*g^2))
DL1(v)=DL(v[1]-v[2],v[2])
V10=IntervalBox((1.8)..2,1..(4/3))
maximise(DL1,V10)
\end{lstlisting}}
\noindent which gives that the maximum lies in $[-33.9807, -33.5971]$. Hence $L(\om_0)>0$ for $\om_0\in[\frac{4-3\ga}{3},2-\ga)$ as required.
\end{proof}


\subsection{Proof of~\eqref{ineq:A2}--\eqref{ineq:dNdetA0}}\label{A:IADETERMINANTS}

\textit{(i)} The easiest of the inequalities to show is inequality \eqref{ineq:A2} for $A_2$. Indeed, we recall that $R<0$ and $W\geq0$ for all $\ga\in(1,\frac43)$ and $\om_0\in[\frac{4-3\ga}{3},2-\ga]$ and consider the coefficient of the $\om_0^2R$ term:
$$-2(3-\ga)\om_0^2+\om_0(\ga-1)(5\ga-9)-(\ga-1)(2-\ga)(\ga+1)<0,$$
where the inequality comes from interval arithmetic:
{\footnotesize\begin{lstlisting}
quad1(w,g)=-2*(3-g)*w^2+(g-1)*(5*g-9)*w-(g-1)*(2-g)*(g+1)
q1(v)=quad1(v[1]-v[2],v[2])
maximise(q1,V,tol=1e-3)
\end{lstlisting}}
\noindent which gives an upper bound in the range $[-0.445382, -0.442693]$. As the contributions from $W$ and the remainder are both non-negative, we conclude $A_2>0$.

\textit{(ii)} Next, we show the  inequality~\eqref{ineq:dNdetA0}, $4A_2+A_1>0$. This is more complicated than before and requires us to consider the coefficients on separate parts of the domain. We first simplify the expression for this sum as
\beqas
4A_2+A_1=&\,\big(-10(3-\ga)\om_0^2+2(\ga-1)(10\ga-19)\om_0-5(\ga-1)(2-\ga)(\ga+1)\big)\om_0^2R\\
&+\big(40\om_0^2-4(4-3\ga)\om_0-2(\ga-1)\om_0\big)\om_0W+20\om_0^4-(14-10\ga)\om_0^3.
\eeqas
Again, the coefficient of $\om_0^2R$ is negative on the whole region of interest by interval arithmetic:
{\footnotesize\begin{lstlisting}
quad2(w,g)=-10*(3-g)*w^2+2*(g-1)*(10*g-19)*w-5*(g-1)*(2-g)*(g+1)
q2(v)=quad2(v[1]-v[2],v[2])
maximise(q2,V,tol=1e-3)
\end{lstlisting}}
\noindent which gives an upper bound in the range $[-2.22671, -2.21346]$. We therefore focus on the other two coefficients. The coefficient of $\om_0W$ is clearly positive when $\om_0>\frac{7-5\ga}{20}$ and negative otherwise (for $\om_0>0$). One also checks easily that $\frac{7-5\ga}{20}\geq\frac{4-3\ga}{3}$ is equivalent to $\ga\geq\frac{59}{45}$ (and $\frac{59}{45}<\frac43$).

Moreover, the final coefficient is
$$\om_0^3(20\om_0-(14-10\ga))\geq0\text{ if and only if }\om_0\geq\frac{7-5\ga}{10}.$$
Note that $\frac{7-5\ga}{10}\geq\frac{4-3\ga}{3}$ only for $\ga\geq \frac{19}{15}$.

In the region $\om_0\geq\frac{7-5\ga}{10}$, we therefore have $4A_2+A_1>0$, as required (and in particular, this holds for the whole region when $\ga\leq \frac{19}{15}$). For $\ga\in(\frac{19}{15},\frac43)$, we consider $$\om_0\in\Big(\max\{\frac{7\ga-5}{20},\frac{4-3\ga}{3}\},\frac{7\ga-5}{10}\Big),$$ so the coefficient of $\om_0W$ is positive for all $\om_0$ of interest. Recalling also that the coefficient of $\om_0^2R$ is negative and that $R<-\frac{1}{2-\ga}$, we therefore bound $4A_2+A_1$ below by
\beqas
4A_2+A_1\geq&\,-\big(-10(3-\ga)\om_0^2+2(\ga-1)(10\ga-19)\om_0-5(\ga-1)(2-\ga)(\ga+1)\big)\om_0^2\frac{1}{2-\ga}\\
&+20\om_0^4-(14-10\ga)\om_0^3\\
=&\,\frac{\om_0^2}{2-\ga}\Big(\big(10(3-\ga)+20(2-\ga)\big)\om_0^2\\
&+\big(-2(\ga-1)(10\ga-19)-(2-\ga)(14-10\ga)\big)\om_0+5(\ga-1)(2-\ga)(\ga+1)\Big)>0,
\eeqas
where we check the sign of the final quadratic using
{\footnotesize\begin{lstlisting}
quad3(w,g)=10*(7-3*g)*w^2-2*(15*g^2 - 46*g + 33)*w+5*(g-1)*(2-g)*(g+1)
q3(v)=quad3(v[1]-v[2],v[2])
minimise(q3,V,tol=1e-3)
\end{lstlisting}}
\noindent which gives a lower bound in the range $[2.49842, 2.51321]$ (actually for all $\om_0\in(\frac{4-3\ga}{3},2-\ga)$ and $\ga\in(1,\frac43)$.

Finally, for $\ga\in(\frac{59}{45},\frac43)$ and $\om_0\in(\frac{4-3\ga}{3},\frac{7-5\ga}{20})$, we compare $\om_0W$ to $\om_0^2R$ using the formula
\beqas
\frac{\om_0W}{\om_0^2R}=\frac{(\ga-1)\om_0^2R+(\ga-1)(\om_0+2-\ga)}{2\om_0^2R+2\om_0^2-(\ga-1)(2-\ga)}=:B(R,\om_0).
\eeqas
Differentiation of $B(R,\om_0)$ with respect to $R$ reveals that
\beqas
\frac{\d}{\d R}B(R,\om_0)=\frac{(\ga-1)\om_0^2\big(2\om_0^2-2\om_0-(2-\ga)(\ga+1)\big)}{\big(2\om_0^2R+2\om_0^2-(\ga-1)(2-\ga)\big)^2}<0,
\eeqas
for all $\om_0\in(\frac{4-3\ga}{3},2-\ga)$, $\ga\in(1,\frac43)$, so $B$  is a decreasing function with respect to $R$. Hence, recalling again that $R\leq-\frac{1}{2-\ga}$, we have that 
\beqa\label{om0W/om0R}
\frac{\om_0W}{\om_0^2R}\leq&\, B(-\frac{1}{2-\ga},\om_0)=\frac{-(\ga-1)\om_0^2+(2-\ga)(\ga-1)(\om_0+2-\ga)}{-2\om_0^2+2(2-\ga)\om_0^2-(\ga-1)(2-\ga)^2}\\
=&\,\frac{\om_0^2-(2-\ga)(\om_0+2-\ga)}{2\om_0^2+(2-\ga)^2}=-1+\frac{3\om_0^2-(2-\ga)\om_0}{2\om_0^2+(2-\ga)^2}\leq-1
\eeqa
for $\om_0\in(\frac{4-3\ga}{3},\frac{7-5\ga}{20})$ (where we are using that $\frac{7-5\ga}{20}<\frac{2-\ga}{3}$). We therefore use the fact that the coefficient of $\om_0W$ is negative on this region to make the lower bound
\beqa\label{eq:4A2+A1}
4A_2+A_1\geq&\,\big(-10(3-\ga)\om_0^2+2(\ga-1)(10\ga-19)\om_0-5(\ga-1)(2-\ga)(\ga+1)\big)\om_0^2R\\
&-\big(40\om_0^2-4(4-3\ga)\om_0-2(\ga-1)\om_0\big)\om_0^2R+20\om_0^4-(14-10\ga)\om_0^3\\
=&\,\big(-10(7-\ga)\om_0^2+4(5\ga^2-17\ga+13)\om_0-5(\ga-1)(2-\ga)(\ga+1)\big)\om_0^2R\\
&+20\om_0^4-(14-10\ga)\om_0^3.
\eeqa
We may check that the coefficient of $\om_0^2R$ is still negative:
{\footnotesize\begin{lstlisting}
quad4(w,g)=-10*(7-g)*w^2+4*(5*g^2 - 17*g + 13)*w-5*(g-1)*(2-g)*(g+1)
q4(v)=quad4(v[1]-v[2],v[2])
maximise(q4,V,tol=1e-3)
\end{lstlisting}}
\noindent which gives an upper bound in $[-2.56641, -2.55714]$. We therefore bound $4A_2+A_1$ below on this region by taking $R=-\frac{1}{2-\ga}$ in \eqref{eq:4A2+A1}. This leaves us with
\beqas
4A_2+A_1\geq&\,\frac{\om_0^2}{2-\ga}\Big(10(7-\ga)\om_0^2-4(5\ga^2-17\ga+13)\om_0+5(\ga-1)(2-\ga)(\ga+1)\\
&\quad+(2-\ga)\big(20\om_0^2-(14-10\ga)\om_0\big)\Big)
\eeqas
which we check is again positive for $\om_0\in(\frac{4-3\ga}{3},\frac{7-5\ga}{20})$ (in fact it is uniformly positive for all $\om_0\in(\frac43-\ga,2-\ga)$ and $\ga\in(1,\frac43)$ by interval arithmetic):
{\footnotesize\begin{lstlisting}
quad5(w,g)=10*(11-3*g)*w^2-2*(15*g^2-51*g+40)*w+5*(g-1)*(2-g)*(g+1)
q5(v)=quad5(v[1]-v[2],v[2])
minimise(q5,V,tol=1e-3)
\end{lstlisting}}
\noindent  with the minimum in $[2.54864, 2.55882]$, concluding the proof of \textit{(ii)}.

\textit{(iii)} A similar strategy holds again for showing \eqref{ineq:detA20} for the last quantity,
$4A_2+2A_1+A_0$. In fact, grouping terms again, we find
\beqas
4A_2&+2A_1+A_0\\
=&\,\big(-2(4+3\ga)(\ga-1)(2-\ga) + 2(\ga-1)(10\ga-21)\om_0 +2(7\ga-19)\om_0^2\big)\om_0^2R\\
&+\big(-2(\ga-1)(2-\ga)(\ga+1) +(6\ga-10)\om_0 + (28+4\ga)\om_0^2\big)\om_0W\\
&+\om_0 \big((4-3\ga)(\ga-1)(2-\ga)(\ga+1) +(\ga-1)(3\ga^2-9\ga+2)\om_0\\
&\qquad\quad -6(\ga-1)(3-\ga)\om_0^2+6(\ga-1)\om_0^3\big),
\eeqas
  the coefficient of $\om_0^2R$ is always negative again:
  {\footnotesize\begin{lstlisting}
quad6(w,g)=2*(7*g-19)*w^2+2*(g-1)*(10*g-21)*w-2*(4+3*g)*(g-1)*(2-g)
q6(v)=quad6(v[1]-v[2],v[2])
maximise(q6,V,tol=1e-3)
\end{lstlisting}}
\noindent gives an upper bound in the interval $[-2.67263, -2.65602]$. Next, we see that there exists
 $$\om_*(\ga)=\frac{5-3\ga + \sqrt{-87+10\ga+129\ga^2-40\ga^3-8\ga^4}}{4(7+\ga)}$$ such that the coefficient of $\om_0W$ is non-negative for $\om_0\geq \om_*(\ga)$ and negative for $\om_0\in(0,\om_*(\ga))$. 
 
 For $\om_0\in(\max\{\frac{4-3\ga}{3},\om_*(\ga)\},2-\ga)$, we check then that 
 \begin{align*}
 4A_2&+2A_1+A_0\\
 \geq&\,\big(-2(4+3\ga)(\ga-1)(2-\ga) + 2(\ga-1)(10\ga-21)\om_0 +2(7\ga-19)\om_0^2\big)\om_0^2\frac{-1}{2-\ga}\\
&+\om_0 \big((4-3\ga)(\ga-1)(2-\ga)(\ga+1) +(\ga-1)(3\ga^2-9\ga+2)\om_0\\
&\qquad\quad -6(\ga-1)(3-\ga)\om_0^2+6(\ga-1)\om_0^3\big)\\
>&\,\frac{\om_0^2}{2-\ga}\Big(\big(2(4+3\ga)(\ga-1)(2-\ga) - 2(\ga-1)(10\ga-21)\om_0 -2(7\ga-19)\om_0^2\big)\\
&+ (2-\ga)\big((\ga-1)(3\ga^2-9\ga+2) -6(\ga-1)(3-\ga)\om_0+6(\ga-1)\om_0^2\big)\Big)\\
=&\,\frac{\om_0^2}{2-\ga}\Big(-2(3\ga^2-2\ga-13)\om_0^2-2(\ga-1)(3\ga^2-5\ga-3)\om_0\\
&\hspace{11mm}+(\ga-1)(2-\ga)(3\ga^2-3\ga+10)\Big)\\
>&\,0
 \end{align*}
 by using interval arithmetic to estimate the final quadratic by
{\footnotesize\begin{lstlisting}
quad7(w,g)=-2(3*g^2-2*g-13)*w^2-2*(g-1)*(3*g^2-5*g-3)*w
              +(g-1)*(2-g)*(3*g^2-3*g+10)
q7(v)=quad7(v[1]-v[2],v[2])
minimise(q7,V,tol=1e-3)
\end{lstlisting}}
\noindent and obtaining the minimum is in $[2.34889, 2.35904]$.
 
 There is a $\ga_*\approx 1.148$ such that $\om_*(\ga)\leq\frac{4-3\ga}{3}$ if $\ga<\ga_*$ and reverse inequality otherwise. In the former case, we are already done. However, $\om_*(\ga)<\frac{2-\ga}{3}$ for all $\ga\in(1,\frac43)$ by using
 {\footnotesize\begin{lstlisting}
wstar(g)=(5-3*g+sqrt(-87+10*g+129*g^2-40*g^3-8*g^4))/(4*(7+g))
G=1..(4/3)
maximise(g->(wstar(g)-(2-g)/3),G,tol=1e-3)
\end{lstlisting}}
\noindent which gives a maximum in $[-0.0134389, -0.0127184]$. Hence, for $\ga\geq \ga_*$ and $\om_0\in(\frac{4-3\ga}{3},\om_*(\ga))$, we again get $\frac{\om_0W}{\om_0^2R}\leq -1$ by the same argument as that leading to \eqref{om0W/om0R}. Therefore, for $\om_0$ in this region, replacing $\om_0W$ with $-\om_0^2R$ and combining terms, we obtain 
 \begin{align}
 4A_2&+2A_1+A_0\nonumber\\
\geq&\,\big(-2(4\ga+3)(\ga-1)(2-\ga) + 2(\ga-1)(10\ga-21)\om_0 +2(7\ga-19)\om_0^2\big)\om_0^2R\nonumber\\
&-\big(-2(\ga-1)(2-\ga)(\ga+1) +(6\ga-10)\om_0 + (28+4\ga)\om_0^2\big)\om_0^2R\nonumber\\
&+\om_0 \big((4-3\ga)(\ga-1)(2-\ga)(\ga+1) +(\ga-1)(3\ga^2-9\ga+2)\om_0\nonumber\\
&\qquad -6(\ga-1)(3-\ga)\om_0^2+6(\ga-1)\om_0^3\big)\nonumber\\
=&\,\big(2 (4 - 7 \ga^2 + 3 \ga^3) + 2 (26 - 34 \ga + 10 \ga^2) \om_0 + 2 (-33 + 5 \ga) \om_0^2\big)\om_0^2R \\
&+\om_0 \big((\ga-1)(3\ga^2-9\ga+2)\om_0 -6(\ga-1)(3-\ga)\om_0^2+6(\ga-1)\om_0^3\big),\nonumber
 \end{align}
where we have also dropped the first order term in $\om_0$ in the last line. The new coefficient of $R$ is again seen to be negative as
 {\footnotesize\begin{lstlisting}
quad8(w,g)=2(5*g-33)*w^2+2*(10*g^2-34*g+26)*w+2*(3*g^3-7*g^2+4)
q8(v)=quad8(v[1]-v[2],v[2])
maximise(q8,V,tol=1e-3)
\end{lstlisting}}
\noindent gives a maximum in $[-2.62435, -2.59568]$.
Thus it is to obtain a lower bound by using $R<-\frac{1}{2-\ga}$ and factoring out $\om_0^2$ from the remainder. We arrive at the lower bound
\beqas
 4A_2&+2A_1+A_0\\
=&\,\frac{\om_0^2}{2-\ga}\Big(\big(-2 (4 - 7 \ga^2 + 3 \ga^3) -2 (26 - 34 \ga + 10 \ga^2) \om_0 - 2 (-33 + 5 \ga) \om_0^2\big)\\
&+ (2-\ga)\big((\ga-1)(3\ga^2-9\ga+2)\om_0 -6(\ga-1)(3-\ga)\om_0^2+6(\ga-1)\om_0^3\big)\\
=&\,\frac{\om_0^2}{2-\ga}\Big(-2(3\ga^2-4\ga-27)\om_0^2+2(-3\ga^3+8\ga^2+\ga-8)\om_0+3(\ga-1)(2-\ga)(\ga^2-\ga+2)\Big).
\eeqas
We verify that the quadratic in $\om_0$ in parentheses is always positive on $(\frac{4-3\ga}{3},\om_*(\ga))$ for $\ga>\ga_*$ (in fact the sign holds on all $\om_0\in(\frac{4-3\ga}{3},2-\ga)$ and $\ga\in(1,\frac43)$) by the following interval arithmetic:
 {\footnotesize\begin{lstlisting}
quad9(w,g)=(-6*g^2+8*g+54)*w^2+(-6*g^3+16*g^2+2*g-16)*w
            +3*(g-1)*(2-g)*(g^2-g+2)
q9(v)=quad9(v[1]-v[2],v[2])
minimise(q9,V,tol=1e-3)
\end{lstlisting}}
\noindent shows a minimum in the range $[1.5754, 1.58222]$, concluding the proof.


\subsection{Proof of Lemma \ref{lemma:Qmintervalarithmetic}}\label{app:Qm}
\textit{Step 1: We prove \eqref{ineq:Qmmax} in the case $m=1$.}\\
Recall
\begin{align*}
Q_1^+(\om)=&\,\big(1-\frac{\om}{2-\ga}\big)\Big(-\frac{3-2\ga}{2-\ga}(4-3\ga)\om^2+(\ga-1)(4-3\ga)(2-\ga)\Big)-\frac{\ga-1}{2-\ga}\om^2-2(\ga-1)\om\\
=&\, (4-3 \ga) (2 - \ga) (\ga-1) -3(2-\ga)(\ga-1) \om + \big(-\frac{\ga-1}{2 - \ga} + \frac{(4 - 3 \ga) (-3 + 2 \ga)}{2 - \ga} \big)\om^2 \\
&+ \frac{(-3 + 2 \ga) (-4 + 3 \ga)}{(2 - \ga)^2}\om^3\\
Q_1^-(\om)=&\,\big(1-\frac{\om}{2-\ga}\big)\Big(-\frac{3-2\ga}{2-\ga}(4-3\ga)\om^2+(\ga-1)(4-3\ga)(2-\ga)\Big)-\frac{\ga-1}{2-\ga}\om^2-2(\ga-1)\om\\
&+\frac{\om^2}{(2-\ga)^2\om}\big((4-3\ga)(2-\ga)-(3-2\ga)\big)\\
=&\,(\ga-1)\Big((4-3\ga)(2-\ga)+(3\ga-6)\om-\frac{6\ga-7}{2-\ga}\om^2+\frac{3(2\ga-3)}{(2-\ga)^2}\om^3\Big)
\end{align*}
Considering first $Q_1^+$, we check that 
\beqa
Q_1(\frac{4-3\ga}{3})=&\,-\frac{(4-3\ga)^2}{27(2-\ga)^2}(9\ga^2-25\ga+18)<0\text{ for all }\ga\in(1,\frac43),\\
Q_1(2-\ga)=&\,-3(2-\ga)(\ga-1)<0\text{ for all }\ga\in(1,\frac43).
\eeqa
We then check using interval arithmetic that \begin{itemize}
\item[(i)] $Q_1^+(\om)<0$ for all $\om\in[\frac43-\ga,2-\ga]$, for $\ga\in[1.02,1.15]$,
\item[(ii)] $Q_1^+(\om)<0$ for all $\om\in[\frac43-\ga,1.8-\ga]$, for $\ga\in[1,1.02]$,
\item[(iii)] $\d_\om Q_1^+(\om)<0$ for all $\om\in[\frac43-\ga,2-\ga]$, for $\ga\in[1.15,\frac43]$,
\item[(iv)] $\d_\om Q_1^+(\om)>0$ for all $\om\in[1.8-\ga,2-\ga]$, for $\ga\in[1,1.02]$,
\end{itemize}
all of which combine to prove that $Q_1^+(\om)<0$ for all $\om\in(\frac43-\ga,2-\ga)$, for $\ga\in(1,\frac43)$.\\
These are checked with the following Julia code (removing line breaks in the definition of functions):
{\footnotesize{\begin{lstlisting} 
Q1plus(w,g)=((2*g-3)*(3*g-4)*w^3)/((2-g)^2)
            +(-((g-1)/(2-g))+((4-3*g)*(2*g-3))/(2-g))*w^2
            +(-2*(g-1)-(g-1)*(4-3*g))*w+(4-3*g)*(2-g)*(g-1)
dQ1plus(w,g)=3*((2*g-3)*(3*g-4)*w^3)/((2-g)^2)
            +2*(-((g-1)/(2-g))+((4-3*g)*(2*g-3))/(2-g))*w^2
            +(-2*(g-1)-(g-1)*(4-3*g))

p1(v)=Q1plus(v[1]-v[2],v[2])
p2(v)=dQ1plus(v[1]-v[2],v[2])

V2=IntervalBox((4/3)..2,(1.02)..(1.15))
V3=IntervalBox((4/3)..(1.8),1..(1.02))
V4=IntervalBox((1.8)..2,1..(1.02))
V5=IntervalBox((4/3)..2,(1.15)..(4/3))
\end{lstlisting}}}
\noindent Property (i) then follows from 
{\footnotesize{\begin{lstlisting} 
maximise(p1,V2,tol=1e-4)
\end{lstlisting}}}
\noindent which gives $\max_{V_2} p_1\in[-0.0178999, -0.0177931]$. Property (ii) follows from
{\footnotesize{\begin{lstlisting} 
maximise(p1,V3,tol=1e-3)
\end{lstlisting}}}
\noindent which gives $\max_{V_3} p_1\in[-0.0638237, -0.0622099]$. Property (iii) follows from 
{\footnotesize{\begin{lstlisting} 
maximise(p2,V5,tol=1e-2)
\end{lstlisting}}}
\noindent giving  $\max_{V_5} p2\in[-0.321421, -0.231604]$. Finally,
{\footnotesize{\begin{lstlisting} 
minimise(p2,V4,tol=1e-3)	
\end{lstlisting}}}
\noindent yields $\min_{V_4}p_2\in[0.178011, 0.190886]$, as required.

To prove the negativity of $Q_1^-$, it is enough to observe that
$$Q_1^-(\frac43-\ga)=-\frac{2(4-3\ga)^2(\ga-1)^2}{9(2-\ga)^2}<0,$$
and, moreover, by interval arithmetic, $\d_\om Q_1^-(\om)<0$ always. To check this last property, we cancel the factor $\ga-1$ to define a function Q1min$=\frac{Q_1^-}{\ga-1}$ and then find the maximum:
{\footnotesize{\begin{lstlisting}
Q1min(w,g)=(4-3*g)*(2-g)+(3*g-6)*w-(6*g-7)*w^2/(2-g)+3*(2*g-3)*w^3/(2-g)^2
dQ1min(w,g)=(3*g-6)-2*(6*g-7)*w/(2-g)+9*(2*g-3)*w^2/((2-g)^2)

p3(v)=Q1min(v[1]-v[2],v[2])
p4(v)=dQ1min(v[1]-v[2],v[2])
V=IntervalBox((4/3)..2,1..(4/3))

maximise(p4,V,tol=1e-3)
\end{lstlisting}}}
\noindent This yields $\max_V p_4\in[-2.003, -1.99999]$, so that, for all $\ga\in(1,\frac43)$, we have $\d_\om Q_1^-<0$.

\textit{Step 2: We prove the estimate \eqref{ineq:Qmmax} for $Q_m$ for $m\in[1,\frac{2\ga}{\ga+1}]$.}\\
 To extend the estimates for $Q_m$ from $m=1$ to $m\in[1,\frac{2\ga}{\ga+1}]$, we proceed as follows.
We first define a new variable $k$ so that $m-1=\frac{\ga-1}{\ga+1}k$, to ensure $k\in[0,1]$ when $m\in[1,\frac{2\ga}{\ga+1}]$. $m$ is then recovered from $k$ by
{\footnotesize\begin{lstlisting}
m(k,g)=((g-1)*k+(g-1))/(g+1)
\end{lstlisting}}
We create two new functions
\begin{align*}
Q_5(\om,m)=&\,Q_m^+(\om)\\
&\,\big(1-\frac{\om}{2-\ga}\big)\Big(-\frac{4-m-2\ga}{2-\ga}(4-3\ga)\om^2+(\ga-1)(4-3\ga)(2-\ga)\Big)\\
&-\frac{2(4-m-2\ga)(m-1)\om^3}{(2-\ga)^2}-\frac{m(\ga-1)}{2-\ga}\om^2-2(\ga-1)\om,\\
Q_6(\om,m)=&\,Q_m^-(\om)\\
=&\,\big(1-\frac{\om}{2-\ga}\big)\Big(-\frac{4-m-2\ga}{2-\ga}(4-3\ga)\om^2+(\ga-1)(4-3\ga)(2-\ga)\Big)\\
&-\frac{2(4-m-2\ga)(m-1)\om^3}{(2-\ga)^2}-\frac{m(\ga-1)}{2-\ga}\om^2-2(\ga-1)\om\\
&+\frac{m\om^2}{(2-\ga)^2}\Big((4-3\ga)(2-\ga)-\om\big(4-3\ga+(\ga-1)(2-m)\big)\Big)\\
=&\,\frac{6\ga^2+\ga(m^2+8m-24)+m^2-16m+24}{(2-\ga)^2}\om^3+\frac{-6\ga^2+\ga(20-7m)+9m-16}{2-\ga}\om^2\\
&-3(2-\ga)(\ga-1)\om+(4-3\ga)(\ga-1)(2-\ga).
\end{align*}
Note that when $m=1$, these are just $Q_1^+$ and $Q_1^-$ from above. We then compute the derivative with respect to $m$ to get
\beqas
\d_mQ_5=&\,\frac{4m+7\ga-14}{(2-\ga)^2}\om^3+\frac{5-4\ga}{2-\ga}\om^2.
\eeqas
It is then straightforward to see that for $m\in[1,\frac{2\ga}{\ga+1}]$, the coefficient of the $\om^3$ term is negative. Note also that $\d_m Q_5<0$ for $\om>\om_*=\frac{(2-\ga)(5-4\ga)}{14-7\ga-4m}$. We check by interval arithmetic that for all $m\in[1,\frac{2\ga}{\ga+1}]$, all $\ga\in(1,\frac43)$, we have $\om_*<\frac43-\ga+0.1$:
{\footnotesize\begin{lstlisting}
wcrit(g,n)=(2-g)*(5-4*g)/(14-7*g-4*n)
wcritdiff(g,n)=wcrit(g,n)-(4/3)-0.1+g
fun(h)=wcritdiff(h[1],m(h[2],h[1]))
Gcrit=IntervalBox(1..(4/3),0..1)
maximise(fun,Gcrit)
\end{lstlisting}}
\noindent The maximum lies in $[-0.154379, -0.153729]$, hence is negative. Thus, for $\om>\frac43-\ga+0.1$, we have $Q_5(\om,m)\leq Q_1^+(\om)<0$. On the other hand, for $\om\in(\frac43-\ga,\frac43-\ga+0.1)$, we have from interval arithmetic that $\d_\om Q_1<-0.29$:
{\footnotesize\begin{lstlisting}
Vcrit=IntervalBox((4/3)..((4/3)+0.1),1..(4/3))
maximise(p2,Vcrit)
\end{lstlisting}}
\noindent The output is in $[-0.304395, -0.297167]$.

 We check that
$$\d^2_{m\om}Q_5=3\frac{4m+7\ga-14}{(2-\ga)^2}\om^2+2\frac{5-4\ga}{2-\ga}\om<0$$
provided $\om>\frac{2}{3}\om_*$. We check that $\frac23 \om_*<\frac43-\ga$ always:
{\footnotesize\begin{lstlisting}
fun2(h)=2*wcrit(h[1],m(h[2],h[1]))/3-(4/3)+h[1]
maximise(fun2,Gcrit)
\end{lstlisting}}
\noindent with output $[-0.0363685, -0.0358193]$. Thus, we retain $\d_\om Q_5<0$ on $\om\in(\frac43-\ga,\frac43-\ga+0.1)$ for all $m$ and $\ga$ in the range we require. Thus, using the fact that, at $\om=\frac43-\ga$, we have
$$Q_5\big|_{\om=\frac43-\ga}=-\frac{(4-3\ga)^2m(26 + 9 \ga^2 - 8 m + \ga (-31 + 6 m))}{27(2-\ga)^2}<0$$
by using
{\footnotesize\begin{lstlisting}
Q5end(g,n)=26+9*g^2-8*n+g*(-31+6*n)
fun3(h)=Q5end(h[1],m(h[2],h[1]))
minimise(fun3,Gcrit,tol=1e-2)
\end{lstlisting}}
\noindent with minimum in the range $[0.483905, 0.681199]$, we conclude $Q_5<0$ for all suitable $\ga$ and $m$.

To handle $Q_6$, we compare it to $Q_1^-$ above. We write
$$\d_\om Q_6=\d_\om(Q_6-Q_1^-)+\d_\om Q_1^-,$$
exploiting the definition of $k$ to introduce factors of $\ga-1$ wherever we find $m-1$. In particular, we have
\beqas
Q_6(\om,m(k))-Q_1^-(\om)=&\,\om^3\frac{\ga-1}{\ga+1}\frac{k}{(2-\ga)^2}\big((\ga-1)k+10\ga-14\big)+\om^2\frac{\ga-1}{\ga+1}k\frac{9-7\ga}{2-\ga}.
\eeqas 
Factoring out $\ga-1$, we differentiate and find
\beqas
\d_\om(Q_6-Q_1^-)=&\,(\ga-1)\om\Big(3\om\frac{k}{(\ga+1)(2-\ga)^2}\big((\ga-1)k+10\ga-14\big)+2\frac{k}{\ga+1}\frac{9-7\ga}{2-\ga}\Big).
\eeqas
Interval arithmetic then yields 
$$\d_\om(Q_6-Q_1^-)+\d_\om Q_1^-<0\text{ for all }(\om,\ga,k)\in[\frac43-\ga,2-\ga]\times[1,\frac43]\times[0,1]$$
by working without the common factor of $(\ga-1)$:
{\footnotesize\begin{lstlisting}
Q6diffw(w,g,k)=3*w^2*(k/(g+1))*((g-1)k+10*g-14)/((2-g)^2)
               +2*w*(k/(g+1))*(9-7*g)/(2-g)
B=IntervalBox((4/3)..2,1..(4/3),0..1)
p6diff(u)=dQ1min(u[1]-u[2],u[2])+Q6diffw(u[1]-u[2],u[2],u[3])
maximise(p6diff,B,tol=1e-2)
\end{lstlisting}}
\noindent giving a maximum in the range $[-2.03123, -1.99999]$.
This establishes inequality \eqref{ineq:Qmmax} for $m\in\big[1,\frac{2\ga}{\ga+1}\big]$.

\textit{Step 3: We extend to cover the full range $m\in\big[1,\frac{2\ga}{\ga+1}+\de\big]$.}\\
 To extend  \eqref{ineq:Qmmax} to $m\in\big[1,\frac{2\ga}{\ga+1}+\de\big]$, we argue directly by continuity with respect to $m$, uniformly with respect to $\om\in[\frac{4-3\ga}{3},2-\ga]$ for each $\ga\in(1,\frac43)$. As $Q_{\frac{2\ga}{\ga+1}}(\om)<0$ for all $\om\in[\frac{4-3\ga}{3},2-\ga]$, for each $\ga\in(1,\frac43)$, we obtain the existence of such a claimed $\de>0$.

\textit{Step 4: Prove \eqref{ineq:Qmmin}.}\\
 To check \eqref{ineq:Qmmin}  rigorously, we follow the following procedure:
Define 
\begin{align*}
Q_3(\om)=&\,Q_{\frac{4}{4-3\ga}}^+(\om)\\
=&\,M(\ga)(4-3\ga)(1-\frac{\om}{2-\ga})\om^2+M(\ga)\om^3\big(-\frac{2}{2-\ga}+\frac{8}{(4-3\ga)(2-\ga)}\big)\\
&-\frac{2(\ga-1)\om^2}{(4-3\ga)(2-\ga)}-2(\ga-1)\om+(4-3\ga)(\ga-1)(2-\ga)\big(1-\frac{\om}{2-\ga}\big)\\
=&\,\om^3\frac{2(3\ga^2-10\ga+6)(9\ga^2-30\ga+16)}{(4-3\ga)^2(2-\ga)^2}+\om^2\frac{2(9\ga^3-42\ga^2+57\ga-23)}{(2-\ga)(4-3\ga)}\\
&-3(2-\ga)(\ga-1)\om+(4-3\ga)(\ga-1)(2-\ga),\\
Q_4(\om)=&\,Q_{\frac{4}{4-3\ga}}^-(\om)\\
=&\,M(\ga)(4-3\ga)(1-\frac{\om}{2-\ga})\om^2+M(\ga)\om^3\big(-\frac{2}{2-\ga}+\frac{8}{(4-3\ga)(2-\ga)}\big)\\
&-\frac{2(\ga-1)\om^2}{(4-3\ga)(2-\ga)}-2(\ga-1)\om+(4-3\ga)(\ga-1)(2-\ga)\big(1-\frac{\om}{2-\ga}\big)\\
&+\om^2\Big(\frac{4(1-\frac{\om}{2-\ga})}{2-\ga}-\om\frac{4(\ga-1)}{(4-3\ga)(2-\ga)}\big(\frac{2}{2-\ga}-\frac{4}{(4-3\ga)(2-\ga)}\big)\Big)\\
=&\,\om^3\frac{6(9\ga^4-60\ga^3+132\ga^2-104\ga+24)}{(4-3\ga)^2(2-\ga)^2}+\om^2\frac{6(3\ga^3-14\ga^2+17\ga-5)}{(2-\ga)(4-3\ga)}\\
&-3(2-\ga)(\ga-1)\om+(4-3\ga)(\ga-1)(2-\ga).
\end{align*}
We need to prove the positivity of both $Q_3$ and $Q_4$. To show the positivity of $Q_3$, we note the following four facts:
\begin{itemize}
\item $Q_3(\frac43-\ga)=\frac{-2(4-3\ga)(21\ga^2-71\ga+42)}{27(2-\ga)^2}>0$ for $\ga\in(1,\frac43)$,
\item $Q_3'(\frac43-\ga)=\frac{27 \ga^4-183 \ga^3+ 402 \ga^2 - 312 \ga  +80}{3(2-\ga)^2}>0$ for $\ga\in(1,\frac43)$,
\item $Q_3^{(2)}(\frac43-\ga)=\frac{4(18\ga^4-120\ga^3+261\ga^2-203\ga+50)}{(4-3\ga)(2-\ga)^2}>0$ for $\ga\in(1,\frac43)$,
\item $Q_3^{(3)}(\om)=\frac{12(3\ga^2-10\ga+6)(9\ga^2-30\ga+16)}{(4-3\ga)^2(2-\ga)^2}>0$ for all $\om\in(\frac43-\ga,2-\ga)$, $\ga\in(1,\frac{4}{3})$.
\end{itemize}
Each of these is proved by interval arithmetic. We scale out the factors of $(4-3\ga)^{-1}$ and $(4-3\ga)^{-2}$ in the second and third derivatives of $Q_3$ before computing to ensure the computations remain bounded.
{\footnotesize\begin{lstlisting}
g5(g)=-(21*g^2-71*g+42)
g6(g)=80-312*g+402*g^2-183*g^3+27*g^4
g7(g)=50-203*g+261*g^2-120*g^3+18*g^4
g8(g)=(6-10*g+3*g^2)*(16-30*g+9*g^2)
G=(4/3)..2
minimise(g5,G)
minimise(g6,G)
minimise(g7,G)
minimise(g8,G)
\end{lstlisting}}
\noindent yielding $\min_G g_5\in[15.2996, 15.3379]$, $\min_G g_6\in[29.4466, 30.2343]$, $\min_G g_7\in[14.8603, 15.7849]$, $\min_G g_8\in[15.9016, 16.0085]$.

Similarly,
\begin{itemize}
\item $Q_4(\frac43-\ga)=\frac{-2(4-3\ga)(3\ga^2-13\ga+6)}{9(2-\ga)^2}>0$ for $\ga\in(1,\frac43)$,
\item $Q_4'(\frac43-\ga)=\frac{9\ga^4-61\ga^3+138\ga^2-112\ga+32}{(2-\ga)^2}>0$ for $\ga\in(1,\frac43)$,
\item $Q_4^{(2)}(\frac43-\ga)=\frac{12(6\ga^4-40\ga^3+87\ga^2-65\ga+14)}{(4-3\ga)(2-\ga)^2}>0$ for $\ga\in(1,\frac43)$,
\item $Q_4^{(3)}(\om)=\frac{36(9\ga^4-60\ga^3+132\ga^2-104\ga+24)}{(4-3\ga)^2(2-\ga)^2}>0$ for all $\om\in(\frac43-\ga,2-\ga)$, $\ga\in(1,\frac{4}{3})$.
\end{itemize}
{\footnotesize\begin{lstlisting}
g9(g)=-(3*g^2-13*g+6)
g10(g)=32-112*g+138*g^2-61*g^3+9*g^4
g11(g)=14-65*g+87*g^2-40*g^3+6*g^4
g12(g)=24-104*g+132*g^2-60*g^3+9*g^4
minimise(g9,G)
minimise(g10,G)
minimise(g11,G)
minimise(g12,G)
\end{lstlisting}}
\noindent yielding $\min_G g_9\in[5.99518, 6.00151]$, $\min_G g_{10}\in[11.5885, 11.8567]$, $\min_G g_{11}\in[5.98055, 6.15133]$, $\min_G g_{12}\in[5.96691, 6.22624]$.



\section{Proof of Proposition~\ref{prop:sonictime}}\label{app:continuity}


\renewcommand{\theequation}{\thesection.\arabic{equation}}

Before we prove the proposition,
it is convenient to rescale the sonic point to a fixed value so that some of the continuity properties are easier to prove. 
We let
\begin{align}
z: = \frac{y}{y_\ast}, \ \ \rho(y)=r(z), \ \ \omega(y) = w(z).
\end{align}
The system~\eqref{eq:rhoom} takes the form
\begin{align}
r' &=\,\frac{y_\ast^2 zr h(r,w)}{\mathcal G(z;r,w)},\label{E:REQN}\\
w' &=\,\frac{4-3\ga-3w}{z}-\frac{y_\ast^2 zw h(r,w)}{\mathcal G(z;r,w)}, \label{E:OEQN}
\end{align}
where 
\begin{align}
\mathcal G(z;r,w) : = \ga r^{\ga-1}-y_\ast^2z^2w^2.
\end{align}
Moreover, the sonic time $s(y_\ast)$ scales naturally into
\begin{align}
S(y_\ast) : = \frac{s(y_\ast)}{y_\ast},
\end{align}
so that the interval $(S(y_\ast),1)$ comprises all the $z$-values in the interval $(0,1)$ for which the unique LPH-type solution exists and $\mathcal G>0$.
By analogy to~\eqref{E:FDEF}--\eqref{E:GDEF} we introduce the abbreviations
\begin{align}
\mathcal I &: = \frac{ zr h(r,w)}{\mathcal G(z;r,w)}, \ \
\mathcal J  : = \frac{ zw h(r,w)}{\mathcal G(z;r,w)}. \label{E:IJDEF}
\end{align}

\begin{proof}
We work with the formulation~\eqref{E:REQN}--\eqref{E:OEQN} for convenience. From there, it is easy to recover all the statements in the original $(\rho(y),\omega(y))$ variables.

\noindent
{\em Proof of part (i).}
We fix an $y_\ast\in[y_f,y_F]$ and an arbitrary $\mathring z\in (S(y_\ast),1-\nu)$. In the following all generic constants will depend on $\mathring z$ unless specified otherwise.
Since $\mathring z>S(y_\ast)$ there exists an $\eta>0$ such that 
\[
\ga r^{\ga-1} >\eta + y_\ast^2 z^2 w^2, \ \ z\in[\mathring z, 1-\nu).
\]
It follows in particular that 
\begin{align}\label{E:RLB}
r(z) > C_\gamma \eta^{\beta}, \ \ z\in[\mathring z, 1-\nu),
\end{align}
where
\begin{align}\label{E:BETACGAMMADEF}
\beta : = \frac1{\gamma-1}, \ \ C_\gamma : = \gamma^{-\frac1{\gamma-1}}.
\end{align}
Moreover, by Lemma~\ref{lemma:apriori} it is clear that there exists a constant $C = C(\mathring z)$ 
such that for any $\tilde y_\ast\in[y_f,y_F]$
\begin{align}\label{E:APRIORI1}
|r(z;\tilde y_\ast)|\le C, \ \ |w(z;\tilde y_\ast)| \le C, 
\ \ z\in[\mathring z, 1-\nu]\cap (S(\tilde y_\ast), 1-\nu).
\end{align}
Let $0<\delta\ll1$ be a control constant to be fixed later and consider the set of $\tilde y_\ast\in[y_f,y_F]$ such that $|\tilde y_\ast-y_\ast|<\delta$. For 
any such $\tilde y_\ast$ let $(\tilde r(\cdot;\tilde y_\ast),\tilde w(\cdot;\tilde y_\ast))$ be the unique LPH-type solution given by Theorem~\ref{thm:Taylor}. 
Let
\[
Z : = \max\{S(\tilde y_\ast), \mathring z\}. 
\]
and 
define the control function
\begin{align}\label{E:LITTLEGDEF}
g(z): = |r(z)-\tilde r(z)| + |w(z)-\tilde w(z)|, \ \ z\in (Z,1-\tilde\nu],
\end{align}
where $\tilde\nu$ is a $y_\ast$-independent positive constant whose existence follows from the existence of $\nu>0$ in Theorem~\ref{thm:Taylor}.
It is straightforward to check that
\begin{align}\label{E:IMINUSITILDE}
\mathcal I(y_\ast,r,w) - \mathcal I(\tilde y_\ast,\tilde r,\tilde w)
= z  \frac{r h(\tilde{\mathcal G}-\mathcal G) + \mathcal G r (h-\tilde h) + \mathcal G \tilde h (r - \tilde{r})}{\mathcal G \tilde{\mathcal G}},
\end{align}
where we used the shorthand $\tilde{\mathcal G} = \mathcal G(\tilde y_\ast,\tilde r,\tilde w)$ and similarly for $\tilde h$.
Note that 
\begin{align}\label{E:GDIFF}
\tilde{\mathcal G} - \mathcal G = \ga \left(\tilde r^{\gamma-1}-r^{\gamma-1}\right) - z^2\left(\tilde y_\ast^2 \tilde w^2 - y_\ast^2 w^2\right)
\end{align}
and also
\begin{align}
\lv \tilde r^{\gamma-1}-r^{\gamma-1}\rv
& = r^{\gamma-1}\lv\left(1+\frac{\tilde r - r}{r}\right)^{\gamma-1}-1\rv \notag \\
& \le  (\gamma-1)r^{\gamma-1} \sup_{|\theta|\le \frac{|\tilde r - r|}{r}}\lv1 - |\theta|\rv^{\gamma-2} \frac{|\tilde r - r|}{r} \notag \\
& =(\gamma-1) r^{\gamma-2}  \lv1 - \frac{|\tilde r - r|}{r} \rv^{\gamma-2} |\tilde r - r|,
\end{align}
where we have used the mean value theorem in the second line above.
Note that by~\eqref{E:LITTLEGDEF} and~\eqref{E:RLB} $ \frac{|\tilde r - r|}{r} \le \frac{g(z)}{C_\gamma \eta^\beta}$ and therefore since $\gamma-2<0$
\be\label{E:RZDIFF}
\lv \tilde r^{\gamma-1}-r^{\gamma-1}\rv \le C_\gamma^{-(2-\gamma)}\eta^{-(2-\gamma)\beta} \lv1 - \frac{g(z)}{C_\gamma\eta^\beta}\rv^{\gamma-2} g(z).
\ee
Moreover, by~\eqref{E:APRIORI1} it is easy to see that 
\begin{align}
\lv z^2\left(\tilde y_\ast^2 \tilde w^2 - y_\ast^2 w^2\right)\rv \le C\left(|y_\ast-\tilde y_\ast| + |w - \tilde w|\right).
\end{align}
Together with~\eqref{E:GDIFF} and~\eqref{E:RZDIFF} this gives
\begin{align}
\lv \tilde{\mathcal G} - \mathcal G \rv & \le  C\left(1+ \eta^{-(2-\gamma)\beta} \lv1 - \frac{g(z)}{C_\gamma \eta^\beta}\rv^{\gamma-2}\right) g(z) + C |y_\ast-\tilde y_\ast| \notag \\
& =: C K(\eta,g(z)) g(z) + C |y_\ast-\tilde y_\ast|, \label{E:GDIFF2}
\end{align}
where
\begin{align}\label{E:KETADEF}
K(\eta,g(z)) : = 1+ \eta^{-(2-\gamma)\beta} \lv1 - \frac{g(z)}{C_\gamma \eta^\beta}\rv^{\gamma-2}.
\end{align}
A simple consequence of~\eqref{E:GDIFF2} is a lower bound for $\tilde{\mathcal G}$,
\begin{align}
\tilde{\mathcal G} &\ge  \mathcal G - \lv \tilde{\mathcal G} - \mathcal G\rv \notag\\
&\ge \eta - C K(\eta,g(z)) g(z) - C |y_\ast-\tilde y_\ast| \notag \\
& = : \bar\eta(z). \label{E:GLB}
\end{align}
From the definition of $h(r,w)$ and the a priori bounds~\eqref{E:APRIORI1} it is straightforward to obtain the bound
\begin{align}\label{E:HUB}
\lv \tilde h(z) - h(z)\rv \le C g(z).
\end{align}
Using~\eqref{E:APRIORI1},~\eqref{E:GDIFF2},~\eqref{E:GLB}, and~\eqref{E:HUB} in~\eqref{E:IMINUSITILDE} we conclude
\begin{align}
\lv\mathcal I(y_\ast,r,w) - \mathcal I(\tilde y_\ast,\tilde r,\tilde w) \rv
\le \frac{C K(\eta,g(z)) g(z) +  C |y_\ast-\tilde y_\ast| }{\eta \, \bar\eta(z)}. \label{E:IBOUND}
\end{align}
The same proof also yields the bound
\begin{align}
\lv\mathcal J(y_\ast,r,w) - \mathcal J(\tilde y_\ast,\tilde r,\tilde w) \rv
\le \frac{C K(\eta,g(z)) g(z) +  C |y_\ast-\tilde y_\ast| }{\eta\, \bar\eta(z)}, \label{E:JBOUND}
\end{align}
where we recall~\eqref{E:IJDEF}.

Clearly, for $\delta>0$  and $|1-\nu-z|$ sufficiently small, we have from~\eqref{E:GDIFF2} and~\eqref{E:KETADEF} by continuity
\[
\bar\eta(z)>\frac\eta2, \ \ g(z)<\frac{C_\gamma \eta^\beta}{2},
\]
where $\bar\eta(z)$ is defined in~\eqref{E:GLB}. Let  
\begin{align}\label{E:BARZDEF}
\bar Z : = \inf_{Z<z<1-\nu} \left\{\bar\eta(z)>\frac\eta2 \ \text{ and } \ g(z)<\frac{C_\gamma\eta^\beta}{2}\right\},
\end{align}
where $C_\gamma>0$ is defined in~\eqref{E:BETACGAMMADEF}. The bound $g(z)<\frac{C_\gamma \eta^\beta}{2}$ ensures that 
\begin{align}\label{E:KETABOUND}
K(\eta,g(z)) \le 1 + \frac1{2^{2-\gamma}}\eta^{-(2-\gamma)\beta} = : K_\eta, \ \  z\in[\bar Z, 1-\tilde\nu].
\end{align}

Integrating over $[z,1-\tilde\nu]$ it follows from~\eqref{E:REQN}--\eqref{E:OEQN} and the bounds~\eqref{E:IBOUND}--\eqref{E:JBOUND} 
that 
\begin{align}
g(z) &\le g(1-\tilde\nu) + \frac{C}{\eta^2}|y_\ast-\tilde y_\ast| + \frac C{\mathring z} \int_z^{1-\tilde\nu}|w-\tilde w|\,\dif\tau
+ \frac C{\eta^2} \int_z^{1-\tilde\nu} K(\eta,g(\tau))\, g(\tau)\,\dif\tau \notag \\
& \le g(1-\tilde\nu) + \frac{C}{\eta^2}|y_\ast-\tilde y_\ast| 
+ \frac C{\eta^2} \int_z^{1-\tilde\nu} K(\eta,g(\tau))\, g(\tau)\,\dif\tau \notag \\
& \le   g(1-\tilde\nu) + \frac{C}{\eta^2}|y_\ast-\tilde y_\ast| +   \frac {CK_\eta}{\eta^2} \int_z^{1-\tilde\nu} g(\tau)\,\dif\tau,   \ \ z\in[\bar Z, 1-\tilde\nu],
\end{align}
where we recall~\eqref{E:KETADEF} and~\eqref{E:KETABOUND}. We now apply the Gr\"onwall inequality to conclude
\begin{align}\label{E:GUNIF}
 g(z) \le \left(g(1-\tilde\nu) + \frac{C}{\eta^2} |y_\ast-\tilde y_\ast|\right)  e^{ \frac{CK_\eta}{\eta^2}(1-\tilde\nu-z)}, \ \ z\in[\bar Z, 1-\tilde\nu].
\end{align}
We note that for any given $\delta'>0$, there exists a $\delta>0$ such that $g(1-\tilde\nu)<\delta'$ for all $|y_\ast-\tilde y_\ast|<\delta$.
Therefore, for any given $\epsilon>0$ we can choose a $\delta=\delta(\eta,\epsilon)$ 
sufficiently small so that 
for all $|y_\ast-\tilde y_\ast|<\delta$ we have the bound 
\be
g(z)<\epsilon, \ \ \bar Z < z \le 1-\tilde\nu. \notag 
\ee
In particular, with $0<\epsilon\ll1$ chosen sufficiently small we have $g(z)<\frac{\tilde C\eta^\beta}{3}$ on $(\bar Z, 1-\tilde\nu]$ and therefore
$
K(\eta,g(z)) < K_\eta 
$
on $[\bar Z,1-\tilde\nu]$. This in turn implies
\be\label{E:GOODFORPARTTHREE}
\bar\eta (\bar Z) \ge \eta - C K_\eta \epsilon - C\delta >\frac{\eta}{2}
\ee
for $0<\delta\ll1$ sufficiently small. This implies $\bar Z = Z$ and provides a uniform lower bound for $\mathcal G$ on $(Z,1-\tilde\nu]$ thus implying
$S(\tilde y_\ast)<Z$. Therefore $Z=\mathring z$ and since $\mathring z>S(y_\ast)$ is chosen arbitrarily,  this implies the upper semi-continuity.

\noindent
{\em Proof of part (ii).}
By Lemma~\ref{lemma:extension} it is clear that there exists a $\tilde\tau=\tilde\tau(\mathring y, \eta)$ such that $S(y_\ast^n)<\mathring y-\tilde\tau$ for all $n\in\mathbb N$.
We now use the lower bounds~\eqref{E:GLB} and~\eqref{E:GOODFORPARTTHREE} applied to the sequence $\{y^n_\ast\}_{n\in\mathbb N}$ to conclude that 
$S(y_\ast) < \frac{\mathring y}{y_\ast}-\tilde\tau$ for a possibly smaller $\tilde\tau>0$, which again depends only on $\mathring y$ and $\eta$.

\noindent
{\em Proof of part (iii).} By the proof of part (i) it follows that there exists a $\delta>0$ sufficiently small so that $S(\tilde y_\ast)<S(y_\ast)+ \frac12\left(\frac{y_0}{\tilde y_\ast} - S(y_\ast)\right)$
for all $|\tilde y_\ast-y_\ast|<\delta$. The claim now follows from the arguments in part (i) using in particular the uniform-in-$\tilde y_\ast$ upper bound~\eqref{E:GUNIF} for the distance function $g(z)$.
\end{proof}

\end{document}